% Template for the submittion to:
%   Probability Surveys               [ps]
%   Statistics Surveys                [ss] 
%   Electronic Journal of Statistics  [ejs]
%
% Author: In this template, the places where you need to add information
%         (or delete line) are indicated by {???}.  Mostly the information
%         required is obvious, but some explanations are given in lines starting
% Author:
% All other lines should be ignored.  After editing, there should be
% no instances of ??? after this line.

% use option [preprint] to remove info line at bottom:
% journal options: ps,ss,ejs
\documentclass[preprint]{imsart}

%\usepackage{amsthm,amsmath}
%\RequirePackage[numbers]{natbib}
%\RequirePackage[colorlinks,citecolor=blue,urlcolor=blue]{hyperref}

% will be filled by editor:
\doi{10.1214/154957804100000000}
\pubyear{0000}
\volume{0}
\firstpage{0}
\lastpage{0}
%\arxiv{}

% put your definitions there:
 %\startlocaldefs

\usepackage{verbatim}
\usepackage{amssymb,amsmath,amsthm,bm}
\usepackage{graphicx}
\usepackage{epsfig}

\newtheorem{theorem}{Theorem}
\newtheorem{corollary}{Corollary}[section]
\newtheorem{lemma}[corollary]{Lemma}
\newtheorem{proposition}[corollary]{Proposition}

\newtheorem{exercise}{Exercise}

\newcommand{\Prob} {{\bf P}}
\newcommand{\Z}{{\mathbb Z}} 
\newcommand{\E}{{\bf E}}
\newcommand {\N}{{\mathbb N}}
\newcommand{\Es}{{\rm Es}}

\newcommand{\R}{{\mathbb{R}}}
\newcommand{\C}{{\mathbb C}}

\newcommand{\dist}{{\rm dist}}
\newcommand{\x}{{\bf x}}
\newcommand{\y}{{\bf y}}
\newcommand{\z}{{\bf z}}
\newcommand{\w}{{\bf w}}

\def \Im {{\rm Im}}
\def \Re {{\rm Re}}
\def \p {\partial}
\def \edges {{\mathcal E}}
\def \Half {{\mathbb H}}

\def \Disk {{\mathbb D}}
\def \Square {{\mathcal S}}

\def \state {{\mathcal X}}
\def \eset {\emptyset}

\def \saws {{\mathcal W}}
\def \paths {{\mathcal K}}
\def \loop {{\mathcal L}}
\def \loops {{\loop}}

\def \tree {{\mathcal T}}
\def \sgn {{\rm sgn}}

 \def \qprob {{  Q}}

\newenvironment{definition}[1][Definition]{\begin{trivlist}
\item[\hskip \labelsep {\bfseries #1}]}{\end{trivlist}}
\newenvironment{example}[1][Example]{\begin{trivlist}
\item[\hskip \labelsep {\bfseries #1}]}{\end{trivlist}}

\newcommand{{\pe}}  {\partial_e}
\newcommand {{\lodd}} {{\mathcal J}}
\newcommand{{\inrad}} {{\rm inrad}}

\newcommand {{\cent}} {{w_0}}
\newcommand {{\eb}}  {{\bf e}}

\newcommand {{\fixeta}} {{\overline \eta}}
\newcommand {{\laplace}} {{\Delta}}
\newcommand {{\edge}}  {{\bf e}}
\newcommand {{\dedges}} {{\boldsymbol{\mathcal E}}} % {{\mathcal {DE}}}
\newcommand {{\uedges}}{{\mathcal E}}

\newcommand {{\vlocal}}  {n}
\newcommand {{\delocal}} {{\bf L}}
\newcommand {{\elocal}}  {L}

\newcommand {\currents}  { {\mathcal C}}

\def \linehere { {\hrule}}
\def \labove { \mtwo \linehere \linehere \linehere \ms   }

\def \lbelow {{\ms \linehere \linehere \linehere \mtwo}}

\def \mtwo {{\medskip \medskip}}
\def \ms {{\medskip}}
\newcommand {{\whoknows}} {{\mathcal L}}

\def \greenable {green }

\newenvironment{advanced}
{ \labove \begin{quote} \begin{small}}
{ \end{small}\end{quote} \lbelow }

\def \begad{\begin{advanced}}
\def \endad{\end{advanced}}

\newcommand \Nfin {{\N}_{ \text{\tiny fin}} }
\newcommand \forest {{\tree}^f}

\newcommand \graph {{\mathcal G}}
\newcommand{\ora} {\overrightarrow}

\newcommand \bfeta  {{\boldsymbol{\eta}}}
\newcommand \bfomega  {{\boldsymbol{\omega}}}

\newcommand \aset {{\mathcal A}}
\newcommand \simpleset {{\mathcal A}^{sc}}
\newcommand  \oddloops {{\mathcal O}}
\newcommand  \mbridge  {{\nu_b}}

\newcommand{\pair}  {{\mathcal P}}
\newcommand {\lra}  {{\leftrightarrow}}

%\endlocaldefs

\begin{document}

\begin{frontmatter}

% "Title of the Paper"
\title{Topics in loop measures and the loop-erased walk }
\thankstext{t1}{Research supported by National Science Foundation grant
 DMS-1513036}
\runtitle{Loop measures}

% indicate corresponding author with \corref{}
% \author{\fnms{John} \snm{Smith}\thanksref{t2}\corref{}\ead[label=e1]{smith@foo.com}\ead[label=e2,url]{www.foo.com}}
% \thankstext{t2}{Thanks to somebody} 
% \address{line 1\\ line 2\\ \printead{e1}\\ \printead{e2}}

\author{\fnms{Gregory F. Lawler }  \ead[label=e1]{???}}
\address{Department of Mathematics\\ University of Chicago\\
Chicago, IL 60637-1546}
%\and
%\author{\fnms{???} \snm{???}\ead[label=e2]{???}}
%\address{\printead{e2}}

\runauthor{G. Lawler}

%\begin{abstract}
%\end{abstract}

%\begin{keyword}[class=MSC]
%\kwd[Primary ]{}
%\kwd{}
%\kwd[; secondary ]{}
%\end{keyword}

%\begin{keyword}
%\kwd{}
%\kwd{}
%\end{keyword}

% history:
% \received{\smonth{1} \syear{0000}}

%\tableofcontents

\end{frontmatter}

\section{Introduction}

This is a collection of notes based on a course that I gave
at the University of Chicago in fall, 2016 on ``Loop measures
and the loop-erased random walk''.   This was not intended to
be a comprehensive view of the topics but instead a personal
selection of some key ideas including some recent
results.  This course was to be followed by a course
on the Schramm-Loewner evolution ($SLE$) so there was some
emphasis on the ideas that have become important in the
study of conformally invariant processes.
 I will first give some  history of the
 main results I discuss here; this can be considered
 a   personal perspective of  the developments 
 of (some of the) ideas.  I will follow with
 a summary of the topics   in this paper.

  \subsection{Some history}

I started looking at the loop-erased random walk in
my thesis \cite{Lthesis}
 spurred by a suggestion by my advisor,
Ed Nelson. My original motivation had been
to try to understand the self-avoiding walk.  Soon in
the investigation, I found out two things: the bad news
was that this process was different than the self-avoiding
walk, but the good news was that it was a very interesting
process with many of the attributes of other models
in critical phenomena. In particular, there was
an upper critical
dimension (in this case $d=4$) and (conjecturally) conformal
invariance in two dimensions.  My thesis handled the
easiest case $d > 4$.  The four dimensional case was 
significantly harder; I did not have the chance to discuss
this in this course even though there is some recent
work on the subject \cite{LSunWu}.

The interest in loop-erased random walk increased when
the relationship between it and uniform spanning trees was
discovered \cite{MDM,Pemantle}.  I believe there were several
independent discoveries of this; one thing I know is that I was
not one of the people involved!    I found out about
it  from  Robin Pemantle who was trying
to construct the infinite spanning tree and forest.  He was able
to use my results combined with the Aldous-Broder
algorithm to show that the limit of the uniform
spanning tree was a tree for $d = 4$ and a forest
for $d > 4$.  I discuss one version of this construction
(limit of wired spanning trees) in Section \ref{spansec}.
The argument here uses an algorithm  found by 
  David Wilson.
  
Although the loop-erased random walk was first considered
for simple random walk on the integer lattice, it immediately
extends to loop-erased Markov chains.  The study of Markov
chains often boils down to questions in linear algebra, and this
turns out to be true for the loop-erased walk.  As we will
see in these notes, analysis of the loop-erased walk naturally
leads to consider the following  quantity for a subMarkov
chain on a state space $A= \{x_1,\ldots,x_n\}$:
\[         F(A) = G_{A}(x_1,x_1) \, G_{A_2}(x_2,x_2)
 \cdots G_{A_n}(x_n,x_n) , \]
 where $A_j = A \setminus \{x_1,\ldots,x_{j-1}\}$ and $G$
 denotes the Green's function.  At first, this  
  quantity looks like it
 depends on the ordering of the vertices.  I
 first noticed that the quantity was the same for
 the reverse ordering $\{x_n,x_{n-1},\ldots,x_1\}$ when
 I needed to show the reversibility of the distribution
 of the LERW in  \cite{Lconn}.  The proof in that paper,
  which is not difficult and is Exercise \ref{exer.prop.sep27}
   in these notes, shows the quantity is the same
   for any permutation of the indices.  I did not notice
   this until conversations with Lon Rosen when
he was working on a paper \cite{Rosen}.  His calculations with
matrices led to a measure on self-avoiding that looked like
it could be the loop-erased measure; however, if that
were the case we would need   
 invariance under permutation, so this caused me to check
this out. 

This fact arose again at a conference in Cortona in 1997
\cite{Cortona}.  There are three things I remember about
that conference: first, Cortona is a very pretty town; second,
this is the first time I heard Oded Schramm discuss his ideas for
what would become the Schamm-Loewner evolution; and finally, I
was told David Wilson's beautiful
algorithm that uses loop-erased random
walk to construct a uniform spanning tree.  This algorithm
was certainly a surprise for me, but I remember saying then that
if it were true I could probably verify it quickly.  In this case,
I was right --- the key fact is the invariance of $F(A)$ under 
permutations of the indices.  I published the short
proof as part of a survey paper \cite{Lkesten}; see
\cite{Wilson} for Wilson's proof using ``cycle popping''.
I believe that  Marchal \cite{Marchal} was the first to identify
  $F(A)$ as
a determinant. 

There were two major papers published in 2000 on loop-erased walk.
In \cite{Kenyon}, Rick Kenyon used a relationship between dimers
and spanning trees to prove the conjecture that the growth
exponent for two-dimensional LERW is $5/4$.  He also showed
how to compute a chordal exponent exactly.  Oded Schramm \cite{schramm}
constructed what would later be proved to be the scaling limit
for loop-erased random walk; it is now called the Schramm-Loewner
evolution.  We will not discuss the latter in these notes.  We
will not do Kenyon's proof exactly, but as mentioned below, the
proof we discuss in these notes uses an important idea from that paper.
Kenyon's result computed both a ``chordal'' and a ``radial'' exponent.
A nice identity by Fomin \cite{Fomin} can be used to derive the
chordal exponent; this derivation is in these notes.  The radial
exponent, from which the growth exponent is deduced, is more difficult.

The current theory of loop measures started with the paper
\cite{LWsoup} where the Brownian loop measure was constructed and
used to interpret some of the computations done about $SLE$
in \cite{LSWrest}.  (It was later realized that work of Symanzik
had much of the construction; indeed, in the whole subject of loop
measures one continually finds parts of the theory that are older
than one realizes!)  The random walk version was first considered
in a paper with Jos\'e Trujillo Ferreras
\cite{LJose} where a strong coupling was given between the
random walk and Brownian loop measures.  That paper did not actually
give the correct definition of the random walk loop measure (it was not
important for that paper), but we (with contribution by John Thacker
as well) soon realized that a slightly 
different version was the one that corresponded to the loop-erased
walk.   The general theory of discrete time loop measures for
general (positive) weights was given in Chapter 9 of \cite{LLimic}.

At about the same time Yves Le Jan was extending   \cite{LWsoup} by
developing a theory of continuous time loop measures on discrete
sample spaces (the case of continuous time random walk appears
  in John Thacker's
thesis that was never published).  Le Jan used the continuous
time loop soup to construct the square of the Gaussian free field.
See \cite{LeJan} and references therein.
This is similar (but not exactly the same) as the Dynkin isomorphism,
which itself is a generalization of ideas of Brydges, Fr\"ohlich,
and Spencer.    Continuous times are needed to get a continuous
random variable.  When I was trying to learn Le Jan's work, I realized
that one could also construct the field using the discrete time
loop measure and then adding exponential random variables.  This idea
also appears later in Le Jan's work.  This construction only gives
the square of the field.  A method to find the sign of the field
was found by Lupu and we discuss a version of this here although
not his complete construction.  There is also a relationship
with currents --- here we give a somewhat novel treatment but
it was motivated by the paper of Lupu and Werner \cite{LupuW}.

Another recent improvement to the loop measure theory is the 
consideration of nonpositive weights.  Loop measures are very
closely tied to problems of linear algebra and much (but certainly
not all) of the theory of ``Markov chains'' can be extended
to negative weights.  There have been two recent applications of
such weights: one is an extension of Le Jan's result to some
Gaussian fields with some negative correlations \cite{LPerlman}
and the other is a loop measure proof of the Green's function
for two-dimensional LERW \cite{BLV,LGreen}.  The latter is an improvement of
Kenyon's result, but it uses a key idea from his paper.

\subsection{Summary of notes}
  
Most of Section \ref{defsec} sets up the notation for the
notes. Much of the framework is similar to that in \cite[Chapter 9]
{LLimic}, but it is done from scratch in order to show that
nonpositive weights are allowable.   When dealing with
nonpositive weights some care is needed; if the weight is
integrable (that is, the weight as a function
on paths  is   $L^1$ 
with respect to counting measure on paths), 
then most of the operations for positive weights
are valid.  For nonintegrable weights, some results will
still hold, but some will not because many of the arguments
involve interchange of sums over paths.

Loop erasure is the topic of Section \ref{erasingsec}.  Here
we only consider the deterministic transformation of loop erasure
and see the measure it induces on paths.  The expression
involves the quantity
$F(A)$.  The invariance of this quantity under
permutations of the indices is discussed as well as the fact
that it is a determinant of the Laplacian for the weight. 

Section \ref{markovsec} discusses the loop-erased walk obtained
for Markov chains.  There are three main cases: transient chains,
where loop-erasure is done on the entire path; chains on finite
state spaces, where loop-erasure is done on the path stopped
when it hits the boundary; and (some) recurrent chains, for which
the LERW on infinite paths can be defined as a limit of finite
state spaces.  One main example is simple random walk
in two dimensions.  The relationship between the loop-erased
walk and the Laplacian random walk is discussed.  Wilson's
algorithm to generate spanning trees is discussed in Section
\ref{wilsonsec}.  The fact that the algorithm generates
uniform spanning trees on graphs works is 
surprising; however, once one is told this verifying it takes
little time (this is often true for algorithms).  Combining
this with the interpretation of $F(A)$ as a determinant gives
Kirchhoff's matrix-tree theorem as an almost immediate
corollary.  The next subsection shows a nice application
of Wilson's algorithm to understand the uniform spanning
tree or forest in $\Z^d$; the algorithm is easily defined
for infinite graphs and it is not too difficult to show that
this gives the same tree or forest as that obtained by
a limit of ``wired'' spanning trees.   We only touch
on this subject: see \cite{BKPS} for a deeper description
of such trees and forests. 

The (discrete time, discrete space) loop measures are
introduced in Section \ref{loopsoupsec}.   It is easiest
to define for rooted loops first, where it is just the
usual measure with an extra factor.  The utility of the
measure comes from the number of different ways that
one can get the same measure on {\em unrooted loops}.
We also give more emphasis to another measure on
rooted loops that uses an ordering of the vertices.
It is the discrete analog of a Brownian bubble measure
decomposition of the Brownian loop measure.
This measure is often the most useful for calculations
and estimations.  In Section \ref{loopmeasuresec}, we
find the other expression for $F(A)$ in terms of the
exponential of the loop measure.  In the next
subsection, we define a soup (the terminology comes
from \cite{LWsoup}) which for a positive weight is a 
Poissonian realization from the loop measure.  We
extend the definition of the soup for nonpositive
weights by considering the distribution of the soup.
Some of the material in Sections \ref{growingsec}
and \ref{bubblesec} may be new.  Here the ``bubble soup''
(which is a union of ``growing loops'')
version of the loop soup is studied and the soup
is shown to be given by a negative binomial process
(for the number of ``elementary'' loops).  A particular
case of this, which was known, was that the loop soup
at intensity one corresponding to the loops erased
in the loop-erasing procedure.  This is made more
explicit in Section  \ref{walksoupsec}.

Section \ref{fieldsec} discusses the results of Le Jan
and Lupu about the Gaussian field.  Some of the treatment
here is new and, as in \cite{LPerlman}, applies to
some negative weight fields.  It also uses the relation
with currents \cite{ADCS,LupuW}.   After defining the field
and giving some basic properties, we study the measure
on currents generated by the loop soup at intensity $1/2$.
We compute the distribution exactly in Theorem
\ref{oct10.theorem}.  The main work is   a combinatorial
lemma proved in Section \ref{combinsec}.
 This is a measure on discrete currents.
We then get a continuous local time by adding independent
exponential random variables for each visit to a vertex
by the discrete soup.  Given Theorem \ref{oct10.theorem}
we get the joint distribution of the current and the 
continuous local times and by integrating out the currents
we get a   new proof of the Le Jan's theorem.
We then discuss Lupu's way of producing the signs for
the field. 

The final section deals with several questions dealing
with multiple loop-erased walks.  The first is understanding
the natural measure in terms of the loop measure.  We
then discuss Fomin's identity and discuss two nontrival
applications.  The first is the derivation of the chordal
crossing exponent first calculated rigorously by Kesten.
The second is a start of the asymptotics
 of the SLE Green's function
by Bene\v{s}, Viklund, and myself \cite{BLV,LGreen}.  We do
not give a complete proof of the latter result but we
do discuss how a loop measure with negative weight
  reduces the problem to several estimates
about random walks.

I thank the members of the class for their comments
and in particular Jeffrey Shen for pointing out
a number of misprints.

\section{Definitions and notations}  \label{defsec}

Loop measures and loop-erased walks were first considered
for random walks and, more generally, Markov chains.  One
of the first things that one learns about Markov chains
on finite state spaces is that much of the
initial theory quickly boils down
to questions of linear algebra.  However, the probabilistic
interpretation gives new insights and in some cases new
techniques, e.g.,  coupling.  

The theory of (sub)Markov chains is therefore a study
of (sub)stochastic matrices.  There are times when one
does not want to restrict one's study to matrices
with nonnegative entries; indeed, many models in mathematical
physics lend very naturally to complex weights on objects.
Much of the theory of loop measures also extends to complex
weights, so we will allow them in our setup.  A disadvantage
of this is that we will need to start with a lot of notation
and definitions.   First time readers may wish to consider the
case of nonnegative entries first when trying to learn the
material.   

To be a little careful, we will adopt the following terminology.
  If  $\Lambda$ is a countable
set, we will call $\phi: \Lambda \rightarrow \C$ a
{\em function} or a {\em weight}.  We will also call $\phi$ a {\em measure} on $\Lambda$
if either $\phi \geq 0$ or
\[    \|\phi\|:= \sum_{x \in \Lambda} |\phi(x)| < \infty . \]

\begin{itemize}

\item $A, \partial A$ finite sets,
$\overline A = A \cup \partial A$.  We  
call the elements in $A$ vertices or sites.  (There
will be times that we allow infinite sets, but we assume
finite unless stated otherwise.)

\item We let $\dedges_A = A \times
A$ denote the set of {\em directed
edges} in $A$.  If $(x,y)$ is a directed
edge,  we call $x$
the {\em initial}  and $y$ the {\em terminal}
 point of the directed
edge, respectively.   Let  
\[ \overline \dedges_A = \dedges_A
 \cup (A \times \partial A) \cup
 (\partial A \times A)\]
  be the set of directed edges in $\overline{A}$
 with at least one vertex in $A$.
 We will write bold-face $\edge$ for directed
 edges. 
 We say that edge $\edge_2$ {\em follows} edge
 $\edge_1$ if the terminal vertex of $\edge_1$
 is the initial vertex of $\edge_2$.  Note that
 we do allow {\em self-edges}, i.e., edges with
 the same initial and terminal point.
 
\item  Let $\edges_A$ denote the set of {\em (undirected)
edges} in $A$ which can be viewed as equivalence
classes of $\dedges_A$ under the equivalence
$(x,y) \sim (y,x)$.  Note that $\edges_A$ includes
self-edges from $x$ to $x$.  We define
$\overline \edges_A$ similarly.  The word ``edge''
will mean undirected edge unless otherwise specified.
  We write $e$
for undirected edges.

 \item  A function $q: \overline \dedges_A \rightarrow \C$
 is called a {\em weight (on edges)}.
Weights restricted to $\dedges_A$ are the same as 
 square matrices indexed by $A$.  A weight is
 {\em symmetric} if $q(x,y) = q(y,x)$ in which case
 it is a function on $\overline \edges_A$.  It
 is {\em Hermitian} if  $q(x,y) = \overline{q(y,x)}$.

 \item  We say that $p$ is a {\em positive} weight
 if $p(\edge) \geq  0$ for all $\edge$.
When we use $p, \Prob$ for a
weight, then the assumption will be that it is a positive
weight.  If we wish to consider complex weights, we will
use $q,{\bf Q}$.  Of course, positive weights are complex
so results about complex weights apply to positive weights.

  \item If $q$ is a weight, we will write
 $|\qprob|$ for the matrix $[|q(x,y)|]$.  Note
 that $|q|$ is a positive weight.

\item   We will call $p$ a {\em Markov chain (weight)} if 
$ [p(x,y)]$  are   transition probabilities of an
irreducible Markov chain
$X_j$
on $\overline A$.
Let $\tau = \tau_A = \min\{j: X_j \in \partial A\}$;  the
assumptions imply that for $x \in A$, 
  $\Prob^x\{\tau < \infty\} = 1$.  We write $\Prob$ for
the transition matrix restricted to $A$.  It is standard
 that there is a unique positive
eigenvalue $\lambda < 1$
 of $\Prob$  such that all other eigenvalues
have absolute value at most $\lambda$.

\item More generally, we say that $q $ is an {\em integrable}
weight on $A$ if the largest positive eigenvalue of $|\qprob|$
is strictly less than one.

\item  We say that $q$ is a \greenable weight, if the eigenvalues
of $\qprob$ are all strictly less than one in absolute value.
This is a weaker condition than integrability.
 
\item  {\bf Simple random walk}
\begin{itemize}
%
%\item  $A \subset \Z^d$ and $p$ is transition probability
%for simple random walk $p(x,x+e) = 1/2d$ if $|e| = 1$.

\item Let $\overline A$ be a connected graph
and $A$ a strict subset of vertices.  There are two
forms of {\em simple random walk on the graph} we will
consider.  Let $d_x$ denote the degree of $x$ and write
$x \sim y$ if $x,y$ are adjacent in the graph.

\begin{itemize}

\item   {\bf Type I}.
$p(x,y) = 1/d_x$ if $x\sim y$.  In this case, the invariant probability
$\pi$ 
is proportional to $d_x$.  The chain is reversible,
that is $\pi(x) \, p(x,y) = \pi (y) \, p(y,x)$, but
is not symmetric unless all the degrees are the same.

\item  {\bf Type II.} Let $n$ be a number greater than or equal to
the largest degree of the vertices in $\overline A$,
and let $p(x,y) = 1/n$ if $x \sim y$.  This is symmetric
and hence the invariant probability is the uniform distribution.

\end{itemize}

\item  {\em Simple random walk in $\Z^d$} is a particular example.
On the whole plane, it is both a Type I or Type II walk. 
If $A$ is a finite subset of $\Z^d$ and $\p A = \{x \in \Z^d:
\dist(x,A) = 1\}$, then it is often convenient to  view
the simple random walk on $\overline A = A \cup \p A$
 as a Type II walk as above with $n=2d$.

\end{itemize}

 \item  A {\em path} or {\em walk}
  in $A$  of length $n$ is a sequence of $n+1$ 
 vertices with repetitions allowed
 \[        \omega = [\omega_0,\omega_1,\ldots,\omega_n],\;\;\;\;
    \omega_j \in A .\]
 We allow trivial paths of length $0$, $\omega = [\omega_0]$.  Any
 path of length $n >0$
 can also be represented as  
 \[    \omega =\edge_1 \oplus \cdots \oplus \edge_n, \;\;\;\;
  \edge_j = (\omega_{j-1},\omega_j) \in \dedges_A.\]
  We call $\omega_0,\omega_n$ the {\em initial} and {\em terminal}
  vertices of $\omega$, respectively.  A path of length
  one is the same as a directed edge.  We write
  $|\omega| = n$ for the length of the path.
  
\item  If $\omega^1 =   \edge_1 \oplus \cdots \oplus \edge_n,$
$\omega^2 = \edge_{n+1} \oplus \cdots \oplus \edge_{n+m}$
and  $\edge_{n+1}$ follows $\edge_n$,
  we define the {\em concatenation}
\[  \omega^1 \oplus \omega^2
 = \edge_1 \oplus \cdots \oplus \edge_n \oplus
  \edge_{n+1} \oplus \cdots \oplus \edge_{n+m}.\]
  Conversely, any concatenation of $n$ edges such that
 $\edge_j$ follows $\edge_{j-1}$ gives a path.

 \item If $\omega = [\omega_0,\omega_1,\ldots,\omega_n]
  = \edge_1 \oplus \cdots \oplus \edge_n$ is a path we
  write $\omega^R$ for the {\em reversed} path
  \[  \omega^R = [\omega_n,\omega_{n-1},\ldots,
  \omega_0] = \edge_n^R \oplus \edge_{n-1}^{R} \oplus
  \cdots \oplus \edge_{1}^R.\]
  
  \item If $x,y \in A$, we let $\paths_A(x,y)$ denote the set
  of paths in $A$ starting at $x$ and ending in $y$. If $x=y$,
  we include the trivial path $[x]$.  We let
  \[ \paths_{A,x} = \bigcup_{y \in A} \paths_A(x,y),\;\;\;\;
   \paths_A = \bigcup_{x \in A} \paths_{A,x}=
  \bigcup_{x \in A}\bigcup_{y \in A} \paths_A(x,y).\]
  
  \item  We also write $\paths_A(x,y)$ when one or both of $x,y$
  are in $\partial A$.  In this case it represents paths
  \[  \omega =[\omega_0,\ldots,\omega_n] \]
  with $\omega_0 = x, \omega_n = y$ and $\omega_j \in A$
  for $0 < j < n$. If $x,y \in \partial A$, we also require
  that $n \geq 2$, that is, that there is at least one 
  vertex of $\omega$ in $A$.   We let   
  \[  \overline \paths_A = 
   \bigcup_{x \in \overline A}\bigcup_{y \in \overline
   A} \paths_A(x,y), \;\;\;\;
     \paths_{\partial A} =  \bigcup_{x \in \p A}\bigcup_{y \in \p
   A} \paths_A(x,y).\]
   
    \item  If $V \subset \overline A$, we write
  \[     \paths_A(x,V) = \bigcup_{y \in V} \paths_A(x,y).\]
  Similarly, we write $\paths_A(V,x), \paths_A(V,V')$.
  
  \item  We call a walk in $\paths_A(x,x)$ a {\em (rooted)
  loop} rooted at $x$.  This includes the {\em trivial} loop $[x]$
  of length zero. 
  We sometimes write $l$ instead of $\omega$ for loops; $l$
  will always refer to a (rooted) loop.

  \item  A weight $q$ gives a
  weight on paths  by
  \[   q(\edge_1 \oplus \cdots \oplus
  \edge_n) = q(\edge_1) \, q(\edge_2) \cdots
     q(\edge_n) , \]
   where paths of length zero get weight one.
  Note that
   \[   q(\omega^1 \oplus \omega^2) = q(\omega^1)
    \, q(\omega^2).\]

    \item If $q$ is positive or integrable, then $q$ is a measure
    on $\overline \paths_A$.
    It is easy to see that the $q$-measure of the set of walks
of length $n$ in $\paths_A(x,y)$ is the same as the $(x,y)$
entry of the matrix $\qprob^n$.
    
    \item If $q$ is weight and $\lambda \in \C$, then
     $\lambda q $  is also a weight.  We sometimes
     write $q_\lambda$ for the weight on paths 
    induced by $\lambda q$,  that is,
  \[            q_\lambda(\omega) = \lambda^{|\omega|}
    \, q( \omega), \]
  where we recall that $|\omega|$ is the length of
  $\omega$.  For any weight $q$ there exist
$\delta > 0$ such that $q_\lambda$ is integrable for
$|\lambda| < \delta$.
    
 \end{itemize}
 
%It is easy to see that the $q$-measure of the set of walks
%of length $n$ in $\paths_A(x,y)$ is the same as the $(x,y)$
%entry of the matrix $\qprob^n$.  In order to say that
%$q$ is a complex measure on $\paths_A$ it is necessary that
%\[     \sum_{\omega \in \paths_A} |q(\omega)| 
% < \infty. \]
%This is guaranteed for integrable $q$, and this is why we
%restrict to this case.  For any weight $q$ there exist
%$\delta > 0$ such that $q_\lambda$ is integrable for
%$|\lambda| < \delta$.

\begin{itemize}

\item  For a Markov chain, the {\em Green's function}
is given by 
\[  G_A(x,y) = \sum_{n=0}^\infty \Prob^x\{X_n = y;
  n < \tau\} = \sum_{\omega \in \paths_A(x,y)}
      p(\omega). \]
 The second expression extends immediately to \greenable
 complex weights.
 \item In matrix form
\[      G = G_A = \sum_{n=0}^\infty \qprob^n, \]
from which we get $(I-\qprob) \, G = I$, that is,
$G = (I-\qprob)^{-1}$. 
 We will write $G_A^p$ or
$G_A^q$ if we wish to emphasize the
weight that we are using.  This expression only requires
the eigenvalues of $\qprob$ to all have absolute value
less than one.
 \end{itemize}
 
 \begad

For integrable $q$,   we can  
view sampling from $q$ as a two-step process: first  sampling
from $|q|$ and then specifying a rotation $q/|q|$.

 \endad

 \begin{itemize}
\item    More generally,  
 the {\em Green's generating function}
 is defined as a function of $\lambda$,
\[       G_A(x,y;\lambda) =  \sum_{\omega \in \paths_A(x,y)}
      \lambda^{|\omega|} \, q(\omega). \]
Note that
  \[     G_A^q(x,y;\lambda) = G_A^{\lambda q}
  (x,y).\]
  
  \item 
   We will say that $l \in \paths_A(x,x)$
  is an {\em elementary loop} if it is nontrivial ($|l| > 0$)
  and the only visits to $x$ occur at the beginning and terminal
  vertices of $l$.  We write  $\tilde \loop_x^1 = \tilde \loop_x^1(A)$
  for the set of elementary loops in $A$ 
  with initial vertex $x$. 

\item  For the Markov chain case,
let  $T_x= \min\{j \geq 1: S_j = x\}$ and
 $f_x = \Prob^x\{T_x < \tau\}$.  Then it is standard that
\begin{equation}  \label{firstreturn} G_A(x,x) = \frac{1}{1-f_x} = \sum_{l \in \tilde \loop_x^1} p(l ).
\end{equation}
This formula extends to complex   weights if  
\begin{equation}  \label{firstreturnbound}
  \sum_{\omega \in \tilde \loop_x^1}| q(\omega)| < \infty , 
   \end{equation}
   which is true, say, for integrable weights. 
   Any
$l \in \paths_A(x,x)$ of length at least one
can be written uniquely as
\[         l = \omega_1 \oplus \omega_2  , \;\;\;\;
  \omega_1 \in \tilde \loop_x^1, \;\;\; \omega_2 \in \paths_A(x,x) , \]
and hence
\[      G_A(x,x) = 1 + f_x \, G_A(x,x).\]

%\item  This argument extends to many other
%\greenable $q$ with
%a little care.  Let $V_k = V_A(x,x)$ be the set of
%$\omega \in V$ of length $k$.  We then define
%   \begin{equation}  \label{firstreturntw0}
%   f_x = \sum_{k=1}^\infty q(V_k) , 
%   \end{equation}
%provided that this sum is absolute convergent.  The latter
%fact will be true if
%\[   \sum_{\omega \in V} |q(\omega)| < \infty. \]

        \begad
        
   A little care is needed when $q$ is not
    integrable.  Let $V_j$ denote the set of loops
    in $\tilde \loop_x^1$ of length $j$. If $q$
    is \greenable and satisfies \eqref{firstreturnbound},
    then
    \[    \sum_{j=1}^\infty \sum_{k=1}^\infty
         |q(V_j) \, q_{n-j}(x,x)| < \infty.\]
Therefore,
\begin{eqnarray*}
  G(x,x) = \sum_{n=0}^\infty q_n(x,x) & = & 1 + \sum_{n=1}^\infty 
\sum_{j=1}^n q(V_j) \, q_{n-j}(x,x) \\
& = &    \sum_{j=1}^\infty q(V_j) \, \sum_{n=1}^\infty
        q_n(x,x) \\
        & = & 1 + f_x \, G(x,x).
        \end{eqnarray*}
        \endad
\begad

A number of standard results for which probabilists use stopping
times can be written in terms of products of
generating functions by suitable path splitting.  Such
 arguments are 
  standard in combinatorics and much mathematical physics
literature.  While the
probabilistic form is more intuitive, it is often useful to go
to the generating functions, especially when using nonpositive
weights.  

\endad
  
 \item  The {\em (discrete) Laplacian} is defined by
 \[     \laplace = \laplace_A^q =     (\qprob-I) = - G_A^{-1}. \]
 A function $h$ on $\overline A$ is called {\em ($q$)-harmonic}
 on $A$ if $\laplace h(x) = 0, x \in A$.
 
    \end{itemize}

 \begad
 
 \begin{itemize}
 
 \item  Analysts  often use   $-\laplace = I-\qprob$ which
 is a positive operator for positive $\Prob$.  Indeed, we
 will phrase many of our results in terms of $I-\qprob$.
 
 \item  In the case of random walks on graphs, $\laplace$
  is sometimes
 called the random walk Laplacian.   Combinatorialists often
 use the {\em combinatorial or graph Laplacian}
   which is $-n \laplace$  for
 the
 Laplacian for the Type II random walk on
 graph.
 Note that  this is the diagonal matrix of degrees
 minus the adjacency matrix.
 
 \end{itemize}
 
  \endad

 \begin{itemize}
 \item  The Poisson kernel for a Markov chain
 is defined by
 \[   H_A(x,z) = \Prob^x\{X_{\tau} = z\}, \;\;\;\;
   x \in A, \;\;z \in \partial A.\]
   In this case,
   \begin{equation}  \label{sep27.1}
       \sum_{z \in \p A} H_A(x,z) = p\left[\paths_A(x,
       \partial A)\right] =  1, 
       \end{equation}
We extend the definition   for complex weights $q$ by 
  \[   H_A(x,z) = q\left[\paths_A(x,z) \right].\]
The analogue of the first equality in  \eqref{sep27.1}
holds but it is not necessarily true that
$q[\paths_A(x,\p A)] = 1$.
   
 \item The {\em boundary Poisson kernel} is
 defined by
 \[     H_{\p A}(z,w) = q
 \left[\paths_A(z,w) \right], \;\;\; z,w \in \partial
    A.\]
    
    \begin{exercise}
    Suppose $x \in A$ and $z,w \in \partial A$.
   \begin{enumerate}
   \item  Show that
   \[   H_A(x,z) = \sum_{y \in A} G_A(x,y) \, q(y,z) .\]
   \item  Suppose that $q$ is symmetric and
   $\paths_ A(z,w;x)$ denotes  the set of paths in $\paths_A(z,w)$
  that include the vertex $x$ at least once.  Show that
  \[     q\left[\paths_{A}(z,w;x)\right]
     = \frac{H_A(x,z) \, H_A(x,w)}{G_A(x,x)}. \]
   \end{enumerate}
   \end{exercise}
    
  \begad
  
  The boundary Poisson kernel goes under a number of other
  names and is related to the Dirichlet to Neumann map.

  \endad

\end{itemize}

\section{Loop-erasure}  \label{erasingsec}

A path $\omega = [\omega_0,\ldots,\omega_n]$ is called
a {\em self-avoiding walk (SAW)} if all of the vertices
are distinct.  We will write $\eta  = [\eta_0,\ldots,
\eta_m]$ for SAWs.  We write $\saws_A(x,y)$ for the
set of $\omega \in \paths_A(x,y)$ that are self-avoiding
walks.  We write $\saws_A(x,V)$, etc., as well.

\begad

We will reserve the notation $\eta$ for self-avoiding walks
and use $\omega$ for general walks that can have self-intersections.

\endad

There is a deterministic procedure called {\em (chronological)
loop-erasing} that takes every $\omega \in \paths_A(x,y)$
and outputs a subpath $\eta =LE(\omega)
\in \saws_A(x,y)$.  One
erases the loops in the order that they appear.  This definition
makes this precise.

\begin{definition} 
Suppose that     $\omega=[\omega_0,\ldots,\omega_n]$ 
is a path. The {\em (chronological) loop-erasure} $\eta =
LE(\omega)$ is defined as follows.
\begin{itemize}
\item  Let $j_0 = \max\{k: \omega_k = \omega_0\},$   and
set $\eta_0 = \omega_0 = \omega_{j_0}$.

\item  Recursively, if $j_i < n$, let
$   j_{i+1} = \max\{k: \omega_k = \omega_{j_i+ 1}\},$
and set
\[   \eta_{j+1} = \omega_{j_i + 1} = \omega_{j_{i+1}}.\]

\item  Continue until $j_m = n$ at which point we
set
$      LE(\omega) = \eta = [\eta_0,\eta_1,\ldots,\eta_m].$

\end{itemize}

Note that $LE(\omega)$ is a self-avoiding subpath of $\omega$
 with the same initial and terminal vertices.

\end{definition}

\begad

In general, there are many self-avoiding subpaths of a path
$\omega$ with the same initial and terminal vertices.  The
loop-erasing procedure specifies a particular choice.

Topologists  use the word ``simple'' to mean
with no self-intersections.  Since this conflicts with our
terminology of simple random walk (which does not produce
a path with no self-intersections) we will use the
term ``self-avoiding'' to refer to such paths.  There is a
some possibility of confusion because ``self-avoiding walk''
is also used to refer to a particular measure on SAWs that
is different from   the ones we will consider.

\endad

\begin{exercise}  Give an example of a path $\omega$ such that
\[   LE\left[\omega^R\right] \neq \left[LE(\omega)\right]^R.\]
\end{exercise}

\begin{definition}
Given an integrable weight $q$ on $A$ which gives a measure $q$
on $\overline \paths_A$, the loop-erased measure $\hat q$
is the measure on $\overline \saws_A$ defined by
\[      \hat q(\eta) = \sum_{\omega \in \overline \paths_A,
LE(\omega) = \eta}  q(\omega).\]
\end{definition}

We can also consider the restriction
of  $\hat q$ to  $\saws_A(x,y)$
and note that
\[   \sum_{\eta \in \saws_A(x,y)} \hat q(\eta) = q
 \left[\paths_A(x,y)\right].\]
 The next proposition gives an expression for $\hat q(\eta)$
 in terms of $q(\eta)$ and the Green's function.
 
 \begin{proposition}  \label{sep27.prop1}
If
$    \eta = [\eta_0,\eta_1,\ldots,\eta_m] 
  \in  \saws_A,$ then 
 \[  \hat q(\eta) = q(\eta) \, \prod_{j=0}^m G_{A_j}
    (\eta_j,\eta_j) \]
  where $A_j = A \setminus\{\eta_0,\ldots,\eta_{j-1}\}$.
\end{proposition}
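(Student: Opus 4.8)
The plan is to decompose the set of paths $\omega$ with $LE(\omega) = \eta$ according to the ``loops'' that get erased at each vertex of $\eta$, and to recognize each such collection of loops as contributing a Green's-function factor. First I would recall from the definition of loop-erasure that if $\eta = [\eta_0,\ldots,\eta_m]$ and $LE(\omega) = \eta$, then $\omega$ visits $\eta_0$ some number of times, then makes its ``last'' step to $\eta_1$, then visits $\eta_1$, then steps to $\eta_2$, and so on. More precisely, I claim every such $\omega$ has a unique decomposition
\[
   \omega = l_0 \oplus [\eta_0,\eta_1] \oplus l_1 \oplus [\eta_1,\eta_2]
     \oplus \cdots \oplus l_{m-1} \oplus [\eta_{m-1},\eta_m] \oplus l_m,
\]
where $l_j \in \paths_{A_j}(\eta_j,\eta_j)$, i.e., $l_j$ is a (possibly trivial) rooted loop at $\eta_j$ that stays in $A_j = A \setminus \{\eta_0,\ldots,\eta_{j-1}\}$. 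The point is that after the loop-erasure algorithm has ``committed'' to $\eta_0,\ldots,\eta_{j-1}$, the remainder of $\omega$ from its last visit to $\eta_{j-1}$ onward must avoid $\eta_0,\ldots,\eta_{j-1}$ entirely (otherwise those earlier vertices would be revisited and the algorithm's choices of $j_i$ would have been larger), and the sub-loop $l_j$ at $\eta_j$ consists exactly of the steps between the first arrival at $\eta_j$ (right after the edge $[\eta_{j-1},\eta_j]$) and the last visit to $\eta_j$.

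Granting this decomposition, the multiplicativity of $q$ over concatenations gives
\[
   q(\omega) = \Big(\prod_{j=1}^m q(\eta_{j-1},\eta_j)\Big)
     \prod_{j=0}^m q(l_j) = q(\eta) \prod_{j=0}^m q(l_j).
\]
Summing over all $\omega$ with $LE(\omega) = \eta$ amounts to summing independently over each $l_j \in \paths_{A_j}(\eta_j,\eta_j)$, and since $q$ is integrable (hence so is its restriction to each $A_j$, whose matrix is a submatrix of $\qprob$ and thus has spectral radius of $|\qprob|$-part at most that of the whole), Fubini applies and
\[
   \hat q(\eta) = q(\eta) \prod_{j=0}^m \sum_{l_j \in \paths_{A_j}(\eta_j,\eta_j)} q(l_j)
     = q(\eta) \prod_{j=0}^m G_{A_j}(\eta_j,\eta_j),
\]
where the last equality is just the path-sum expression for the Green's function $G_{A_j}(x,x) = \sum_{l \in \paths_{A_j}(x,x)} q(l)$ recorded in the excerpt. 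Note that for $j=m$ the loop $l_m$ lives in $A_m = A \setminus \{\eta_0,\ldots,\eta_{m-1}\}$, which still contains $\eta_m$, so the factor $G_{A_m}(\eta_m,\eta_m)$ is genuinely present; this matches the product running to $j = m$.

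The main obstacle is establishing the uniqueness and well-definedness of the decomposition above — that is, checking carefully that $LE(\omega) = \eta$ if and only if $\omega$ has exactly this form with $l_j$ constrained to $A_j$. One direction (that a path of this form loop-erases to $\eta$) is a direct induction on $m$ using the definition of the $j_i$'s. The converse requires arguing that the maximality in $j_{i+1} = \max\{k : \omega_k = \omega_{j_i+1}\}$ forces the tail of $\omega$ after its last visit to $\eta_j$ to avoid $\{\eta_0,\ldots,\eta_j\}$, which is what pins down the state space $A_{j+1}$ for the next loop. Once the bijection between $\{\omega : LE(\omega)=\eta\}$ and $\prod_{j=0}^m \paths_{A_j}(\eta_j,\eta_j)$ is in place, the rest is the bookkeeping with $q$ and the absolute convergence needed to interchange the sum over $\omega$ with the product over $j$.
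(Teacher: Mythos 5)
Your proposal is correct and follows essentially the same route as the paper: the unique decomposition $\omega = l_0 \oplus [\eta_0,\eta_1] \oplus l_1 \oplus \cdots \oplus [\eta_{m-1},\eta_m] \oplus l_m$ with $l_j \in \paths_{A_j}(\eta_j,\eta_j)$, multiplicativity of $q$, and factorization of the sum into the Green's-function factors $G_{A_j}(\eta_j,\eta_j)$. Your added remarks on verifying the bijection (via maximality of the indices $j_i$) and on integrability justifying the interchange of sums are just explicit versions of steps the paper leaves implicit.
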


\begin{proof}
Suppose $\omega=[\omega_0,\ldots,\omega_n]$ is such
that $LE(\omega) = \eta.$  Define the indices $j_0,j_1,
\ldots,j_m
$ as in the definition of $LE(\omega)$.  This gives
a unique decomposition  
\[ \omega = l_0 \oplus [\eta_0,\eta_1] \oplus l_1
 \oplus [\eta_1,\eta_2] \oplus \cdots \oplus
 [\eta_{m-1},\eta_{m}] \oplus l_m,\]
where $l_j \in \paths_{A_j}(\eta_j,\eta_j)$.  Conversely,
any choice of $l_j \in K_{A_j}(\eta_j,\eta_j),
j=0,1\ldots,m$ produces an $\omega$ as above with
$LE(\omega) = \eta$.  Since
\[  q(\omega) =  q(\eta) \, q(l_0)\,  q(l_1) 
\dots \, q(l_m) , \]
we get
\[  \hat q(\eta)  =
  q(\eta) \, \left[ \prod_{j=0}^m \sum_{l_j
    \in K_{A_j}(\eta_j,\eta_j)}\;  q(l_j) \right]=
      q(\eta) \, \prod_{j=0}^m
        G_{A_j}(\eta_j,\eta_j).\]

\end{proof}

The case when one or both of the endpoints of $\eta$ is
in $\partial A$ is almost the same except that there is
no loop to be erased at the boundary point. We only
state the proposition. 

\begin{proposition}\label{sep27.prop1.alt}
 Suppose 
 \[     \eta = [\eta_0,\eta_1,\ldots,\eta_m] 
  \in  \overline \saws_A.\]
 Then 
 \[  \hat q(\eta) = q(\eta) \, \prod_{j=0}^m G_{A_j}^*
    (\eta_j,\eta_j) \]
  where $A_j = A \setminus\{\eta_0,\ldots,\eta_{j-1}\}$ and
  \[     G_{A_j}^*
    (\eta_j,\eta_j) = \left\{ \begin{array}{ll}G_{A_j} 
    (\eta_j,\eta_j), & \eta_j \in A, \\
    1, & \eta_j \in \partial A \end{array} \right..\]
\end{proposition}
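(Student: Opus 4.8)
The plan is to run the proof of Proposition~\ref{sep27.prop1} essentially unchanged, paying attention only to what happens at an endpoint of $\eta$ lying in $\partial A$. First I would fix $\eta = [\eta_0,\ldots,\eta_m] \in \overline\saws_A$ and record three facts that follow from the defining restriction on $\paths_A(x,y)$ when $x$ or $y$ is a boundary point: every interior vertex $\eta_j$ ($0<j<m$) lies in $A$; the two endpoints are distinct (when both lie in $\partial A$ because $\paths_A$ there requires length at least two, so self-avoidance forces $\eta_0 \ne \eta_m$); and, writing $A_j = A \setminus \{\eta_0,\ldots,\eta_{j-1}\}$ as before and using that $\eta$ is self-avoiding, one has $\eta_j \in A_j$ exactly when $\eta_j \in A$. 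Hence $G_{A_j}(\eta_j,\eta_j)$ is defined (with $q$ restricted to $A_j$ still integrable) precisely for those $j$, while $G_{A_j}^*(\eta_j,\eta_j)=1$ for the at most two indices with $\eta_j \in \partial A$, so the right-hand side of the statement makes sense.

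Next, given $\omega \in \overline\paths_A$ with $LE(\omega) = \eta$, I would extract from the indices $j_0,\ldots,j_m$ of the definition of $LE$ the same unique factorization as in Proposition~\ref{sep27.prop1},
\[
 \omega = l_0 \oplus [\eta_0,\eta_1] \oplus l_1 \oplus \cdots \oplus [\eta_{m-1},\eta_m] \oplus l_m,
 \qquad l_j \in \paths_{A_j}(\eta_j,\eta_j).
\]
The one new observation I need is that if $\eta_0 \in \partial A$ then necessarily $l_0 = [\eta_0]$, and if $\eta_m \in \partial A$ then $l_m = [\eta_m]$: since $\omega$ starts at $\eta_0$, ends at $\eta_m \ne \eta_0$, and has all its interior vertices in $A$, it visits any boundary vertex only at the matching endpoint, so no nontrivial loop is erased there. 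Thus $q(l_0) = 1$ (resp.\ $q(l_m) = 1$), matching $G_{A_0}^*(\eta_0,\eta_0) = 1$ (resp.\ $G_{A_m}^*(\eta_m,\eta_m)=1$).

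Finally I would run the same bijection as before: any choice of $l_j \in \paths_{A_j}(\eta_j,\eta_j)$ at the indices with $\eta_j \in A$, together with the forced trivial loops at any boundary endpoints, assembles via the displayed concatenation into a path $\omega \in \overline\paths_A$ with $LE(\omega) = \eta$, the boundary trivial loops playing no role in checking $LE(\omega)=\eta$. Since $q(\omega) = q(\eta)\,q(l_0)\cdots q(l_m)$, summing over all such $\omega$ gives
\[
 \hat q(\eta) = q(\eta)\prod_{j=0}^m \Bigl(\ \sum_{l_j \in \paths_{A_j}(\eta_j,\eta_j)} q(l_j)\ \Bigr) = q(\eta)\prod_{j=0}^m G_{A_j}^*(\eta_j,\eta_j),
\]
the inner sum being $G_{A_j}(\eta_j,\eta_j)$ when $\eta_j \in A$ and the single term $1$ when $\eta_j \in \partial A$. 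I expect the only real work to be the bookkeeping across the boundary sub-cases ($\eta_0 \in \partial A$ only, $\eta_m \in \partial A$ only, both, and short degenerate cases such as $m=1$ with one endpoint on the boundary), verifying in each that the forced triviality of $l_0$ and/or $l_m$ is consistent both with $LE(\omega)=\eta$ and with $\omega \in \overline\paths_A$; this is routine but is precisely the point the authors chose to suppress.
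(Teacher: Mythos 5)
Your proof is correct and is exactly the argument the paper has in mind: it states the proposition without proof, remarking only that the case of boundary endpoints is "almost the same" as Proposition \ref{sep27.prop1} since no loop is erased at a boundary point, which is precisely your observation that $l_0$ (resp.\ $l_m$) is forced to be the trivial loop when $\eta_0$ (resp.\ $\eta_m$) lies in $\partial A$. The rest of your argument reproduces the decomposition and factorization from the paper's proof of Proposition \ref{sep27.prop1}, so nothing further is needed.
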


The quantity
\[ 
  \prod_{j=0}^m G_{A_j} 
    (\eta_j,\eta_j) \]
 appears to depend on the ordering of the vertices
 $\{\eta_0,\ldots,\eta_m\}$.  Actually, as this
 next proposition shows,  it is independent
 of the order.  The proof is easy (once one decides that
 this is true!), and we leave
 it as an exercise. 

\begin{proposition}  \label{prop.sep27}
Let $\{x_1,\ldots,x_n\} \subset A$ and let
\[   F_{x_1,\ldots,x_n}(A) = \prod_{j=1}^n G_{A_j}
(x_j,x_j) , \]
where $A_j = A \setminus \{x_1,\ldots,x_{j-1}\}$. 
Then if $\sigma:\{1,\ldots, n\} \rightarrow \{1,\ldots,
n\}$ is a permutation,
\[    F_{x_{\sigma(1)},\ldots,x_{\sigma(n)}}
  (A) =   F_{x_1,\ldots,x_n}(A).\]
  \end{proposition}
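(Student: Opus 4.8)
The plan is to show that $F_{x_1,\dots,x_n}(A)$ equals a quantity which is \emph{visibly} symmetric in $x_1,\dots,x_n$, namely the determinant of the submatrix of the Green's function indexed by these vertices:
\[
   F_{x_1,\dots,x_n}(A) \;=\; \det\big[\,G_A(x_i,x_j)\,\big]_{1\le i,j\le n}.
\]
Once this is established the proposition is immediate: applying a permutation $\sigma$ to the list $x_1,\dots,x_n$ permutes the rows and the columns of $[G_A(x_i,x_j)]$ by the same permutation, which leaves the determinant unchanged, so $F_{x_{\sigma(1)},\dots,x_{\sigma(n)}}(A)=F_{x_1,\dots,x_n}(A)$.

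The only tool needed is a \emph{removal-of-a-vertex} identity: for $z\in A$ and $x,y\in A$ (possibly $x=y$),
\[
   G_A(x,y) \;=\; G_{A\setminus\{z\}}(x,y) \;+\; \frac{G_A(x,z)\,G_A(z,y)}{G_A(z,z)} .
\]
I would prove this by splitting $\paths_A(x,y)$ according to whether a path visits $z$: paths avoiding $z$ contribute $G_{A\setminus\{z\}}(x,y)$, while a path that does visit $z$ decomposes uniquely, at its first and last visits to $z$, as $\alpha\oplus\beta\oplus\gamma$ with $\alpha\in\paths_A(x,z)$ meeting $z$ only at its terminal vertex, $\beta\in\paths_A(z,z)$, and $\gamma\in\paths_A(z,y)$ meeting $z$ only at its initial vertex. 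Splitting a general element of $\paths_A(x,z)$ at its \emph{first} visit to $z$ gives $\sum_\alpha q(\alpha)=G_A(x,z)/G_A(z,z)$, and similarly $\sum_\gamma q(\gamma)=G_A(z,y)/G_A(z,z)$, while $\sum_\beta q(\beta)=G_A(z,z)$; multiplying yields the correction term. All interchanges of sums are justified because $|q|$ is integrable, and for positive weights $G_A(z,z)\ge 1$; in general the equality is an identity between rational functions of the entries of $\qprob$, valid wherever $G_A(z,z)\ne 0$, which is all that is used.

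To obtain the determinant formula I would induct on $n$, the case $n=1$ being the definition. For $n\ge 2$, apply the removal identity with $z=x_1$ to every entry $G_A(x_i,x_j)$ with $i,j\ge 2$, then perform the row operations replacing row $i$ by $(\text{row }i)-[G_A(x_i,x_1)/G_A(x_1,x_1)]\cdot(\text{row }1)$ for $i\ge 2$. This clears the first column below the $(1,1)$ entry and turns the lower-right $(n-1)\times(n-1)$ block into $[\,G_{A\setminus\{x_1\}}(x_i,x_j)\,]_{i,j\ge 2}$, so the determinant equals $G_A(x_1,x_1)\cdot\det[\,G_{A_2}(x_i,x_j)\,]_{i,j=2}^{n}$ with $A_2=A\setminus\{x_1\}$. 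Since the deleted sets for the subproblem on state space $A_2$ with vertices $x_2,\dots,x_n$ coincide with the original $A_j$ for $j\ge 2$, the inductive hypothesis gives $\det[\,G_{A_2}(x_i,x_j)\,]_{i,j=2}^{n}=\prod_{j=2}^{n}G_{A_j}(x_j,x_j)$, hence the determinant is $G_A(x_1,x_1)\prod_{j=2}^{n}G_{A_j}(x_j,x_j)=F_{x_1,\dots,x_n}(A)$, closing the induction.

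There is no serious obstacle here — the content, as the statement hints, is simply recognizing that the right symmetric object to compare against is $\det[G_A(x_i,x_j)]$, equivalently recognizing the removal identity. The only points needing a little care are the absolute convergence of the path decompositions in the non-positive integrable case and the possible vanishing of $G_A(z,z)$ for complex weights, both dealt with as above. One can also bypass the induction entirely: every permutation is a product of adjacent transpositions, and transposing $x_i,x_{i+1}$ leaves all factors of $F$ untouched except $G_{A_i}(x_i,x_i)$ and $G_{A_{i+1}}(x_{i+1},x_{i+1})$ (the sets $A_1,\dots,A_i$ and $A_{i+2},\dots,A_n$ do not see the swap), so the claim reduces to the $n=2$ case $G_B(a,a)\,G_{B\setminus\{a\}}(b,b)=G_B(b,b)\,G_{B\setminus\{b\}}(a,a)$ on the state space $B=A_i$; and this follows at once from the removal identity, since $G_B(a,a)G_B(b,b)-G_B(a,b)G_B(b,a)=G_{B\setminus\{b\}}(a,a)\,G_B(b,b)$ and the left-hand side is symmetric in $a$ and $b$.
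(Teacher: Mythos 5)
Your argument is correct, and your main route differs from the one the paper intends. The paper simply leaves Proposition \ref{prop.sep27} as Exercise \ref{exer.prop.sep27} with the hint ``prove it for $n=2$, then explain why this gives the general case'' --- i.e.\ exactly the reduction to adjacent transpositions plus the two-point identity $G_B(a,a)\,G_{B\setminus\{a\}}(b,b)=G_B(b,b)\,G_{B\setminus\{b\}}(a,a)$ that you sketch in your closing paragraph; so that alternative of yours \emph{is} the paper's proof. Your primary argument instead identifies $F_{x_1,\ldots,x_n}(A)$ with the principal minor $\det\left[G_A(x_i,x_j)\right]_{1\le i,j\le n}$, proved by induction using the removal-of-a-vertex identity $G_A(x,y)=G_{A\setminus\{z\}}(x,y)+G_A(x,z)G_A(z,y)/G_A(z,z)$ (a first/last-visit decomposition, equivalently the statement that the Schur complement of $G_A$ at $z$ is $G_{A\setminus\{z\}}$), and your row operations are exactly the Schur-complement computation; I checked the decomposition, the base case, and the induction, and they are sound. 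What your route buys is a stronger, quantitatively useful fact: a closed-form expression for $F_B(A)$ for an arbitrary subset $B$ that is manifestly permutation invariant, consistent with \eqref{funion} and with $F(A)=\det G_A$ in \eqref{fdet} (indeed it is Jacobi's complementary-minor identity in disguise), and in the same spirit as the other symmetric representation the paper gives later, $F_B(A)=\exp\{m[\loops(A;B)]\}$ in Proposition \ref{funrooted} --- which, note, requires integrability, whereas your determinant identity does not. What the paper's route buys is economy: it needs only the $n=2$ case and no determinant machinery. Your handling of the analytic caveats is also appropriate: the path-splitting proof of the removal identity needs absolute convergence (integrable or positive weights), with $G_A(z,z)=1/(1-f_z)\neq 0$ there by \eqref{dec28.3}, and the extension to merely \greenable weights by treating both sides as rational functions of the entries of $\qprob$ (agreeing for $q_\lambda$ with $|\lambda|$ small) is a legitimate way to finish, valid wherever all the Green's functions appearing in the statement are defined.
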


\begin{exercise}  \label{exer.prop.sep27}
Prove Proposition \ref{prop.sep27}.  Hint: first prove it for
$n =2$ and then explain why this implies the general result.
\end{exercise}

Given the proposition, we can make the following definition.

\begin{definition}
If $B = \{x_1,\ldots,x_n\} 
\subset A$, then
\begin{equation}  \label{sep27.3}
      F_B(A) = \prod_{j=1}^n G_{A_j}
(x_j,x_j), \;\;\;\; A_j = A \setminus \{x_1,\ldots,x_{j-1}\}.
\end{equation}
By convention, 
  if $B \not\subset
A$, we define
$    F_B(A) = F_{B \cap A}(A). $
Also,
\[    F(A) = F_A(A). \]
\end{definition}

The proposition implies the rule
\begin{equation}  \label{funion}
   F_{B_1\cup B_2}(A) = F_{B_1}(A)
 \, F_{B_2}(A \setminus B_1).
 \end{equation}
It also  allows us to rewrite
  Propositions \ref{sep27.prop1} and 
\ref{sep27.prop1.alt}  as follows.  

\begin{proposition}  \label{lerwprob}
If 
$    \eta = [\eta_0,\eta_1,\ldots,\eta_m] 
  \in  \overline \saws_A$, then 
\[             \hat q(\eta) = q(\eta) \, F_\eta(A). \]
\end{proposition}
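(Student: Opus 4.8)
The plan is to observe that Proposition~\ref{lerwprob} is an immediate bookkeeping consequence of Propositions~\ref{sep27.prop1.alt} and the definition of $F_B(A)$, together with Proposition~\ref{prop.sep27} which guarantees the product is well defined. First I would recall that for $\eta = [\eta_0,\dots,\eta_m] \in \overline\saws_A$, Proposition~\ref{sep27.prop1.alt} gives
\[
  \hat q(\eta) = q(\eta)\,\prod_{j=0}^m G_{A_j}^*(\eta_j,\eta_j),
  \qquad A_j = A \setminus \{\eta_0,\dots,\eta_{j-1}\},
\]
where $G_{A_j}^*(\eta_j,\eta_j) = G_{A_j}(\eta_j,\eta_j)$ if $\eta_j \in A$ and equals $1$ if $\eta_j \in \partial A$.

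Next I would match this product against the definition \eqref{sep27.3} of $F_\eta(A)$. Write $B = \{\eta_0,\dots,\eta_m\} \cap A$, the set of interior vertices of $\eta$; note these are distinct since $\eta$ is self-avoiding. The index set $\{0,\dots,m\}$ splits into those $j$ with $\eta_j \in A$ and those with $\eta_j \in \partial A$ (by the conventions for $\overline\saws_A$, at most the two endpoints can lie in $\partial A$). For the boundary indices the factor $G_{A_j}^*(\eta_j,\eta_j) = 1$ contributes nothing. For the interior indices, enumerating $B$ in the order in which the vertices appear along $\eta$, say $B = \{x_1,\dots,x_k\}$ with $x_i = \eta_{j(i)}$ and $j(1) < \dots < j(k)$, one checks that $A \setminus \{x_1,\dots,x_{i-1}\}$ is exactly $A_{j(i)} = A \setminus \{\eta_0,\dots,\eta_{j(i)-1}\}$, because the vertices $\eta_0,\dots,\eta_{j(i)-1}$ that lie in $A$ are precisely $x_1,\dots,x_{i-1}$ (the boundary ones among them are already outside $A$). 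Hence $\prod_{j=0}^m G_{A_j}^*(\eta_j,\eta_j) = \prod_{i=1}^k G_{A \setminus \{x_1,\dots,x_{i-1}\}}(x_i,x_i) = F_B(A) = F_\eta(A)$, where the last equality uses the convention $F_\eta(A) = F_{\eta \cap A}(A)$ (interpreting $\eta$ as the set of its vertices). Combining with the displayed formula for $\hat q(\eta)$ yields $\hat q(\eta) = q(\eta)\,F_\eta(A)$.

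The only genuinely substantive ingredient is Proposition~\ref{prop.sep27}: without permutation invariance the symbol $F_\eta(A)$ would not even be well defined independently of how we order the interior vertices of $\eta$, and a priori the chronological order coming from $\eta$ is a particular choice. So the ``hard part'' was already dispatched in Proposition~\ref{prop.sep27} (Exercise~\ref{exer.prop.sep27}); here the work is purely the reindexing described above, namely verifying that chronological loop-erasure peels off the interior vertices in an order for which $A_j$ matches the sets appearing in \eqref{sep27.3}, and that the trivial boundary factors cause no discrepancy. I would present this as a two- or three-line argument citing Propositions~\ref{sep27.prop1.alt} and \ref{prop.sep27}.
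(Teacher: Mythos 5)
Your argument is correct and is exactly the route the paper takes: Proposition \ref{lerwprob} is presented there as an immediate rewriting of Proposition \ref{sep27.prop1.alt} using the definition \eqref{sep27.3}, the convention $F_\eta(A)=F_{\eta\cap A}(A)$, and the permutation invariance from Proposition \ref{prop.sep27}. Your extra care with the reindexing of interior versus boundary vertices is a fine (if slightly more detailed) version of the same bookkeeping.
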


\begad

In the statement of the proposition we have used $\eta$ for
both the path and for the set of vertices in the path.  We will
do this often; hopefully, it will not cause confusion.

\endad

In Proposition \ref{funrooted}, 
we  will give   expression for $F_B(A)$ in terms of the
{\em loop measure} and the invariance under reordering will
be seen from this.   The next proposition gives a formula
for $F(A)$ that is clearly invariant under permutation.

 \begin{proposition}
\begin{equation}  \label{fdet}
 F(A) = \det G_A = \frac{1}{\det (I - \qprob)}. 
 \end{equation}
\end{proposition}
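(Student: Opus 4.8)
The second equality in \eqref{fdet} is immediate from the identity $G_A = (I-\qprob)^{-1}$ recorded above, so the real content is $F(A) = \det G_A$. The plan is an induction on $n$, where $A = \{x_1,\ldots,x_n\}$, built on the one-vertex reduction
\[
   \det G_A \;=\; G_A(x,x)\,\det G_{A\setminus\{x\}}, \qquad x \in A .
\]
Granting this, fix the ordering used in \eqref{sep27.3}: peel off $x_1$, then $x_2$ from $A_2 = A\setminus\{x_1\}$, and so on. Since each $A_{j+1} = A_j\setminus\{x_j\}$ and the determinant of the empty matrix is $1$, the reduction telescopes to $\det G_A = \prod_{j=1}^n G_{A_j}(x_j,x_j) = F(A)$; the case $n=1$ is just the fact that a $1\times 1$ determinant is its single entry.

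The reduction itself is a cofactor computation. Write $M = I - \qprob$, so $G_A = M^{-1}$. The adjugate (Cramer) formula for the inverse gives $G_A(x,x) = \mathrm{cof}_{xx}(M)/\det M$, and the $(x,x)$ cofactor of $M$ is, with sign $+1$, the minor obtained by deleting the row and column indexed by $x$; that minor is exactly $\det\big(I_{A\setminus\{x\}} - \qprob|_{A\setminus\{x\}}\big) = \det\big(G_{A\setminus\{x\}}^{-1}\big) = 1/\det G_{A\setminus\{x\}}$. Hence $G_A(x,x) = \det G_A / \det G_{A\setminus\{x\}}$, which rearranges to the displayed reduction. (More in the spirit of these notes, one can obtain the reduction probabilistically: decomposing a path in $A$ from $y$ to $z$ at its first and last visits to $x$ yields $G_{A\setminus\{x\}}(y,z) = G_A(y,z) - G_A(y,x)\,G_A(x,x)^{-1}\,G_A(x,z)$ for $y,z \neq x$, which says precisely that $G_{A\setminus\{x\}}$ is the Schur complement of the entry $G_A(x,x)$ inside the block matrix $G_A$; the Schur determinant identity $\det\!\begin{pmatrix} a & b^{T} \\ c & D \end{pmatrix} = a\,\det\big(D - a^{-1}cb^{T}\big)$ then gives the same conclusion.)

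The one point needing care is that every $G_{A_j}$ in \eqref{sep27.3} — equivalently, every minor $1/\det G_{A\setminus\{x\}}$ appearing above — must actually be defined. For an integrable weight this is automatic: the Perron root of the nonnegative matrix $|\qprob|$ dominates that of each of its principal submatrices (pad the submatrix with zeros to get a matrix that is entrywise $\le |\qprob|$ and has the same nonzero spectrum), so every restriction $\qprob|_{A_j}$ is again integrable and $G_{A_j} = (I - \qprob|_{A_j})^{-1}$ exists. For a general \greenable weight one reduces to this case by analyticity in $\lambda$: both sides of \eqref{fdet}, with $\qprob$ replaced by $\lambda\qprob$, are rational functions of $\lambda$ that agree for $|\lambda|$ small (where $\lambda\qprob$ is integrable), hence agree wherever both are defined. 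I expect this domain bookkeeping, rather than the algebra — a one-line cofactor identity, iterated — to be the only real obstacle.
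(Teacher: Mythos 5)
Your proof is correct and takes essentially the same route as the paper: your cofactor/adjugate identity $G_A(x,x)=\det(I-\qprob|_{A\setminus\{x\}})/\det(I-\qprob)$ is exactly what the paper extracts via Cramer's rule (solving $(I-\qprob)\vec v=\delta_{x_1}$ and expanding along the first column), and both arguments then induct by peeling off one vertex at a time. The additional bookkeeping you supply about the restricted Green's functions being defined is sensible but goes beyond what the paper records.
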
  

\begin{proof}
If $A$ has a single element $x$ and $q = q(x,x)$, then
\[  G_A(x,x) = \frac{1}{1-q}, \;\;\; \qprob= [q] , \]
so the result is immediate.  

We now proceed by induction on the number of elements of $A$.
Assume it is true for sets of $n-1$ elements and let
$A = \{x_1,\ldots,x_n\}, A' = \{x_2,\ldots,x_n\}. $ From
the formula we see that
\[   F(A) = G_A(x_1,x_1) \, F(A'). \]
Let $v(y) = G_A( y,x_1)$ which satisfies
\[          v(y) = \delta(x_1-y) + \sum_{z \in A}
      q(y,z) \, v(z) . \]
In other words, the vector $\vec v$  satisfies
    \[   (I- \qprob) \vec v = \delta_{x_1}\]
    where $\delta_{x_1}$ is the vector with $1$ in the
    first component and $0$ elsewhere.  Using Cramer's
    rule to solve this equation we see that
    \[ G_{A}(x_1,x_1) = \frac{\det[M]}{\det(I - \qprob)},\]
  where $M$ is the matrix obtained from $I - \qprob$ by
  changing the first column to  $\delta_{x_1} $. By
  expanding along the first column, we see that
  \[       \det[M] = \det(I -\qprob') , \]
 where $\qprob'$ is $\qprob$ restricted to the entries
 indexed by $A'$.  Therefore, using the inductive hypothesis,
 \[  F(A) =   G_A(x_1,x_1) \, F(A') = \frac{\det[I - \qprob']}{\det[I - \qprob]
 } \, F(A')  =  \frac{1}{\det[I - \qprob]
 }   .\]

\end{proof}

 \begin{exercise}   Consider simple one-dimensional random
walk with $A = [n] := \{1,2,\ldots,n\},$ $ \partial A = \{0,n+1\}$.  Compute
$    F_{[m]}(A)$  for $1\leq m  \leq  n$.
\end{exercise}

\begin{exercise}   \label{exer4} Let $\overline A$ be the complete
 graph on $n$ vertices, and $A \subset \overline A$
 a set with $m < n$ vertices.  Assuming that we are
 doing simple random walk on $\overline A$, compute
 $F(A)$. 
 \end{exercise}

 \section{Loop-erased walk on Markov chains}  \label{markovsec}
 
 For this section, we will only consider loop-erased walks
 arising from Markov chain  transition probabilities
 $p$.  We assume that the
 reader knows basic facts about Markov chains. As before,
 we write 
 \[  \laplace f(x) = \sum_{y \in \state}
        p(x,y) \, [f(y) - f(x)] \]
   for the Laplacian of the chain.
   
 \subsection{Loop-erased walk from transient chains} 
 
 Let $S_n$ be an irreducible
  transient Markov chain on a
 countable set $\state$.  We
 we can define loop-erased random walk as a stochastic
 process by erasing loops from the infinite path. 
 Indeed, if
 \[  \omega = [\omega_0,\omega_1,\ldots]  , \;\;\;
  \omega_j \in \state , \]
  is an infinite sequence of points such that no
  vertex appears an infinite number of times, the
 loop-erasing algorithm  in Section \ref{erasingsec}
  outputs an infinite
 subpath $\eta = LE(\omega)$. 
This probability
measure on infinite self-avoiding paths
can also be viewed as a nonMarkovian process  $\hat S_n$
starting at the same point as the chain $S_n$.

We  will give  another description of the process
by specifying for each SAW $\eta = [\eta_0,\eta_1,
\ldots,\eta_n]$ the probability that the LERW starts
with $\eta$.  
If $A \subset \state$ is a bounded set, let
\[   \phi_A(z) = \Prob^z\{S_n \not\in    A
\mbox{ for all } n \geq 0\}.\]
It is well known that
 $\phi_A$ is the unique nonnegative
 function on 
 $\state$ satisfying
 \[   \phi_A(x) = 0 , \;\;\; x \in A, \]
 \[  \laplace \phi_A(x) = 0 , \;\;\; x \in \state  \setminus
 A, \]
 \[   \sup_{x \in  \state}
  \phi_A(x)= 1   .\]
 We define the {\em escape probability}  $\Es_A(x)$ to be
 \[  \Es_A(x) = \Prob^z\{S_n \not\in    A
\mbox{ for all } n \geq 1\}= \left\{ \begin{array}{ll}
    \phi_A(x) , & x \not \in A. \\
     \laplace \phi_A(x), & x \in A \end{array} \right. .\]

\begin{proposition} \label{sep27.prop2}
If $\eta = [x_0,\ldots,x_n]$
is a self-avoiding walk in $\state$  starting at   $x_0$,
then
\[   \Prob\left\{ [\hat S_0, \ldots \hat S_n] = \eta
     \right\} = p(\eta) \, F_\eta(\state )
      \, \Es_\eta(x_n) ,\]
\begin{equation}  \label{sep27.5}
 \Prob\left\{\hat S_{n+1} = z \mid
     [\hat S_0, \ldots \hat S_n] = \eta
     \right\} = \frac{p(x_n,z) \, \phi_\eta(z)}
            {\sum_{w \in \state} p(x_n,w) \, \phi_\eta(w)}
   . 
            \end{equation}
            \end{proposition}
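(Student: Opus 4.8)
The plan is to reduce the infinite‑path statement to a limit of finite‑state computations and then apply Proposition \ref{lerwprob} (equivalently Proposition \ref{sep27.prop1}). First I would fix the self‑avoiding walk $\eta = [x_0,\ldots,x_n]$ and consider, for a large bounded set $A \supset \eta$, the loop‑erasure of the chain stopped at $\tau_A$. By Proposition \ref{sep27.prop1.alt} the probability that $LE$ of the stopped path begins with $\eta$ and then reaches $\partial A$ without returning to $\eta$ is
\[
    p(\eta)\, F_\eta(A) \, \Prob^{x_n}\{\,S_j \notin \eta \text{ for } 1 \le j \le \tau_A\,\}.
\]
Indeed, any path $\omega$ stopped at $\tau_A$ whose loop‑erasure starts with $\eta$ decomposes as in the proof of Proposition \ref{sep27.prop1}: loops $l_j \in \paths_{A_j}(x_j,x_j)$ rooted at the successive vertices, interleaved with the edges of $\eta$, followed by a final piece from $x_n$ that avoids $\eta$ until it exits $A$. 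Summing the weights of the loop pieces gives exactly $\prod_{j=0}^n G_{A_j}(x_j,x_j) = F_\eta(A)$, and summing over the terminal piece gives the stated escape‑type probability.

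Next I would let $A \uparrow \state$ (exhausting by bounded sets). On the left side, the event that $LE$ of the stopped path begins with $\eta$ converges to the event $\{[\hat S_0,\ldots,\hat S_n] = \eta\}$, since loop‑erasure of the infinite transient path is well defined (no vertex visited infinitely often, a.s.) and the initial segment of $LE$ stabilizes once $A$ is large enough to contain the relevant excursions — this is where transience is used. On the right side, $F_\eta(A) = \prod_{j=0}^n G_{A_j}(x_j,x_j) \to \prod_{j=0}^n G_{\state \setminus \{x_0,\ldots,x_{j-1}\}}(x_j,x_j) = F_\eta(\state)$ by monotone convergence of Green's functions for the positive weight $p$, and $\Prob^{x_n}\{S_j \notin \eta,\ 1 \le j \le \tau_A\} \to \Prob^{x_n}\{S_j \notin \eta \text{ for all } j \ge 1\} = \Es_\eta(x_n)$. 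Combining these limits yields the first displayed formula.

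For the second formula \eqref{sep27.5}, I would argue directly from the description of $\hat S$ via loop‑erasure: conditioned on $[\hat S_0,\ldots,\hat S_n] = \eta$, the future of the loop‑erased walk depends on the underlying chain only through its path after the last visit to $x_n = \eta_n$, which (by the strong Markov property and the characterization of the last‑exit decomposition) is a chain started at $x_n$ conditioned never to return to $\eta$. Taking one step of that conditioned chain gives precisely the Doob $h$‑transform by $\phi_\eta$, i.e.\ the transition probability $p(x_n,z)\phi_\eta(z) / \sum_w p(x_n,w)\phi_\eta(w)$. Alternatively one can derive \eqref{sep27.5} from the first formula applied to $\eta$ and to $\eta \oplus [x_n,z]$ by taking a ratio, using $\Es_\eta(z)/\Es_{\eta}(x_n)$‑type identities together with the harmonicity of $\phi_\eta$ off $\eta$; I would present whichever is cleaner. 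The main obstacle is the first step: carefully justifying that summing the loop‑weights in the finite‑$A$ decomposition is legitimate (it is, since $p$ is positive, so Tonelli applies) and, more substantively, that the ``avoid $\eta$ until exiting $A$'' terminal factor converges to $\Es_\eta(x_n)$ — this needs the standard fact that $\phi_\eta = \lim_A \phi_\eta^A$ where $\phi_\eta^A(z) = \Prob^z\{S \text{ exits } A \text{ before hitting } \eta\}$, which follows from transience and the maximum principle characterization of $\phi_\eta$ recalled just before the proposition.
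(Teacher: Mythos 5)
Your argument is correct, but it reaches the first identity by a different route than the paper. The paper decomposes the \emph{infinite} path directly: any $\omega$ whose loop-erasure begins with $\eta$ factors uniquely as $l^0\oplus[x_0,x_1]\oplus l^1\oplus\cdots\oplus[x_{n-1},x_n]\oplus l^n\oplus\omega^+$, where $l^j$ is a loop at $x_j$ in $\state\setminus\{x_0,\ldots,x_{j-1}\}$ and $\omega^+$ is an infinite path from $x_n$ that never returns to $\eta$; transience makes the Green's functions on $\state$ finite, so the loop part sums to $F_\eta(\state)$ and the tail has probability $\Es_\eta(x_n)$, with no exhaustion needed. You use the same decomposition but only for the path stopped at $\tau_A$ (this is the finite-$A$ proposition for LERW to $\p A$) and then let $A\uparrow\state$; this is legitimate, but it costs three extra facts — almost sure stabilization of the initial segment of the loop-erasure of the stopped path, $G_{A_j}(x_j,x_j)\uparrow G_{\state_j}(x_j,x_j)$, and convergence of the killed escape probabilities to $\Es_\eta(x_n)$ — which are precisely the content of Proposition \ref{sept27.prop4} and Exercise \ref{sep28.exer1}, so in effect you reprove that finite-to-infinite consistency statement on the way, whereas the paper proves the present proposition directly and records the consistency separately. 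Your primary derivation of \eqref{sep27.5} (conditionally on the event, $\omega^+$ is the chain from $x_n$ conditioned never to return to $\eta$, and the first step of $LE(\omega^+)$ is the first step of $\omega^+$, whose conditioned one-step law is the $\phi_\eta$-transform) is exactly the paper's argument; the alternative ratio route you sketch also works, but it requires the last-exit identity $\phi_\eta(z)=G_{\state\setminus\eta}(z,z)\,\Es_{\eta\cup\{z\}}(z)$ and $\sum_w p(x_n,w)\phi_\eta(w)=\Es_\eta(x_n)$, which you would need to spell out.
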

            
            \begad

   The right-hand side of \eqref{sep27.5} is easily seen
 to be the conditional probability that the Markov chain
 starting at $x_n$ takes its first step to $z$ given
 that it never returns to $A$.
   
   \endad

\begin{proof}
Similarly as in the proof of Proposition
\ref{sep27.prop1}, if $\omega = [\omega_0,\omega_1,\ldots]$
is an infinite path   such that the loop-erasure
$LE(\omega)$ starts with $\eta$, then we can write
$\omega$ uniquely as
\begin{equation}  \label{loopdecomposition}
   \omega = l^0 \oplus [x_0,x_1] \oplus l^1
  \oplus [x_1,x_2] \oplus \cdots \oplus [x_{n-1},x_n]
   \oplus l^n \oplus \omega^+, 
   \end{equation}
   where $l^j$ is a loop rooted at $x_j$
    contained in $\state
   \setminus \{x_0,\ldots,x_{j-1}\}$
 and $\omega^+$ is an infinite path starting at
 $x_n$ that never returns to $\eta$.  In this
 case, $LE(\omega) = \eta \oplus LE(\omega^+)$. 
 The measure of possibilities for $l^0,\ldots,l^n$
 is given by $F_\eta(\state )$ and the measure of
possibilities for $\omega^+$ is $\Es_\eta(x_n)$.  
Given that $\omega^+$ does not return to $\eta$,
the first step of $LE(\omega^+)$ is the same
as the first step of $\omega^+$ and the
conditional probabilities for this step are
given by  \eqref{sep27.5}.
     
   \end{proof}
   
   The process $\hat S_n$ could have been defined using
   the transition probability
     \eqref{sep27.5}.  Since $\phi_\eta$ is the solution
    of the Laplace's equation $\laplace \phi_\eta = 0$,
 the process is sometimes called the {\em Laplacian
 random walk}.  More generally, we can define a process
 called the $b$-Laplacian random walk by
 using the transitions
\[  \Prob\left\{\hat S_{n+1} = x_{n+1} \mid
     [\hat S_0, \ldots \hat S_n] = \eta
     \right\} = \frac{[p(x_n,z)\, \phi_\eta(z)]^b}
            {\sum_{w \in \state}[p(x_n,w)\,  \phi_\eta(w)]^b}, \]
  where we set $0^b = 0$ even if $b \leq 0$.  For $b \neq 1$,
this process is much harder to study and little is known
about it rigorously.  The case $b=0$ is sometimes called the
{\em infinitely growing self-avoiding walk (IGSAW)}.  The IGSAW
chooses randomly among all possible vertices that will not
trap the chain.

\begad

One could also have chosen the numerator to be 
$p(x_n,z) \, \phi_\eta(z)^b$ and similarly
for the denominator.  Neither case is understood.
The only cases that I know where this has been studied,
$p(x_n,z) = p(x_n,w)$ for all $z,w$ for which this
probability is nonzero (such as simple random walk
on a lattice), so the two definitions would be
the same.

\endad
 
 The decomposition of $\omega$ into $LE(\omega)$ and
 the loops in $l_0,l_1,l_2,\ldots$ in Proposition \ref{sep27.prop2}
 extends to the infinite path.  If $x_n = \hat S_n$, then
 the path of the Markov chain is decomposed into
 \[  l^0 \oplus [x_0,x_1] \oplus l^1 \oplus [x_1,x_2]
   \oplus l^2 \oplus \cdots.\]
 As a corollary of the proof, we get
  the conditional distribution of $l_0,l_1,\ldots$
  given its loop-erasure.
 We state the result.  Recall that
 \[  G_A(x,x) = \sum_{l \in \paths_A(x,x)} p(l).\]
 
 \begin{proposition}  \label{inferasedloops}
 Given $\hat S_n=[x_0,x_1,x_2,\ldots]$ the distribution of    $l^0,l^1,\ldots$
  is that of independent
 random variables taking values respectively in $\paths_{\state_j}(x_j,x_j)$.  The random variable $l^j$ has the distribution
\begin{equation}  \label{inferasedloopsdist}
  \Prob\{l^j = l\} =  \frac{p(l)} {G_{\state_j}(x_j,x_j)}
         , \;\;\;\; l \in 
         \paths_{\state_j}(x_j,x_j).
         \end{equation}
 Here $\state_j = \state \setminus \{x_0,\ldots,x_{j-1}\}$.
  \end{proposition}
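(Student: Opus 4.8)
The plan is to read the statement off the path decomposition that already appears in the proof of Proposition \ref{sep27.prop2}, and then to upgrade the conditioning from a finite prefix of $\hat S$ to the whole infinite path. The combinatorial content is essentially contained in that earlier proof; the only genuinely new point is the passage to the infinite conditioning.

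First I would fix a self-avoiding $\eta = [x_0,\ldots,x_n]$ in the support of $\hat S$ and recall decomposition \eqref{loopdecomposition}: an infinite path $\omega$ whose loop-erasure begins with $\eta$ is uniquely $\omega = l^0 \oplus [x_0,x_1] \oplus l^1 \oplus \cdots \oplus [x_{n-1},x_n] \oplus l^n \oplus \omega^+$, with $l^j \in \paths_{\state_j}(x_j,x_j)$ and $\omega^+$ an infinite path from $x_n$ that never returns to $\eta$, and conversely every such tuple yields a valid $\omega$. Since $p$ is multiplicative over concatenations, for fixed loops $\bar l^0,\ldots,\bar l^n$, summing over the allowed $\omega^+$ exactly as in that proof gives
\[ \Prob\bigl\{l^0 = \bar l^0,\ldots, l^n = \bar l^n;\ [\hat S_0,\ldots,\hat S_n] = \eta\bigr\} = p(\eta)\,\Es_\eta(x_n)\,\prod_{j=0}^n p(\bar l^j). \]
Dividing by $\Prob\{[\hat S_0,\ldots,\hat S_n] = \eta\} = p(\eta)\,F_\eta(\state)\,\Es_\eta(x_n)$, which is positive for $\eta$ in the support (so in particular $\Es_\eta(x_n)>0$ and the division is legitimate), and using $F_\eta(\state) = \prod_{j=0}^n G_{\state_j}(x_j,x_j)$, I obtain
\[ \Prob\bigl\{l^0 = \bar l^0,\ldots, l^n = \bar l^n \bigm| [\hat S_0,\ldots,\hat S_n] = \eta\bigr\} = \prod_{j=0}^n \frac{p(\bar l^j)}{G_{\state_j}(x_j,x_j)}. \]
This already shows that, conditionally on $[\hat S_0,\ldots,\hat S_n]$, the loops $l^0,\ldots,l^n$ are independent with the marginals \eqref{inferasedloopsdist}.

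To condition on the full path $\hat S = [x_0,x_1,\ldots]$, the key remark is that the marginal of $(l^0,\ldots,l^{n-1})$ obtained from the displayed product law is unchanged if one conditions instead on the longer prefix $[\hat S_0,\ldots,\hat S_{n+k}]$ for any $k \ge 0$ (just marginalize the product law for $(l^0,\ldots,l^{n+k})$), so it does not depend on $x_n,x_{n+1},\ldots$. Applying L\'evy's upward theorem to the filtration $\sigma(\hat S_0,\ldots,\hat S_m)$, whose union is $\sigma(\hat S)$, the conditional law of $(l^0,\ldots,l^{n-1})$ given $\sigma(\hat S)$ is therefore the product measure under which $l^j$ has law $p(\cdot)/G_{\state_j}(x_j,x_j)$ on $\paths_{\state_j}(x_j,x_j)$. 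Since this holds for every $n$ and these finite-dimensional laws are consistent, Kolmogorov extension identifies the conditional law of $(l^0,l^1,\ldots)$ given $\hat S$ as the announced product measure, which proves the proposition.

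I do not anticipate a serious obstacle, since the combinatorial heart is inherited from Proposition \ref{sep27.prop2}; the only step requiring genuine care is this last interchange of the infinite conditioning with the finite computation, i.e. verifying that the finite-prefix conditional laws form a consistent family that is stable under refinement, so that the martingale limit is valid and is the correct $\sigma(\hat S)$-measurable version. A minor preliminary, used implicitly above, is that for a.e. realization of $\hat S$ every finite prefix has positive probability (immediate from Proposition \ref{sep27.prop2}), so all the conditionings are legitimate.
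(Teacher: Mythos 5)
Your proof is correct and follows essentially the same route as the paper, which obtains this proposition directly from the path decomposition \eqref{loopdecomposition} in the proof of Proposition \ref{sep27.prop2}. The only addition is your explicit martingale/consistency argument for passing from conditioning on finite prefixes of $\hat S$ to conditioning on the full infinite path, a step the paper leaves implicit; since the loops live in a countable space this extra care is sound and harmless.
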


There is another way to view the distribution on loops in the last proposition.
For fixed $j$, let $S_k = S_k^j$  denote the Markov chain starting
at $x_j$ and let $\tau_j = \inf\{k: S_k \not \in \state _j\}$ (this can equal
infinity) and $\sigma_j = \max\{k < \tau_j : S_k = x_j\}$.  Then
it is easy to check that
the distribution of the loop $[S_0,S_1,\ldots,S_{\sigma_j}]$ is given
by \eqref{inferasedloopsdist}. 
This gives us a method to obtain a path of the Markov chain
by starting with a loop-erased path (or, equivalently, a realization
of the Laplacian walk with transitions as in \eqref{sep27.5})
and adding loops with the appropriate distribution.  We omit
the proof (we have done all the work already).

\begin{proposition}   
Suppose we have  independent Markov chains $\{S_k^x: x \in \state\}$,
each with transition matrix $\Prob$, 
with $S_0^x = x $.  Create a new path as follows.
\begin{itemize}
\item Start with  $\hat S_n = [\hat S_0,\hat S_1,\ldots]$,
  a Laplacian random walk satisfying
\eqref{sep27.5}    starting at $x_0$ independent of $\{S^x\}$.
\item  For each $j$, let $\state _j = \state \setminus \{\hat S_0,\ldots,
\hat S_{j-1}\}$.  Choose a loop at $\hat S_j$ with
distribution \eqref{inferasedloopsdist} by using the method
in previous paragraph.  Note that the loops $\{l^j\}$ are conditionally
independent given $\hat S_n$.

\end{itemize}
Then the path
\[  l^0 \oplus [\hat S_0, \hat S_1] \oplus l^1
 \oplus [\hat S_1,\hat S_2] \oplus l^2 \oplus \cdots \]
has the distribution of the Markov chain starting at $x_0 $.
\end{proposition}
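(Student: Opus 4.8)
The plan is to exhibit the construction as the inverse of the loop-erasing bijection and then read off the conclusion from Propositions~\ref{sep27.prop2} and~\ref{inferasedloops}, so that essentially no new computation is needed. First I would set up the bijection already implicit in \eqref{loopdecomposition}. For a transient irreducible chain, almost surely no site is visited infinitely often, so the infinite path $[S_0,S_1,\ldots]$ visits infinitely many distinct sites and $\eta := LE([S_0,S_1,\ldots])$ is a genuinely infinite self-avoiding walk; moreover
\[ [S_0,S_1,\ldots] = m^0 \oplus [\eta_0,\eta_1] \oplus m^1 \oplus [\eta_1,\eta_2] \oplus m^2 \oplus \cdots , \]
where $m^j$ is the loop erased at the $j$-th stage, a finite loop rooted at $\eta_j$ lying in $\state_j = \state \setminus \{\eta_0,\ldots,\eta_{j-1}\}$ (finiteness being automatic from $G_{\state_j}(\eta_j,\eta_j)\le G_\state(\eta_j,\eta_j)<\infty$). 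Define $\Phi(\eta,(l^j)_{j\ge 0}) = l^0 \oplus [\eta_0,\eta_1] \oplus l^1 \oplus [\eta_1,\eta_2]\oplus\cdots$, on pairs consisting of an infinite SAW $\eta$ from $x_0$ and loops $l^j \in \paths_{\state_j}(\eta_j,\eta_j)$. This $\Phi$ is measurable, and the display above says it is a bijection onto the set of infinite paths from $x_0$ visiting infinitely many sites, its inverse sending $\omega$ to the pair consisting of $LE(\omega)$ and the sequence of erased loops.

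Next I would observe that the path $\Gamma$ built in the statement is literally $\Phi$ applied to the input data, $\Gamma = \Phi(\hat S, (l^j)_{j\ge 0})$: since each $l^j$ avoids $\{\hat S_0,\ldots,\hat S_{j-1}\}$, loop-erasing $\Gamma$ returns $\hat S$ and peels off the $l^j$ in order, so the pair assigned to $\Gamma$ by $\Phi^{-1}$ is exactly $(\hat S,(l^j))$. It therefore suffices to show that the pair fed into $\Phi$ has the same law as the pair $(\eta,(m^j)_{j\ge 0})$ extracted from the Markov chain. For the first coordinates: $\hat S$ is by definition distributed as the loop-erased walk, equivalently the Laplacian walk of \eqref{sep27.5}, which is the law of $\eta = LE([S_0,S_1,\ldots])$ --- this is precisely the content of Proposition~\ref{sep27.prop2}. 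For the second coordinate given the first: in the construction, using independence of $\hat S$ from $\{S^x\}$, the loops $l^j$ are conditionally independent given $\hat S$ with $l^j$ distributed on $\paths_{\state_j}(\hat S_j,\hat S_j)$ according to \eqref{inferasedloopsdist}; and Proposition~\ref{inferasedloops} says the erased loops $(m^j)$ have exactly this conditional law given $\eta$.

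Hence $(\hat S,(l^j)_{j\ge 0}) \stackrel{d}{=} (\eta,(m^j)_{j\ge 0})$, and applying the measurable map $\Phi$ to both sides yields $\Gamma = \Phi(\hat S,(l^j)) \stackrel{d}{=} \Phi(\eta,(m^j)) = [S_0,S_1,\ldots]$, the Markov chain from $x_0$, as claimed. The one step that deserves genuine care is the first one: making the measure-theoretic bookkeeping on the space of infinite paths rigorous, i.e.\ confirming that the loop decomposition $\Phi$ really is a measurable bijection off a null set, so that equality in law of the pairs transfers to equality in law of the concatenated paths. (An alternative, more hands-on route would be to verify agreement of all cylinder probabilities directly by a finite computation with the densities in \eqref{sep27.5} and \eqref{inferasedloopsdist}, but it is cleaner to route everything through $\Phi$.) Everything past that first step is bookkeeping assembled from the two cited propositions, which is why the author can assert the work has already been done.
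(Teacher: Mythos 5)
Your proposal is correct and follows exactly the route the paper has in mind when it omits the proof with the remark that ``we have done all the work already'': the unique loop decomposition \eqref{loopdecomposition} identifies the constructed path with the pair $(\hat S,(l^j))$, and Propositions \ref{sep27.prop2} and \ref{inferasedloops} show this pair has the same joint law as the loop-erasure and erased loops of the chain. Your explicit treatment of the map $\Phi$ and the check that $LE(\Gamma)=\hat S$ (because later loops and the SAW avoid earlier vertices) is just a careful spelling-out of that same argument.
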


\begad
One thing to emphasize about the last proposition is that the construction
has the following form.
\begin{itemize}
\item We first choose independently  $\hat S$ and the loop-making
Markov chains $\{S^x_j\}$.
\item  The Markov chain $S$ is then
constructed as a {\em deterministic} function of the realizations of these
processes.
\end{itemize}

\endad

 \subsection{Loop-erased walk in a finite set $A$}
 
 \begin{definition}
 Suppose $\overline A = A \cup \partial A$ with 
 $\partial A$ nonempty  and $\Prob$
 is an irreducible Markov chain on $\overline A$. 
   If $x \in A$, then loop-erased
 random walk (LERW) from $x$ to $\partial A$ is
 the   probability
 measure on $\saws_A(x,\partial A)$ 
   obtained by starting
 the chain at $x$, ending at the first time that the
 chain leaves $A$, and erasing loops chronologically.  
 \end{definition}
 
Equivalently, using
Proposition \ref{sep27.prop1.alt}, we see that LERW 
 is the probability measure $\hat p$ on
  $  \saws_A(x,\p A)  $ given by
 \[     \hat p(\eta) = p(\eta) \, F_\eta(A) . \]
 We can also describe this process by
 giving  the transition probabilities for
 the random process $\hat S_n$.  Let $\phi_\eta(y)
 = \phi_{\eta,A}(y)$
 denote the function on $\overline A$ satisfying
 \[            \phi_\eta(y) = 0 , \;\;\; y \in \eta,\]
 \[        \laplace \phi_\eta(y) = 0 , \;\;\;
    y \in A \setminus \eta, \]
  \[   \phi_\eta(y) = 1, \;\;\; y \in \partial A . \]
As before, we let
\[       \Es_{\eta}(y) =\Es_{\eta,A}(y) =   \left\{
\begin{array}{ll}   \laplace \phi_\eta(y), & y  \in \eta\\
   \phi_\eta(y), & y \not \in \eta \end{array}
    \right. . \]
    
   \begin{proposition}
 Suppose $\eta=[\eta_0,\ldots,\eta_k]
  \in \saws_A(x,A)$. Then the probability
 that the LERW from $x$ to $\p A$ starts with $\eta$ is
 \[         p(\eta) \, F_\eta(A) \, \Es_\eta(\eta_k).\]
\end{proposition}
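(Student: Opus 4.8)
The plan is to transcribe the proof of Proposition~\ref{sep27.prop2} to the finite setting, with the infinite escape path replaced by a finite path that exits $A$. Let $\omega=[\omega_0,\dots,\omega_n]$ be a realization of the chain started at $x=\eta_0$ and run until $\tau=\tau_A$, so that $\omega_n\in\partial A$ and $\omega_j\in A$ for $j<n$. Writing $j_0<j_1<\cdots$ for the indices selected by the loop-erasing algorithm, the event that $LE(\omega)$ starts with $\eta=[\eta_0,\dots,\eta_k]$ is precisely $\{\omega_{j_i}=\eta_i:\ 0\le i\le k\}$, and on this event $\omega$ has the unique decomposition
\[
\omega = l^0\oplus[\eta_0,\eta_1]\oplus l^1\oplus\cdots\oplus[\eta_{k-1},\eta_k]\oplus l^k\oplus\omega^+,
\]
where $l^j\in\paths_{A_j}(\eta_j,\eta_j)$ with $A_j=A\setminus\{\eta_0,\dots,\eta_{j-1}\}$, and $\omega^+$ is a path from $\eta_k$ to $\partial A$ all of whose vertices after time $0$ lie in $A\setminus\{\eta_0,\dots,\eta_k\}$. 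The split between $l^k$ and $\omega^+$ occurs at the \emph{last} visit of $\omega$ to $\eta_k$, so $l^k$ may return to $\eta_k$ but $\omega^+$ may not. Conversely, any choice of loops $l^0,\dots,l^k$ in these sets together with any such $\omega^+$ recombines to an $\omega$ whose loop erasure begins with $\eta$; this is a bijection.

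Next I would use multiplicativity of $p$ along concatenations, $p(\omega)=p(\eta)\prod_{j=0}^k p(l^j)\,p(\omega^+)$ with $p(\eta)=\prod_{i=1}^k p(\eta_{i-1},\eta_i)$, and sum over all $\omega$ in the event. Since the factors decouple,
\[
\Prob\{[\hat S_0,\dots]\text{ starts with }\eta\}
= p(\eta)\left[\prod_{j=0}^k\sum_{l\in\paths_{A_j}(\eta_j,\eta_j)}p(l)\right]\left[\sum_{\omega^+}p(\omega^+)\right]
= p(\eta)\,F_\eta(A)\left[\sum_{\omega^+}p(\omega^+)\right],
\]
because $\sum_{l\in\paths_{A_j}(\eta_j,\eta_j)}p(l)=G_{A_j}(\eta_j,\eta_j)$ and, the vertex set of $\eta$ being $\{\eta_0,\dots,\eta_k\}$, the product of these Green's function values is exactly $F_\eta(A)$ by its definition.

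Finally I would identify $\sum_{\omega^+}p(\omega^+)$ with $\Es_\eta(\eta_k)$. Splitting $\omega^+$ on its first step $z=\omega^+_1$, the total contribution of all $\omega^+$ with $\omega^+_1=z$ is $p(\eta_k,z)\,\phi_\eta(z)$: the function $\phi_\eta$ — being $0$ on $\eta$, $1$ on $\partial A$, and harmonic on $A\setminus\eta$ — is by one-step analysis exactly the $p$-mass of paths from $z$ that reach $\partial A$ before $\eta$ (this mass is $0$ when $z\in\eta$, matching that no such $\omega^+$ exists, and is $1$ when $z\in\partial A$). Hence $\sum_{\omega^+}p(\omega^+)=\sum_z p(\eta_k,z)\,\phi_\eta(z)=\laplace\phi_\eta(\eta_k)$, using $\phi_\eta(\eta_k)=0$ and $\sum_z p(\eta_k,z)=1$, and $\laplace\phi_\eta(\eta_k)=\Es_\eta(\eta_k)$ by the definition of $\Es_\eta$ at a vertex of $\eta$. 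Combining the three displays gives the claim. The only place requiring genuine care is the bookkeeping in the first step — in particular, that $\omega^+$ must avoid \emph{all} of $\eta_0,\dots,\eta_k$ and that the decomposition is a true bijection — but this is precisely the finite-state transcription of the argument already carried out for Propositions~\ref{sep27.prop1} and \ref{sep27.prop2}, so it introduces no new ideas.
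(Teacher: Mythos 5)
Your proof is correct and follows exactly the route the paper intends: the paper's own proof of this proposition is simply the remark that it is essentially the same as that of Proposition~\ref{sep27.prop2}, and your argument is precisely that finite-state transcription (loop-plus-escape-path decomposition, factorization of the weights into $p(\eta)\,F_\eta(A)$, and identification of the escape mass with $\laplace\phi_\eta(\eta_k)=\Es_\eta(\eta_k)$). No gaps; the bookkeeping you flag at the end is handled correctly.
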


\begin{proof}  Essentially the same as the proof of
Proposition \ref{sep27.prop2}.
\end{proof}

One can define the Laplacian $b$-walk on $A$ similarly
as to the transient case.

\begin{proposition}  \label{sept27.prop4} 
Suppose $S_n$ is an irreducible, transient Markov chain on
a countable state space $\state$ starting at $x \in \state$
  and $A^j$ is an increasing sequence of subsets
of $\state$ containing $x$, such that
\[     \state = \bigcup_{j=0}^\infty A^j.\]
Let $\eta$ be a (finite) SAW  in $\state$ starting at $x$
 and let $\hat p(\eta)$ and $\hat p^j(\eta)$ denote
the probability that LERW starting at $x$ to infinity 
and $\partial A^j$, respectively, start  with $\eta$.  Then,
\[   \hat  p(\eta) = \lim_{j \rightarrow \infty }\hat  p^j(\eta).\]
\end{proposition}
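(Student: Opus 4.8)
The plan is to express both $\hat p(\eta)$ and $\hat p^j(\eta)$ via the explicit formulas already available and then pass to the limit term by term. By Proposition \ref{sep27.prop2} applied to the transient chain on $\state$, and by the finite-set version (the proposition preceding this one) applied to $A^j$, writing $\eta = [x_0,\ldots,x_k]$ with $x = x_0$, we have
\[
 \hat p(\eta) = p(\eta)\, F_\eta(\state)\, \Es_{\eta}(x_k),
 \qquad
 \hat p^j(\eta) = p(\eta)\, F_\eta(A^j)\, \Es_{\eta,A^j}(x_k).
\]
Here for $j$ large enough that $\eta \subset A^j$ (which eventually holds since the $A^j$ increase to $\state$), the set $\eta$ of vertices is a fixed finite set, so $p(\eta)$ is literally the same factor on both sides and can be pulled out. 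Thus it suffices to show $F_\eta(A^j) \to F_\eta(\state)$ and $\Es_{\eta,A^j}(x_k) \to \Es_\eta(x_k)$ as $j \to \infty$.

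For the first convergence, recall $F_\eta(A) = \prod_{i=0}^{k} G_{A_i}(\eta_i,\eta_i)$ with $A_i = A \setminus \{\eta_0,\ldots,\eta_{i-1}\}$, and $G_{A_i}(\eta_i,\eta_i) = \sum_{l \in \paths_{A_i}(\eta_i,\eta_i)} p(l)$. For fixed $i$, the sets $(A^j)_i = A^j \setminus \{\eta_0,\ldots,\eta_{i-1}\}$ increase to $\state_i := \state \setminus \{\eta_0,\ldots,\eta_{i-1}\}$, so $\paths_{(A^j)_i}(\eta_i,\eta_i) \uparrow \paths_{\state_i}(\eta_i,\eta_i)$, and by monotone convergence (the weights $p(l)$ are nonnegative since $p$ is a Markov chain weight) $G_{(A^j)_i}(\eta_i,\eta_i) \to G_{\state_i}(\eta_i,\eta_i)$. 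Since each factor converges and there are finitely many factors ($k+1$ of them), the product converges: $F_\eta(A^j) \to F_\eta(\state)$. For the second convergence, $\Es_{\eta,A^j}(x_k) = \Prob^{x_k}\{S_n \notin \eta \text{ for all } n \ge 1,\ \text{and } S_n \text{ exits } A^j \text{ before returning to } \eta\}$, while $\Es_\eta(x_k) = \Prob^{x_k}\{S_n \notin \eta \text{ for all } n \ge 1\}$. The events $\{$the chain from $x_k$ avoids $\eta$ up to exiting $A^j$ and then leaves $A^j\}$ are increasing in $j$ (since $A^j$ increases) and their union, by transience (the chain leaves every bounded set, so it eventually exits $\overline{A^j}$), is exactly $\{$the chain from $x_k$ never returns to $\eta\}$; hence $\Es_{\eta,A^j}(x_k) \uparrow \Es_\eta(x_k)$ by continuity of probability from below.

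Combining the two limits gives $\hat p^j(\eta) = p(\eta)\,F_\eta(A^j)\,\Es_{\eta,A^j}(x_k) \to p(\eta)\,F_\eta(\state)\,\Es_\eta(x_k) = \hat p(\eta)$, as desired. The only mild subtlety — and the step I would be most careful about — is the transience input needed for $\Es_{\eta,A^j}(x_k) \to \Es_\eta(x_k)$: one must check that on the event that the chain from $x_k$ never returns to $\eta$, the chain almost surely exits $A^j$ for all large $j$ (so that the finite-$A^j$ escape event indeed captures it in the limit), which is immediate from transience since $\{A^j\}$ exhausts $\state$; without transience the chain could return to $\eta$ infinitely often and the limiting escape probability could be zero while the finite ones are positive. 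Everything else is a finite product of monotone limits.
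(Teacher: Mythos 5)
Your overall route is exactly the one the paper takes: for $j$ large enough that $\eta\subset A^j$, write $\hat p(\eta)=p(\eta)\,F_\eta(\state)\,\Es_\eta(x_k)$ and $\hat p^j(\eta)=p(\eta)\,F_\eta(A^j)\,\Es_{\eta,A^j}(x_k)$ and reduce everything to the two limits $F_\eta(A^j)\to F_\eta(\state)$ and $\Es_{\eta,A^j}(x_k)\to\Es_\eta(x_k)$, which the paper simply delegates to Exercise \ref{sep28.exer1}. Your verification of the first limit is correct: each Green's function $G_{(A^j)_i}(\eta_i,\eta_i)$ is a sum of nonnegative path weights over an increasing family of path sets, so it increases to $G_{\state_i}(\eta_i,\eta_i)$ (finite by transience), and there are only finitely many factors.

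The escape-probability step, however, is wrong as written. The quantity $\Es_{\eta,A^j}(x_k)$ is the probability that the chain avoids $\eta$ at all times $1\le n\le \tau_j$, where $\tau_j$ is the exit time of $A^j$; it imposes no condition after $\tau_j$. Since the $A^j$ increase, $\tau_j$ increases, so these events are \emph{decreasing} in $j$, not increasing, and their union is certainly not the never-return event (the never-return event is contained in every one of them, so your claimed identity would force all the $\Es_{\eta,A^j}(x_k)$ to be equal). The correct one-line repair: the events decrease, and their intersection is, up to a null set, $\{S_n\notin\eta \mbox{ for all }n\ge 1\}$, because $\tau_j\uparrow\infty$ almost surely (the chain visits only finitely many states by any fixed time and $A^j\uparrow\state$); continuity of probability from above then gives $\Es_{\eta,A^j}(x_k)\downarrow \Es_\eta(x_k)$. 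Note that the ``subtlety'' you single out at the end --- that a never-returning path eventually exits each $A^j$ --- is the trivial inclusion; what actually needs the exhaustion $\tau_j\to\infty$ is the reverse implication, that a path avoiding $\eta$ up to time $\tau_j$ for every $j$ avoids $\eta$ forever. Transience is what makes $F_\eta(\state)$ finite and the infinite LERW well defined in the first place; it is not what drives this monotone limit.
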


\begin{proof}  For fixed $\eta$, we need only show that
\[   \lim_{n \rightarrow \infty}  F_\eta(A^n) = F_\eta 
(\state) , \]
\[    \lim_{n \rightarrow \infty}  \Es_{\eta,A^n}(\eta_n) =  
\Es_{\eta,\state}(\eta_n), \]
both of which are easily verified (Exercise \ref{sep28.exer1}).
\end{proof}
%
%\begad
%With more care one can get error estimates in the above limit, but
%we delay at the moment until we have another expression
%for $F_\eta(A)$.
%
%\endad

We state the analogue of Proposition \ref{inferasedloops} which is
proved in the same way.  

\begin{proposition}  \label{erasedloops}
 Given $\eta $ the distribution of    $l^0,l^1,\ldots,l^{k-1}$
  is that of independent
 random variables taking values respectively in $\paths_{A_j}(x_j,x_j)$.  The random variable $l^j$ has the distribution
\begin{equation} \label{erasedloopsdist}
   \Prob\{l^j = l\} =  \frac{p(l)} {G_{A_j}(x_j,x_j)}
         , \;\;\;\; l \in 
         \paths_{A_j}(x_j,x_j).
         \end{equation}
 Here $A_j = A \setminus \{x_0,\ldots,x_{j-1}\}$.
  \end{proposition}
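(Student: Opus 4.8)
The plan is to mimic the proof of Proposition \ref{sep27.prop1} and Proposition \ref{sep27.prop2}, this time keeping track of the conditional law of the erased loops rather than merely their total mass. First I would fix a SAW $\eta = [x_0,x_1,\ldots,x_k] \in \saws_A(x,\partial A)$ and recall from the proof of Proposition \ref{sep27.prop1.alt} (the finite-state analogue of the decomposition \eqref{loopdecomposition}) that every path $\omega \in \paths_A(x,\partial A)$ with $LE(\omega)=\eta$ has a \emph{unique} decomposition
\[
   \omega = l^0 \oplus [x_0,x_1] \oplus l^1 \oplus [x_1,x_2] \oplus \cdots \oplus [x_{k-1},x_k] \oplus l^{k-1},
\]
where $l^j \in \paths_{A_j}(x_j,x_j)$ and $A_j = A \setminus\{x_0,\ldots,x_{j-1}\}$ (there is no loop at the boundary vertex $x_k$, which is why the product stops at $l^{k-1}$). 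Conversely, any choice of such loops produces an $\omega$ with $LE(\omega)=\eta$. Since $p$ is multiplicative over concatenation,
\[
   p(\omega) = p(\eta)\, p(l^0)\, p(l^1) \cdots p(l^{k-1}).
\]

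Next I would compute the conditional probability directly. The event $\{LE(\omega)=\eta\}$ has probability $\hat p(\eta) = p(\eta)\,F_\eta(A) = p(\eta)\,\prod_{j=0}^{k-1} G_{A_j}(x_j,x_j)$ by Proposition \ref{lerwprob} (with the boundary convention $G^*_{A_k}(x_k,x_k)=1$). On that event the loops $(l^0,\ldots,l^{k-1})$ are well-defined random variables, and for any fixed admissible $(l^0,\ldots,l^{k-1})$,
\[
   \Prob\{(l^0,\ldots,l^{k-1}) = (l^0,\ldots,l^{k-1}),\, LE(\omega)=\eta\}
      = p(\eta)\, \prod_{j=0}^{k-1} p(l^j).
\]
Dividing by $\hat p(\eta)$ gives
\[
   \Prob\{(l^0,\ldots,l^{k-1}) = (l^0,\ldots,l^{k-1}) \mid LE(\omega)=\eta\}
      = \prod_{j=0}^{k-1} \frac{p(l^j)}{G_{A_j}(x_j,x_j)},
\]
which exhibits the conditional law as a product measure; hence the $l^j$ are conditionally independent with $l^j$ distributed according to \eqref{erasedloopsdist}. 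One should check that each factor is a genuine probability measure on $\paths_{A_j}(x_j,x_j)$, i.e. $\sum_{l \in \paths_{A_j}(x_j,x_j)} p(l) = G_{A_j}(x_j,x_j)$, which is exactly the definition of the Green's function recalled just before the proposition.

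I do not expect a serious obstacle here: the statement is essentially a bookkeeping consequence of the unique loop decomposition already established, combined with the multiplicativity of $p$ over concatenation and the formula $\hat p(\eta) = p(\eta)F_\eta(A)$. The only point requiring a little care is the boundary endpoint $x_k$: since loop-erasure erases no loop at a vertex of $\partial A$, there is no $l^k$, and the product indices run $j = 0,\ldots,k-1$ rather than $0,\ldots,k$; this is the finite-set counterpart of the fact that in Proposition \ref{inferasedloops} the loops are indexed by all of $\N$ because the transient chain never ``stops.'' Since the problem says the proof is ``proved in the same way'' as Proposition \ref{inferasedloops}, in the notes one could legitimately just refer back to that argument; but the self-contained version above is short enough to record in full if desired.
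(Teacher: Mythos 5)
Your proposal is correct and takes essentially the same approach the paper intends: the text simply refers back to the argument for Proposition \ref{inferasedloops} (itself the loop decomposition from the proof of Proposition \ref{sep27.prop2}), namely the unique factorization of each $\omega$ with $LE(\omega)=\eta$ into $\eta$ and the loops erased at $x_0,\ldots,x_{k-1}$, multiplicativity of $p$, and division by $\hat p(\eta)=p(\eta)\,F_\eta(A)$, exactly as you do. The only blemish is typographical: in your displayed decomposition the final loop $l^{k-1}$, being rooted at $x_{k-1}$, must precede the edge $[x_{k-1},x_k]$ rather than follow it, as your surrounding discussion makes clear you intend.
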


It is often  useful to consider LERW from a boundary point to a
subset of the boundary.  Suppose $A, \partial A$ are given and
$z \in \p A, V \subset \p A \setminus \{z\}$.  Then loop-erased
random walk from $z$ to $V$ in $A$ is the measure on paths
of total mass 
\[            H_{\p A}(z,V) := \sum_{w \in V}
   H_{\p A}(z,w) , \]
obtained from the measure $\hat p$ restricted to $\saws_A(z,V)$.
This also gives a probability measure on paths when we normalize
so with total mass one.  Let us consider this probability measure.
Note that if $z \in A$, then LERW from $z$ to $V$ is the same
as if we make $z$ a boundary point.  An important property that
the probability measure satisfies is the following:
\begin{itemize}
\item {\bf Domain Markov Property.}  Suppose $z \in \p A $,
$V \subset \p A \setminus \{z\}$.  Then the probability measure of
loop-erased random walk from $z$ to $V$ satisfies the 
following
{\em domain Markov property}: conditioned that the
path starts as $\eta = [\eta_0 = z,\ldots,\eta_k]$, the
remainder of the walk has the distribution of LERW from
$\eta_k $ to $V$ in $A \setminus \eta$.
\end{itemize}

\begad

There is a slight confusion in terminology that the reader must live with.
When referring to loop-erased random walk say from $z$ to $w$ in $A$ where
$z,w \in \p A$, one sometimes is referring to the measure on paths of
total mass $H_{\p A}(z,w)$ and sometimes to the probability measure obtained
from normalizing to total mass one.  Both concepts are very important and
the ability to go back and forth between the two ideas is fruitful in
analysis.

\endad

\begin{exercise}$\;$
\begin{enumerate}
\item  Verify the domain Markov property.
\item  Extend it to the the following ``two-sided'' domain
Markov property.  Take LERW from $z$ to $V$ in $A$ and condition
on the event that the beginning of the path is
$\eta = [\eta_0,\ldots,\eta_k]$; the  end of the path
is $\eta' = [\eta_0',\ldots,\eta_j']$ where $\eta_j' \in V$.
Assume that $\eta \cap \eta' = \eset$ and that $p(\eta_k,\eta_0')
 = 0$.  Show that the conditional
 distribution of the remainder of the path is the same
 as LERW from $\eta_k$ to $\eta_0'$ in $A \setminus (\eta \cup \eta')$.
 \end{enumerate}
 \end{exercise}

\begin{exercise} \label{sep28.exer1}  The following is used in the proof
of Proposition \ref{sept27.prop4}.  Suppose $X_n$ is
an irreducible, transient Markov chain on a countable
state space $\state$; $A_n$ is an increasing sequence
of finite sets containing $x$ whose union is $\state$.
Then  for all $x \in \state$, 
\[        \lim_{n \rightarrow \infty}
            G_{A_n}(x,x) = G(x,x) , \]
 \[   \lim_{n \rightarrow \infty} \Es_{A_0,A_n}(x)
   = \Es_{A_0}(x).\]

 \end{exercise}

 \begin{exercise}  Using the notation of Proposition
 \ref{erasedloops},
suppose $S$ is a random walk starting at $x_j$ and
 let $\tau = \min\{k: S_k \not \in A_j\}, \sigma = \max\{k
 < \tau:
   S_j = x_j\}$.  Show that the distribution of the loop
 $    [S_0,S_1,\ldots,S_\sigma]  $
   is the same as \eqref{erasedloopsdist}.
 \end{exercise}
 
 \subsection{Infinite LERW for recurrent Markov chains}  \label{infinitelerwsec}
 
 If $X_n$ is an irreducible, recurrent Markov chain on a countably
 infinite state space $\state$, then one cannot define LERW on 
 $\state$ by erasing loops from the infinite path.  However, 
 if one can prove a certain property of the chain, then one can
give a good definition.  This property will hold
for  two-dimensional simple random walk.
 
Let $x_0 \in \state$.
Suppose $A_n$ is an increasing sequence of finite subsets of
$\state$ with $x_0 \in A_0$ and whose union is $\state$.  
Let $\eta = [\eta_0 =x_0,\ldots,\eta_k]$ be a SAW in $\state$
starting at $x_0$.  In order to specify the distribution
of the infinite LERW it suffices to give the probability of
producing $\eta$ for each $\eta$. Using
   the previous section, we see that we would
 like to define this to be
 \[  \hat p(\eta) =  \lim_{n \rightarrow \infty} p(\eta) \, F_\eta(A_n)
  \, \Es_{A_n}(\eta_n). \]
Since $F_\eta(A_n) =  G_{A_n}(x_0,x_0)
 \, F_\eta (A_n \setminus \{0\})
  \sim G_{A_n}(x_0,x_0) \, F_{\eta}(\state
 \setminus \{x_0\}),$ we can see that
 \[   \hat p(\eta) = p(\eta) \, 
  F_{\eta}(\state
 \setminus \{x_0\}) \, \left[\lim_{n \rightarrow \infty}
      G_{A_n}(x_0,x_0) \, \Es_{\eta,A_n}(\eta_n) \right], \]
 assuming that the limit on the right-hand side exists.

 \begin{itemize}
 
 \item {\bf Property A.}  For every finite $  V \subset \state$
 and $y \in V$,
 there exists a nonnegative function $\phi_{V,y}$ that vanishes
 on $V$ and is harmonic (that is, $\laplace \phi_{V,y} = 0$)
 on $\state \setminus V$ satisfying the following.  Suppose
 $A_n$ in an increasing sequence of subsets of $\state$
 whose union is $\state$. Let $\phi_n$ be the function
 that is harmonic on $A_n \setminus V$; vanishes on $V$;
 and takes value $1$ on $\state \setminus (A_n \cup V)$.  
  Then for all $x$,
\begin{equation}  \label{sep28.1}
       \lim_{n \rightarrow \infty}  G_{A_n}(y,y)
    \, \phi_{n}(x) = \phi_{V,y}(x)  . 
  \end{equation}
  In particular, if $x \in V$,   
     \[        \lim_{n \rightarrow \infty}  G_{A_n}(y,y)
    \, \Es_{V,A_n}(x) =   \laplace \phi_{V,y}(x)  . \]
    \end{itemize}
    
\begin{definition}  If a recurrent irreducible Markov chain 
satisfies Property A, then we define the infinite LERW starting
at $x_0$ by
\[    \Prob\left\{[\hat S_0,\ldots,\hat S_n]
 = \eta \right\}    =   p(\eta) \,  [\laplace \phi(\eta_n)]
    \, F_\eta( \state \setminus \{x_0\}), \]
   where $\phi = \phi_{\eta,x_0}$ and $\eta = [\eta_0 = x_0,\eta_1,
   \ldots,\eta_n].$
\end{definition}

We will show later the nontrivial fact that two-dimensional
random walk satisfies Property A.   However, this property
is not satisfied by all recurrent chains as can be seen from
the next example.

\begin{exercise}$\;$

\begin{enumerate}
\item  Show that one-dimensional simple random walk does
not satisfy Property A.
\item  Show that if one takes $y=0$, $V = \{0\}$,  and
 $A_n = \{1-n,2-n,\ldots,n-2,n-1\}$,
then the limit in \eqref{sep28.1} exists and give
the limit.
\item  Do the same with $A_n = \{1-n,2-n,\ldots,2n-2,
2n-1\}$ and show that the limit exists but is different.
\end{enumerate}
\end{exercise}

One could also define infinite LERW with respect to a particular
sequence $\{A_n\}$ provided that the appropriate limit
exists for this sequence.

\begin{exercise}
Suppose  that $X_n$ is an irreducible, recurrent
Markov chain on a countably infinite state space $\state$, and
  $A_n$ is  an increasing sequence
of finite subsets of $\state$ whose union is $\state$.  Show
that if $x,y \in \state$,
\[   \lim_{n \rightarrow \infty} \frac{G_{A_n}(x,x)}
 {G_{A_n}(y,y)} = \frac{G_{\state \setminus \{y\}}
    (x,x)}{G_{\state \setminus \{x\}}(y,y)}.\]
    Is the recurrence assumption needed?
  \end{exercise}

\begin{exercise}  Assume that $X_n$ is an irreducible, recurrent
Markov chain on a countably infinite state space $\state$
that satisfies Property A.  Let $A_n$ be an increasing sequence
of finite subsets of $\state$ whose union is $\state$
and $V$ a finite subset of $\state$.
\begin{enumerate}
%\item Show   that  for $x,y \in \state$, the limit
%\[    \lim_{n \rightarrow \infty} \frac{G_{A_n}(x,x)}
%  {G_{A_n}(y,y)}\]
%  exists and is in $(0,\infty)$.
  \item  Show that there exists a single function
  $\phi_V$  and a
  positive function $c(\cdot)$ on $V$
such that  for $x \in  V$,
    $\phi_{V,x} = c(x)
 \, \phi_V$.
 \item  Show that the process is a Laplacian random
 walk in the sense that
 \[  \Prob\left\{\hat S_{n+1} = z \mid
     [\hat S_0, \ldots \hat S_n] = \eta
     \right\} = \frac{\phi_\eta(z)\, p(\eta_n,z) }
            {\sum_{|w-x_n| = 1} \phi_\eta(w)\, p(\eta_n,w)}\]
            
\item Assume as given that two-dimensional simple random
walk satisfies Property A.  Show that  
$\phi_{V,x} = \phi_{V,y}$ for all $x,y \in \Z^2$.

\end{enumerate}

\end{exercise}

\subsubsection{Random walk in $\Z^2$}

Here we will show that two-dimensional simple random walk
 satisfies Property A using known
facts about the random walk.  We will establish
this property with $y = 0$ (the other cases are done similarly)
and write $\phi_{V,y} = \phi_V$.  Let
$C_m = \{z \in \Z^2: |z| < m\}$  and if $S_j$ denotes a simple
random walk,
\[     \sigma_m = \min\{j \geq 0: S_j \not\in C_m\}. \]

The {\em potential kernel} (see \cite[Section 4.4]{LLimic})
is defined by
\[      a(x) = \lim_{n \rightarrow \infty}
 \sum_{j=0}^n  \left[\Prob\{S_j = 0\}
    - \Prob\{S_j = x\} \right].\]
 This limit exists, is nonnegative,  and satisfies
 \[    a(0) = 0 ,   \]
 \[  \laplace a(x) = \left\{\begin{array}
 {ll}  1 , & x = 0 \\
  0 , & x \neq 0 \end{array} \right., \]
 \[   a(x) = \frac 2 \pi \, \log |x| + c_0 + O(|x|^{-2}),
 \;\;\;\; x \rightarrow \infty.\]
where $c_0 = (2\gamma+\log 8)/\pi$  and $\gamma$
is Euler's constant.   We  set 
\[      \phi_V(x) = a(x) - \E^x\left[a(S_{\zeta}) \right], \]
where $\zeta = \zeta_V = \min\{j \geq 0: S_j \in V\}.$
%It is immediate that $\phi_V \equiv 0$ on $V$, $\Delta
%\phi_V  \equiv 0$ on $\Z^2 \setminus V$ and 
%\[     \phi_V(x) = \frac{2}{\pi} \log |x| + O_V(1),
%\;\;\;\; x \rightarrow \infty . \]
%Note that $\phi_{\{0\}} = a$.  
It is known  
\cite[Proposition 6.4.7]{LLimic} that
\[ \phi_V(x)= \lim_{m \rightarrow \infty} G_{C_m}(0,0) \,
  \Prob^x\{S[0,\sigma_m] \cap V = \eset\}. \]

Let $A_n$ be an increasing sequence of finite subsets of $\Z^2$
containing $V$ 
whose union is $\Z^2$.  
Let $\tau_n = \min\{j: S_j \not \in A_n\}$; we need
to show that
\[   \phi_V(x)= \lim_{n \rightarrow \infty} G_{A_n}(0,0) \,
  \Prob^x\{S[0,\tau_n] \cap V = \eset\}, \]
   Let $m = m_n $ be 
  the largest integer with 
  $ C_{ m} 
\subset A_n $.

Let us first consider the case $V = \{0\}$.  Let 
$T = \inf\{j > 0: S_j = 0\}$.  As usual for Markov chains,
we have
\begin{eqnarray*}
  G_{A_n}(0,0)^{-1}  & = & \Prob^0\{T  > \tau_n\} \\
  & = & \Prob^0\{T > \sigma_{m}\} \, \Prob^0\{T > \tau_n
    \mid T > \sigma_{m} \}\\
    &= &  G_{C_m}(0,0)^{-1}  \,   \Prob^0\{T > \tau_n
    \mid T > \sigma_{m} \}
    \end{eqnarray*}
Let $h_m(z) = H_{C_{m}}(0,z)$ denote the hitting probability
of $\partial C_{m}$ by a random walk starting at the origin.
Using a last exit decomposition, we can see that
 $
   \Prob^0\{ S_{\sigma_m} = z \mid \sigma_{m } < T \}  = h_m(z).$
Therefore,
\[      \Prob^0\{T > \tau_n
    \mid T > \sigma_m \} = \sum_{z \in \p C_{m}}
        h_m(z) \,  \Prob^z\{T > \tau_n\} . \]
        Using \cite[Proposition 6.4.5]{LLimic},
 we see that for $|x| < m/2$,
 \[     \Prob^x\{
 S_{\sigma_m} = z \mid  \sigma_m< T \} =
  h_n(z) \, \left[1 + O\left(\frac{|x|}{m} \,
     \log \frac{m}{|x|}\right) \right], \]
 from which we conclude that
 \[ \Prob^x\{\tau_n < T\}
=  \Prob^x\{\sigma_{m} < T\} \, 
\Prob^0\{T > \tau_n
    \mid T > \sigma_{m} \} \, 
    \left[1 + O\left(\frac{|x|}{m} \,
     \log \frac{m}{|x|}\right) \right],\]
 and 
 \[  \lim_{n \rightarrow \infty}
   G_{A_n}(0,0) \, \Prob^x\{\tau_n < T\}
      =  \lim_{m \rightarrow \infty}
         G_{C_{m}}(0,0)
          \, \Prob^x\{\sigma_{m } < T\} = a(x)\]

 For more general $V$, let $\zeta = \zeta_V
 $ and let $\phi_n$ denote the function that is
  harmonic on $A_n \setminus V$ with boundary value $0$
  on $V$ and $1$ on $\Z^2 \setminus A_n$.
  Let $\psi_n$ be the corresponding function
  with $V=\{0\}$.
  Note that  
  \[  \phi_n(x) = \psi_n(x) - 
  \sum_{z \in V}
      \Prob^{x}\{S_{\zeta \wedge \tau_n} = z\}   \, \psi_n(z). \]
 Therefore,
\begin{eqnarray*}
 \lefteqn{\lim_{n \rightarrow \infty} G_{A_n}(0,0)
    \phi_n(x)}\\ & = &  \lim_{n \rightarrow \infty}\left[ 
G_{A_n}(0,0)\,  \psi_n(x) - 
  \sum_{z \in V}
      \Prob^{x}\{S_{\zeta \wedge \tau_n} = z\}   \,G_{A_n}(0,0)
      \,  \psi_n(z)\right]\\
    & = & a(x) -  \sum_{z \in V}
      \Prob^{x}\{S_{\zeta } = z\}  \,a(z) =  \phi_V(x).
    \end{eqnarray*}

\subsection{Wilson's algorithm}  \label{wilsonsec}

Suppose $\Prob$ is the transition matrix
of an   irreducible Markov chains on a finite state space
$\overline A = \{x_0,x_1,\ldots,x_n\}$
and let $A = \{x_1,\ldots,x_n\}$.   A {\em spanning tree} $\tree$
of 
(the complete graph) of $\overline A$ is a collection of $n$ (undirected)
edges such that $\overline A$ with those edges is a connected graph.
This implies that every point is connected to every other point
(this is what makes it {\em spanning}), 
and since there are only $n$ edges that there are no ``loops''
(this is what makes it a {\em tree}). 
Given a spanning tree $\tree$,
for each $x \in A$, there is a unique SAW $\eta \in \saws_A(x,x_0)$
whose edges lie in $\tree$. This gives us a directed graph (that we also
label as $\tree$ although it depends on the choice of
``root'' $x_0$) by orienting each edge towards the root.

The weight of $\tree$ (with respect to $x_0)$ is given by
\[           p(\tree; x_0) = \prod_{j=1}^n p(\edge_j), \]
where the product is over the directed edges in the tree.
We will now describe an algorithm to choose a spanning tree
with a fixed root $x_0$.

\begin{definition}  Given $A,\Prob,x_0$, {\em Wilson's
algorithm} to select a spanning tree is as follows.
\begin{itemize}
\item  
Take a LERW in $A $ starting at $x_1$ to $\p A  = \{x_0\}$.
   Include all the edges traversed
by the walk in the tree and let $A_2$ be the set of vertices
that have not been connected to the tree yet.  

\end{itemize}

and recursively,
\begin{itemize}

\item  If $A_k = \eset$, then we have a tree and stop.
\item Otherwise, let $j$ be the smallest index such that
$x_j \not \in A_k$.  Take a LERW   in $A_k$ from
$x_j$ to $A \setminus A_k$.  Add those edges to the tree
and let $A_{k+1}$ be the set of vertices that have not
been connected to the tree. 

\end{itemize}

\end{definition}

\begin{proposition}   \label{kirchhoff}
Given $\{x_0\}, \Prob$,
the probability that a particular spanning
tree $\tree$ is chosen in Wilson's algorithm is
$            p(\tree;x_0) \, F(A).$
In particular, 
\begin{equation}  \label{dec28.1}
 \sum_{\tree} p(\tree;x_0) = \frac{1}{F(A)}
 = \det(I-\Prob). 
 \end{equation}
 \end{proposition}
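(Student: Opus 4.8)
The plan is to prove Proposition \ref{kirchhoff} by tracking the probability that Wilson's algorithm produces a given spanning tree $\tree$, and recognizing that the sequence of LERWs run in disjoint subdomains assembles into exactly the product $p(\tree;x_0)\,F(A)$. The key observation is that the algorithm decomposes $A$ into the successive vertex sets explored by each LERW run, and Proposition \ref{lerwprob} gives the probability of each LERW segment as (edge weight of the segment) times an $F$-factor over the relevant subdomain. The multiplicativity rule \eqref{funion}, namely $F_{B_1\cup B_2}(A) = F_{B_1}(A)\,F_{B_2}(A\setminus B_1)$, is precisely what is needed to telescope these $F$-factors into the single quantity $F(A)$.

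First I would fix a spanning tree $\tree$ (rooted at $x_0$) and ask for the probability that Wilson's algorithm outputs it. Run the algorithm: it produces a sequence of LERW paths $\eta^{(1)}, \eta^{(2)}, \ldots, \eta^{(r)}$, where $\eta^{(i)}$ is a LERW in the current domain $A_{k_i}$ from some vertex to the already-built portion of the tree (or to $\{x_0\}$ for $i=1$). A crucial deterministic fact is that, conditioned on the algorithm producing $\tree$, these segments are forced: $\eta^{(i)}$ must be the unique branch of $\tree$ from the chosen starting vertex $x_j$ to the part of $\tree$ already constructed. So the event ``$\tree$ is output'' is exactly the intersection, over $i$, of the events ``the $i$th LERW equals $\eta^{(i)}$''. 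By the domain Markov structure of the construction (each LERW is run with fresh randomness in a subdomain determined by the previous segments), the probability factors as $\prod_i \hat p_{A_{k_i}}(\eta^{(i)})$, and by Proposition \ref{lerwprob} each factor is $p(\eta^{(i)})\,F_{\eta^{(i)}}(A_{k_i})$.

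Next I would collect the pieces. The edge-weight factors multiply to $\prod_i p(\eta^{(i)}) = p(\tree;x_0)$, since the segments $\eta^{(i)}$ partition the directed edges of $\tree$. For the $F$-factors, note that the vertex sets of the segments (excluding their terminal vertices, which already lie in the tree) partition $A$, and each segment $\eta^{(i)}$ lives in $A_{k_i} = A \setminus (\text{vertices used by earlier segments})$. Repeated application of \eqref{funion} collapses $\prod_i F_{\eta^{(i)}}(A_{k_i})$ into $F_A(A) = F(A)$. This gives $\Prob\{\tree \text{ chosen}\} = p(\tree;x_0)\,F(A)$. Summing over all spanning trees $\tree$ and using that probabilities sum to one yields $\sum_\tree p(\tree;x_0) = 1/F(A)$, and the identity $1/F(A) = \det(I-\Prob)$ is immediate from \eqref{fdet}.

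The main obstacle I expect is not any computation but the bookkeeping needed to justify that the segments' vertex sets partition $A$ and that the $F$-factor domains nest correctly so that \eqref{funion} telescopes cleanly — in other words, making precise that Wilson's algorithm, conditioned on its output tree, reduces to a deterministic sequence of LERW targets in a decreasing family of subdomains. One should also be slightly careful that the ``target set'' of $\eta^{(i)}$ — namely $A\setminus A_{k_i}$, the current tree vertices — plays the role of $\partial$ in the LERW-to-boundary formula, so that Proposition \ref{lerwprob} (or its boundary-target variant) genuinely applies; this is where the freedom to treat already-connected vertices as boundary is used. Once that structural picture is in place the algebra is routine.
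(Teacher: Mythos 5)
Your proposal is correct and follows essentially the same route as the paper: decompose the tree into the successive branches $\eta^{(i)}$ forced by the algorithm, apply Proposition \ref{lerwprob} to each LERW run in its shrinking domain, collect the edge weights into $p(\tree;x_0)$, telescope the $F$-factors with \eqref{funion} to get $F(A)$, and deduce \eqref{dec28.1} by summing over trees and invoking \eqref{fdet}. No substantive difference from the paper's argument.
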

 
 \begin{proof}
 Given any $\tree$ we can decompose it in a unique way as
 follows.
 \begin{itemize}
 \item Let $\eta^1$ be the path in $\tree$ from $x_1$ to $x_0$.
 \item  Given $\eta^1, \ldots  ,\eta^k$, let $x_j$ be
 the vertex of smallest index (if any) that is not included
 in $\eta^1 \cup \cdots \cup \eta^k$.  Let $\eta^{k+1}$
 be the unique
  path from $x^j$ to $\eta^1 \cup \cdots\cup \eta^k$.
  (If there were more than one path, then there would be
  a loop in the tree.)
  \end{itemize}
Given this decomposition of $\tree$ into $\eta^1,\ldots,\eta^k$
we can see from repeated application of Proposition
\ref{lerwprob} that the probability of choosing $\tree$ is
\[  p(\eta^1) \, F_{\eta^1}(A) \, p(\eta^2)
 \, F_{\eta^2}(A \setminus \eta^1) \cdots p(\eta^k)
  \, F_{\eta^k}(A \setminus (\eta^1 \cup   \cdots  \cup  \eta^{k-1})).\]
 but $p(\eta_1) \cdots p(\eta_k) = p(\tree;x_0)$ and 
 \eqref{funion} shows that
 \[  F(A) =  F_{\eta^1}(A)\,  F_{\eta^2}(A \setminus \eta^1)\,
 \cdots  F_{\eta^k}(A \setminus (\eta^1 \cup  \cdots  \cup  \eta^{k-1})).\]
 The second equality in \eqref{dec28.1}
 follows from \eqref{fdet}.
 \end{proof}

 A particularly interesting case of this result is   random
 walk on a graph.  Suppose $(G,\edges)$ is a simple, connected
 graph with vertices $\{x_0,x_1,\ldots,x_n\}$.
  Let us do
 the ``Type II'' version of random walk on the graph.  Then
 for each spanning tree $\tree$ of $G$ we have
 \[     p(\tree;x_0) = n^{-n} .\]
 In particular each tree is chosen with equal probability and
 this probability is
 \[              n^{-n} \, F(A) =   \frac1{n^n \, \det(I - \Prob)}
  = \frac{1}{\det[n(I-\Prob)]}.\]
  Recall that $n(I-\Prob)$ is the graph Laplacian.  We have proved
  an old result due to Kirchhoff sometimes called the matrix-tree
  theorem.
  
  \begin{corollary}[Kirchhoff] 
  The number of spanning trees of a graph is given by the determinant
  of the graph Laplacian.
  \end{corollary}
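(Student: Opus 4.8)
The plan is to read the corollary off directly from Proposition \ref{kirchhoff}, specialized to the Type II simple random walk on the given simple connected graph $(G,\edges)$ with vertices $\{x_0,x_1,\ldots,x_n\}$ and $A = \{x_1,\ldots,x_n\}$. First I would fix the root $x_0$ (connectedness guarantees the walk is irreducible, so Proposition \ref{kirchhoff} applies) and recall that a spanning tree $\tree$, once each of its edges is oriented towards $x_0$, consists of exactly $n$ directed edges $\edge_1,\ldots,\edge_n$: precisely one outgoing edge from each vertex of $A$. For the Type II walk every adjacent pair satisfies $p(x,y) = 1/n$, so each directed edge appearing in an oriented spanning tree carries weight $1/n$, whence $p(\tree;x_0) = n^{-n}$, a value that does not depend on which spanning tree was chosen.

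Next I would invoke Proposition \ref{kirchhoff}: the probability that Wilson's algorithm outputs a given spanning tree $\tree$ equals $p(\tree;x_0)\,F(A) = n^{-n}\,F(A)$, which is the same for every $\tree$. Summing this constant over all spanning trees of $G$ and using that these probabilities sum to one gives
\[ (\text{number of spanning trees of } G)\cdot n^{-n}\,F(A) = 1, \]
so the number of spanning trees equals $n^{n}/F(A)$. By \eqref{fdet} we have $1/F(A) = \det(I-\Prob)$, hence the number of spanning trees of $G$ is $n^{n}\det(I-\Prob) = \det[\,n(I-\Prob)\,]$.

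Finally I would observe that $n(I-\Prob)$, with $\Prob$ the Type II transition matrix restricted to $A$, is exactly the graph Laplacian (diagonal matrix of degrees minus the adjacency matrix) with the row and column indexed by the root $x_0$ deleted, i.e. the reduced Laplacian whose determinant Kirchhoff's theorem refers to. The only point requiring a little care — and the main ``obstacle,'' such as it is — is this bookkeeping: the full graph Laplacian is singular, so the phrase ``the determinant of the graph Laplacian'' must be read as the reduced Laplacian, and one should note that $\det[\,n(I-\Prob)\,]$ is independent of the choice of root $x_0$, which is automatic since the left-hand side of the identity above, the number of spanning trees, manifestly does not depend on $x_0$. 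No analysis is involved; all the content sits in Proposition \ref{kirchhoff} and the determinant formula \eqref{fdet}.
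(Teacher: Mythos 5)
Your proposal is correct and follows essentially the same route as the paper: specialize Proposition \ref{kirchhoff} to the Type II walk with $p(x,y)=1/n$, note $p(\tree;x_0)=n^{-n}$ so all trees are equally likely with probability $1/\det[n(I-\Prob)]$, and identify $n(I-\Prob)$ (with $\Prob$ restricted to $A$) as the graph Laplacian. Your added remark that this is the \emph{reduced} Laplacian (root row and column deleted), and that the count is independent of the choice of root, is a sensible clarification of a point the paper passes over silently.
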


  \begin{exercise}  Explain why doing Wilson's algorithm with a ``Type I''
  simple random walk on a graph generates the same distribution
  (that is, the uniform distribution)
  on the set of spanning trees.
  \end{exercise}

  \begin{exercise}  Use Proposition \ref{kirchhoff} and Exercise 
  \ref{exer4} to compute the number of spanning trees in a complete
  graph.
  \end{exercise}
  
  We will generalize this a little bit.  Given $\overline A = A \cup
  \partial A$, we define the graph obtained by {\em wiring} the boundary
  $\partial A$ to be  the graph with vertex set $A \cup \{\partial A\}$
  (that is,  considering $\partial A$
  as a single vertex) and retaining all the edges 
  with at least one vertex in $A$.     This
  gives some multiple edges between vertices in $A$  and $\partial A$, but
  we retain the multiple edges.  A {\em wired spanning tree} for $A$
  is a spanning tree for the wired graph.  Wilson's algorithm gives
  a method for sampling from the uniform distribution on wired
  spanning trees.  We can describe the algorithm recursively
  as follows.
  \begin{itemize}
  \item  Choose any vertex  $x \in A$ and let $\eta$ be LERW
  from $x$ to $\partial A$.  Let $A' = A \setminus \eta$.
  \item  If $A' \neq \eset$, choose a uniform wired spanning
  tree from $A'$.
  \end{itemize}
  What we emphasize here is that we can choose any vertex at which
  to start the algorithm, and after adding a SAW $\eta$ to the
  tree, we can choose any remaining vertex at which to continue.
  If we take the uniform wired spanning tree and restrict to
  the edges in $A$, then we call the resulting object the
  {\em uniform spanning forest} on $A$.  The terminology is
  perhaps not best because this is not the same thing as looking
  at all spanning forests of $A$ and choosing one at random.
  Since we will use this terminology we define it here.
  
  \begin{definition}
  Suppose $(\overline A,E)$ is a connected graph and $A$
  is a strict subset of $\overline A$.    
  \begin{itemize}  
  \item  The {\em uniform
  wired spanning tree} on $A$ is a spanning tree of
  $A \cup \{\partial A\}$ chosen uniformly over all spanning
  trees of the wired graph.
  \item The {\em uniform spanning forest} of $A$ is the uniform wired
  spanning tree of $A$ restricted to the edges for which both 
  endpoints are in $A$,
  \end{itemize}
  \end{definition}
  
  Wilson's algorithm applied to the simple random walk on the
  graph generates a uniform wired spanning tree and hence
  a uniform spanning forest.

  \subsection{Uniform spanning tree/forest in $\Z^d$}  \label{spansec}
  
  The uniform spanning tree in $\Z^d$  is 
  the limit as $n \rightarrow \infty$ of the uniform
  spanning forest
  on the discrete ball $C_n = \{x \in \Z^d:
  |x| < n\}$.  If $d=1$, the uniform spanning forest of $C_n$
  is all of $C_n$, so we will consider only
  $d\geq 2$.  We will use Wilson's algorithm
  to give  a different definition
  for the forest, but then we will prove it is also the
  limit of the uniform spanning forests on $C_n$.  The construction
  will yield a spanning tree of $\Z^d$ if $d=2,3,4$, but
  will only be a forest for $d \geq 5$.   The
  difference between $d \leq 4$ and $d \geq 5$ comes
  for a property about loop-erased random walk that
  we now discuss.  If $S_j$ is a simple
  random walk we write   
  \[  S[0,n] = \{S_j: 0 \leq j \leq n \}.\]

  \begin{proposition}  \label{oct5.prop1}
  If $S^1,S^2$ are independent simple
  random walks in $\Z^d$ starting at the origin,
    then
    \[  \Prob\left\{
  S^1[0,\infty) \cap S^2[0,\infty)
\mbox{ is infinite}\right\} =  \left\{ \begin{array}{ll}  1 , & d \leq 4 \\
    0, & d \geq 5 \end{array}\right. ,\]
   \[  \Prob\left\{
 S^1[0,\infty) \cap S^2[1,\infty)=
  \eset \right\}   \; \left\{ \begin{array}{ll}  = 0  , & d \leq 4 \\
    >0, & d \geq 5 \end{array}\right.  .\]
  \end{proposition}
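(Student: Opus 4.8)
The plan is to reduce the statement to classical facts about intersections of independent random walks, which are really facts about the Green's function $G(x,y)$ of simple random walk in $\Z^d$ and its rate of decay. First I would recall that, writing $N = \#\{(j,k): S^1_j = S^2_k\}$ for the (random) number of intersection points counted with multiplicity, one has
\[
\E[N] = \sum_{x \in \Z^d} \left(\sum_{j} \Prob\{S^1_j = x\}\right)\left(\sum_{k} \Prob\{S^2_k = x\}\right) = \sum_{x \in \Z^d} G(0,x)^2,
\]
using translation invariance and the fact that $\sum_j \Prob^0\{S_j = x\} = G(0,x)$. The local central limit theorem gives $G(0,x) \asymp |x|^{2-d}$ for $d \geq 3$, so $\sum_x G(0,x)^2 \asymp \sum_x |x|^{4-2d}$, which is finite if and only if $2d - 4 > d$, i.e. $d \geq 5$. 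Thus for $d \geq 5$ we get $\E[N] < \infty$, hence $N < \infty$ a.s., which immediately gives the first displayed equation in the case $d \geq 5$ and also shows the event $\{S^1[0,\infty) \cap S^2[1,\infty) = \eset\}$ can have positive probability; one still must show it is strictly positive, which follows because $\Prob\{N = 0\}$ (or $\Prob\{S^1[1,\infty)\cap S^2[1,\infty) = \eset\}$) is bounded below by a positive constant — e.g. by a second-moment/Paley–Zygmund argument using that $\E[N^2] \leq C\,\E[N]^2$ in transient high dimensions, or by a direct escape-probability estimate.

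For $d \leq 4$ I would prove the intersection set is a.s. infinite by a second-moment argument applied to the truncated counts. Let $N_n = \#\{(j,k): j,k \leq n,\ S^1_j = S^2_k\}$; then $\E[N_n] = \sum_x \left(\sum_{j\leq n}\Prob\{S^1_j = x\}\right)^2$, and a standard computation (again via the local CLT) shows $\E[N_n] \to \infty$ as $n \to \infty$ when $d \leq 4$: for $d = 4$ it grows like $\log n$, for $d = 3$ like $n^{1/2}$, for $d \le 2$ even faster (for $d\le 2$ the walks are recurrent and the statement is nearly trivial). The key estimate is the matching upper bound $\E[N_n^2] \leq C\,\E[N_n]^2$, which comes from the Markov property: given the first intersection, the future behaves like a fresh pair of walks, and transience-type bounds on return sums control the second moment. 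Then Paley–Zygmund gives $\Prob\{N_n \geq c\,\E[N_n]\} \geq c' > 0$ uniformly in $n$, and since $N_n \uparrow N_\infty := \#\{(j,k): S^1_j = S^2_k\}$, we conclude $\Prob\{N_\infty = \infty\} \geq c' > 0$. Finally a zero–one law — the event $\{N_\infty = \infty\}$ is a tail event for the pair of walks, or one uses the Hewitt–Savage / Kolmogorov $0$–$1$ law after noting it is independent of any finite initial segment — upgrades this to probability one.

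The remaining item is to pass between ``the number of intersection pairs is infinite'' and ``the intersection set $S^1[0,\infty)\cap S^2[0,\infty)$ is infinite'' as a set of sites, and to handle the slightly different event $S^1[0,\infty)\cap S^2[1,\infty)$ in the second displayed line. For $d \leq 4$: if $N_\infty = \infty$ but only finitely many distinct sites were hit jointly, then some site $x$ would be visited by one of the walks infinitely often, contradicting transience (true for $d\ge 3$; for $d\le 2$ the two walks are recurrent and meet at infinitely many sites by a direct recurrence argument, so the statement holds trivially). The shift from $S^2[0,\infty)$ to $S^2[1,\infty)$ changes nothing when the intersection is infinite, since deleting one point cannot make an infinite set finite; and in the $d \geq 5$ case the positivity $\Prob\{S^1[0,\infty)\cap S^2[1,\infty) = \eset\} > 0$ is exactly the escape-probability statement handled above. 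I expect the main obstacle to be the second-moment upper bound $\E[N_n^2] \le C \E[N_n]^2$ in dimension $d = 4$, where the logarithmic growth is borderline and one must be careful with the constants in the Green's function asymptotics and in the decomposition over the ``first'' intersection time; everything else is a routine consequence of the local central limit theorem and transience.
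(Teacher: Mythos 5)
The paper does not actually prove this proposition --- it is left as an exercise, with the hint to first compute the expected number of intersections --- and your first/second moment strategy is exactly the intended route: $\E[N]=\sum_x G(0,x)^2$ finite iff $d\geq 5$, truncated counts with Paley--Zygmund for $d\leq 4$, and transience to pass from ``infinitely many intersection pairs'' to ``infinitely many intersection sites.'' The one step that, as primarily stated, would fail is the positivity claim for $d\geq 5$: Paley--Zygmund (with $\E[N^2]\leq C\,\E[N]^2$) gives a lower bound on $\Prob\{N>0\}$, i.e.\ the wrong tail, and Markov's inequality does not help since $\E[N]$ need not be less than $1$. The standard repair is a last-intersection decomposition: let $N=\#\{(j,k): j\geq 0,\,k\geq 1,\,S^1_j=S^2_k\}$, which is a.s.\ finite for $d\geq 5$; on $\{N\geq 1\}$ take the lexicographically last pair (largest $j$, then largest $k$), for which $S^1[j,\infty)\cap S^2[k+1,\infty)=\eset$, and apply the Markov property at times $j$ and $k$ to get
\[
\Prob\{N\geq 1\}\;\leq\;\sum_{j\geq 0}\sum_{k\geq 1}\Prob\{S^1_j=S^2_k\}\cdot b
\;=\;\E[N]\,b,\qquad b:=\Prob\left\{S^1[0,\infty)\cap S^2[1,\infty)=\eset\right\},
\]
whence $b\geq \Prob\{N\geq1\}/\E[N]>0$ (and $\Prob\{N\geq1\}>0$ trivially, e.g.\ from $\Prob\{S^2_2=0\}>0$). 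Your phrase ``direct escape-probability estimate'' presumably gestures at something like this, but no argument is given, and the argument you do give does not work.

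Two further points are asserted rather than proved. The bound $\E[N_n^2]\leq C(\E N_n)^2$ in $d=3,4$ is the real work of the $d\leq 4$ case (you rightly flag $d=4$, where both sides are of order $(\log n)^2$); it is classical but is a genuine computation, not an automatic consequence of the Markov property. And $\{N_\infty=\infty\}$ is not literally a tail event of the increment $\sigma$-fields, since altering early increments translates the whole future path; the clean route is Hewitt--Savage applied to the i.i.d.\ sequence of increment pairs, after noting that by transience (for $d=3,4$) the event agrees a.s.\ with the exchangeable event $\bigcap_m\{S^1[m,\infty)\cap S^2[m,\infty)\neq\eset\}$, the recurrent cases $d\leq 2$ being handled directly as you say. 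With these repairs your outline is correct and coincides with the solution the paper's hint has in mind.
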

   
   \begin{exercise}   Prove Proposition \ref{oct5.prop1}.
  You may want to  consider  first  the expectation of
 \[\#\left[S^1[0,\infty) \cap S^2[0,\infty)
  \right].\]
  \end{exercise}

  A little harder to prove is the following.
  
  \begin{proposition} \label{oct5.3}
   If $S^1,S^2$ are independent simple
  random walks starting at the origin then
  \[   \Prob\{\hat S^1[0,\infty) \cap S^2[1,\infty)
   = \eset \}
    \; \left\{ \begin{array}{ll}  = 0  , & d \leq 4 \\
    >0, & d \geq 5 \end{array}\right.  .\]
    \end{proposition}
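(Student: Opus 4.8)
The cases $d=2$ and $d\ge5$ are immediate and of opposite character, so the real work lies in $d=3,4$. For $d\ge5$, since $\hat S^1[0,\infty)\subseteq S^1[0,\infty)$,
\[
   \Prob\{\hat S^1[0,\infty)\cap S^2[1,\infty)=\eset\}\ \ge\ \Prob\{S^1[0,\infty)\cap S^2[1,\infty)=\eset\}\ >\ 0
\]
by Proposition \ref{oct5.prop1}. For $d=2$, $S^2$ is recurrent so $0\in S^2[1,\infty)$ a.s., while $\hat S^1_0=0$, so the two ranges already meet at the origin. For $d=3,4$ I would prove the stronger statement that $\hat S^1[0,\infty)\cap S^2[1,\infty)$ is a.s.\ infinite.

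For $d=3,4$ the plan is to condition on $S^1$, so that $\gamma:=\hat S^1[0,\infty)$ is a deterministic infinite self-avoiding path through the origin, and then to show that for almost every realization $\gamma$ an independent simple random walk from $0$ returns to $\gamma$ a.s.---that is, $\gamma$ is a recurrent set. The efficient route is a second moment argument. Writing $\mathcal{D}_n=\{x\in\Z^d:2^n\le|x|<2^{n+1}\}$, set
\[
   N_n=\#\{x\in\mathcal{D}_n:\ x\in\hat S^1[0,\infty)\ \text{ and }\ x\in S^2[1,\infty)\}.
\]
Since $S^2$ is independent of $S^1$ and $\Prob\{x\in S^2[1,\infty)\}$ is comparable to the random walk Green's function, $\Prob\{x\in S^2[1,\infty)\}\asymp|x|^{2-d}$ for $d\ge3$, one gets $\E[N_n]\asymp\sum_{x\in\mathcal{D}_n}|x|^{2-d}\,\Prob\{x\in\hat S^1[0,\infty)\}$, while a bound of the expected--product form on the two--point function $\Prob\{x,y\in\hat S^1[0,\infty)\}$ yields $\E[N_n^2]\le C(\E[N_n])^2$; hence $\Prob\{N_n>0\}\ge c>0$ uniformly in $n$ by Cauchy--Schwarz. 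One then passes from ``positive probability at each scale'' to ``infinitely many scales occur a.s.''\ by using the domain Markov property of loop-erased walk (Section \ref{markovsec}) and the Markov property of $S^2$ to decouple widely separated shells, followed by a Borel--Cantelli argument; equivalently, one can phrase everything through Wiener's test and show $\sum_n 2^{-n(d-2)}\Cp(\hat S^1[0,\infty)\cap\mathcal{D}_n)=\infty$ a.s. A convenient shortcut in $d=3$ (and trivially $d=2$): every cut time $m$ of $S^1$, one with $S^1[0,m]\cap S^1[m+1,\infty)=\eset$, contributes a vertex $S^1_m\in\hat S^1[0,\infty)$, since then no vertex of $S^1[0,m]$ is ever revisited and so $LE(S^1[0,\infty))$ begins with $LE(S^1[0,m])$; thus in $d=3$ it suffices to apply the second moment method to those cut times of $S^1$ that lie in $S^2[1,\infty)$.

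The main obstacle I anticipate is the input on the one-- and two--point functions of $\hat S^1$ in $d=4$. In $d=3$ the loop-erased walk has fractal dimension strictly exceeding $d-2=1$, the needed estimates follow from known escape-probability and Green's-function bounds for three-dimensional LERW, and the relevant series diverges with room to spare (or one invokes the cut-point argument above). In $d=4$, however, $\hat S^1$ sits at the borderline dimension $d-2=2$, so the one-- and two--point functions carry logarithmic corrections which must be controlled precisely for the first and second moment estimates to close; these are exactly the delicate estimates belonging to the theory of four-dimensional loop-erased walk (cf.\ \cite{LSunWu}), and this is where the substance of the argument lies. A secondary technicality is making the decoupling across scales rigorous while accounting for the transience of $S^2$ (it may leave a ball and never return); this is handled in the usual way, by restarting $S^2$ and exploiting the reversibility/domain-Markov structure of the loop-erased walk.
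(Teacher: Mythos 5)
There is nothing in the paper to compare your argument with: the remark immediately following the proposition says explicitly ``We will not prove this,'' recording only the four-dimensional asymptotics (two independent walks avoid each other up to distance $R$ with probability comparable to $(\log R)^{-1/2}$, and a walk avoids the loop-erasure of the other with probability comparable to $(\log R)^{-1/3}$). So your proposal has to stand as a self-contained proof. Your $d\geq 5$ case (monotonicity $\hat S^1[0,\infty)\subseteq S^1[0,\infty)$ plus Proposition \ref{oct5.prop1}) and your recurrence argument for $d\leq 2$ are complete and correct. The $d=3$ reduction to cut points is also sound in principle --- a cut point of $S^1$ survives loop-erasure, so it suffices that $S^2$ hits the cut-point set --- but it still rests on unproved inputs (the a.s.\ existence and density of cut times of a three-dimensional walk and the associated two-point estimates), so even there you have a program rather than a proof.

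The genuine gap is $d=4$, which is the only hard part of the statement, and there one of your intermediate claims is false, not merely unproved. With $\mathcal{D}_n=\{x:2^n\leq |x|<2^{n+1}\}$, in $d=4$ one has $\Prob\{x\in S^2[1,\infty)\}\asymp |x|^{-2}$, while the loop-erasure of $S^1$ occupies only about $R^2(\log R)^{-1/3}$ sites of the ball of radius $R$, so that on average $\Prob\{x\in\hat S^1[0,\infty)\}\approx |x|^{-2}(\log |x|)^{-1/3}$ for $|x|\approx 2^n$; hence $\E[N_n]\asymp n^{-1/3}\rightarrow 0$ and, by Markov's inequality, $\Prob\{N_n>0\}\leq \E[N_n]\rightarrow 0$. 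No second-moment inequality can therefore deliver the uniform bound $\Prob\{N_n>0\}\geq c>0$ on which your Borel--Cantelli step relies; this is consistent with the $(\log R)^{-1/3}$ asymptotics quoted in the paper, which say that the conditional probability of a first intersection in the $n$-th dyadic shell decays like a constant times $1/n$. A correct argument at the critical dimension must accumulate intersections over growing blocks of scales and control the four-dimensional logarithmic corrections (and the strong positive correlations in the second moment) precisely --- this is the substance of Lawler's $d=4$ analysis, of the kind underlying \cite{LSunWu} --- and these are exactly the estimates you defer. So the proposal establishes the $d\geq 5$ half and the recurrent cases, but not the critical case $d=4$, and the $d=3$ case only modulo standard but nontrivial inputs.
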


  \begad
  
  We will not prove this.  The critical dimension is $d=4$.  
  The probability that two simple random walks in
  $\Z^4$ starting at neighboring points go distance
  $R$ without intersecting is comparable to $(\log R)^{-1/2}$.
  The probability that one of the walks does not
  intersect the loop-erasure of the other is comparable
  to $(\log R)^{-1/3}$. 
  
  \endad
    
   Using this proposition, we will now define the spanning
   tree/forest in the three cases.  In each case we will use
  what we will call the   {\em infinite 
  Wilson's algorithm}.  We assume that we start with
   an enumeration of $\Z^d=\{ x_1,x_2,\ldots,\}$ and  
 we have   independent simple random walks $S^j_n$ starting
   at $x_j$.   The algorithm as we state it will depend
   on the particular enumeration of the lattice, but it will follows
   from Theorem \ref{infspan} below that the distribution
   of the object is independent of the ordering.

  \subsubsection{Uniform spanning tree for $d=3,4$}
  \begin{itemize} 
  \item  Start by taking $S^1$ and erasing loops to get $\hat S^1[0,\infty)$.
  Include all the edges and vertices of $\hat S^1[0,\infty)$ in the tree.
  We call this tree (which is not spanning) $\tilde \tree_1$.
  \item  Recursively, if $x_j \in \tree_{j-1}$, we set
  $\tree_{j} = \tree_{j-1}$.
  Otherwise, consider the random walk $S^j$ and
  stop it at the first time $T^j$
   it reaches a vertex in $\tilde \tree_{j-1}$.  By Proposition
  \ref{oct5.3}, this happens with probability one. Take
  the loop-erasure
  $LE(S^j[0,T^j])$ and add those edges and vertices to the tree
  to form $\tilde \tree_j$.
  \end{itemize}
  
  This algorithm does not stop in finite time,  but it gives a spanning
  tree of the infinite lattice $\Z^d$.   To be more precise,
  suppose that $\overline C_m \subset \{x_1,\ldots,x_k\}$.
  Then every vertex in $\overline C_m$ in included in $\tilde \tree_k$,
  and it is impossible to add any more edges adjacent to a vertex
  in $C_m$.  Hence for all $n \geq k $,
  \[     \tilde \tree_k \cap C_m = \tilde \tree_n \cap C_m ,\]
  and hence we can set $\tree \cap C_m = \tilde \tree_k \cap C_m$.  Here we are writing
  $\tree \cap C_m$ for the set of edges in $\tree$ that have both
  vertices in $C_m$.

  This distribution on spanning trees
  is called the {\em uniform spanning tree} on $\Z^d, d=3,4$.
  
 \subsubsection{Uniform spanning tree for $d=2$}
 The uniform spanning tree for $d=2$ is defined  similarly.  The only
 difference is that in the first step, one takes the infinite
 LERW starting at $x_1$ as discussed in Section
 \ref{infinitelerwsec} and uses those edges to form $\tilde \tree_1$.
 The remaining construction is the same.
% \begin{itemize}  
% \item Take an infinite LERW starting at $x_1$ and add those edges
% to the tree. 
% \item  Given this start, complete the tree in the same was as $d=3,4$
% case.
% \end{itemize}
 
 \subsubsection{Uniform spanning forest for $d \geq 5$}
 The construction will be similarly to $d=3,4$ except that
 the $\tilde \tree_k$ will only be forests, that is, they
 will not necessarily be connected.
 \begin{itemize} 
  \item  Start by taking $S^1$ and erasing loops to get $\hat S^1[0,\infty)$.
  Include all the edges and vertices of $\hat S^1[0,\infty)$ in 
  the forest $\tilde \tree_1$.
  \item  Recursively, if $x_j$ is a vertex in the forest $\tilde \tree_{j-1}$,
  then we set $\tilde \tree_j = \tilde \tree_{j-1}$.
Otherwise, consider the random walk $S^j$ and
  stop it at the first time $T$
   it reaches a vertex in $\tilde \tree_j$.  It is possible that $T = \infty$.
  Erase loops
  from $S^j[0,T]$ and add those edges and vertices to the tree.
  If $T <\infty$,  this adds edges to one of the components
  of $\tilde \tree_{j-1}$. 
  If $T = \infty$, this adds the complete
  loop-erasure $\hat S^j[0,\infty)$  and hence gives
  a new connected component to the forest.
  \end{itemize}
The output of this algorithm is an infinite spanning forest $\forest$
  with an infinite number of components.

  \begin{exercise}  Show that the uniform spanning forest for
  $d \geq 5$ has an infinite number of components.
  \end{exercise}
  
 This was not the original definition of the uniform spanning tree/forest.
Rather, it was described as a limit of trees on finite subsets of $\Z^d$.
Let $C_n = \{x \in \Z^d: |x| < n\}$ and consider the {\em uniform spanning
forest} on $C_n$.  To be precise, we construct the uniform spanning tree
$\tree_n$  on
the wired graph  $C_n \cup \{\p C_n\}$ and let $\forest_n$ be
the forest in $C_n$ obtained by taking only  the edges
in $C_n$. 

For every finite set $A$, we write $A \cap \forest_n$
for  the set of edges in $\forest_n$ with
both vertices in $A$.    This gives a probability measure $\nu_{A,n}$ on 
forests in $A$.  We can also consider the probability measure
$\nu_{A}$ obtained from intersecting the infinite spanning tree with $A$.

\begin{theorem}  \label{infspan}
  If $\forest$ denotes the uniform spanning
forest (or tree) in $\Z^d$, then we can couple $\forest$ 
and $\{\forest_n: n \geq 1\}$ on the same probability space,
such that with probability one for each finite set $A$,
for all $n$ sufficiently large
\[         
    \forest_n \cap A = \forest \cap A . \]
  \end{theorem}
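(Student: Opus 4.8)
The plan is to couple $\forest$ with all the $\forest_n$ by running Wilson's algorithm for each on a common probability space, driven by a single family of simple random walks, and then to observe that the portion of the output inside any fixed ball is already decided after only finitely many steps, so one need only match those finitely many steps once $n$ is large. Fix the enumeration $\Z^d = \{x_1,x_2,\ldots\}$ and the independent simple random walks $S^j = [S^j_0 = x_j, S^j_1,\ldots]$ used in the infinite Wilson's algorithm, and let $\tilde \tree_j$ be its partial tree/forest after treating $x_1,\ldots,x_j$. For each $n$, build $\forest_n$ by running Wilson's algorithm on the wired graph $C_n \cup \{\partial C_n\}$, processing the vertices in the \emph{same} order (skipping those outside $C_n$ or already attached) and using the \emph{same} walks $S^j$, each now stopped the first time it hits the current partial tree or exits $C_n$; write $\tilde \tree^n_j$ for the partial tree after its $j$th step. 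By Proposition \ref{kirchhoff} this produces the uniform wired spanning tree on $C_n$, so restricting to edges lying in $C_n$ gives $\forest_n$.

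Now fix a finite set $A$, pick $m$ with $A \subseteq C_m$, and pick $k$ with $C_m \subseteq \{x_1,\ldots,x_k\}$. For $n$ large the $C_n$-algorithm treats $x_1,\ldots,x_k$ among its first $k$ steps, so $\tilde \tree^n_k$ contains every vertex of $C_m$, as does $\tilde \tree_k$; consequently no later step of either algorithm can traverse an edge with both endpoints in $C_m$ (such a walk would stop on first entering $C_m$). Hence $\forest \cap C_m$ and $\forest_n \cap C_m$ are already determined by the first $k$ branches, and the theorem reduces to the claim that, a.s., for all $n$ large the portions of $\tilde \tree^n_j$ and $\tilde \tree_j$ inside $C_m$ (vertices, and edges with both endpoints in $C_m$) coincide for $j \le k$. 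It is convenient to prove the stronger statement that this holds with $C_m$ replaced by an arbitrary finite set $V$, which needs only a countable intersection over $V$.

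For $d \ge 3$ I would prove the claim by induction on $j$, the enabling lemma being the stability of chronological loop-erasure under truncation: if $\omega$ is a transient path (each vertex visited finitely often) and $V$ is finite, then the portion of $LE(\omega[0,t])$ inside $V$ is eventually constant in $t$ and equals the portion of $LE(\omega[0,\infty))$ inside $V$, since the last-visit times governing the loop-erasure near $V$ are all finite. For $j=1$ this, applied to $\omega = S^1$ and together with $\inf\{t: S^1_t \notin C_n\} \to \infty$, gives the equality. For $j \ge 2$, the time $T^j$ at which $S^j$ first meets $\tilde \tree_{j-1}$ is finite a.s.\ by Proposition \ref{oct5.3} for $d = 3,4$ (for $d \ge 5$ it may be infinite, but then $\hat S^j[0,\infty)$ is well defined and the same lemma applies); the finite excursion $S^j[0,T^j]$ lies in some ball $C_{m'}$, on which $\tilde \tree^n_{j-1}$ agrees with $\tilde \tree_{j-1}$ for $n$ large by the inductive hypothesis with $V = C_{m'}$, so the $C_n$-walk $S^j$ meets $\tilde \tree^n_{j-1}$ at the same time and place and the two loop-erasures coincide; combining this branch with the inductive hypothesis (now applied with $V$ the target set) carries the equality up to $j = k$.

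For $d = 2$ everything is the same except the first step: $S^1$ is recurrent, so the infinite algorithm uses the infinite LERW from $x_1$ of Section \ref{infinitelerwsec}, defined via Property A (valid for $\Z^2$), rather than $LE(S^1[0,\infty))$. What is needed is a coupling of this infinite LERW with the LERW from $x_1$ to $\partial C_n$ under which the two agree inside $C_m$ for all $n$ large, a.s.; this upgrades the Property A limit --- convergence of the laws of the finite initial segments, hence convergence in total variation since the configurations inside the fixed finite set $C_m$ form a finite collection --- to an a.s.\ coupling by a routine construction. Granting it, the steps $j \ge 2$ go exactly as above, since by then the partial tree contains an infinite branch and the remaining walks are absorbed in finite time a.s. I expect the $d=2$ first step to be the main obstacle: one must produce an explicit coupling of the two-dimensional infinite LERW with the finite-domain LERWs that is eventually constant on every ball. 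The clean formulation of the loop-erasure stability lemma, and the verification that the propagation through the steps of Wilson's algorithm uses only finitely much information, also need some care but are routine.
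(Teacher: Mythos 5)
For $d\geq 3$ your argument is essentially the paper's proof: the same coupling (one ordering of $\Z^d$, one family of walks $S^j$, the finite-volume algorithm run with the same walks stopped at $\partial C_n$, with Proposition \ref{kirchhoff} and the order-independence of Wilson's algorithm guaranteeing the correct marginals), and the same two stabilization facts --- branches absorbed by the existing tree in finite time are eventually unaffected by the stopping at $\partial C_n$, while for branches that escape to infinity the trace of the loop-erasure on any fixed ball stabilizes by transience. Your ``truncation stability of chronological loop-erasure'' lemma is a clean repackaging of the paper's choice of radii $n_1<n_2<N$ (no return to $C_{n_1}$ after $\partial C_{n_2}$, etc.). One detail worth spelling out in the $d\geq 5$ case: the finite-volume walk $S^j$ could be stopped by hitting $\tilde\tree^n_{j-1}$ far from the ball on which the two partial trees are known to agree; you need the inductive hypothesis once more to see that this stopping time tends to infinity with $n$, and only then does your lemma apply. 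This is within the level of detail the paper itself leaves to the reader.

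For $d=2$ (which the paper leaves as an exercise) your sketch has a genuine soft spot, which you partly acknowledge. First, the naive coupling really is unusable for the first branch: by recurrence, the loop-erasure of $S^1$ stopped at $\partial C_n$, restricted to $C_m$, does not stabilize a.s.\ (the last visit to $x_1$ before exiting $C_n$ changes infinitely often), so a separate coupling of the first branch with the infinite LERW of Section \ref{infinitelerwsec} is forced, as you propose. Second, Property A gives convergence of the probabilities of finite initial segments, but the trace of the branch on $C_m$ is \emph{not} a function of any bounded-length initial segment --- the LERW from $x_1$ to $\partial C_n$ may re-enter $C_m$ after arbitrarily many steps --- so to get the total-variation convergence you assert you need a uniform-in-$n$ estimate that the branch, after reaching $\partial C_M$ for $M$ large, does not return to $C_m$; this is a genuine LERW estimate, not bookkeeping. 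Third, total-variation convergence of each marginal does not by itself yield one coupling of the whole family with a.s.\ eventual equality; you need either summable rates plus Borel--Cantelli or a Skorokhod-representation argument for the entire sequence at once (which does work here, since the trace on each $C_m$ takes values in a finite set). With those points supplied, your induction over $j\geq 2$ goes through exactly as in $d\geq 3$.
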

   
%
% \begin{proof}
% This is straightforward (if one does not worry at all about
% the rate of convergence) and we omit the proof.  We start with
% independent random walks at each point and we use the same paths
% to create the infinite forest and the forest in $C_n$.  
% \end{proof}

 \begin{proof}  If suffices to prove this for $A = C_m$,
  and we write $\tree_{n,m},\tree_{\infty,m}$
 for $\forest_{n} \cap C_m, \forest \cap C_m$,
 respectively.  We will do the $d\geq 3$ case leaving
 the $d=2$ case as an exercise.    Assume $d \geq 3$  
and choose any ordering of $\Z^d = \{x_1,x_2,\ldots\}$.  We 
assume we have a probability space on which are defined
 independent simple random walks $S^j_i$ starting at $x_j$. Given
 these random walks  the  spanning forest $\tree$ is output
using Wilson's algorithm above (it is a forest for $d \geq 5$
and a  tree for $d=3,4$, but we can use a single notation).
   For each $n$, we  construct
the uniform spanning forest on $C_n$ on the same probability
space, using Wilson's algorithm with
the same random walks and the same ordering of the points.
The only difference is that the random walks are stopped upon
reaching $\p C_n$. We recall that the distribution of this
forest  is independent of the ordering.  
If $m < n$, we will write $\forest_{\infty,m},
\forest_{n,m}$ for the forests  
restricted to $C_m$.   

We fix $m$.
Given the realization of the random walks $S^j$ we find
$N$ as follows. We write $\tilde \tree_k$ for the (non-spanning)
forest obtained from the infinite Wilson's algorithm
stopped once all the vertices $\{x_1,\ldots,x_k\}$
have been added to $\tree_k$. We write $\tilde \tree_{k,n}$
for the analogous forest for the walks stopped at
$\partial C_n$.
\begin{itemize}
\item 
Choose $k$ sufficiently large so that 
 $C_{m+1}\subset\{x_1,\ldots,x_k\}$.
In particular, every vertex in $C_{m+1}$
 has been added to $\tilde \tree_k$.   
 We partition $\{x_1,\ldots,x_k\}$
 as $V_1 \cup V_2$ where $V_1$ are the points
 $x_j$ such that $S^j[0,\infty)
  \cap \tree_{j-1} \neq \eset$.
    By definition, $x_1 \in V_2$;
   if $d=3,4$, then $V_2 = \{x_1\}$,
  but $V_2$ can be larger   for $d \geq 5$.
\item  Choose
$n_1$ sufficiently large so that for each $x_j
 \in V_1$,
The path $S^j$ hits $\tilde \tree_{j-1}$ before reaching
$\partial C_{n_1}$.  
%In particular, 
%the unique path in $\tilde \tree_k$ from $x_j$ to $x_1$ (which
%is also the unique path in $\tree_\infty$) stays within
%$C_{n_1}$.
\item  Choose $n_1 < n_2 <N $ such that each for each $j \in V_2$, the
random walk $S^j$ never returns to $C_{n_1}$ after reaching
$\partial C_{n_2}$ and never returns to $C_{n_2}$ after
reaching $\partial C_{N}$.  Note that this implies that
for every $n \geq N$, the intersection of
$C_{n_1}$  and  the loop-erasure of $S^j$ stopped
when it reaches $\p C_n$  is the same as 
intersection of
$C_{n_1}$  and  
  the loop-erasure of the infinite path.
%  
%  \item In particular, if $n \geq N$ and one does
%  Wilson's algorithm in $C_n$ and stops       after
% $\{x_1,\ldots,
% x_k\}$ have been added, then the intersection of the forest with $C_{n_1}$
% is the same as in the infinite Wilson's algorithm.
% 
% \item  Since all points in $C_{m+1}$ have been added to
% the forest, there can be no edges added to $C_m$ after
% this point, either for the infinite or the tree in $C_{n}$.
%  In particular, $\forest_{n,m} = \forest_{n,\infty}$.
\end{itemize}
Then one readily checks that for $n \geq N$,
$\tree_{n,m} = \tree_{\infty,m}$.

%  
%   When we do the infinite Wilson's
%algorithm,  we start with the infinite LERW $S^1[0,\infty)$
%at $x_1$ every point in $C_m$ is eventually added to the tree. 
%
%It suffices to show that for every $m$ and every $\epsilon > 0$,
%there exists $n_0$ such that for $n \geq n_0$, then
%\[  \Prob\{\forest_{n,m} \neq \forest_ {\infty,m}\} < \epsilon .\]
%We fix $m$  and let $k $ be sufficiently large so that
%$C_m \subset \{x_1,\ldots,x_{k}\}$.  Choose $n_1$ sufficiently
%large so that the probability that any one of the random walks
%$S^1,\ldots,S^k$ returns to $C_m$ after leaving $C_{n_1}$ is
%less than $\epsilon/3$.  Then choose $n_2$ sufficiently
%large  so that the probability that any one of the random walks
%$S^1,\ldots,S^k$ returns to $C_{n_1}$ after leaving $C_{n_2}$ is
%less than $\epsilon/3$.
%
%We now choose $n_0$ sufficiently large so that except for
%an event of probability less than  
%
%
%For an infinite path that does not return to $C_m$ after
%leaving $C_{n_1}$ and does not return to $C_2$ after 
%On the event that none of the random walks
%$S^1,\ldots,S^k$ return  to $C_m$ after leaving $C_{n_1}$ 
%and  none of the random walks
%  return   to $C_{n_1}$ after leaving $C_{n_0}$, one can
%  check that $\forest_{n,m} \neq \forest_ {\infty,m}$.
% 
\end{proof}

\begad

While this proof was not very difficult, it should be
pointed out that no estimates were given
for the rate of convergence.  Indeed, the numbers $n_1,n_2,N$
in the proof can be very large.

\endad

Given the infinite spanning tree or forest, we can also 
consider the intersection of this with a discrete ball $C_n$.
This gives a forest in $C_n$.  For $d=2,3$, the largest
components of this forest have on order $n^{d}$ points.
However, for $d=4$, there are of order $\log n$ components
of order $n^4/\log n $ points.  In other words, the uniform
spanning tree in $\Z^4$ does not look like a tree locally.

\begin{exercise}  Prove  Theorem \ref{infspan}
for $d=2$.
\end{exercise}

  \section{Loop measures and soups}  \label{loopsoupsec}
  
  \subsection{Loop measure}  \label{loopmeasuresec}
  
  Recall that a loop rooted at $x$ in $A$ is an element of
  $\paths_A(x,x)$.  We will say that $l \in \paths_A(x,x)$
  is an {\em elementary loop} if it is nontrivial ($|l| > 0$)
  and the only visits to $x$ occur at the beginning and terminal
  vertices of $l$.  We write  $\tilde \loop_x^1 = \tilde \loop_x^1(A)$
  for the set of elementary loops in $A$ rooted at $x$.   Recall that
if $q$ is an integrable weight,
\begin{equation}  \label{dec28.3}  
  f_x = \sum_{l \in\tilde \loop_x^1 } q(l), \;\;\;\;
      G_A(x,x) = \frac1{1 - f_x}.  
 \end{equation}
  Any nontrivial loop $l \in \paths_A(x,x)$ can be written uniquely
  as
\begin{equation}  \label{dec28.2}
   l = l^1 \oplus \cdots \oplus l^k , 
   \end{equation}
  where $k$ is a positive integer and $l^1,\ldots,l^k \in  \tilde \loops_x^1 $.
  We write $\tilde \loops_x^k $ for the set of loops of the form \eqref{dec28.2}
  for a given $k$, and we write  $ \tilde \loops_x^0 $ for the set containing
  only the trivial loop at $x$.  Let $ \tilde \loops_x
  = \tilde \loop_x(A)$ be the
  set of nontrivial loops, so that we have partitions
  \[   \paths_A(x,x) = \bigcup_{k=0}^\infty \tilde \loops_x^k(A) , \;\;\;\;
     \tilde \loops_x(A) = \bigcup_{k=1}^\infty \tilde \loops_x^k(A).\]
   Note that $q(\tilde \loops_x^k) = f_x^k.$
    We will define a measure on nontrivial
  loops, that is, on
  \[    \tilde \loops(A) := \bigcup_{x \in A} \tilde \loops_x(A).\]

  \begin{definition}
  If $q$ is a weight on $A$, then the {\em (rooted) 
  loop measure} $\tilde m = \tilde m^q_A$  is defined 
  on $\tilde \loops(A)$ by 
  \[   \tilde m (l)  = \frac{q(l)}{|l|}  . \]
 \end{definition}
 
 The loop measure is a measure on $\tilde \loops(A)$ and hence
 gives zero measure to trivial loops.
 It may   not be immediately clear why one would make this definition.
 The usefulness of it comes when we consider the
 corresponding measure on unrooted loops.  An  unrooted loop
 is   an oriented loop that has forgotten where the loop starts.  
 
 \begin{definition}
 An {\em (oriented) unrooted loop} $\ell$ is an equivalence class
 of rooted loops of positive length under the equivalence relation
 generated by
 \[   [l_0,l_1,\ldots,l_n] \sim [l_1,l_2,\ldots,l_n,l_1]
  \sim [l_2,l_3,\ldots,l_n,l_1,l_2] \sim \cdots.\]
  \begin{itemize}
 \item  Let $\loop(A)$ denote the set of unrooted loops in $A$.
 \item If $B \subset A$, let $\loop(A;B)$ denote the set
 of unrooted loops in $A$ that include at least one vertex in $B$.
 If $B = \{x\}$ is a singleton, we also write $\loop(A;x)$. 
% \item  We similarly write $\tilde \loop(A), \tilde \loop(A;B),
% \tilde \loop(A;x)$ for the corresponding sets of rooted loops.
% \item  We will also write $\tilde \loop_a(A)$ for the
% set of nontrivial loops in $A$ rooted at $x$.  This is the 
% same as $\paths_A(x,x)$ except that the trivial loop has been
% removed. 
 \end{itemize}
  \end{definition} 
  
  We will write $\ell$ for unrooted loops and $l$ for rooted
  loops.   We write $l \in \ell$ if $l$ is a representative
 of the unrooted loop $\ell$. 
    Note that $|l|$ and $q(l)$ are 
 the same for all representatives of an unrooted loop $\ell$
 so we can write $|\ell|$ and $q(\ell)$.  
 
  For each unrooted loop,
 let $s_\ell$ denote the number of distinct representatives
 $l$ of $\ell$.  If $s_\ell = |\ell|$ we call $\ell$ {\em
 irreducible}; we also call a rooted loop $l$ irreducible
 if its corresponding $\ell$ is irreducible.  More generally,
 if $|\ell| = n, s_\ell = s$, then each representative
 of $\ell$ can be written as
 \[                l =    \underbrace{l' \oplus \cdots
   \oplus l'}_{n/s} \]
where $l'$ is an irreducible loop of length $s$.
  For example, if $\ell$ is the unrooted loop with
representative $[x,y,x,y,x]$ we have $s_\ell = 2$ and
the two irreducible loops are $[x,y,x]$ and $[y,x,y]$.  Note
that $s_\ell$ is always an integer dividing $n$.

\begad

For rooted loops we have two different notions: elementary
and irreducible.  The words are similar but refer to different
things.  Elementary loops are irreducible but irreducible
loops are not necessarily elementary.

The notion of elementary loops is used only for rooted loops
while irreducibility is a property of an unrooted loop.

\endad

\begin{definition}  The {\em unrooted loop measure} $  m
=  m_A^q$ 
is the measure on  $\loops(A)$ induced by the rooted loop
measure.  More precisely, for every  $\ell
 \in \loops(A)$,
\[    m(\ell)  
   = \sum_{l \in \ell}\tilde m (l) =   s_\ell \, \frac{q(\ell)}
     {|\ell|}. \]
 \end{definition}
 
 \begin{definition}$\;$
 \begin{itemize}
 \item If $l$ is a loop and $x \in A$, we let $\vlocal(l;x)$ be the
 vertex local time, that is, the  number of
 times that the loop visits $x$.  To be precise, if
 \[   l =[l_0,l_1,\ldots,l_n], \] 
 then
 \[    \vlocal(l;x) = \#\{j: 1 \leq j \leq n : l_j = x \}. \]
  In particular,   $\vlocal(l;x) = k $ if $l \in 
  \tilde \loop_x^k(A)$.
% 
% \item  Similarly, if $\edge \in \dedges_A, e \in \edges_A$ we write
% $\delocal(l;\edge)$ for the number of times that $l$
% traverses the directed edge $\edge$  and
%  $\elocal(l;e)$ for the number of times 
%that $l$
% traverses the undirected edge $e$.
 
 \item  If $\ell$ is an unrooted loop, we similarly
 write $\vlocal(\ell;x).$  
 
 \end{itemize}
 
 \end{definition}

The next proposition is important.  It relates the unrooted
loop measure restricted to loops that visit $x$ to a measure
on loops rooted at $x$.

  \begin{proposition}  \label{oct1.lemma1}
  Let $m' = m'_{A,x}$ denote the measure on $
  \tilde \loop_x = \tilde \loop_x(A)$ that
  gives measure
  \[                   \frac{q(l)}{\vlocal(l;x)} \]
  to each $l \in \tilde\loop_x $.  In other words, if
  $V \subset \tilde \loop_x $, then
  \[  m '(V) = \sum_{k=1}^{\infty}
     k^{-1} \, q\left
     [V \cap \tilde \loop_x^k  \right].\] Then
  the induced measure on unrooted loops is $m$ restricted
  to $\loop(A;x)$.
  \end{proposition}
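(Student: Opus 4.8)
The plan is to set up a bijection between $\tilde\loop_x(A)$, partitioned according to the value of the vertex local time at $x$, and the set of rooted loops that represent an unrooted loop in $\loop(A;x)$, and then to track carefully how the multiplicities $s_\ell$, $|\ell|$, and $\vlocal(\ell;x)$ interact. First I would fix an unrooted loop $\ell\in\loop(A;x)$ with $|\ell|=n$, $s_\ell=s$, and let $k=\vlocal(\ell;x)$ be its (well-defined) number of visits to $x$; note $k\ge 1$ precisely because $\ell\in\loop(A;x)$. Among the $n$ rooted representatives of $\ell$ (counted with multiplicity, i.e.\ the $n$ cyclic shifts of a given representative $[l_0,\dots,l_n]$), exactly those whose basepoint is a visit to $x$ lie in $\tilde\loop_x$; by cyclic symmetry there are exactly $k$ such shifts among every block of $n$. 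But the $s$ distinct representatives are each repeated $n/s$ times among the $n$ shifts, so the number of \emph{distinct} rooted loops in $\tilde\loop_x$ that represent $\ell$ is $k\cdot s/n$ — wait, more carefully: the number of shifts landing at an $x$-visit is $k$, each distinct loop among these appears $n/s$ times, hence the count of distinct rooted loops at $x$ representing $\ell$ is $ks/n$. I should double-check this is an integer and reconcile it with the example $[x,y,x,y,x]$ (there $n=4$, $s=2$, $k=2$, giving $ks/n=1$ distinct rooted loop at $x$, namely $[x,y,x,y,x]$, which is correct).

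Next I would compute the $m'$-mass that $\ell$ receives under the pushforward to unrooted loops. Every rooted representative $l\in\tilde\loop_x$ of $\ell$ has $\vlocal(l;x)=k$ and $q(l)=q(\ell)$, so it contributes $q(\ell)/k$ to $m'$; summing over the $ks/n$ distinct such rooted loops gives total mass
\[
\frac{ks}{n}\cdot\frac{q(\ell)}{k} \;=\; \frac{s\,q(\ell)}{n} \;=\; s_\ell\,\frac{q(\ell)}{|\ell|},
\]
which is exactly $m(\ell)$ by the definition of the unrooted loop measure. Since this holds for every $\ell\in\loop(A;x)$, and since any rooted loop in $\tilde\loop_x$ represents a (unique) unrooted loop in $\loop(A;x)$, the pushforward of $m'$ under $l\mapsto\ell$ is precisely $m$ restricted to $\loop(A;x)$.

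Finally I would remark that the identity $m'(V)=\sum_{k\ge1}k^{-1}q[V\cap\tilde\loop_x^k]$ in the statement is just the unpacking of "$m'$ gives mass $q(l)/\vlocal(l;x)$ to each $l$," using that $\vlocal(l;x)=k$ exactly on $\tilde\loop_x^k$; in the complex-weight case one should note all the sums here are finite (each $\tilde\loop_x^k$ can still be infinite, but the statement is at the level of formal measures / is justified for integrable $q$ exactly as elsewhere in the section, so no convergence subtlety beyond what has already been assumed arises). The main obstacle is purely bookkeeping: getting the count of distinct rooted representatives at $x$ right, i.e.\ the factor $ks/n$, and making sure it meshes with the repetition factor $n/s$ so that the $k$'s cancel against the $1/\vlocal$ weighting. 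Everything else is formal.
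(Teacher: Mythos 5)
Your proposal is correct and is essentially the paper's argument: both compute, for each unrooted loop $\ell \in \loop(A;x)$, the number of distinct rooted-at-$x$ representatives and check that multiplying by $q(\ell)/\vlocal(\ell;x)$ recovers $s_\ell\, q(\ell)/|\ell| = m(\ell)$. The only cosmetic difference is that you obtain the count $k s_\ell/|\ell|$ by counting cyclic shifts modulo the repetition factor $|\ell|/s_\ell$, whereas the paper arrives at the same number, namely $\vlocal(l';x)$ for the irreducible loop $l'$ with $l = l' \oplus \cdots \oplus l'$, directly from that decomposition.
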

  
  \begin{proof}  Let $l$ be a representative of $\ell$
  in $\tilde\loop_x $ and let $s = s_\ell, n = |\ell|$.
  Then we can write
  \[    l = \underbrace{l' \oplus \cdots \oplus l'}_{n/s}, \]
  where $l'$ is an irreducible loop in $\tilde \loops_x $.
  The loop $l'$  is the concatenation of $n(l';x)$ elementary
  loops.
  Note that $\vlocal(l;x) = (n/s)\, n(l';x),$ and there
  are $\vlocal(l';x)$ distinct representatives of $\ell$ 
  in $\tilde\loop_x  $.
  
  \end{proof}
  
  Recall that if $B = \{x_1,\ldots,x_n\}$, then
  \[  F_B(A) = \prod_{j=1}^n G_{A_j}(x_j,x_j), \]
  where $A_j = A \setminus \{x_1,\ldots,x_{j-1}\}$.  In
  Proposition \ref{prop.sep27} (actually in 
  Exercise \ref{exer.prop.sep27}), it was shown that this is independent
  of the ordering of the points of $B$.  In the next
  proposition we give another expression for $F_B(A)$
  in terms of the unrooted loop measure
  that is clearly independent of the ordering.

  \begin{proposition}\label{funrooted}  Suppose that
  $q$ is an integrable weight on $A$.
  
  \begin{enumerate}
  
  \item If $x \in A$, 
  \[    \exp\left\{ m \left[\loop(A;x)\right]\right\}
   = G_A(x,x).\]
  
  \item  If $B \subset A$, then
  \[     F_B(A) = \exp\left\{m[\loops(A;B)]\right\}.\]
  
  \item  In particular,  
  \[   \exp\left\{m[\loops(A )]\right\} =F(A) =  \det G_A = \frac{1}{\det[I - \qprob]}. \]
  \end{enumerate}
  \end{proposition}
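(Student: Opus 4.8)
I would prove the three parts in order; part (1) carries the real computation, and parts (2)--(3) are a short induction and a specialization built on it.

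First I would establish (1). By Proposition \ref{oct1.lemma1}, the unrooted loop measure restricted to $\loop(A;x)$ is the image of the measure $m'$ on $\tilde\loop_x(A)$ that assigns mass $q(l)/\vlocal(l;x)$ to each rooted loop $l$. Summing over $l$ and grouping the loops according to the number of returns to $x$, i.e.\ according to which $\tilde\loop_x^k(A)$ they lie in (on which $\vlocal(\cdot;x)\equiv k$), I obtain
\[
   m\left[\loop(A;x)\right] = \sum_{k=1}^\infty \frac{1}{k}\, q\left[\tilde\loop_x^k(A)\right]
     = \sum_{k=1}^\infty \frac{f_x^k}{k} = -\log(1 - f_x),
\]
using $q[\tilde\loop_x^k] = f_x^k$. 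For the series to sum to $-\log(1-f_x)$ one needs $|f_x| < 1$; this holds for an integrable weight because $|f_x| \le \sum_{l \in \tilde\loop_x^1}|q(l)|$, and the right-hand side is the corresponding first-return quantity for the positive weight $|\qprob|$, which is $<1$ since $G_A^{|q|}(x,x) = 1/\bigl(1-\sum_{l}|q(l)|\bigr)$ is finite by integrability. Exponentiating and using $G_A(x,x) = 1/(1-f_x)$ from \eqref{dec28.3} gives (1).

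Next I would prove (2) by induction on $\#B$. For $B = \eset$ both sides equal $1$, and $\#B = 1$ is (1) together with $F_{\{x\}}(A) = G_A(x,x)$. For the inductive step write $B = \{x_1,\dots,x_n\}$, put $A_2 = A\setminus\{x_1\}$ and $B' = \{x_2,\dots,x_n\}$, and split $\loops(A;B)$ into the loops that visit $x_1$ --- exactly $\loop(A;x_1)$ --- and those that do not, which are exactly the loops lying in $A_2$ that meet $B'$, i.e.\ $\loops(A_2;B')$. These two families are disjoint and their union is $\loops(A;B)$. Since $q(\ell)$, $|\ell|$, and $s_\ell$ depend only on the loop $\ell$ and not on the ambient set, the restriction of $m_A^q$ to loops avoiding $x_1$ coincides with $m_{A_2}^q$ (and $q$ restricted to $A_2$ is again integrable), so
\[
   m_A\left[\loops(A;B)\right] = m_A\left[\loop(A;x_1)\right] + m_{A_2}\left[\loops(A_2;B')\right].
\]
Exponentiating, applying (1) to the first summand and the inductive hypothesis to the second, and noting $A_2 \setminus\{x_2,\dots,x_{j-1}\} = A_j$, yields $\exp\{m_A[\loops(A;B)]\} = G_A(x_1,x_1)\,F_{B'}(A_2) = F_B(A)$.

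Finally, (3) is the case $B = A$ of (2): every nontrivial loop in $A$ meets $A$, so $\loops(A;A) = \loops(A)$, whence $\exp\{m[\loops(A)]\} = F_A(A) = F(A)$, and the remaining identities $F(A) = \det G_A = 1/\det(I-\qprob)$ are \eqref{fdet}. I do not expect a genuine obstacle: the only point needing care is the $|f_x|<1$ convergence check in (1), and otherwise the combinatorial substance has been front-loaded into Proposition \ref{oct1.lemma1} and the identity $q[\tilde\loop_x^k]=f_x^k$, leaving only bookkeeping about disjoint families of loops and the behaviour of the loop measure under restriction of the state space.
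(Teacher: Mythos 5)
Your proposal is correct and follows essentially the same route as the paper: part (1) is the identical computation via Proposition \ref{oct1.lemma1} and the series $\sum_k f_x^k/k = -\log(1-f_x)$, and your induction in part (2) is just the unrolled form of the paper's partition $\loops(A;B) = \loop(A;x_1) \cup \loops(A_2;x_2) \cup \cdots \cup \loops(A_n;x_n)$ with part (1) applied repeatedly, while part (3) is the same specialization using \eqref{fdet}. The only addition is your explicit check that $|f_x|<1$ under integrability, which the paper leaves implicit; that is a fine piece of bookkeeping and changes nothing structurally.
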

  
  \begin{proof}$\;$
  \begin{enumerate}
  \item  By the previous lemma, the measure  $m$ restricted
  to $\loop(A;x)$ can be obtained from $m'=m'_{A,x}$ by ``forgetting
  the root''.  Using \eqref{dec28.3}, we get 
\[
 m\left[\loop(A;x)\right]  =  \sum_{k=1}^\infty k^{-1} \,  q
 \left[\tilde \loops_{x}^{k}(A)\right] 
  =   \sum_{k=1}^\infty k^{-1}
    \, f_x^k   \hspace{.5in}\]\[\hspace{1.5in}
     =  - \log[1-f_x] = \log G_A(x,x). 
 \]
 \item  If $B = \{x_1,\ldots,x_n\}$ and $A_j = A \setminus
 \{x_1,\ldots,x_{j-1}\}$ we partition $\loops(A;B)$ as
 \[ \loops(A;B) = \loops(A;x_1) \cup \loops(A_1;x_2)
   \cup \cdots \cup \loops(A_n;x_n) , \]
 and use part 1 $j$ times. 
 \item  This is part 2 with $B = A$ combined with \eqref{fdet}.
 \end{enumerate}
 
 \end{proof}

 The last proposition might appear surprising at first.  The first
 equality can be rewritten as
 \[  \exp\left\{\sum_{\ell \in \loop(A;x)} m^q(\ell) \right\}
     = \sum_{l \in \paths_A(x,x)} q(l). \]
 On  the right-hand side we have a measure of a set of paths and on the 
 left-hand
 side we have the {\em exponential} of the measure of a set of
 paths.  However,
 as the proof shows, this relation follows from the Taylor
 series for the logarithm,
 \[          -\log(1-q) = \sum_{k=1}^{\infty} \frac{q^k}{k}.\]
 As a corollary of this result, we see that a way to sample
 from the unrooted loop measure $m $ on $A$ is to first choose
 an ordering $A = \{x_1,\ldots,x_n\}$  and then sample
 independently from the measures on rooted loops $\tilde{m}_{x_j,A_j} $
 where $A_j = A \setminus \{x_0,\ldots,x_{j-1}\}$.  Viewed
 as  a measure
 on {\em unrooted} loops, this is independent of the ordering
 of $A$.

 \subsection{Soups}
 
 We use the word soup for the more technical term
 ``Poissonian realization'' from a measure.
 If $\state$ is a set, then a {\em multiset} of $\state$
 is a subset where elements can appear multiple times.  A more
 precise formulation is that a multiset of $\state$ is
 an element of $\N^\state$, that is, a function
 \[      N: \state \rightarrow \N , \]
 where $N(x) = j$  can be interpreted as saying
  that the element $x$ appears $j$
 times in the multiset.  Here $\N = \{0,1,2,\ldots
 \}$.  Let $\Nfin^\state$ denote the {\em finite}
 multisets, that is, the $N$ such that $\#(x:
 N(x) > 0\}$ is finite.
 
 \begin{definition}  If $\mu$ is a positive measure on a
 countable state space $\state$, then a soup is a
 collection of independent Poisson processes
 \[        \overline N_t := \{N^{x}_t: x \in \state \}  , \]
 where $N^x$ has rate $\mu_x = \mu(x)$. 
 
 \end{definition}
 
 If $\mu$ is a finite measure, then ${\bf N}_t
 \in \Nfin^\state$ with probability one and 
 the distribution of the
 soup at time $t$ is
 \[      \Prob\{{\bf N}_t = \overline N\}
   =   e^{-t \|\mu \|}\prod_{x \in \state}
     \frac{(t\mu_x)^{N(x)}}{N(x)!} .\]
Although the product is formally an infinite product,  since
$\overline N \in \Nfin^\state$, all but a finite
number of terms equals one.  We can give an alternative
definition of a loop soup in terms of the distributions.
This definition will not require the measure to be
positive, but it will need to be a complex measure.
In other words, if $\mu_x$ denotes the measure of $x$,
then we need
\[     \|\mu  \| := \sum_{x \in \state}
   |\mu_x| < \infty . \]

\begin{definition}  If $\mu$ is a complex measure
on a countable set $\state$, then the  soup
is the  collection of complex measures $\{\nu_t\}$
on $\Nfin^\state$ given by
\begin{equation}  \label{complexsoup}
     \nu_t\left[ \, \overline N\, \right]
   =   e^{-t \mu(\state)}\prod_{x \in \state}
     \frac{(t\mu_x)^{N(x)}}{N(x)!} . 
\end{equation}
    \end{definition}

The generalization to complex measures is straightforward but
it is not clear if there is a probabilistic intuition.  Let us consider
the simple case of a ``Poisson random variable with parameter 
$ \lambda \in \C$''.  This does not make literal
sense but one can talk about
its ``distribution'' which is the complex measure $\nu$ on
$\N$ given by
\[             \nu(k) = e^{-\lambda} \, \frac{  \lambda ^k}{k!},\;\;\;\;k=
0,1,2,\ldots. \]
As in the positive case, $\nu(\N) = 1$; however, the total variation
is larger,
\[               \|\nu\| = \sum_{k=0}^\infty
   \left|  e^{-\lambda} \, \frac{ \lambda^k}{k!}\right| = e^{  
      |\lambda|- \Re(\lambda) }. \]
  Using this calculation, we can see that $\nu_t$ as defined
 in \eqref{complexsoup} is a complex measure on $\Nfin^\state$ 
of total variation
\[    \|\nu_t\| = \prod_{x \in \state} \exp\left\{t[
 |\mu_x| - \Re(\mu_x)]  \right\}=
\exp\left\{
t(\|\mu\| - \Re[\mu(\state)])\right\}. \]

\subsection{The growing loop at a point}  \label{growingsec}

In this subsection, we fix $x \in A$ and an integrable
 weight $q$ and consider
loops coming from the measure $m' = m'_{A,x}$ as in 
Proposition \ref{oct1.lemma1}.   We first consider
the case of positive weights $p \geq 0$. Recall that
the  measure $m'$ is given by
\[  m'(l^1 \oplus \cdots \oplus l^k)
         = k^{-1} \, p(l^1) \cdots p(l^k), \;\;\;\;
          l^j \in 
          \tilde \loop^1 := \tilde \loops^1_x(A)  .\]
 Sampling from $m'$ can be done in a two-step method,
 \begin{itemize}
 \item Choose $k \in \N$ from the measure
        \[       \nu(k) = \frac{1}{k} \, f^k\;\;\;\;
         \mbox{where } f= f_x  = \sum_{l \in\tilde  \loops^1} p(l) , \]
   \item Given $k$, choose $l^1,\ldots, l^k$ independently
   from the probability  measure $p/f$ on $\tilde \loops^1$.
  \end{itemize}

 At time $t$, the soup  outputs a (possible empty)
 multiset of loops in $\tilde \loops_x  $.  If
 we concatenate them in the order they appear, we get
 a single loop in $\paths_A(x,x)$, which we denote
 by $l(t)$.  We can also write $l(t)$ as 
  a concatenation
 of elementary loops. 
 If no  loops have
 appeared in the soup, then the concatenated
 loop is defined to be the  trivial
 loop $[x]$. 
% If $s < t$, then we can write
% \[   l(t) = l(s) \oplus l(s,t), \]
% where $l(s,t)$ is the loop obtained by concatenating
% the loops that arrive in the interval $(s,t]$.  

\begin{definition}
The process $l(t)$ is
the {\em growing loop} (in $A$ at $x$ induced
by $p$) at time $t$. 
\end{definition}

  \begad
 
  The growing loop at time $t$ is a concatenation of
 loops in $\tilde \loops_x(A)$; if we only view the
 loop $l(t)$ we cannot determine how it was formed
 in the soup. 
 
 \endad
 
 The growing loop can also be defined
as the  continuous time Markov
 chain with state space $\paths_A(x,x)$ which starts
 with the trivial loop and whose transition rate of
 going from $\tilde l$ to $\tilde l \oplus l$ is
 $m'(l)$.  The next proposition computes the distribution
 of the loop $l(t)$.

 \begin{proposition}  $\;$  \label{prop.growingloop}
 \begin{itemize}
 \item The distribution of the 
 growing loop at $x$ 
 at time $t$,  is
 \[  \mu_t(l)   
  = \frac{1}{G_A(x,x)^t} \, 
  \frac{\Gamma(k +t)}
  {k! \, \Gamma(t)} \, p(l)\;\;\;\;
       l \in\tilde \loops_x^k(A).\]
   \item In particular, the distribution
   at time $t=1$ is given by
  \[  \mu_1(l)   
  = \frac{p(l)}{G_A(x,x) } \, 
   \;\;\;\;
       l \in\tilde \loops_x^k(A).\]
       \end{itemize}
  \end{proposition}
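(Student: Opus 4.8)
The plan is to read off the distribution of $l(t)$ directly from the soup construction, reducing everything to a single generating-function identity. Throughout write $G = G_A(x,x)$. Since $p$ is integrable we have $f_x < 1$, so $m' = m'_{A,x}$ is a finite measure with total mass
\[ \|m'\| = \sum_{k \geq 1} k^{-1}\, q\!\left[\tilde\loops_x^k\right] = \sum_{k\geq 1} \frac{f_x^k}{k} = -\log(1-f_x) = \log G . \]
Every $l \in \tilde\loops_x$ has a unique decomposition $l = l^1 \oplus \cdots \oplus l^m$ into elementary loops $l^i \in \tilde\loops_x^1$, and $m'(l) = m^{-1}\, p(l^1)\cdots p(l^m)$. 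So the soup from $m'$ is equivalently a Poisson process whose events are \emph{chunks} — ordered tuples $(l^1,\dots,l^m)$ of elementary loops — the chunk $(l^1,\dots,l^m)$ arriving at rate $m^{-1} p(l^1)\cdots p(l^m)$, and $l(t)$ is the concatenation, in order of arrival, of all chunks present at time $t$, read as one sequence of elementary loops.

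Next I would compute $\mu_t(l)$ for a fixed $l = l^1 \oplus \cdots \oplus l^k \in \tilde\loops_x^k$ (the case $k=0$, $l = [x]$, being immediate: no chunk has arrived, probability $e^{-t\|m'\|} = G^{-t}$). The event $\{l(t) = l\}$ is the disjoint union, over $j \geq 1$ and ordered compositions $k = m_1 + \cdots + m_j$ with $m_i \geq 1$, of the event that exactly $j$ chunks have arrived and that, in order of arrival, they are the successive blocks $(l^1,\dots,l^{m_1})$, $(l^{m_1+1},\dots,l^{m_1+m_2})$, and so on. The superposition of all chunk-processes is Poisson of rate $R := \|m'\| = \log G$, and by the coloring property of Poisson processes the types of the first $j$ arrivals, in order, are independent with the appropriate normalized rates; hence this event has probability $e^{-tR}\,\frac{(tR)^j}{j!}\,\prod_{i=1}^j \frac{r_i}{R}$, where $r_i = m_i^{-1}\prod_{a} p(l^a)$ is the rate of the $i$-th block (product over the indices $a$ in that block). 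Summing over $j$ and the compositions, the factor $\prod_{a=1}^k p(l^a) = p(l)$ comes out, the powers of $R$ cancel, and $e^{-tR} = G^{-t}$, leaving
\[ \mu_t(l) = G^{-t}\, p(l)\, \sum_{j=1}^k \frac{t^j}{j!} \sum_{\substack{m_1 + \cdots + m_j = k\\ m_i \geq 1}} \prod_{i=1}^j \frac{1}{m_i}. \]

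Finally I would evaluate the combinatorial sum. It is $[z^k]$ applied to $\sum_{j\geq 0} \frac{t^j}{j!}\big(\sum_{m\geq 1} z^m/m\big)^j = \exp\{-t\log(1-z)\} = (1-z)^{-t}$, hence equals $\binom{t+k-1}{k} = \Gamma(k+t)/(k!\,\Gamma(t))$, the claimed constant (which also handles $k=0$). Putting $t = 1$ gives $\Gamma(k+1)/(k!\,\Gamma(1)) = 1$ and so $\mu_1(l) = p(l)/G_A(x,x)$, the second assertion. (As a check, $\sum_{l} \mu_t(l) = G^{-t}\sum_{k} \binom{t+k-1}{k} f_x^k = G^{-t}(1-f_x)^{-t} = 1$, so $\mu_t$ is indeed a probability measure.) I expect the only delicate point is the second step: setting up the chunk-Poisson-process and justifying the coloring/temporal-ordering computation, including that it is unaffected when consecutive blocks coincide as types. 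An alternative that bypasses this is to write the Kolmogorov forward equation for the continuous-time chain $l(t)$, substitute the ansatz $\mu_t(l) = G^{-t} c_{n(l;x)}(t)\, p(l)$, and solve the recursion $c_k'(t) = \sum_{m=1}^k c_{k-m}(t)/m$, $c_0 \equiv 1$, $c_k(0)=0$ for $k\geq1$, via the same generating function (noting the chain is nonexplosive since every state has constant total jump rate $\log G$).
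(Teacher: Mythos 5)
Your argument is correct, and it reaches the negative binomial constant by a somewhat different route than the paper. The paper factors the computation through the counting process $K_t$ of elementary loops: it asserts the two-step structure (given $K_t=k$, the elementary loops are i.i.d.\ from $p/f$) and then identifies the law of $K_t$ as negative binomial by viewing it as a compound Poisson/L\'evy process and matching characteristic functions, using the binomial series for $(1-f)^{-t}$. You instead compute $\Prob\{l(t)=l\}$ directly from the soup: superpose the Poisson processes into one process of rate $\log G_A(x,x)$, use the marking property to get the ordered chunk types i.i.d., sum over the number of arrivals and over compositions of $k$ into block sizes, and evaluate the resulting sum as $[z^k]\exp\{-t\log(1-z)\}=[z^k](1-z)^{-t}$ --- the same series identity, extracted combinatorially rather than through L\'evy-process uniqueness. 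Your version is more self-contained (it also supplies the conditional i.i.d.\ structure that the paper only asserts, and your disjointness-over-compositions step, including the case of repeated chunk types, is handled correctly by conditioning on the ordered sequence of marks), while the paper's version isolates the negative binomial process $K_t$ as an object in its own right, which it then reuses for the complex-weight extension and the bubble soup. Your closing remarks --- the total-mass check and the forward-equation alternative --- are consistent with the paper (the latter is essentially the exercise on $q(t,r)$ following the proposition).
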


 \begad
The expression involving the
Gamma function is also written as 
 the general binomial coefficient defined by
  \[ \binom{k+t-1}{k}
    := \frac{(k+t-1) \, (k+t-2) \, \cdots
      \, (k+t - k)}{k!} = \frac{\Gamma(k +t)}
  {k! \, \Gamma(t)} .\]
 We choose to use the Gamma function form because we
 will occasionally use properties of the Gamma function.
 
 \endad
 
 \begin{proof}
  We can    
 decompose  the growing loop at time $t$ into   a number
 of elementary loops $l^j \in \tilde \loops^1 $. 
 Let $K_t$ be the number of elementary loops in $l(t)$.
   Given $K_t$, the elementary loops $l^1,
 \ldots,l^k$ are chosen independently from the
 measure $p/f$.

%   We will consider the measure induced
% on $\paths_A(x,x)$ by the soup.  Note that we lose information
% when we concatenate the loops, but it is only this combined
% loop that will be important for us here.
 
  To  compute $\mu_t$
 we first consider the distribution 
 on $\N$ for the number of elementary loops at $t$.
 Given the number of such loops, the actual loops are 
 chosen independently from $p/f$.  In other words,
 the distribution $\mu_t$ at time $t$ can be written
 as
 \[   \mu_t[l^1 \oplus \cdots \oplus l^k]
   = \Prob\{K_t = k\}\;  \frac{p(l^1) \cdots p(l^k)}
          {f^k} , \;\;\;\; l^1,\ldots,l^k \in \tilde\loops^1.  \]
% where  $K_t$ is the continuous
% time Markov chain on $\N$ that  jumps  from
% $j$ to $j+k$  at rate $f^k/k$.
The process $K_t$ is sometimes called the
 {\em negative binomial process} with parameter
 $f$. 
It can  also be viewed as the L\'evy process with L\'evy measure
 $f^k/k$,
which can be written    as a   compound
 Poisson process  
 \[   K_t = Y_1 + \cdots +Y_{N_t},\]
 where $N_t$ is a Poisson process with parameter
 $m' (\tilde \loops  ) = -\log(1-f)$, and $Y_1,Y_2,\ldots$ are independent
 random variables with distribution
 \[  \Prob\{Y_j = k\} = \frac{1}{-\log(1-f)} 
 \, \frac{f^k}{k}.\]
 The distribution of $K_t$ is given by (see remark below)
\begin{equation}  \label{negbin}
\Prob\{K_t = r\}  = \frac{\Gamma(r +t)}
  {r! \, \Gamma(t)}\,f^r \, (1-f)^t = \binom{r+t-1}{r}
   \,f^r \, (1-f)^t .
  \end{equation}
%Here the binomial coefficient is defined by
%\[   \binom{r+s}{r}:= \frac{(r+s) \,(r+s-1) \cdots (r+ s - (r-1))}
%       {r!}   . \]  
%  This can be checked by noting that $p(t,r) :=
%  \Prob\{K_t = r\}  $ satisfies
%  \[     \p_t p(t,r) = \log (1-f)
%   p(t,r) + \sum_{j <  r}
%       p(t,r-j)  \, \frac{f^j}{j}.\]
 Therefore if $l = l^1 \oplus \cdots \oplus l^k$ with
 $l^j \in \tilde \loops^1$,  
 \[ \mu_t(l) = \left[\frac{\Gamma(k +t)}
  {k! \, \Gamma(t)}\,f^k \, (1-f)^t\right] \,
   \frac{p(l^1) \cdots p(l^k)}
          {f^k} = \frac{\Gamma(k +t)}
  {k! \, \Gamma(t)} \, (1-f)^t \, p(l).\]
%  
%  
%% If $K_t = 0$, then the soup is empty, that is,
%% it outputs the trivial loop at $x$.
%% If $K_t =k > 0$, then the loop is the
%% concatenation of $k$ elementary loops each
%% chosen independently.  That is, the marginal 
%% distribution at time $t$
%% on a loop $l = l^1 \oplus \cdots
%% \oplus l^k$ given that $K_t = t$, is
%% \[ 
%%  \left[\frac{\Gamma(r +t)}
%%  {r! \, \Gamma(t)}\,f^r \, (1-f)^t\right]
%%    \,  \frac{q(l^1)}{f}
%%     \, \frac{q(l^2)}{f} \, \cdots \,
%%      \frac{q(l^k)}{f}. \]
Recalling that
 $(1-f) = 1/G_A(x,x)$, we get the result.
\end{proof}

 \begad
 Here we discuss some facts about the negative
 binomial process $K_t$ with parameter
 $p\in(0,1)$.  At time $1$, $K_1$ will have a geometric
 distribution with parameter $p$,
 \[   \Prob\{K_1 = k\} = p^{k}(1-p), \;\;\; k=0,1,2,\ldots,\]
and hence
\[     \E\left[e^{isK_1}\right] = \sum_{k=0}^\infty e^{iks} \, p^k \,
  (1-p) = \frac{1-p}{1-pe^{is}}.\]
  Since $K_t$ is a L\'evy process, we see that 
%  It is the L\'evy process with
% L\'evy measure $\nu$ where $\nu(k) = f^k/k$.   The
% characteristic function   is
% \[    \int_{-\infty}^\infty x \, \nu(dx)
%   = \sum_{j=1}^\infty \frac{j \, f^k}{j}
%     = \frac1{1-f}.\]
 the characteristic function of $K_t$ must be 
  \[   \left[\frac{1-p}{1-pe^{is}}\right]^t.\]
To check that \eqref{negbin} gives the distribution
for $K_t$, we compute the characteristic function.
  Using the binomial expansion (for positive, real $t$),
  we see that
  \[  (1-p)^{-t} = \sum_{k=0}^\infty \frac{\Gamma(k+t)}
     {k! \, \Gamma(t)} \, p^k,\]
  which shows that 
  \[   \nu_t(k) := \frac{\Gamma(k+t)}
     {k! \, \Gamma(t)} \, p^k\, (1-p)^t ,\]
 is a probability distribution on $\N$.  Moreover, if $K_t$
 has distribution $\nu_t$,
 \[  \E\left[e^{isK_t}\right] 
 = \sum_{k=0}^\infty \frac{\Gamma(k+t)}
     {k! \, \Gamma(t)} \, p^k\, (1-p)^t \, e^{iks} =
         \left[\frac{1-p}{1-pe^{is}}\right]^t.\]
%Note that 
%\[  \frac{\Gamma(k+t)}
%     {k! \, \Gamma(t)} = \frac{(k+t-1) \, \cdots (k+ t - k)}
%       {k!} = \binom{k+ t-1}{k}. \]  
     
     \endad
 
 \begin{exercise}  
 Let $f \in (0,1)$ and let $\mu_t$ denote
 the probability distribution on $\N$ given
 by
 \[   \mu_t(k) =
 \Prob\{K_t = k\}  = \frac{\Gamma(k +t)}
  {k! \, \Gamma(t)}\,f^k \, (1-f)^t.\]
   Show that for each $k$,
  \[   \lim_{t \downarrow 0}
      t^{-1} \, \mu_t(k) = 
  \frac{1}{-\log(1-f)} 
 \, \frac{f^k}{k}.\]
 
 \end{exercise}

 We can extend the last result to show a general principle
 \begin{itemize}
 \item  The distribution of the loops erased in a LERW is
 the same as that of the appropriate soup at time
 $t=1$.
 \end{itemize}

\begin{corollary}
Suppose $\omega \in\paths_A(x,y)$
and $l$ denotes the loop erased at $x$ in the
definition of $LE(\omega)$.  Then the distribution
of $l$ is the same as the distribution of the growing
loop at time $t=1$.
\end{corollary}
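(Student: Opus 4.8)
The plan is to compute both distributions and observe they are the same probability measure on $\paths_A(x,x)$. By Proposition \ref{prop.growingloop} the growing loop at time $t=1$ has law $\mu_1(l) = p(l)/G_A(x,x)$ for $l \in \tilde\loops_x^k(A)$; since $\paths_A(x,x) = \bigcup_{k\ge 0}\tilde\loops_x^k(A)$ and the $k=0$ term is the trivial loop $[x]$ with $p([x]) = 1$, the measure $\mu_1$ is simply $l \mapsto p(l)/G_A(x,x)$ on all of $\paths_A(x,x)$. So it suffices to show that the loop erased at $x$ has this same law.

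For that I would invoke the loop decomposition from the proof of Proposition \ref{sep27.prop1}: writing $\eta = LE(\omega) = [\eta_0,\ldots,\eta_m]$ with $\eta_0 = x$, the walk factors uniquely as $\omega = l^0 \oplus [\eta_0,\eta_1] \oplus l^1 \oplus \cdots \oplus l^m$ with $l^0 \in \paths_A(x,x)$, and the loop erased at $x$ is precisely $l^0$. By Proposition \ref{erasedloops} (or Proposition \ref{inferasedloops} in the transient setting), conditioned on the loop-erasure $\eta$ the loop $l^0$ has distribution $\Prob\{l^0 = l\} = p(l)/G_{A_0}(\eta_0,\eta_0)$, which equals $p(l)/G_A(x,x)$ because $A_0 = A$ and $\eta_0 = x$. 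The key observation is that this conditional law is the \emph{same} for every admissible $\eta$, hence is also the unconditional law of $l^0$; comparing with the previous paragraph finishes the proof. (Equivalently and more directly, in the LERW-to-$\partial A$ picture $\omega$ splits uniquely as $l^0 \oplus \omega^+$ with $\omega^+$ a path from $x$ to $\partial A$ that never returns to $x$; then $p(\omega) = p(l^0)\,p(\omega^+)$, and summing over $\omega^+$ gives $\Prob\{l^0 = l\} = p(l)\,(1-f_x) = p(l)/G_A(x,x)$ by \eqref{firstreturn}.)

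I do not expect a genuine obstacle here: the statement is essentially the $j=0$ instance of Proposition \ref{erasedloops} packaged against Proposition \ref{prop.growingloop}. The only subtlety worth a line of care is making sure the trivial loop $[x]$ is accounted for on both sides (it is the $k=0$ term of $\mu_1$ and the $l^0 = [x]$ case of the erased loop), so that we really are comparing two probability measures rather than just their restrictions to the nontrivial loops.
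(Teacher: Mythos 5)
Your proposal is correct and is essentially the paper's argument: the paper proves the corollary by exactly this comparison, noting that Proposition \ref{erasedloops} (the $j=0$ case, where $A_0=A$ and $\eta_0=x$) gives the erased loop the law $p(l)/G_A(x,x)$, which matches the $t=1$ growing-loop law from Proposition \ref{prop.growingloop}. Your extra remarks (constancy of the conditional law in $\eta$, the first-return normalization via \eqref{firstreturn}, and the trivial-loop bookkeeping) just make explicit what the paper leaves as ``immediate.''
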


\begin{proof}  This follows immediately by comparison
with Proposition \ref{erasedloops}.
\end{proof}

The growing loop distribution is also defined for complex
integrable weights $q$ although some of the probabilistic
intuition disappears. Let $f = f_x$ be
as before although now $f$ can be complex.  Integrability
implies that $|f| < 1$, so we can define the negative
binomial distributions
\[   \nu_t(k) =  \frac{\Gamma(r +t)}
  {r! \, \Gamma(t)}\,f^r \, (1-f)^t.\]
Since $|f| < 1$, this gives a complex measure on
$\N$ with
\[  \nu_t(\N) = \sum_{j=0}^\infty 
\frac{\Gamma(r +t)}
  {r! \, \Gamma(t)}\,f^r \, (1-f)^t = 1  .\]
  We can then define
\begin{equation}  \label{dec13.1}
  \mu_t(l) = \frac{1}{G_A(x,x)^t} \, 
  \frac{\Gamma(k +t)}
  {k! \, \Gamma(t)} \, q(l)\;\;\;\;
       l \in\tilde \loops_x^k(A), 
       \end{equation}
   and we can check that $\mu_t$ is a complex measure
  on $\tilde \loops_x(A)$ with $\mu_t[
  \tilde \loops_x(A)] = 1$.
If $q$ is \greenable but
not integrable, the formula \eqref{dec13.1}
 defines a function on $[0,\infty) \times \tilde \loops_x^k(A)$
 although $\mu_t$ is   not necessarily   a measure.

 \begad
 
 As in the case of positive weights, we can view the measure
 $\mu_k$ for integrable $q$
  in two steps: first, choose $k$ according to (the complex
 measure) $\nu_t$
 and then, given $k$, choose independent
 $l^1,\ldots,l^k$ from the
 measure $q/f$.  This latter measure gives measure one to $\tilde \loops^1_x$,
 although it is not a probability measure since
 it is  not  a  positive measure.

 \endad

 \begin{exercise}

Let $z$ be in the open unit disk of the complex
plane and \[ q(t,r) = 
\frac{\Gamma(r +t)}
  {r! \, \Gamma(t)}\,z^r \, (1-z)^t, \;\;\;\;
  r =0,1,2,\ldots \]
Verify directly that $q(t,r)$ is the solution
of the system 
\[ \p_t \, q(t,r) = \log (1-z) \, q(t,r)
 + \sum_{k=1}^{r}
  q(t,r-k) \, \frac{z^k}{k} .\]
  with initial condition
  \[  q(0,r)  = \left\{ \begin{array}{ll}
   1, & r  =0,\\
   0, & r \geq 1.  \end{array}  \right.\]
 You may wish to derive or look up properties
 of the logarithmic derivative of the Gamma
 function,
 \[   \psi(x) = \frac{\Gamma'(x)}{\Gamma(x)}.\]
  
\end{exercise} 
\subsection{Random walk bubble soup}  \label{bubblesec}

We continue the discussion of the previous subsection to
define what we call the {\em (random walk)
bubble soup}.  We start with a finite set
$A$ and an ordering of the points $A = \{x_1,\ldots,
x_n\}$.  As before, we write $A_ j =A \setminus
\{x_1,\ldots,x_{j-1}\}$. 

\begin{definition}
The {\em (random walk) bubble soup} (for the
ordering $x_1,\ldots,x_n$) is an increasing
collection of multisets from $\tilde \loops(A)$
obtained by taking the union of the independent
soups from $m'_{x_j, A_j}$.

\end{definition}

\begad

The colorful terminology ``bubble soup'' come from the
relation between this discrete construction and a
construction of the Brownian loop soup in terms
of  (boundary) ``bubbles''.

\endad

By concatenation we
can also view the bubble soup as an $n$-tuple of
growing loops ${\bf l}(t) = (l^1(t),\ldots,l^n(t)) ,$ 
where $l^j(t)$ is the loop growing at $x_j$ in $A_j$.
These loops grow independently (although, of course, their
distribution depends on the ordering of $A$).  
More generally, if $B = \{x_1,\ldots,x_k\} \subset A$
is an ordered subset of $A$, we can 
define the bubble soup restricted to $\tilde \loops(A;B)$
 as a collection of
growing loops ${\bf l}(t) = (l^1(t),\ldots,l^k(t)) .$ 
The following is an immediate consequence of 
Proposition \ref{prop.growingloop} and the relation
\[     \det G= \prod_{i=1}^n G_{A_i}(x_i,x_i).\]

\begin{proposition}  The distribution of the bubble soup
at time $t$ is given by
\[\mu_t({\bf l}) =  \frac{ q({\bf l}) }{[\det G]^t} \, 
 \left[\prod_{i=1}^n  \frac{\Gamma(j_i +t)}
  {j_i! \, \Gamma(t)} \right] 
    , \]
    where ${\bf l} = (l^1,\ldots,l^n)$; $q({\bf l})
    = q(l^1) \cdots q(l^n)$; and 
  $j_i$ is   the number of elementary loops
in $l^i$, that is,
$   l^i \in \tilde \loops_{x_i}^{j_i}(A_i) .$
In particular,
\[   \mu_1({\bf l}) =  \frac{q({\bf l})}{ \det G  } ,\]
\begin{equation}  \label{onehalfsoup}
 \mu_{1/2}({\bf l}) = 
\frac{ q({\bf l})}{\sqrt{\det G}} \, 
 \left[\prod_{i=1}^n  \frac{\Gamma(j_i +\frac 12 )}
  {j_i! \, \sqrt \pi } \right]  .
  \end{equation}
\end{proposition}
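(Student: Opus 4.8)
The plan is to obtain the joint distribution simply by multiplying together the one-coordinate distributions already computed in Proposition~\ref{prop.growingloop}, exploiting the independence that is built into the definition of the bubble soup.

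First I would recall that, by construction, the bubble soup for the ordering $x_1,\ldots,x_n$ is the union of the \emph{independent} soups from the measures $m'_{x_j,A_j}$, and that concatenating the loops of the $j$-th of these soups produces exactly the growing loop $l^j(t)$ at $x_j$ inside $A_j$ (in the sense of Section~\ref{growingsec}). Hence ${\bf l}(t)=(l^1(t),\ldots,l^n(t))$ is a vector of independent growing loops, the $i$-th living in $A_i$ and growing at $x_i$.

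Next I would apply Proposition~\ref{prop.growingloop} — in the complex-weight form \eqref{dec13.1} — to each coordinate: for $l^i\in\tilde\loops_{x_i}^{j_i}(A_i)$ the time-$t$ law of $l^i(t)$ assigns mass
\[
  \frac{1}{G_{A_i}(x_i,x_i)^t}\,\frac{\Gamma(j_i+t)}{j_i!\,\Gamma(t)}\,q(l^i).
\]
Taking the product over $i$ (legitimate by independence), and using both $q({\bf l})=q(l^1)\cdots q(l^n)$ and the identity $\prod_{i=1}^n G_{A_i}(x_i,x_i)=\det G$ (which is \eqref{sep27.3} with $B=A$ combined with \eqref{fdet}), the factors $G_{A_i}(x_i,x_i)^{-t}$ collapse to $[\det G]^{-t}$ and the displayed formula for $\mu_t({\bf l})$ drops out. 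The two special cases are then pure bookkeeping: at $t=1$ one has $\Gamma(j_i+1)/(j_i!\,\Gamma(1))=1$ for each $i$, giving $\mu_1({\bf l})=q({\bf l})/\det G$; at $t=1/2$ one uses $\Gamma(1/2)=\sqrt\pi$ and $[\det G]^{1/2}=\sqrt{\det G}$ to obtain \eqref{onehalfsoup}.

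There is no genuine obstacle here; the statement is a direct corollary. The only point deserving a sentence of care is the independence of the coordinate growing loops — immediate from the definition of the bubble soup as a union of independent soups — and, in the complex-weight case, reading ``distribution'' as the complex measure of \eqref{dec13.1}, so that the product of the coordinate complex measures is the intended complex measure on $\prod_{i=1}^n \tilde\loops_{x_i}(A_i)$.
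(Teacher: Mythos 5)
Your proposal is correct and is essentially the argument the paper intends: the proposition is stated there as an immediate consequence of Proposition \ref{prop.growingloop} together with the identity $\det G=\prod_{i=1}^n G_{A_i}(x_i,x_i)$, using the independence of the coordinate growing loops built into the definition of the bubble soup. Your handling of the special cases $t=1$ and $t=1/2$ and the remark on the complex-weight case match what is needed.
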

 
Note that we
can write 
\[  \mu_t({\bf l}) = c(t,{\bf l}) \,  \frac{ q({\bf l}) }{[\det G]^t}, \]
where $c(t,{\bf l})$ is a combinatorial term, independent
of $q$,
\[   c(t,{\bf l}) = 
\prod_{i=1}^n  \frac{\Gamma(j_i +t)}
  {j_i! \, \Gamma(t)}.\]

\begin{proposition}  Suppose $p$ comes from a Markov chain.
Suppose $x \in A$, and the loop-erased walk from $x$ to $\partial
A$ is   
\[   \eta = [\eta_0 = x_0,x_1,\ldots,x_n, \eta_{n+1}].\]
Then given that the loop-erasure is $\eta$, the distribution
of the loops erased is the same as the growing loops restricted
to $\eta$ using the ordering $\{x_0,\ldots,x_n]$.
\end{proposition}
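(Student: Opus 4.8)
The plan is to reduce the proposition to two results already in hand: the conditional law of the erased loops given the loop-erasure, namely Proposition~\ref{erasedloops}, and the time-one marginal of a single growing loop, namely Proposition~\ref{prop.growingloop}, together with the independence that is built into the definition of the bubble soup. No new estimate is needed; the content is to check that the two descriptions of the tuple of erased loops coincide coordinate by coordinate.

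First I would fix the ordering $x_0,x_1,\dots,x_n$ of the interior vertices of $\eta$ and set $A_j = A\setminus\{x_0,\dots,x_{j-1}\}$, so that $A_0 = A$ and $A_0\supset A_1\supset\cdots\supset A_n$. On one side, Proposition~\ref{erasedloops} says that, conditioned on $\{LE=\eta\}$, the erased loops $l^0,l^1,\dots,l^n$ are independent, with $l^j$ supported on $\paths_{A_j}(x_j,x_j)$ and $\Prob\{l^j=l\} = p(l)/G_{A_j}(x_j,x_j)$; in particular the trivial loop $[x_j]$ receives mass $1/G_{A_j}(x_j,x_j)$, consistent with $p([x_j])=1$. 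On the other side, the bubble soup for this ordering, restricted to $\eta$, is by definition the $(n{+}1)$-tuple of growing loops $(l^0(t),\dots,l^n(t))$, where $l^j(t)$ is the growing loop at $x_j$ in the domain $A_j$, and these $n{+}1$ growing loops are mutually independent.

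Next I would read off the time-one law of each $l^j(t)$ from Proposition~\ref{prop.growingloop}, applied with $A_j$ in place of $A$ and $x_j$ in place of $x$: at $t=1$ the factor $\Gamma(k+t)/(k!\,\Gamma(t))$ equals $1$ for every $k\ge 0$, so $l^j(1)$ has law $p(l)/G_{A_j}(x_j,x_j)$ on $\paths_{A_j}(x_j,x_j)$, with the trivial loop included as the case $k=0$. This is exactly the conditional law of $l^j$ furnished by Proposition~\ref{erasedloops}. Since both the erased loops (conditioned on $\eta$) and the restricted bubble soup at $t=1$ consist of mutually independent coordinates with these matching marginals, their joint laws agree, which is the assertion. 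Equivalently, one may invoke the earlier corollary identifying the loop erased at a single vertex with the growing loop at time $t=1$ and upgrade it to the joint statement using the independence in Proposition~\ref{erasedloops}; I would present it this way to keep the argument to a few lines.

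The only real ``obstacle'' is bookkeeping: one must make sure that the bubble soup ``restricted to $\eta$'' is precisely the tuple of growing loops in the nested domains $A_0\supset\cdots\supset A_n$ with this particular ordering (rather than loops grown in $A$ throughout), and that the growing-loop marginal at $t=1$ taken from Proposition~\ref{prop.growingloop} is read as including the trivial-loop case $k=0$, so that its support matches the $\paths_{A_j}(x_j,x_j)$ appearing in Proposition~\ref{erasedloops}. Once the indexing is lined up, the identity of the two distributions is immediate.
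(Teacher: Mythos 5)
Your proposal is correct and is exactly the argument the paper intends: the paper leaves this proposition without an explicit proof, having just noted (in the corollary for a single erased loop) that the identification ``follows immediately by comparison with Proposition~\ref{erasedloops},'' and your coordinate-by-coordinate matching of the conditional law from Proposition~\ref{erasedloops} with the $t=1$ marginal from Proposition~\ref{prop.growingloop}, using the independence built into the bubble soup in the nested domains $A_j$, is precisely that comparison.
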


We can state this a different way.

\begin{proposition}
Suppose $p$ is coming from a Markov chain
and $x_0 \in A$. 
\begin{itemize}
\item  Let $\eta =[\eta_0,\eta_1,\ldots,\eta_n]$ be
LERW from $x_0$ to $\partial A$.  That is, $\eta$
is chosen from the probability distribution on
$\saws_A(x_0,\p A)$,
\[    \hat p(\eta) = p(\eta) \, F_\eta(A).\]
\item  Given $\eta$, take a  realization of the
bubble soup using the ordering $\{\eta_0,$ $\ldots,
$ $\eta_{n-1}\}$.
Let $  {\bf l}(1)= [l^0,\ldots,l^{n-1}]$ be the
loops that intersect $\eta$ at time $t=1$.

\end{itemize}
Then the path
\[  \omega = l^0 \oplus[\eta_0,\eta_1] \oplus l^1
 \oplus [\eta_1,\eta_2] \oplus l^2 \oplus \cdots
  \oplus  l^{n-1} \oplus [\eta_{n-1},\eta_n] \]
 has the distribution of the Markov chain started
 at $0$ ending at $\partial A$.  In other words, for each
 $\omega \in \paths_A(x_0,\partial A)$, the probability
 that this algorithm outputs $\omega $ is $p(\omega)$.
 
 \end{proposition}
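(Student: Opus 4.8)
The plan is to exploit the fact that the construction is a two-stage randomization whose outcome $\omega$ both determines and is determined by the pair consisting of the loop-erased walk $\eta = LE(\omega)$ and the tuple of erased loops. Concretely, I would fix a target $\omega \in \paths_A(x_0,\partial A)$, set $\eta = LE(\omega) = [\eta_0,\ldots,\eta_n]$ with $\eta_0 = x_0$ and $\eta_n \in \partial A$, and invoke the unique decomposition from the proof of Proposition \ref{sep27.prop1} in its boundary form (Proposition \ref{sep27.prop1.alt}): $\omega = l^0 \oplus [\eta_0,\eta_1] \oplus l^1 \oplus \cdots \oplus l^{n-1} \oplus [\eta_{n-1},\eta_n]$ with $l^j \in \paths_{A_j}(\eta_j,\eta_j)$, $A_j = A \setminus \{\eta_0,\ldots,\eta_{j-1}\}$, and conversely any such tuple of loops rebuilds a path with loop-erasure $\eta$. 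Hence the algorithm outputs $\omega$ if and only if at stage one it selects this particular $\eta$ and at stage two the bubble soup at time $1$ equals precisely $(l^0,\ldots,l^{n-1})$.

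Next I would compute the two factors. The first, the LERW probability, is $\hat p(\eta) = p(\eta)\,F_\eta(A)$ by Proposition \ref{lerwprob}. For the second, I would use that the bubble soup restricted to $\tilde\loops(A;\eta)$ with ordering $\eta_0,\ldots,\eta_{n-1}$ is a tuple of independent growing loops, the $j$-th living in $A_j$ and rooted at $\eta_j$, whose time-$1$ marginal is, by Proposition \ref{prop.growingloop}, the law $l \mapsto p(l)/G_{A_j}(\eta_j,\eta_j)$ on $\paths_{A_j}(\eta_j,\eta_j)$ (equivalently, this is exactly the conditional law of the erased loops given $\eta$ supplied by Proposition \ref{erasedloops}). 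Thus, conditionally on $\eta$, the probability of obtaining $(l^0,\ldots,l^{n-1})$ is $\prod_{j=0}^{n-1} p(l^j)/G_{A_j}(\eta_j,\eta_j) = \big(\prod_{j=0}^{n-1} p(l^j)\big)/F_\eta(A)$, using the definition \eqref{sep27.3} of $F_\eta(A)$ together with the convention of Proposition \ref{sep27.prop1.alt} that the terminal boundary vertex $\eta_n$ contributes a factor $1$.

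Multiplying the two factors makes $F_\eta(A)$ cancel, leaving $p(\eta)\,p(l^0)\cdots p(l^{n-1})$; and since $p$ is multiplicative under concatenation with $p(\eta) = p([\eta_0,\eta_1])\cdots p([\eta_{n-1},\eta_n])$, this product equals $p(l^0)\,p([\eta_0,\eta_1])\,p(l^1)\cdots p([\eta_{n-1},\eta_n]) = p(\omega)$, which is the assertion.

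I do not anticipate a genuine obstacle: the substantive work is already contained in Propositions \ref{sep27.prop1.alt}, \ref{lerwprob}, \ref{erasedloops} and \ref{prop.growingloop}. The only point needing care is bookkeeping, namely checking that the loops of the bubble soup meeting $\eta$ at time $1$ are precisely the independent growing loops at $\eta_0,\ldots,\eta_{n-1}$ in $A_0,\ldots,A_{n-1}$ — i.e. that a loop grown at a vertex outside $\eta$ never meets $\eta$, since once $\eta_0,\ldots,\eta_{n-1}$ precede it in the ordering its ambient set contains no vertex of $\eta$ — and that no loop is grown at the terminal boundary point $\eta_n$ (it is not in $A$). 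Once this identification is made the computation above is immediate, and summing over $\omega$ recovers the total mass $p[\paths_A(x_0,\partial A)]$ as a consistency check.
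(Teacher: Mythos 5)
Your argument is correct and is exactly the route the paper intends: the paper omits a proof because the statement follows immediately from comparing Proposition \ref{erasedloops} (the conditional law of the erased loops given $\eta$) with Proposition \ref{prop.growingloop} (the time-one law of the growing loops), which is precisely the cancellation of $F_\eta(A)$ you carry out via the unique decomposition of Propositions \ref{sep27.prop1} and \ref{sep27.prop1.alt}. The bookkeeping point you flag --- that loops grown at vertices coming after $\eta_{n-1}$ in the ordering live in sets disjoint from $\eta$ and so never meet it, and that no loop is grown at $\eta_n \in \partial A$ --- is the only detail left implicit in the paper, and you handle it correctly.
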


\subsection{Random walk loop soup}  \label{walksoupsec}

\begin{definition}
The {\em (random walk) loop soup} is a soup from
the measure $m$ on unrooted loops $ \ell \in \loops_A$.  
\end{definition}

If $q$ is a positive measure, then the soup can 
be viewed as an independent collection of Poisson
processes $\{X^\ell_t: \ell \in \loops_A\}$,  where
$X^\ell$ has parameter $m(\ell)$. 
%Corresponding to the soup are processes $\vlocal_t(x),
%\delocal_t(\edge), \elocal_t(e)$.  
 If $q$ is complex,
the soup is defined only as the collection of
complex measures $\nu_t$ on $N^{\loops_A}$. 
The definition of the unrooted loop soup does
not require an ordering of the points on $A$.

However, if a realization of the loop soup is
given along with an ordering of the 
vertices $A = \{x_1,\ldots,x_n\}$ we can get
a soup on rooted loops with a little more 
randomness.
Indeed, suppose that 
an ordering of the vertices $A = \{x_1,\ldots,x_n\}$
is given.  If $\ell \in \loops_A$, we choose a
rooted representative of $\ell$ as follows:
\begin{itemize}
\item  Find the smallest $j$ such that the vertex
$x_j$ is in $\ell$.
\item  Consider all $l \in \ell$ that are rooted
at $x_j$ and select one (uniformly).
\end{itemize}
This gives a collection of rooted loops.  At any
time $t$ we can construct  a loop
 in $\paths_A(x,x)$
by concatenating all the loops in $\paths_A(x,x)$
that have been output by time $t$, doing the
concatenation in the order that the loops arrive.

\begin{proposition}  The random walk loop soup
considered as a collection of growing loops as above
has the same distribution as the bubble loop soup.
%In particular, the processes
%$\vlocal_t(x),
%\delocal_t(\edge), \elocal_t(e)$ have
%the same distribution.
\end{proposition}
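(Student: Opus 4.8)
The plan is to show that the two soups --- the random walk loop soup (with the rooting procedure described above) and the bubble soup for the ordering $x_1,\ldots,x_n$ --- induce the same distribution on the collection of growing loops $(l^1(t),\ldots,l^n(t))$, where $l^j(t) \in \paths_{A_j}(x_j,x_j)$. Both objects are, at each fixed time $t$, certain measures on $n$-tuples of rooted loops, so it suffices to check that these measures agree. By Proposition \ref{prop.growingloop} (together with the product formula $\det G = \prod_{i=1}^n G_{A_i}(x_i,x_i)$ that precedes the bubble soup proposition), the bubble soup at time $t$ has a completely explicit density $\mu_t(\mathbf{l}) = c(t,\mathbf{l})\, q(\mathbf{l})/[\det G]^t$. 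So the real content is to compute the corresponding density for the rooted version of the unrooted loop soup and see that it matches.

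The key step is the observation that the rooting procedure partitions $\loops(A)$ exactly the way Proposition \ref{funrooted}(2) partitions it: an unrooted loop $\ell$ gets assigned to index $j$ precisely when $x_j$ is the first vertex of the ordering that $\ell$ visits, i.e. when $\ell \in \loops(A_j; x_j)$ (with $A_j = A\setminus\{x_1,\ldots,x_{j-1}\}$), and then $\ell$ is rerooted at $x_j$ inside $A_j$. First I would argue that, because the soups over disjoint classes of unrooted loops are independent Poisson processes (or, in the complex case, the measure $\nu_t$ factors as a product over the classes by \eqref{complexsoup}), the rooted soup splits as an independent family of soups, one for each $j$, living on the rooted loops in $A_j$ at $x_j$. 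Then I would identify, for a single $j$, the soup one gets by taking the unrooted loop soup restricted to $\loops(A_j;x_j)$ and choosing a uniform rooted representative at $x_j$: by Proposition \ref{oct1.lemma1}, the unrooted measure $m$ restricted to $\loops(A_j;x_j)$ is exactly the pushforward of $m'_{A_j,x_j}$ under ``forget the root'', and the uniform reselection of a root at $x_j$ is precisely the inverse operation --- the number of rooted representatives at $x_j$ of $\ell$ equals $\vlocal(\ell;x_j)$, which is the factor relating $m$ and $m'_{A_j,x_j}$. Hence the $j$-th component soup is a soup from $m'_{A_j,x_j}$, which by definition is the growing loop at $x_j$ in $A_j$; concatenating in order of arrival gives $l^j(t)$.

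Putting the pieces together: the rooted loop soup, read as an $n$-tuple of concatenated loops, is a family of $n$ independent growing loops, the $j$-th being the growing loop at $x_j$ in $A_j$ --- which is exactly the definition of the bubble soup. I expect the main obstacle to be bookkeeping in the rerooting bijection: one must check carefully that choosing a uniform rooted representative of $\ell$ at $x_j$ and then reading off how many elementary loops it decomposes into is consistent with the two-step ``choose $k$ from $\nu_t$, then choose $l^1,\ldots,l^k$ from $q/f$'' description of $m'_{A_j,x_j}$, i.e. that the combinatorial weights $c(t,\mathbf{l})$ come out correctly and that the factor $1/[\det G]^t = \prod_j 1/G_{A_j}(x_j,x_j)^t$ is exactly what the normalization of $\nu_t$ on each class contributes. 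Once the single-$j$ identification is in hand via Proposition \ref{oct1.lemma1}, the rest is immediate, and the complex-weight case needs only the remark that all the sums involved converge by integrability, so the same manipulations of the defining complex measures $\nu_t$ go through verbatim.
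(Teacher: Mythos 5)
Your argument is correct and is essentially the paper's own (one-line) proof, whose whole content is your key step: partition the unrooted loops according to the first vertex $x_j$ of the ordering that they visit, use independence of the soups over these disjoint classes, and identify the uniformly re-rooted soup on $\loops(A_j;x_j)$ with the soup from $m'_{A_j,x_j}$ via Proposition \ref{oct1.lemma1}. One small correction to your bookkeeping: the number of distinct rooted representatives of $\ell$ at $x_j$ is $\vlocal(l';x_j)$, where $l'$ is the irreducible loop generating $\ell$, not $\vlocal(\ell;x_j)$; this discrepancy is exactly compensated by the factor $s_\ell/|\ell|$ in the unrooted measure $m$, so uniform re-rooting does produce intensity $m'_{A_j,x_j}$ and your conclusion stands.
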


\begin{proof}  This is not difficult to show given
the fact that the measure $m'_{A,x}$, considered
as a measure on unrooted loops $\loops_x(A)$, is
the same as $m$ restricted to $\loops_x(A)$.
\end{proof}

\begin{proposition}
Suppose $p$ is coming from a Markov chain
and $x_0 \in A$. 
\begin{itemize}
\item  Let $\eta =[\eta_0,\eta_1,\ldots,\eta_n]$ be
LERW from $x_0$ to $\partial A$.  That is, $\eta$
is chosen from the probability distribution on
$\saws_A(x_0,\p A)$,
\[    \hat p(\eta) = p(\eta) \, F_\eta(A).\]
\item Let $\{X_t^\ell: \ell \in \loops(A)\}$ denote
an independent realization of the random walk loop soup.
Let us view the realization at time $t=1$ as a finite
sequence of loops
\[ [\ell^1,\ell^2,\ldots,\ell^M ] \]
where the loops are ordered according to the
 time they were added
to the soup.

\item Take a subsequence of   these loops, which we
also denote by $[\ell^1,\ell^2,\ldots,\ell^M ]$, by
considering only those loops that intersect $\eta$.

\item  For each $\ell^k$ let $j$ be the smallest index
such that $\eta_j \in \ell^k$.  Choose a rooted
representative $\tilde l^k$ of $\ell^k$ rooted at $\eta_j$.
If there are several representatives choose uniformly
among all possibilities.  Define loops $l^j, j=0,\ldots,n-1$
to be the loop rooted at $\eta_j$ obtained by concatenating
(in the order they appear in the soup)  all
the loops $\tilde l^k$ that are rooted at $\eta_j$.
 
\end{itemize}
Then the path
\[  \omega = l^0 \oplus[\eta_0,\eta_1] \oplus l^1
 \oplus [\eta_1,\eta_2] \oplus l^2 \oplus \cdots
   l^{n-1} \oplus [\eta_{n-1},\eta_n] \]
 has the distribution of the Markov chain started
 at $0$ ending at $\partial A$.  In other words, for each
 $\omega \in \paths_A(x_0,\partial A)$, the probability
 that this algorithm outputs $\omega $ is $p(\omega)$.
 
 \end{proposition}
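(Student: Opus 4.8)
The plan is to deduce this proposition from the two propositions immediately preceding it: the statement that the random walk loop soup, reorganized into growing loops via the ``smallest index / uniform root / concatenate in arrival order'' recipe, has the same law as the bubble soup; and the bubble‑soup version of the present proposition. The point that does all the work is that the construction described here \emph{is} exactly that reorganization recipe, applied after discarding the loops of the soup that miss $\eta$, and run with the ordering $\{\eta_0,\ldots,\eta_{n-1}\}$ of the vertices of $\eta$ that lie in $A$.

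Concretely, I would first fix $\eta$ (it is chosen with probability $\hat p(\eta)=p(\eta)\,F_\eta(A)$ independently of the loop soup). Next I would extend the ordering $\eta_0,\eta_1,\ldots,\eta_{n-1}$ of the $A$‑vertices of $\eta$ to an ordering $y_1=\eta_0,\ldots,y_n=\eta_{n-1},y_{n+1},\ldots,y_N$ of all of $A$, with arbitrary tail. Since $\eta_n\in\partial A$ and every loop of the soup lies in $A$, a soup loop meets $\eta$ if and only if it meets $\{\eta_0,\ldots,\eta_{n-1}\}$; and for such a loop the smallest index $i$ with $y_i$ on the loop lies in $\{1,\ldots,n\}$ and corresponds precisely to the smallest $j$ with $\eta_j$ on the loop. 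Hence, running the reorganization recipe with the ordering $y_1,\ldots,y_N$, the growing loop built at $y_i$ for $1\le i\le n$ receives exactly the loops $\ell^k$ that the present construction roots at $\eta_{i-1}$, with the same uniform choice of root among the admissible representatives and the same arrival order, so that the loops $l^0,\ldots,l^{n-1}$ here coincide with the first $n$ growing loops of that recipe (the remaining $y_i$'s absorb the loops discarded here).

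Then I would invoke the loop‑soup / bubble‑soup identity: these growing loops have the joint law of the bubble soup for the ordering $y_1,\ldots,y_N$ at time $t=1$; and because the growing loop at $y_i$ lives in $A\setminus\{y_1,\ldots,y_{i-1}\}$ and does not see $y_{i+1},\ldots,y_N$, the first $n$ of them are distributed precisely as the bubble soup restricted to the ordered set $\{\eta_0,\ldots,\eta_{n-1}\}$. Therefore, conditionally on $\eta$, the tuple $(l^0,\ldots,l^{n-1})$ produced here has the same law as the tuple ${\bf l}(1)$ in the bubble‑soup version of the proposition; since the marginal law of $\eta$ is the same in both constructions and $\omega$ is the same deterministic function $l^0\oplus[\eta_0,\eta_1]\oplus\cdots\oplus[\eta_{n-1},\eta_n]$ of these data, the output $\omega$ has the asserted law, namely that of the Markov chain from $x_0$ to $\partial A$.

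The one genuinely fiddly point — the step I would flag as the main obstacle — is the bookkeeping that matches the two ``smallest index'' rules and the extra uniform root‑selecting randomness: one must verify that assigning a soup loop by its lowest index along $\eta$ agrees with assigning it by its lowest index in the extended ordering of all of $A$, and that the uniform choice of a rooted representative is the same object in both formulations. This is routine once one uses $\eta_n\in\partial A$ to guarantee that soup loops never visit $\eta_n$, but it is the place where an argument that looks like a one‑line corollary actually requires a little care.
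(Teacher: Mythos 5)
Your argument is correct and is exactly the route the paper intends: the paper states this proposition without proof, as an immediate consequence of the two results you cite (the bubble-soup version of the statement and the identification of the loop soup, reorganized into growing loops, with the bubble soup), and your extension of the ordering $\eta_0,\ldots,\eta_{n-1}$ to all of $A$ together with the observation that soup loops meeting $\eta$ can only do so at $\eta_0,\ldots,\eta_{n-1}$ (since $\eta_n\in\partial A$) is precisely the bookkeeping needed to make that deduction rigorous. No gaps.
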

 
 \section{Relation to Gaussian field}  \label{fieldsec}

There is a strong relationship between the loop soup
at time $t=1/2$ with a Gaussian field that we
will discuss here.
We will consider only integrable,
Hermitian
  weights $q$, that is $q(x,y) = \overline{q(y,x)}$.
 If $q$ is real, then this implies that $q$ is symmetric.  
This implies that for every path $\omega$,
$q(\omega^R) = \overline{q(\omega)}.$
Every Hermitian weight can be written as
\[        q(x,y) = p(x,y) \, \exp\{i\Theta(x,y)\},\]
where $p$ is positive and symmetric and $\Theta$
is {\em anti-symmetric}, $\Theta(x,y)
= -\Theta(y,x)$.  If $q$ is an integrable Hermitian
weight on $A$, then
\[     f_{x} = \sum_{l \in \tilde \loops_x^1(A)}
    q(l) \in \R , \]
 since $q(l) + q(l^R) \in \R$,  and hence
 \begin{equation}  \label{oct9.1}
    G_A(x,x) = \frac1{1-f_x}  \in \left(
 \frac 12,\infty \right).
 \end{equation}

\begin{proposition}  If $q$ is an integrable,
Hermitian
weight on $q$, then the Green's matrix $G$ is
a positive definite Hermitian matrix.
\end{proposition}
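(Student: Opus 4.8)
The plan is to reduce everything to the spectrum of the matrix $\qprob$. First I would dispose of the ``Hermitian'' half: since $q$ is Hermitian, $\qprob$ is a Hermitian matrix, hence so is $I-\qprob$; and since $q$ is integrable it is in particular greenable, so $G=(I-\qprob)^{-1}$ exists and, being the inverse of a Hermitian matrix, is itself Hermitian. It therefore remains to prove that $G$ is positive definite, and because $G$ and $I-\qprob$ are Hermitian with mutually reciprocal eigenvalues, this is equivalent to showing that $I-\qprob$ is positive definite.

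Next I would locate the eigenvalues of $\qprob$. Being Hermitian, $\qprob$ has only real eigenvalues; being integrable (hence greenable), every eigenvalue $\lambda$ of $\qprob$ satisfies $|\lambda|<1$. Combining the two facts, $\lambda\in(-1,1)$ for every eigenvalue $\lambda$, so every eigenvalue of $I-\qprob$ equals $1-\lambda\in(0,2)$. Thus $I-\qprob$ is a Hermitian matrix all of whose eigenvalues are strictly positive, i.e. it is positive definite, and consequently $G=(I-\qprob)^{-1}$ is Hermitian with all eigenvalues $(1-\lambda)^{-1}>0$, hence positive definite.

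There is no real obstacle here; the only point that needs care is that both hypotheses are genuinely used, and in complementary ways. Hermiticity is what forces the (a priori possibly complex) eigenvalues of $\qprob$ to be real, so that the bound $|\lambda|<1$ upgrades to $\lambda\in(-1,1)$ — without it a merely greenable weight could have eigenvalues near $e^{i\theta}$, for which $1-\lambda$ has no reason to lie in the right half-plane; and integrability is what keeps $|\lambda|$ strictly below $1$, so that $I-\qprob$ is invertible and its eigenvalues stay on the positive side. As a consistency check, \eqref{oct9.1} already records the $1\times1$ (diagonal) case $G_A(x,x)\in(\tfrac12,\infty)$; indeed an alternative proof is to apply \eqref{oct9.1} to every principal subset $B\subset A$ with the restricted weight (still Hermitian, still integrable), deduce from \eqref{sep27.3} and \eqref{fdet} that each leading principal minor of $I-\qprob$ is of the form $1/F(B)>0$, and invoke Sylvester's criterion. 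I would present the spectral argument as the main proof since it is the shortest, and mention the Sylvester variant only in passing.
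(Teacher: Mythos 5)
Your spectral argument is correct, but it is not the route the paper takes, so a comparison is in order. You diagonalize: Hermiticity makes the eigenvalues of $\qprob$ real, integrability (via greenability) forces them into $(-1,1)$, hence $I-\qprob$ has spectrum in $(0,2)$ and $G=(I-\qprob)^{-1}$ is positive definite Hermitian. The paper instead argues through Sylvester's criterion exactly along the lines of the variant you mention in passing: for each $V\subset A$ the restricted weight is still integrable and Hermitian, so by \eqref{fdet} the principal minor $\det(I-Q_V)$ equals the reciprocal of a product of diagonal Green's functions $G_{V_j}(x_j,x_j)$, each of which lies in $(\tfrac12,\infty)$ by \eqref{oct9.1}; hence all principal minors are positive. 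Your proof is shorter, uses only standard linear algebra, and in fact establishes the slightly stronger statement that greenability plus Hermiticity already suffices (integrability enters only through the paper's remark that it implies greenability, and you correctly flag why Hermiticity is indispensable, since a greenable non-Hermitian weight can have eigenvalues near $e^{i\theta}$). The paper's proof buys something different: it stays inside the loop-measure machinery, exhibiting each principal minor as $1/F_V(V)$, which ties positive definiteness of $G$ directly to the positivity of the quantities $G_{V_j}(x_j,x_j)$ that recur throughout the paper, at the cost of genuinely needing integrability to invoke \eqref{oct9.1}. Both arguments are complete; your presentation, giving the spectral proof as primary and the Sylvester/minor computation as a remark, is a reasonable inversion of the paper's emphasis.
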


\begin{proof}  Since $I-\qprob$ is Hermitian,
it is clear that $G = (I-\qprob)^{-1}$ is Hermitian.
It suffices to prove that $ I-\qprob$
is positive definite.  Since $I-\qprob$ is Hermitian, 
Sylvester's criterion   states that
it suffices to show that for each $V \subset A$,
that $\det (I-Q_V) > 0$ where $Q_V$ denotes $\qprob$
restricted to the rows and columns associated to
$V$. If $V = \{x_1,\ldots,x_k\}$, then
\eqref{fdet} gives
\[  \frac{1}{\det (I -Q_V)}
    = G_V(x_1,x_1) \, G_{V_1}(x_2,x_2)
     \, \cdots \, G_{V_k}(x_k,x_k) ,\]
where $V_j = V \setminus \{x_1,\ldots,x_{j-1}\}.$
This is positive by \eqref{oct9.1}.
\end{proof}

%We will consider only signed weights $q$ and later
%show how to extend results to complex weights.
%The weights are defined on directed edges. 

\subsection{Weights on undirected edges}

\begin{definition} $\;$
\begin{itemize}
\item  A {\em (real, signed) weight} on undirected
edges $\edges_A$ is a function
  $\theta: \edges_A \rightarrow \R$. 
  \item  If $\theta$ is a weight on $\edges_A$,
then there is a symmetric weight $q = q_\theta$
on directed edges given by 
\begin{equation}  \label{thetadef2}
   q(x,y) = \left\{\begin{array}{ll}
       \theta_{xy}/2, & x \neq y \\
        \theta_{xx} , & x = y \end{array} \right. . 
        \end{equation}.
\item  Conversely, if $q$ is a symmetric
weight on $\dedges_A$, we define $\theta$ by
\begin{equation}  \label{thetadef}
  \theta_e = \theta_{e,q} = \left\{\begin{array}{ll}
q(x,y) + q(y,x) = 2q(x,y) , & x \neq y \\
  q(x,x) , &x = y .
  \end{array}  \right. 
  \end{equation}
\item  We say that $\theta$ is {\em integrable}
or {\em \greenable}if
the corresponding $q$ is integrable  or greenable,
respectively.
\item If $f: A \rightarrow \C$,
we also write $f$ for the function
 $f: \edges_A \rightarrow \C$ by $f_e = f(x) \, f(y)$
where $e$ connects $x$ and $y$.
\end{itemize}
\end{definition}

Clearly it suffices to give either $\theta$ or $q$,
and we will specify symmetric weights either way.
Whenever we use $\theta$ it will be a function
on undirected edges and $q$ is a function on directed
edges.  They will always be related by \eqref{thetadef2}
and \eqref{thetadef}.  If we give $\theta$, we will write
just $q$ for $q_\theta$.  In particular, if $\theta$
is integrable, we can discuss the Laplacian $\laplace =
 \qprob-I$ and the Green's function $G = (I-\qprob)^{-1}$ where
$\qprob = [q(x,y)]_{x,y \in A}$.  We will write
\[      D = \det (I-\qprob) = \frac{1}{\det G}.\]
 
\subsection{Gaussian free field}

\begin{definition}  Given a (strictly)
positive definite symmetric
real matrix $\Gamma$ indexed by a finite set $A$,
the centered {\em Gaussian   field (with Dirichlet
boundary conditions)} is a centered multivariate
normal random vector $\{Z_x: x \in A\}$ indexed
by $A$ with covariance matrix $\Gamma$.
\end{definition}

The density of $Z_x$ is given by
\[    f(x) = \frac{1}{(2\pi)^{\#(A)/2}
  \, \sqrt{\det \Gamma} }
    \, \exp\left\{-\frac 12  \langle f,
      \Gamma^{-1} f \rangle \right\}, \]
 where $\langle \cdot \rangle$ denotes the dot
 product
 \[   \langle f,
      \Gamma^{-1} f \rangle  =
        \sum_{x,y \in A}
          f(x) \, \Gamma^{-1}(x,y) \, f(y) .\]
We will consider the case $\Gamma = G, \Gamma^{-1}
    = I - Q = -\laplace$ where $q$ is a \greenable
    weight.  Then we have
\[ 
   -  \langle f,
      \Gamma^{-1} f \rangle
          =  -  \sum_{x \in A}
           \,  f(x)^2 
            +   \sum_{x \in A}
              \sum_{y \in A}
       q(x,y) \, f(x) \, f(y) 
    =- \sum_{x \in A}
           \,  f(x)^2 
            +   \sum_{e \in \edges_A}
       \theta_e \,f_e    .\]
    Here $\theta_e$ is as defined in 
    \eqref{thetadef}. From this we see that the Gaussian distribution with
covariance $G$  has Radon-Nikodym derivative with
respect to  independent standard Gaussians of
\[       (\det G)^{-1/2} \, \exp\left\{
 \frac 12\sum_{e \in \edges_A}
       \theta_e \,f_e \right\} .\] 
       
 \begin{definition}  
 The Gaussian field generated
 by a  \greenable weight $\theta$ on $\edges_A$ 
 is a random vector $\{Z_x: x \in A\}$
 indexed by $A$ whose density  
 is
 \begin{equation}  \label{fielddensity}
          \phi(\bar z) \,  \sqrt {D}  \, \exp\left\{
 \frac 12 \sum_{e \in \edges_A}
      \theta_e \, z_e \right\}. 
      \end{equation}
 where $\phi = \phi_A$ is the density of a standard
 normal random variable indexed by $A$.
 This is the same as the centered Gaussian
 field with covariance matrix $\Gamma = (I-\qprob)^{-1}$.
     
\end{definition}

We will consider a two step process for sampling from
the Gaussian field.  We first sample from the square
of the field, and then we try to assign the signs.
Recall that if $N$ is a standard normal random variable,
then $N^2$ has a $\chi^2$-distribution with one
degree of freedom.    In particular, if $T = N^2/2$, then
$T$ has density
$          (\pi t)^{-1/2} \, e^{-t}. $
The next proposition gives the analogous computation
for the Gaussian field weighted by $\theta$.

\begin{proposition}  Suppose 
$\{Z_x: x \in A\}$ is the  Gaussian field generated
 by an integrable weight 
 $\theta$ on $\edges_A$ and let $T_x = Z_x^2/2$.
 Then $\{T_x: x \in A\}$ has density.
 \begin{equation}  \label{may16.5}   \left[ \prod_{x \in A} \sqrt{\pi \, t_x}
  \right]^{-1} 
    \, \exp \left\{-\sum_{x \in A} t_x \right\}
     \,\Phi 
     \end{equation}
 where
 \[ \Phi =  \Phi_q =    \sqrt {D} 
     \,   \E\left[\exp \left\{ 
     \sum_{e \in \edges_A}
      \theta_e  J_e \, \sqrt{t_e} \right\}\right].\]  
  Here $\{J_x, x \in A\}$ are independent
  random variables $\Prob\{J_x = 1 \} = \Prob\{J_x = -1\}
   = 1/2$. 
 \end{proposition}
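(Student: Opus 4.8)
The plan is to obtain the density of $\bar T = (T_x)_{x \in A}$ directly from the density \eqref{fielddensity} by the coordinatewise change of variables $t_x = z_x^2/2$. On each coordinate this map is two‑to‑one, with $z_x = \pm\sqrt{2 t_x}$ and Jacobian factor $(2 t_x)^{-1/2}$, so on the full‑measure set where $t_x > 0$ for all $x$ the density of $\bar T$ at $\bar t$ is
\[
\sum_{\bar\epsilon \in \{-1,1\}^A} \Big[\prod_{x \in A}(2t_x)^{-1/2}\Big] \, \phi(\bar z^{\bar\epsilon}) \, \sqrt D \, \exp\Big\{\tfrac12 \sum_{e \in \edges_A}\theta_e \, z^{\bar\epsilon}_e\Big\},
\]
where $z^{\bar\epsilon}_x = \epsilon_x\sqrt{2 t_x}$ and, for an edge $e$ joining $x$ and $y$, $z^{\bar\epsilon}_e = z^{\bar\epsilon}_x z^{\bar\epsilon}_y$ as in the convention for extending functions on $A$ to $\edges_A$.

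The first observation is that the standard normal factor is independent of $\bar\epsilon$: $\phi(\bar z^{\bar\epsilon}) = (2\pi)^{-\#(A)/2}\exp\{-\tfrac12\sum_x (z^{\bar\epsilon}_x)^2\} = (2\pi)^{-\#(A)/2}\exp\{-\sum_x t_x\}$, since $\sum_x (z^{\bar\epsilon}_x)^2 = \sum_x 2t_x$ regardless of the signs. The second is that the edge exponent simplifies and the factor $\tfrac12$ disappears: writing $\epsilon_e = \epsilon_x\epsilon_y$ (which equals $1$ on self‑edges) and recalling $t_e = t_x t_y$, one has $z^{\bar\epsilon}_e = 2\epsilon_e\sqrt{t_x}\sqrt{t_y} = 2\epsilon_e\sqrt{t_e}$, so $\tfrac12\sum_e\theta_e z^{\bar\epsilon}_e = \sum_e\theta_e\epsilon_e\sqrt{t_e}$. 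Substituting and pulling out the factors that do not depend on $\bar\epsilon$, the density of $\bar T$ at $\bar t$ becomes
\[
\Big[\prod_{x\in A}(2t_x)^{-1/2}\Big](2\pi)^{-\#(A)/2}\, e^{-\sum_x t_x}\,\sqrt D \sum_{\bar\epsilon\in\{-1,1\}^A}\exp\Big\{\sum_{e\in\edges_A}\theta_e\epsilon_e\sqrt{t_e}\Big\}.
\]

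Finally I would recognize $\sum_{\bar\epsilon}\exp\{\cdots\} = 2^{\#(A)}\,\E\!\left[\exp\{\sum_e\theta_e J_e\sqrt{t_e}\}\right]$ with $\{J_x\}$ i.i.d.\ uniform on $\{\pm1\}$ and $J_e = J_xJ_y$, and collect the constants via $\prod_x(2t_x)^{-1/2}\cdot(2\pi)^{-\#(A)/2}\cdot 2^{\#(A)} = \prod_x(\pi t_x)^{-1/2} = [\prod_x\sqrt{\pi t_x}]^{-1}$. This yields exactly \eqref{may16.5} with $\Phi = \sqrt D\,\E\!\left[\exp\{\sum_e\theta_e J_e\sqrt{t_e}\}\right]$. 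There is no analytic difficulty here; the only points needing care are the bookkeeping of the powers of $2$, and the treatment of self‑edges, where $\epsilon_e = J_e = 1$ automatically so that such an edge contributes $\theta_{xx}\sqrt{t_e} = \theta_{xx}t_x$ to the exponent regardless of the signs. One should also note that integrability of $\theta$ is precisely what guarantees that \eqref{fielddensity} is a genuine probability density, so that the computation is meaningful.
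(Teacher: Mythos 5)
Your proposal is correct and follows essentially the same route as the paper: the coordinatewise two-to-one change of variables $t_x = z_x^2/2$, summing over the $2^{\#(A)}$ sign configurations so that the sign sum becomes the expectation over the independent $\pm 1$ variables $J_x$, with the same bookkeeping of the Jacobian and powers of $2$ (including the correct treatment of self-edges). No gaps.
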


 \begad
 
 In other words, $\Phi$ is the Radon-Nikodym derivative of $\{T_x\}$
 with respect to the density obtained for standard normals
 ($\theta \equiv 0$).
 
 \endad
 
\begin{proof}  This is obtained by change of variables being a little
careful because the relation $\bar z \rightarrow \bar t$ is
not one-to-one. Let $n = \#(A)$,
and let 
$J_x = \sgn(z_x), y_x = |z_x|$ so that  $z_x=   J_x \,  y_x$. 
Then we can write the density \eqref{fielddensity} as
\[             \phi(\bar y) \,  \sqrt {D} 
 \, \exp \left\{\frac 12 \sum_{e \in \edges_A} \theta_e \, J_e
  \, y_e \right\}. \]
  We now do the change of variables $t_x = z_x^2/2 = y_x^2/2$,
  $dt_x = y_x \, dy_x = \sqrt{2t_x} \, dy_x $  to see that
  for a fixed value of $\bar J$, the density of $\bar T$
   restricted to $\bar z$ with
 the signs of $\bar J$ is
\begin{equation}  \label{dec28.4}
   \left[\prod_{x \in A}
   \frac{1}{\sqrt{\pi\, t_x}}\right] \,  \exp \left\{-\sum_{x \in A}
   t_x\right\} \,  \sqrt {D} 
     \,  \left[2^{-n} \,\exp \left\{  \sum_{e \in \edges_A} \theta_e \, J_e
  \, \sqrt{t_e} \right\}\right].\end{equation}
  If we now sum over the $2^n$ possible values for $\bar J$, we get the
  result.
\end{proof}

Note that \eqref{dec28.4} gives  
 the conditional distribution of the signs of the field given
 the square of the field.
 
 \begin{corollary}  \label{signcor}
  Suppose 
$\{Z_x: x \in A\}$ is the  Gaussian field generated
 by a \greenable weight 
 $\theta$ on $\edges_A$ and let $T_x = Z_x^2/2, 
 J_x = {\rm sgn}(Z_x)$.  Then the conditional distribution
 on $\{J_x\}$ given $\{T_x\}$ is proportional to
 \[         \exp \left\{ 
     \sum_{e \in \edges_A}
      \theta_e  J_e \, \sqrt{t_e} \right\}.\]
 \end{corollary}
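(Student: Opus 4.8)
The plan is to read the conclusion directly off the computation already carried out in the proof of the preceding proposition, and in particular off the displayed formula \eqref{dec28.4}. First I would observe that, although that proposition was stated for an integrable weight, its proof used only the explicit density \eqref{fielddensity} together with the sign/radius change of variables $t_x = z_x^2/2$; both are available whenever $\theta$ is \greenable, since for a real symmetric \greenable weight the eigenvalues of $\qprob$ lie in $(-1,1)$, so $I - \qprob$ is positive definite and $\{Z_x\}$ is a genuine centered Gaussian vector with covariance $(I-\qprob)^{-1}$. Consequently \eqref{dec28.4} holds verbatim in the \greenable case: writing $n = \#(A)$, for each fixed sign pattern $\bar J \in \{-1,1\}^A$ the density of $\bar T$ on the event $\{\sgn(Z_x) = J_x \text{ for all } x \in A\}$ equals
\[
   \left[\prod_{x \in A} \frac{1}{\sqrt{\pi\, t_x}}\right]
   \exp\left\{-\sum_{x \in A} t_x\right\}
   \sqrt{D}\; 2^{-n}
   \exp\left\{\sum_{e \in \edges_A} \theta_e\, J_e\, \sqrt{t_e}\right\}.
\]

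Next I would condition. The marginal density of $\bar T$ at $\bar t$ is obtained by summing the previous display over all $2^n$ choices of $\bar J$, so dividing gives
\[
   \Prob\{\bar J = \bar \epsilon \mid \bar T = \bar t\}
   = \frac{\exp\{\sum_{e} \theta_e\, \epsilon_e\, \sqrt{t_e}\}}
          {\sum_{\bar \epsilon'} \exp\{\sum_{e} \theta_e\, \epsilon'_e\, \sqrt{t_e}\}}.
\]
Every factor in \eqref{dec28.4} except the last exponential is independent of $\bar J$ and therefore cancels against the same factor in the denominator; what remains is precisely the asserted proportionality of the conditional law of $\{J_x\}$ given $\{T_x\}$ to $\exp\{\sum_{e\in\edges_A}\theta_e J_e\sqrt{t_e}\}$.

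There is no real obstacle here; the only point worth a sentence is the passage from ``integrable'' to ``\greenable'', i.e.\ checking that \eqref{fielddensity} and the change of variables remain valid, which they do because only positive-definiteness of $I-\qprob$ was used. I would also recall that $J_e\sqrt{t_e}$ is to be read, following the convention $f_e = f(x) f(y)$ for $e$ joining $x$ and $y$, as $J_x J_y \sqrt{t_x t_y}$ (and as $t_x$ for a self-edge at $x$), so that $\theta_e J_e\sqrt{t_e} = \theta_e z_x z_y/2$, consistent with the $\tfrac12\sum_e \theta_e z_e$ appearing in \eqref{fielddensity}; with this reading the exponential factors over edges in the obvious way.
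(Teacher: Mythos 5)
Your proposal is correct and follows the same route as the paper, which obtains the corollary directly from the fixed-sign density \eqref{dec28.4}: the $\bar J$-independent factors cancel upon normalizing, leaving the stated exponential. Your added remark that only positive definiteness of $I-\qprob$ is needed (so the computation carries over verbatim from the integrable to the \greenable case) is a fair and accurate clarification of a point the paper leaves implicit.
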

  
  \subsection{The measure on undirected currents}
  
 We will write $\edges = \edges_A$ for the set of undirected edges.
 For each $e \in \edges, x \in A$, we let $n_e(x)$ be the
 number of times the edge  touches  $x$.  More precisely,
  $n_e(x) = 2$ if $e$
 is a self-loop at $x$; $n_e(x) = 1$ if $e$ connects $x$ to
 a different vertex; and $n_e(x) = 0 $ otherwise.  If
 $\bar k  = (k_e: e\in \edges) \in \N^\edges$, then $
 \bar k $ generates a local time
 on vertices by
\begin{equation}  \label{nx}
         n_x = n_x(\bar k) =
 \frac 12 \sum_{e \in \edges} k_e \,
 n_e(x) .
 \end{equation}
Note that
\[    \sum_{x \in A} n_x = \sum_{e \in \edges} k_e.\] 
 We say that $\bar k$ is an {\em (undirected) current} if
 $n(x)$ is an integer for each $x$.  Equivalently, $\bar k$
 is a current if for each $x \in A$ the number of edges in
 $\bar k$ that go from $x$ to a different vertex is even.
 Let $\currents = \currents_A$ denote the set of undirected
 currents in $A$.
 
 Given a weight $\theta$ on $\edges$, there is a 
 corresponding   symmetric weight
  $q$ on $\dedges_A$ given by \eqref{thetadef2}.
  The
  loop soup for an integrable weight  $q$ viewed at
 time $t$ induces a measure on $\N^{\edges}$ that is supported
 on $\currents$.  (This process is not reversible without adding
 randomness --- one cannot determine the realization of the loop
 soup solely from the realization of the current.)  
 At time $t=1/2$, this has a particular nice form.  If $\bar k
 \in \currents$ we define
 \[              \theta(\bar k) = \prod_{e \in \edges} \theta_e^{k_e}.\]

 \begin{theorem}  \label{oct10.theorem}
 If $\theta$ is an integrable weight on $\edges$  and $\mu = \mu_{1/2}$ denotes
 the distribution at time $t=1/2$
  of the corresponding loop  soup considered
 as a measure on $\currents$, then for each $\bar k \in \currents$,
 \[   \mu (\bar k) =  \sqrt {D} \, \left[\prod_{x \in A}
   \frac{\Gamma(n_x + \frac 12)}{\sqrt \pi}\right] \, 
 \left[\prod_{e \in \edges} \frac{1}{k_e!} \right] \, \theta(\bar k).\]
 Here $n_x = n_x(\bar k)$ is the vertex local time as in
 \eqref{nx}.
 \end{theorem}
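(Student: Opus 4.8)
The plan is to turn the statement into a single coefficient–extraction identity and then evaluate it by a Wick–type count. The key device is to attach to each undirected edge $e$ a formal variable $u_e$ recording edge local time: define the modified symmetric weight $q^u(x,y)=q(x,y)\,u_{\{x,y\}}$, so that for any loop $\ell$ one has $q^u(\ell)=q(\ell)\prod_e u_e^{c_e(\ell)}$, where $c_e(\ell)$ is the number of undirected traversals of $e$ by $\ell$, hence $m^{q^u}(\ell)=m^{q}(\ell)\prod_e u_e^{c_e(\ell)}$. Summing the soup distribution \eqref{complexsoup} at $t=1/2$ against $\prod_e u_e^{k_e(\bar N)}$ (with $k_e(\bar N)$ the total edge-$e$ local time of the multiset $\bar N$), and using that the Poisson factors split over loops, gives, for $u$ near $0$,
\[
 \sum_{\bar k\in\currents}\mu(\bar k)\prod_e u_e^{k_e}
 = e^{-\frac12 m^q[\loops(A)]}\,\exp\Big\{\tfrac12\sum_{\ell}m^q(\ell)\prod_e u_e^{c_e(\ell)}\Big\}
 = e^{-\frac12 m^q[\loops(A)]}\,\exp\Big\{\tfrac12 m^{q^u}[\loops(A)]\Big\}.
\]
By part 3 of Proposition~\ref{funrooted}, applied to $q$ and to $q^u$, the right side equals $\sqrt{\det(I-\qprob)}\,\det(I-Q^u)^{-1/2}=\sqrt{D}\,\det(I-Q^u)^{-1/2}$ (as convergent, hence formal, power series). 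Extracting the coefficient of $\prod_e u_e^{k_e}$ yields $\mu(\bar k)=\sqrt{D}\,\big[\textstyle\prod_e u_e^{k_e}\big]\det(I-Q^u)^{-1/2}$.

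Next I would make the $\theta$-dependence explicit. Writing $w_e=\theta_e u_e$, the matrix $Q^u$ becomes $\widehat Q(\bar w)$, with off-diagonal entries $w_{\{x,y\}}/2$ and diagonal entry $w_{xx}$ at $x$; since $w_e=\theta_e u_e$, the extraction produces a factor $\theta(\bar k)$, so the theorem reduces to the purely combinatorial identity
\[
 \big[\textstyle\prod_e w_e^{k_e}\big]\det\big(I-\widehat Q(\bar w)\big)^{-1/2}
 =\Big[\prod_{x\in A}\frac{\Gamma(n_x+\tfrac12)}{\sqrt\pi}\Big]\Big[\prod_{e\in\edges}\frac{1}{k_e!}\Big],\qquad n_x=n_x(\bar k).
\]
I would prove this using the Gaussian representation $\det(I-\widehat Q(\bar w))^{-1/2}=(2\pi)^{-|A|/2}\int_{\R^A}\exp\{-\tfrac12\|\phi\|^2+\tfrac12\sum_e w_e\phi_e\}\,d\phi$ (valid when $I-\widehat Q$ is positive definite, hence as a power-series identity; recall $\phi_e=\phi_x\phi_y$). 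Extracting $\prod_e w_e^{k_e}$ inside the integral gives $(\prod_e k_e!)^{-1}2^{-\sum_e k_e}\prod_e\phi_e^{k_e}$, and $\prod_e\phi_e^{k_e}=\prod_x\phi_x^{2n_x}$; the integral then factors over $x$, and $(2\pi)^{-1/2}\int_\R e^{-t^2/2}t^{2n}\,dt=(2n-1)!!=2^{n}\Gamma(n+\tfrac12)/\sqrt\pi$ together with $\sum_x n_x=\sum_e k_e$ cancels all the powers of $2$, leaving the claimed product.

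The main obstacle is exactly this last identity. The Gaussian–integral argument is short, but it is essentially Wick's theorem, so it should be regarded as the combinatorial core; a self-contained proof — the combinatorial lemma of Section~\ref{combinsec} — must instead argue directly that a multiset of unrooted loops realizing the current $\bar k$ corresponds, at each vertex $x$, to a pairing of the $2n_x$ edge-ends incident to $x$ into $n_x$ ``passages'', that the number of such pairings is $(2n_x-1)!!=2^{n_x}\Gamma(n_x+\tfrac12)/\sqrt\pi$, and that the edge-multiplicity normalizations $\prod_e k_e!$ and the powers of $2$ (coming both from undirected versus directed edges and from $q(x,y)=\theta_{\{x,y\}}/2$) precisely account for the overcounting; the $1/|\ell|$ and $s_\ell$ factors built into $m(\ell)$ must be tracked through the step that glues passages back into loops.

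Two minor points can be handled in passing. The soup for a non-positive integrable $\theta$ is defined by \eqref{complexsoup}, and every identity above is between convergent power series for small weights, so the result extends by analytic continuation; for positive (or Hermitian positive-definite) weights $D>0$ and $\sqrt D$ is the usual positive root, which matches the probabilistic statement. Alternatively, one may run the same computation through the bubble soup, starting from the formula for $\mu_{1/2}({\bf l})$ and summing over tuples ${\bf l}$ with current $\bar k$; this trades the exponential-formula step for a direct product over an ordering of $A$, at the cost that the $\Gamma$-factors then appear as $\prod_i\Gamma(j_i+\tfrac12)$ in terms of elementary-loop counts $j_i$ rather than the vertex local times $n_x$, so one still faces the same reorganization — which is why I would favor the generating-function route above.
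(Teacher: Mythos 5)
Your argument is correct, and it takes a genuinely different route from the paper's. The paper fixes an ordering of the vertices, passes to the bubble--soup representation \eqref{onehalfsoup}, and reduces the theorem to the purely combinatorial graph identity of Theorem \ref{grapheqtheorem}, which is then proved in Section \ref{combinsec} by induction on elementary graph modifications (adding and duplicating edges, converting self-edges, merging vertices) plus a multinomial lemma. You instead compute the edge-local-time generating function of the soup directly: the exponential formula for the Poissonian distribution together with Proposition \ref{funrooted}(3) gives $\sum_{\bar k}\mu(\bar k)\prod_e u_e^{k_e}=\sqrt{D}\,\det(I-Q^u)^{-1/2}$, and you evaluate the Taylor coefficient by the Gaussian representation of $\det(\cdot)^{-1/2}$, i.e.\ Wick's theorem, using $(2\pi)^{-1/2}\int_\R t^{2n}e^{-t^2/2}\,dt=(2n-1)!!=2^{n}\Gamma(n+\frac 12)/\sqrt\pi$ and $\sum_x n_x=\sum_e k_e$; I checked the bookkeeping (off-diagonal $q(x,y)=\theta_{xy}/2$ versus diagonal $\theta_{xx}$, and the cancellation of all powers of $2$) and it is right, and the extraction correctly returns $0$ off the set of currents since an odd Gaussian moment vanishes. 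The only things you should state explicitly are routine: absolute convergence (integrability of $\theta$, $|u_e|\le 1$, and finiteness of $\sum_\ell|m(\ell)|$) to justify exchanging the coefficient extraction with the sum over multisets and with the Gaussian integral, positive definiteness of $I-\widehat Q(\bar w)$ for small $\bar w$, and that the branch of the square root is the one equal to $\exp\{\frac 12 m^{q^u}[\loops(A)]\}$, which is $1$ at $u=0$. As for what each approach buys: yours is much shorter, needs no vertex ordering, and makes the half-integer Gamma factors transparent as Gaussian moments; the paper's proof is deliberately Gaussian-free, establishing Theorem \ref{grapheqtheorem} as a standalone identity and preserving the architecture in which the current distribution is derived combinatorially and Le Jan's theorem is then deduced from it via \eqref{may16.3} --- since your evaluation is essentially that same Gaussian computation, your route makes the later derivation of the square of the field less of an independent consequence, though there is no logical circularity because your proof nowhere uses Le Jan's theorem. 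Your closing sketch of a self-contained pairing argument at each vertex is the right idea for recovering the paper's combinatorial core, but it is not carried out; it is also not needed, since the Gaussian-integral evaluation already completes the proof.
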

 
\begin{proof}
The  proof
  will use a combinatorial identity that will be proved
  in Section \ref{combinsec}. 
   Here we will 
 show how to reduce it to this identity.  We write $q$ for the corresponding
 weight on directed graphs as in \eqref{thetadef2}. We choose
 an ordering of $A = \{x_1,\ldots,x_n\}$ and consider the growing loop
 (bubble)
 representation of the soup as in Section \ref{growingsec}. Let
 ${\bf l} = (l_1,\ldots,l_n)$ be the output of the growing loop
 at time $t=1/2$.  Let $\pi$ denote the function that sends each ${\bf l}$
 to the corresponding current $\bar k$; note that $\pi$ is not one-to-one.
 By \eqref{onehalfsoup} the measure on ${\bf l}$ of the soup is
\[   \sqrt {D}  \, 
 \left[\prod_{i=1}^n  \frac{\Gamma(j_i +\frac 12 )}
  {j_i! \, \sqrt \pi } \right]\, q({\bf l}) .\]
  If $\pi({\bf l}) = \bar k$, then
  \[      q({\bf l}) = 2^{-S(\bar k)} \, \theta(\bar k), \]
where 
$        S(\bar k) = \sum_{e\in\edges^0} k_e,$
and $\edges^0$ denote the edges in $\edges$ that are not self-edges.
Therefore, the induced distribution on $\currents$ gives measure
\begin{equation}  \label{sept21.1}
        \frac{\theta(\bar k)\,  \sqrt {D} }{2^{S(\bar k)}  }
 \, \sum_{\pi({\bf  l}) = \bar k}
 \left[\prod_{i=1}^n  \frac{\Gamma(j_i +\frac 12 )}
  {j_i! \, \sqrt \pi } \right]
  \end{equation}
 to $\bar k$.  So we need to show that
 \[   \sum_{\pi({\bf  l}) = \bar k}
 \left[\prod_{i=1}^n  \frac{\Gamma(j_i +\frac 12 )}
  {j_i!   } \right] =  2^{S(\bar k)} \left[\prod_{x \in A}
    {\Gamma(n_x + \frac 12)} \right] \, 
 \left[\prod_{e \in \edges} \frac{1}{k_e!} \right].\]
This is done in Theorem \ref{grapheqtheorem}. We note that
since \eqref{sept21.1} also represents the measure of the
current $\overline k$ from the (unordered) loop measure, that
the sum on the left-hand side is indpendent of the ordering
of the vertices. 
    \end{proof}
    
    \begad
  In the last proof, $\pi$ is used both for a function on loops
  and for the number $3.14\cdots$.  This will also
  be true in Section \ref{combinsec}.  We hope
  that  this is not
confusing.
  \endad

    \subsection{A graph identity}  \label{combinsec}
    
  Here we prove a combinatorial fact that is a little more
  general than we need for Theorem \ref{oct10.theorem}.  We will change the notation slightly
  although there is overlap with our previous notation.
  Let $\graph = (A,E)$ be a finite but not necessarily
  simple graph.  The edges $E $ are undirected, but we allow 
  self-edges and multiple edges; let $E_0$ be the set
  of edges that are not self-edges.   We write $A = \{x_1,\ldots,x_n\},
  A_j = A \setminus \{x_1,\ldots,x_{j-1}\}$
  and we let $\bar \omega = (\omega^1,\ldots,\omega^n)$
  be an ordered $n$-tuple of loops where $\omega^j$ is a loop in $A_j$
  rooted at $x_j$.   To be more precise, a loop rooted at $x_j$
  in $A_j$ is a sequence of points
  \[   \omega^j = [\omega^j_0 = x_j ,\ldots,\omega^j_m = x_j] ,
  \;\;\;\; \omega^j_i \in A_j \]
  as well as a sequence of undirected edges
  \[      \omega^j = e_1 \oplus \cdots \oplus e_n , \]
  such that the endpoints of $e_i$ are $\omega^j_{i-1}$
  and $\omega^j_i$.  As before, we write $n_e(x) = 2,1,0 $,
  if $e$ is a self-edge at $x$; is in $E_0$
  and has $x$  as a
  vertex; and does not touch $x$, respectively.  A current
  $\bar k = \{k_e, e\in E\}$  is an element of $\N^{\edges}$ with
  the property that the number of edges going out of each vertex
  is even.  To be precise, if
  \[         n_x = n_x(\bar k) = \frac 12 \sum_{e \in E}  k_e 
  \, n_e(x) , \]
  then $n_x$ is an integer for each $x$.  For each $\bar \omega$ there
  is a corresponding current, which we denote by $\pi(\overline \omega)$,
  obtained by counting the total number of transversals of each edge. 
 We write $N_j  = N_j(\omega_j)$ for the number of elementary loops
 in $\omega_j$, that is, 
 \[       N_j = \#\{i\geq 1: \omega_i^j = x_j\}. \]
 We also let 
 \[     S(\bar k) = \sum_{e \in E_0} k_e.\]
\begin{theorem}   \label{grapheqtheorem}
 If $(A,E)$ is a graph,
%  with corresponding
% directed graph $(A, \vec E)$, 
 then for every $\bar k \in \currents$,
 \begin{equation}  \label{graphequality}
  2^{-S(\bar k)}  \sum_{\bar \omega , \, 
     \pi(\bar \omega) = \bar k} \;\prod_{j=1}^n 
      \frac{\Gamma(N_j + \frac 12)}
          {N_j! } = \left
    [\prod_{x \in A}  {\Gamma\left(n_x + \frac 12\right)} 
    \right] \;\;
      \left[
      \prod_{e \in E}  \frac 1{k_e!}\right]. 
      \end{equation}
  \end{theorem}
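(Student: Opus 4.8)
The plan is to deduce \eqref{graphequality} from an identity of generating functions in an auxiliary edge weight. Given the graph $\graph=(A,E)$ with $A=\{x_1,\dots,x_n\}$, assign to it a real weight $\theta=(\theta_e)_{e\in E}$, small enough that the associated symmetric weight $q=q_\theta$ on $\dedges_A$ is integrable, and write $G=(I-\qprob)^{-1}$, $D=\det(I-\qprob)$ and $\theta(\bar k)=\prod_e\theta_e^{k_e}$. Since neither side of \eqref{graphequality} involves $\theta$, it suffices to prove that the two generating functions $\sum_{\bar k\in\currents}(\cdots)\,\theta(\bar k)$ agree as (convergent, for small $\theta$) power series, because distinct currents give distinct monomials $\theta(\bar k)$. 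I will show both equal $\pi^{n/2}\,(\det G)^{1/2}=\pi^{n/2}D^{-1/2}$.

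For the left-hand side, interchange the sum over $\bar k$ with the sum over the tuples $\bar\omega$. A step of a loop along a non-self edge $e$ contributes the factor $q=\theta_e/2$, while a step along a self-edge $e$ contributes $q=\theta_e$; collecting these over all steps of all the $\omega^j$ gives $2^{-S(\pi(\bar\omega))}\,\theta(\pi(\bar\omega))=\prod_{j=1}^n q(\omega^j)$. Hence the left-hand generating function is
\[
\sum_{\bar\omega}\;\prod_{j=1}^n q(\omega^j)\,\frac{\Gamma(N_j+\tfrac12)}{N_j!}
=\prod_{j=1}^n\;\sum_{l}\, q(l)\,\frac{\Gamma(N_j(l)+\tfrac12)}{N_j(l)!},
\]
the inner sum ranging over all loops $l$ rooted at $x_j$ in $A_j$ (including the trivial one). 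Grouping those loops by their number $k$ of elementary factors, using $q(\tilde\loops_{x_j}^k(A_j))=f_{x_j}^k$ with $f_{x_j}=\sum_{l\in\tilde\loops^1_{x_j}(A_j)}q(l)$, and summing the resulting series via $\sum_{k\geq0}\frac{\Gamma(k+1/2)}{k!\,\Gamma(1/2)}z^k=(1-z)^{-1/2}$, each factor equals $\sqrt\pi\,(1-f_{x_j})^{-1/2}=\sqrt\pi\,G_{A_j}(x_j,x_j)^{1/2}$. Since $\det G=\prod_j G_{A_j}(x_j,x_j)$, the left-hand generating function is $\pi^{n/2}(\det G)^{1/2}$.

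For the right-hand side, let $\{U_x:x\in A\}$ be independent standard normal variables and recall $\Gamma(m+\tfrac12)=\sqrt\pi\,2^{-m}\,\E[U^{2m}]$ for $m\in\N$. Because $\sum_e k_e n_e(x)=2n_x$ and $\sum_x n_e(x)=2$ for every $e$, one gets $\prod_x\E[U_x^{2n_x}]=\E\bigl[\prod_e U_e^{k_e}\bigr]$ with $U_e:=\prod_x U_x^{n_e(x)}$ (so $U_e=U_xU_y$ for $e=\{x,y\}$ and $U_e=U_x^2$ for a self-edge at $x$), and therefore
\[
\Bigl[\prod_x\Gamma(n_x+\tfrac12)\Bigr]\Bigl[\prod_e\tfrac1{k_e!}\Bigr]\,\theta(\bar k)
=\pi^{n/2}\,\E\Bigl[\prod_e\frac{(\theta_eU_e/2)^{k_e}}{k_e!}\Bigr].
\]
Summing over $\bar k\in\currents$ and taking the expectation outside, the terms indexed by $\bar m\in\N^E\setminus\currents$ drop out because they carry an odd moment of some $U_x$, so the sum over $\currents$ equals the sum over all of $\N^E$, namely $\pi^{n/2}\,\E\bigl[\exp\{\tfrac12\sum_e\theta_eU_e\}\bigr]$. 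Finally, the Gaussian field density \eqref{fielddensity} is a probability density, hence integrates to one, i.e. $\E\bigl[\exp\{\tfrac12\sum_e\theta_eU_e\}\bigr]=(\det G)^{1/2}$. Thus the right-hand generating function is also $\pi^{n/2}(\det G)^{1/2}$, and \eqref{graphequality} follows.

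The conceptual content is modest: on the loop side it is the first-return decomposition together with the generalized binomial series, and on the vertex side it is the normalization of the Gaussian integral, both already available. The step that needs the most care — and where a slip is most likely — is the bookkeeping around self-edges: the distinction between $E$ and $E_0$, the conventions $n_e(x)\in\{0,1,2\}$ and $\theta_e=q(x,x)$ for a self-edge, and the attendant powers of two in $S(\bar k)$, in $q$ versus $\theta$, and in $\Gamma(m+\tfrac12)=\sqrt\pi\,2^{-m}\,\E[U^{2m}]$; one should also check the interchanges of summation, which are legitimate for small $\theta$ (or formally, since each $f_{x_j}$ has vanishing constant term).
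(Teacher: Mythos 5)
Your proposal is correct, but it proves the identity by a genuinely different route than the paper. The paper's argument is a purely combinatorial induction over graph surgeries --- base case of a single vertex, then adding a self-edge, duplicating an edge, converting a self-edge into an edge to a new pendant vertex, and merging vertices, the last step resting on the multinomial identity \eqref{may7.1} --- and it leans on the fact that the ordered-loop sum is independent of the vertex ordering. You instead polarize in an auxiliary edge weight: summing each side of \eqref{graphequality} against $\theta(\bar k)$, the left side becomes the total mass of the $t=\tfrac12$ bubble soup, evaluated via the first-return decomposition $G_{A_j}(x_j,x_j)=(1-f_{x_j})^{-1}$, the binomial series $(1-z)^{-1/2}$, and \eqref{fdet}, while the right side becomes a Gaussian moment computation via $\Gamma(m+\tfrac12)=\sqrt\pi\,2^{-m}\,\E[U^{2m}]$ together with the normalization of the density \eqref{fielddensity}; both equal $\pi^{n/2}(\det G)^{1/2}$, and since distinct currents give distinct monomials in $\theta$, coefficient identification (valid for small $\theta$, where all interchanges are absolutely convergent) yields the theorem. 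None of these ingredients depends on Theorem \ref{grapheqtheorem} itself --- \eqref{fdet}, the first-return identity, and the Gaussian integral are all upstream --- so there is no circularity, even though your right-hand computation is essentially the paper's later lemma \eqref{may16.3} with the signs $J_x$ replaced by full Gaussians; in effect you run the loop-soup/Gaussian-square correspondence backwards, which makes the identity conceptually transparent and avoids the case analysis, at the cost of an analytic coefficient-extraction step where the paper stays entirely elementary. One small point worth making explicit: for multigraphs each parallel edge must carry its own variable $\theta_e$, and the matrix weight entering $G$ is obtained by summing $\theta_e/2$ (resp.\ $\theta_e$ for self-edges) over parallel edges; your per-step bookkeeping already does this correctly, so the argument goes through, but the appeal to \eqref{thetadef2} as written presumes a simple edge set and deserves a sentence.
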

  
  We will do this by induction by treating various cases.
 We will use the fact from the last section that
 \[ \sum_{\bar \omega , \, 
     \pi(\bar \omega) = \bar k} \;\prod_{j=1}^n 
      \frac{\Gamma(N_j + \frac 12)}
          {N_j! }  \]
  is independent of the ordering of the vertices. 
  
  \subsubsection{Trivial case: $A = \{x\}, E = \eset$}
  In this case there is only one current and one possible 
  walk $\omega$.  Then $N_1 = 0, n_x = 0$ and both sides
  of \eqref{graphequality} equal $\Gamma (\frac 12)
   = \sqrt \pi$.

\subsubsection{Adding a self-edge}  \label{selfedge}

Suppose that \eqref{graphequality}
holds for a graph $\graph = (A, E)$
with $A = \{x_1,\ldots,x_n\}$ and consider a new graph
$\tilde \graph =(A,\tilde E)$ by adding one self-edge
$ \tilde e $ at $x_1$. We write $\currents, \tilde \currents$
for the currents for $\graph$ and $\tilde \graph$
respectively. 
   We write   
$\tilde k \in \tilde \currents $
as $\tilde k  = (\bar k, k   )$  where $\bar k \in \currents $
and $k = k_{\tilde e}$.
Let us write $n_x, \tilde n_x$ for the corresponding quantities
in $\graph, \tilde \graph$, respectively.    We also
write $\whoknows$ and $\tilde \whoknows$ for the corresponding
collections of ordered pairs $\bar \omega$.

 Let $U,V$ denote the left and right-hand
sides of \eqref{graphequality}, respectively, for $\graph$
and $\bar k$,
 and
let $\tilde U, \tilde V$ be the corresponding quantities for
$\tilde \graph$ and $\tilde k = (\bar k,k)$.  
We will show that $U = V$ implies
that $\tilde U = \tilde V$.

Let $r = n_{x_1}$
 and hence $\tilde n_{x_1}
  =r +   k_ {\tilde e}= r + k$. Note that 
$  \tilde n_{x_j } =   n_{x_j}$ for $j \geq 2$ and 
$  S(\tilde k) = S(\bar k) .$   In particular,
\begin{equation}  \label{may21.6}
  \tilde V = \frac{\Gamma(r + k + \frac 12)}
  {\Gamma(r + \frac 12) \, k!} \;   V.
  \end{equation}

If $ \bar \omega = (\omega_1,\ldots,\omega_n) \in  \whoknows$
with $\pi(\bar \omega) = \bar k$
we have  $N_1 = n_{x_1} = r$.
(this uses the fact that $x_1$ is the first vertex
in the ordering). To  obtain an 
$\bar \omega' \in \tilde \whoknows$ with $\pi (\bar \omega')
 = \tilde k$ we
  replace $\omega_1$
with $\tilde \omega_1$ which is constructed by placing the edge $ e$
$k$ times into $\omega_1$.  
  The number of ways to add the edge $e$ in $k$ times
is
\[         \binom{N_1 + k}{k} = \frac{(r + k)!}{r! \, k!}.\]
Note that $\tilde N_1 = N_1 + k$.  Using $S(\tilde k) = 
S(\bar k)$, we see that
\begin{equation}  \label{may21.7}
  \tilde U = \frac{(r + k)!}{r! \, k!}\, \frac{\Gamma( r + k + \frac 12) 
}{\Gamma(r + \frac 12)} \,\frac{r!}{(r+k)!} \,  U
 = \frac{1}{k!} \, \frac{\Gamma( r + k + \frac 12) 
}{\Gamma(r + \frac 12)} \, U 
\end{equation}
Comparing \eqref{may21.6} and \eqref{may21.7}, we see
that $\tilde U = \tilde V$.

\subsubsection{Edge duplicating}  \label{edgesplit}

Suppose that \eqref{graphequality}   holds for a given
 graph $\graph = (A,E)$ and
 we take an edge $e \in E$  and add another  edge
 $e_1$
 with the same endpoints.  If $ e$ is a self-edge, this is
 the same as adding a self-edge and we can use the previous
 argument; hence, we will assume that $e$ connects distinct
 vertices.  Let $\tilde \graph = (A,\tilde E)$ with
 $\tilde E = E \cup \{e_1\}$.  We write $\currents, \tilde{\currents},
 \whoknows, \tilde \whoknows$ for the corresponding quantities
 as before. 
 If $\tilde k \in \tilde \currents $, then we can obtain
 a current $\bar k \in \currents $ by letting 
 \[   \bar k_ e = \tilde k_ e + \tilde k_{ e_1}.\]
and letting $\bar k$ agree with $\tilde k$ on 
$E \setminus \{ e\}$.  Suppose that $  k_ e = k$.
Then there are $k+1$ possible $\tilde k$ that give
$\bar k $.   We write $\tilde k^j \in \tilde \currents  
  $
 for the current that agrees with $\bar k$ on
 $E \setminus \{ e\}$ and has 
  $\tilde k^j_ e = j, \tilde k^j_{ e_1} = k-j$, 

Let us fix $\bar k$ as above and let $U,V$ be the left and
right hand sides of \eqref{graphequality} for $  \graph$
and $\bar k$.   We will choose an ordering $A = \{x_1,\ldots,
x_n\}$ for which $x_1$ is an endpoint of $e$.
For
$j=0,1,\ldots,k$, let  $\tilde U_j,\tilde V_j$ be the
left and right-hand sides  of \eqref{graphequality} 
for $\tilde \graph$ and
$\tilde k^j$.  Note that $\tilde n_{x_i} = n_{x_i}$ and
$N_i = \tilde N_i$ for $i=1,\ldots,n$.  We will show that
if $U = V$, then $\tilde U_j = \tilde V_j$ for each
$j$.

First note that
\[   \tilde V_j =  \binom{k}{j} \, V . \]
%Let $\whoknows, \tilde \whoknows$ be the corresponding
%$n$-tuples of random walk paths. 
 If $\bar \omega
\in \whoknows$ is a walk with $\pi(\omega) = \bar k$, then
$N_1 = k$ and we traverse $ e$ $k$ times.  In $\tilde{\whoknows}$
for each of these traversals we can either keep $ e$ or
we can replace $ e$ with $ e_1$.  There are 
\[           \binom{k}{j} \]
ways in which we can retain $ e$ for $j$ times and change
to $ e_1$ at $k-j$ times.  Therefore
\[    \tilde U_j = \binom{k}{j} \, U . \]

\subsubsection{Converting a self-edge}  \label{converting}
  
  Suppose that $\graph = (A,E)$ is a graph with $A = \{x_1,\ldots,x_n\}$
   for which
  \eqref{graphequality} holds.  Suppose that $e \in E$ is
  a self-edge at $x_1$.
     Let $\tilde \graph = (\tilde A, \tilde E)$
  be a new graph obtained by converting the self-edge to an edge
  to a new vertex, that is,
  \[            \tilde A = \{x_1,\ldots,x_n,y\}\]
  and $\tilde E = [E \setminus \{e\}] \cup \{  e'\}$ where
  $  e'$ connects $x_1$ and $y$.  
  We write $\currents, \tilde{\currents},
 \whoknows, \tilde \whoknows$ for the corresponding quantities
 as before. 
 We will show that
  \eqref{graphequality} holds for $\tilde \graph$.
  Let $\tilde k \in \tilde \currents $ and let
  $2k = \tilde k_y$.  Note that $\tilde k_y$ must be even since
  $y$ has no other edges adjacent to it. 
    Let
  $\bar k $    be
  the current in $\currents $
  that agrees with $\bar k$ on $E \setminus \{ e\}$ and has
  $\bar k_{ e}=  k$.  We will show that if \eqref{graphequality}
  holds for $\graph$ and $\bar k$, then it also holds
  for $\tilde \graph$ and $\tilde k$.  As before let $U,V$
  be the left and right-hand sides of $\eqref{graphequality}$
  for $\graph,\bar k$ and $\tilde U, \tilde V$ the corresponding
  quantities for $\tilde \graph, \tilde k$.

  Note that $n_x$ agrees with $\tilde n_x$ on $A$ with
  $\tilde n_y = k$.  Also, 
  $   S (\tilde k) = S(\bar k) + 2k$.
  This and a standard identity for the Gamma function give
\begin{equation}  \label{may21.8}
    \tilde V = \frac{k!}{(2k)!} \,  {\Gamma (
  k + \frac 12) } \, V
      =  \frac{ 
   \sqrt \pi }{ 2^{2k}  }\, V. 
   \end{equation}
  Comparing $U$ and $\tilde U$ is not difficult.  There is
  a one-to-one correspondence between walks $\omega_1$ that visit
  $e$ $k$ times and walks $\tilde \omega_1$ that visit $e'$
  $2k$ times.  We just replace each occurrence of $e$ with $e'
  \oplus e'$. 
 Therefore, 
 \[   \sum_{\bar \omega \in \whoknows, \, 
     \pi(\bar \omega) = \bar k} \prod_{j=1}^{n+1} 
      \frac{\Gamma(N_j + \frac 12)}
          {N_j! } = \sqrt \pi  \sum_{\tilde \omega \in \tilde \whoknows, \, 
     \pi(\tilde \omega) = \tilde k} \prod_{j=1}^n 
      \frac{\Gamma(N_j + \frac 12)}
          {N_j! } .\]
  Here we setting $N_{n+1} = 0$ as the value corresponding to the
  new vertex $y$.
 Therefore,
\begin{equation}
\label{may21.9}
 \tilde U = \frac{2^{S(\bar k)}}{2^{S(\tilde k)}} \, \sqrt \pi 
    \, U = 2^{-2k} \,\sqrt \pi \, U . 
    \end{equation}
    Comparing \eqref{may21.8} and \eqref{may21.9} gives
    $\tilde U = \tilde V$.

  \subsubsection{Merging vertices}
  
Suppose that  $\graph = (A,E)$  is a graph
where $A = \{x_1,x_2,\ldots,x_n,y_1,y_2,\ldots,y_s\}$
with $s \leq n$  such that there 
there are edges $  e_j, 1 \leq j \leq s$ connecting $x_j,y_j$;
 and for $1 \leq i < j \leq s$,  edges
 $e_{ij}$ connecting
$x_i,x_j$. We also assume that there are no more
edges adjacent to $y_1,\ldots,y_s$ but there may be more
edges connecting $x_1,\ldots,x_n$.
 
 Our new graph $\tilde \graph$ will combine
  $y_1,y_2,\ldots,y_s$ into a single vertex that we call
  $y$.  We keep the edges $e_1,e_2,\ldots,e_s$ (that now
 connect $x_j$  and  $y$) and we remove
  the edges $e_{ij}$.  The remaining edges of $E$
  (all of which connect points in $x_1,\ldots,x_n$)
  are also in $\tilde E$.  We write $\currents,\tilde \currents,
  \whoknows, \tilde \whoknows$ as before.
   
 We choose the orderings of $A = \{x_1,\ldots,x_n,$
 $ y_1,$ $\ldots, $ $y_s\}$
 and $\tilde A = \{x_1,\ldots,$ $x_n,$ $y\}$.
There is a one-to-one relationship between
 the $\omega \in \whoknows$ and 
   $\tilde \omega \in \tilde \whoknows$
  by replacing each traversal  of  the  
  edge $e_{ij}$   starting at $x_i$ with
   $  e_i \oplus   e_j $ and each traversal of
   $e_{ij}$   starting at $x_j$
   with $  e_j \oplus  e_i $.

 Suppose $\bar \omega \in \whoknows$  giving the
current $\bar k$ such that
  \[    n_{y_j} = a_j ,\;\;\;\; k_{ e_j} = 2 a_j ,\;\;\;\;
      k_{ e_{ij}} = b_{ij}  , \]
      and let 
      \[
        B_i = \sum_{j \neq i} b_{ij} , \;\;\;\;  B 
         = \frac{1}{2} \sum_i B_i , \;\;\;\;
          K  = B+ \sum{a_j}.\]
   Then $\bar \omega$ induces  $\tilde{\omega} \in \tilde{\whoknows}$
  and  
     $\tilde k \in \tilde \currents$
  with
 \begin{equation}  \label{may21.10}
     \tilde k_{ e_j} =k_j := 2 a_j + B_j,
%     = 2 a_j 
%     + \sum_{i \neq j}
%      b_{ij},
\end{equation}
      \[ 
     S(\tilde k ) = S(\bar k) + B,
\;\;\;\;  n_y =   K     . \]
  If there are any edges $ e$ that are not of the form
  $ e_j$ or $ e_{ij}$, then $\bar k_ e = \tilde k_ e$.
  This correspondence $\bar k \mapsto
  \tilde k$ is not one-to-one.  Let us write $U(\bar k),
  V(\bar k)$ for the left and right-hand sides of \eqref{graphequality}
  for $(\graph, \bar k)$ and $\tilde U, \tilde V$ for the
  corresponding quantities for $(\tilde \graph, \tilde k)$.
  We will show that if $U(\bar k) = 
  V(\bar k)$  for each $\bar k$, then $\tilde U =
  \tilde V$.
  
  Note that
%  \[  \tilde U = \frac{2^{S(\tilde k)}}{2^{S(\bar k)}}
%      \,   (\sqrt \pi)^{1-s} \, U(\bar k) = \frac{1}
%        {2^B\,(\sqrt \pi)^{s-1}} \, U(\bar k)  ,\]
%  \begin{eqnarray*} 
%   \frac{ V(\bar k)}{\tilde V} & = &   \frac
%    {\prod_{j=1}^s \Gamma(a_j +\frac 12)} 
%    {\Gamma(K + \frac 12)} \,\prod_{j=1}^s 
%    \frac{
%     k_j!}{(2a_j)!}\, {\prod_{ij} \frac 1{b_{ij}! }}
%                            \\
%                           &  = & \frac
%                             {2^{B}\, K!\,  [k_1!
%                              \cdots k_s!]} 
%                   {(2K)!\  a_1! \cdots
%                             a_s!\, \prod_{ij} b_{ij}! } \,  
%                               {2^{B} \  (\sqrt \pi)^{s-1} },                        
%       \end{eqnarray*}
   \begin{eqnarray*}
    \tilde U & = & \sum \frac{2^{S(\tilde k)}}{2^{S(\bar k)}}
      \,   (\sqrt \pi)^{1-s} \, U(\bar k)\\
      & = & \sum\frac{1}
        {2^B\,(\sqrt \pi)^{s-1}} \, U(\bar k) \\
        &= &\sum\frac{1}
        {2^B\,(\sqrt \pi)^{s-1}} \, V(\bar k)\\
       & = & \tilde V \sum\frac
                             {2^{B}\, K!\,  [k_1!
                              \cdots k_s!]} 
                   {(2K)!\  a_1! \cdots
                             a_s! \, \prod_{ij} b_{ij}!} , 
   \end{eqnarray*}
  where in each case the sum is over all $a_j, b_{ij}$
  satisfying \eqref{may21.10}.  The result $\tilde U
  = \tilde V$ follows from the following combinatorial
  lemma.

\begin{lemma}  Suppose $K$ is a positive
integer and   $k_1,\ldots,k_n$ are positive
integers with $k_1 + \cdots + k_n = 2K$.
Then,
\begin{equation}  \label{may7.1}
\sum    \frac{  2^B\, K!}
   {  \, a_1! \cdots a_n! \, \prod_{ij}
    b_{ij}! } = \frac{(2K)!}{k_1!
    \, k_2! \cdots k_n!}.
    \end{equation}
  where
  \[  B= \sum_{1 \leq i < j \leq n} b_{ij}, \]
  and the sum is over all nonnegative integers $a_1,
  \ldots,a_n$ and $\{b_{ij} :1 \leq i < j \leq n\} $
  with
  \begin{equation}  \label{apr13.1}
     k_j = 2a_j + \sum_{i \neq j} b_{ij}.
     \end{equation}
  In the last formula, we write $b_{ji} = b_{ij}$
  if $j > i$.  
  \end{lemma}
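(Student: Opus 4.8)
The plan is to prove \eqref{may7.1} by interpreting both sides as the same coefficient in an expansion of $(x_1 + \cdots + x_n)^{2K}$, where $x_1,\ldots,x_n$ are commuting indeterminates.

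First I would expand directly. Since $k_1 + \cdots + k_n = 2K$, the multinomial theorem says that the coefficient of $x_1^{k_1} \cdots x_n^{k_n}$ in $(x_1 + \cdots + x_n)^{2K}$ is exactly $(2K)!/(k_1! \cdots k_n!)$, the right-hand side of \eqref{may7.1}.

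Next I would expand the same polynomial after grouping in pairs. Writing
\[
(x_1 + \cdots + x_n)^2 = \sum_{i=1}^n x_i^2 + \sum_{1 \leq i < j \leq n} 2 x_i x_j
\]
and applying the multinomial theorem to the $K$-th power of this sum of $n + \binom{n}{2}$ monomials, a term is indexed by nonnegative integers $a_i$ (the exponent of $x_i^2$) and $b_{ij}$ for $i<j$ (the exponent of $2 x_i x_j$), subject to $\sum_i a_i + \sum_{i<j} b_{ij} = K$, and it contributes
\[
\frac{K!}{\prod_i a_i! \, \prod_{i<j} b_{ij}!} \, 2^{B} \, \prod_{i=1}^n x_i^{\,2 a_i + \sum_{j \neq i} b_{ij}}, \qquad B = \sum_{i<j} b_{ij},
\]
with the convention $b_{ji} = b_{ij}$. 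Collecting the coefficient of $x_1^{k_1} \cdots x_n^{k_n}$ produces precisely the left-hand side of \eqref{may7.1}, where the summation is now over the tuples satisfying $k_j = 2 a_j + \sum_{i \neq j} b_{ij}$ for every $j$, i.e.\ \eqref{apr13.1}.

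Finally I would check that the two index sets agree. Summing the constraints $k_j = 2 a_j + \sum_{i \neq j} b_{ij}$ over $j$ and using $\sum_j k_j = 2K$ forces $\sum_i a_i + \sum_{i < j} b_{ij} = K$ automatically, so the degree-$K$ condition in the paired expansion is redundant and the two parametrizations coincide. Equating the two ways of computing the coefficient of $x_1^{k_1} \cdots x_n^{k_n}$ then yields \eqref{may7.1}. I do not expect a genuine obstacle in this argument; the only points needing a little care are the $b_{ij}$-versus-$b_{ji}$ bookkeeping and the observation just made that the constraint $\sum_i a_i + \sum_{i<j} b_{ij} = K$ is automatically satisfied, both of which are routine.
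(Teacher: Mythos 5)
Your proof is correct. It is essentially the paper's argument recast in generating-function language: the paper counts sequences $(m_1,\ldots,m_{2K})$ with letter $j$ appearing $k_j$ times and then groups them into $K$ ordered pairs, recording $a_j$ pairs of type $(j,j)$ and $b_{ij}$ pairs of type $(i,j)$ or $(j,i)$ (whence the $2^B$), which is exactly the bookkeeping your expansion of $\left((x_1+\cdots+x_n)^2\right)^K$ versus $(x_1+\cdots+x_n)^{2K}$ encodes, including the observation that the constraint $\sum_i a_i + \sum_{i<j} b_{ij} = K$ is forced by \eqref{apr13.1}.
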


 \begin{proof} The right-hand side of \eqref{may7.1}
  is the number of sequences
 $(m_1,\ldots,m_{2K})$  with $m_i \in \{1,\ldots,n\}$ such
 that integer $j$ appears exactly $k_j$ times.  We can
 write each such sequence as a sequence of $K $ ordered
 pairs
 \[  (m_1,m_2), \; (m_3,m_4), \;\ldots, \;(m_{2K-1},m_{2K}).\]
 Let $a_j$ denote the number of these pairs that equal $(j,j)$
 and if $i < j$, let $b_{ij}$ denote the number of
 these that equal $(i,j)$ or $(j,i)$. Then the condition
 that the integer $j$ appears exactly $j$ times in the first
 sequence
 translates to \eqref{apr13.1} for the sequence of ordered
 pairs.  The factor $2^B$ takes into consideration the
 fact that $(i,j)$ or $(j,i)$ are counted by the $b_{ij}$.
 
 \end{proof}

\subsubsection{General case}  We proceed by induction on
the number of vertices.  Suppose the result is true for
all graphs of $n$ vertices.  If $\graph = (A,E)$ is a 
simple (no self-edges or multiple edges) graph
of $n+1$ vertices, write $A = A'  \cup \{y\}$ 
where $A = \{x_1,\ldots,x_n\}$ and
$E = E' \cup E_y$ where $E_y$ is the set of edges that
include $y$.  By induction the result holds for $E'$ and
we can obtain $E$ from $E'$ by first adding edges to
vertices $\{y_j\}$ for each $j$ for which there is an
edge in $E$ connecting $x_j$ and $y$ and then 
merging the vertices to get the graph $\graph$.  This
handles simple graphs of $n+1$ vertices,
 but then we can add multiple
edges and self-edges as above.

\subsection{Square of the Gaussian free field}

If $\theta$ is an integrable weight on $\edges = \edges_A$ with corresponding
directed weight $q$, we can consider the loop soup associated
to $q$.  Here we consider this only as measure on
currents  
$\{k_e: e \in A\} \in \currents$ and hence also on vertex local times
$\{n_x: x \in A\}.$  We will use this measure plus some
extra randomness to construct the square of the Gaussian field
with weight $\theta$.

Let us first consider $\theta \equiv 0$ for which the measure
on currents is supported on the trivial current.  In this
case, the field $\{Z_x: x \in A\}$ should be the standard Gaussian
and hence $\{Z_x^2: x \in A\}$ are independent $\chi^2$ random
variables with one degree of freedom.  Equivalently, we can say
that if $R_x = Z_x^2/2$, then
$\bar R = \{R_x: x \in A\}$ are independent Gamma random
variables with parameters $\frac 12 $ and $1$, that is, each with density
\[                    \frac{1}{\sqrt{\pi t}} \, e^{-t} . \]

More generally, given a realization of the current $\bar k$
and hence of the  vertex
local times $\{n_x\}$, at each vertex $x$ we  put the
sum of $n_x$ independent exponential random variables of
rate $1$.  In other words, we will consider a random
vector $\bar Y = \{Y_x: x \in A\}$ such that $Y_x$ are
independent (given $\bar k$) with a Gamma density with
parameters $n_x$ and $1$.   If $\bar T = \bar R + \bar Y$,
then (given $\bar k$), $\{T_x\}$ are independent Gamma
random variables with parameters $n_x + \frac 12$ and $1$,
that is, the joint density for $\bar T$ is
\[             \left[ \prod_{x \in A}\frac{ 
              t_x^{n_x - \frac 12}}{\Gamma(n_x + \frac 12)}
              \right] \, \exp\left\{-\sum_{x \in A} t_x \right\}.\]
This can also be written as
\[   \left[   \prod_{x \in A} \Gamma(n_x + \frac 12)\right]^{-1}
 \, \left[\prod_{x \in A} t_x^{-1/2} \right]
 \, \left[\prod_{e \in \edges}   t_e^{k_e/2} \right]
 \, \exp\left\{-\sum_{x \in A} t_x \right\}.\]            
If we combine this with Theorem \ref{oct10.theorem}, we  
get the following.

\begin{proposition} Suppose $\theta$ is an integrable  weight
on a set $A$ with $n$ elements.  Let $\bar k$ denote an undirected
current and let $\mu = \mu_{1/2}$ denote the measure on $\bar k$
induced by the loop soup at time $1/2$.  Let $\bar T =   \bar R
+ \bar Y$
  as above.
Then the joint density on $(\bar k, \bar t)$ is given by
\begin{equation}  \label{may16.1}  \frac{ \sqrt {D} }{
\pi^{n/2}  } \,   
 \left[\prod_{e \in \edges} \frac{(\theta_e  \sqrt {  t_e})^{k_e}}{k_e!} \right] \, 
 \left[\prod_{x \in A} t_x^{-1/2} \right]
 \, \exp\left\{-\sum_{x \in A} t_x \right\}.
 \end{equation}
 
 \end{proposition}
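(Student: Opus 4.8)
The plan is to compute the joint density by the chain rule: since $\bar k$ is discrete and $\bar t$ continuous, the joint density on $(\bar k,\bar t)$ (with respect to counting measure on $\currents$ times Lebesgue measure on $\R_{>0}^A$) equals $\mu_{1/2}(\bar k)\cdot f(\bar t\mid \bar k)$, where $\mu_{1/2}(\bar k)$ is the loop-soup current mass given in Theorem \ref{oct10.theorem} and $f(\cdot\mid\bar k)$ is the conditional density of $\bar T=\bar R+\bar Y$ given $\bar k$. So it suffices to identify $f(\cdot\mid\bar k)$ and multiply.

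First I would record the conditional law of $\bar T$ given the current $\bar k$, equivalently given the vertex local times $\{n_x\}$ from \eqref{nx}. By construction the coordinates of $\bar R$ are independent (and independent of $\bar k$), each with the Gamma$(\tfrac12,1)$ density $(\pi t)^{-1/2}e^{-t}$, and conditionally on $\bar k$ the coordinates of $\bar Y$ are independent with $Y_x$ a sum of $n_x$ independent rate-one exponentials, i.e.\ Gamma$(n_x,1)$ (degenerate at $0$ when $n_x=0$). Since independent Gamma variables of common rate add their shape parameters, conditionally on $\bar k$ the $T_x$ are independent Gamma$(n_x+\tfrac12,1)$, so
\[
   f(\bar t\mid\bar k)=\Big[\prod_{x\in A}\frac{t_x^{\,n_x-\frac12}}{\Gamma(n_x+\tfrac12)}\Big]\exp\Big\{-\sum_{x\in A}t_x\Big\}.
\]

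Next I would rewrite $\prod_{x\in A}t_x^{n_x}$ in terms of the edge variables $t_e$ ($t_e=t_xt_y$ if $e$ joins distinct vertices $x,y$, and $t_e=t_x^2$ if $e$ is a self-loop at $x$). Substituting $n_x=\tfrac12\sum_e k_e\,n_e(x)$ from \eqref{nx} and using the identity $\sum_{x\in A}n_e(x)\log t_x=\log t_e$ (valid since $n_e(x)=2$ on a self-loop and $n_e(x)=1$ at each endpoint of a non-self-edge) gives $\prod_{x\in A}t_x^{n_x}=\prod_{e\in\edges}t_e^{\,k_e/2}$, so
\[
   f(\bar t\mid\bar k)=\Big[\prod_{x\in A}\Gamma(n_x+\tfrac12)\Big]^{-1}\Big[\prod_{x\in A}t_x^{-1/2}\Big]\Big[\prod_{e\in\edges}t_e^{\,k_e/2}\Big]\exp\Big\{-\sum_{x\in A}t_x\Big\}.
\]
Multiplying by the expression for $\mu_{1/2}(\bar k)$ in Theorem \ref{oct10.theorem}, the factors $\prod_x\Gamma(n_x+\tfrac12)$ cancel, each vertex contributes a leftover $\pi^{-1/2}$ (hence $\pi^{-n/2}$), $\sqrt D$ and $\prod_e k_e!^{-1}$ persist, and $\theta(\bar k)=\prod_e\theta_e^{k_e}$ combines with $\prod_e t_e^{k_e/2}$ into $\prod_e(\theta_e\sqrt{t_e})^{k_e}$. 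This is exactly \eqref{may16.1}.

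\textbf{Main difficulty.}
There is no substantial obstacle: the real content sits in Theorem \ref{oct10.theorem}, and what remains is the bookkeeping identity relating vertex local times to edge local times together with shape-additivity of Gamma laws. The only points that need care are the self-loop case in the exponent identity ($n_e(x)=2$, $t_e=t_x^2$) and the degenerate $n_x=0$ coordinates, neither of which causes trouble; and one should note that when $\theta$ is signed $\mu_{1/2}$ need not be a positive measure, but the assertion is an algebraic identity of densities and so is unaffected.
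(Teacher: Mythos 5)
Your proof is correct and follows essentially the same route as the paper: the paper likewise derives the conditional Gamma$(n_x+\tfrac12,1)$ law of $\bar T$ given $\bar k$, rewrites $\prod_x t_x^{n_x}$ as $\prod_e t_e^{k_e/2}$, and multiplies by the current distribution from Theorem \ref{oct10.theorem}. Your closing caveats (self-loops, $n_x=0$, signed $\theta$) are consistent with the paper's own remarks.
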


  If $\bar \rho = \{\rho_ e:  e \in \edges \} \in \C^E $ and
 $\bar k \in  {\N}^E$, we set
 \[    \Psi(\bar k, \bar \rho) = 
  \prod_{ e \in \edges }
     \frac{\rho_ e^{k_ e}}{k_ e!}.\]
   We can write the density
 \eqref{may16.1}  as
 \[ \frac{ \sqrt {D} }{
\pi^{n/2}  } \, 
       \Psi(\bar k, \bar \rho) 
                \, \left[\prod_{x \in A} t_x^{-1/2}\right]
  \,    \exp\left\{-\sum_{x \in A} t_x\right\},\;\;\;\;
  \rho_ e = \theta_ e \, \sqrt{t_ e}.  \] 
Let $\hat g$ denote the marginal density
 on $\bar t$ which can be given by summing over all possibilities
 for $\bar k$,   
\begin{equation}  \label{may16.2}
   \hat g(\bar t) :=  \frac{ \sqrt {D} }{
\pi^{n/2} \,   [\prod_{x \in A} \sqrt{t_x} ]}  \,  \left[\sum_{\tilde k \in \currents
 }  \Psi(\bar k, \bar \rho)
 \right]   
                \,  
                \exp\left\{-\sum_{x \in A}t_x \right\}
  .
 \end{equation}
      We will now show
   the relationship between the distribution of $\bar t$ and the
   square of the Gaussian free field which was first found by Le Jan
   (see \cite{LeJan} and references therein).
    We will use the
   following lemma (see, e.g.,  \cite[Section 2.1]{ADCS}).
  
   \begin{lemma}  Suppose $\{J_x: x \in A\}$
   are independent $\pm 1$ coin flips, and for $ e \in E$,
   let $J_ e = J_xJ_y$ if $x,y$ are the endpoints of $ e$.
   For any $\bar \rho \in  {\C}^E$,
\begin{equation}  \label{may16.3}
  \E\left[
\exp\left\{ \sum_{ e \in \edges }  J_e\, \rho_ e \right\} \right]
= \sum_{\bar k \in \currents } \ \Psi(\bar k, \bar \rho).
\end{equation}
\end{lemma}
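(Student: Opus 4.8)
The plan is to expand the left-hand side directly and match terms with the right-hand side. First I would write
\[
  \E\left[\exp\left\{\sum_{e \in \edges} J_e \, \rho_e\right\}\right]
  = \E\left[\prod_{e \in \edges} e^{J_e \rho_e}\right]
  = \E\left[\prod_{e \in \edges} \sum_{k_e = 0}^{\infty} \frac{(J_e \rho_e)^{k_e}}{k_e!}\right],
\]
and then interchange the (finite) product over edges with the sums to obtain a sum over all $\bar k = (k_e : e \in \edges) \in \N^\edges$ of the term $\left[\prod_e \rho_e^{k_e}/k_e!\right] \, \E\!\left[\prod_e J_e^{k_e}\right]$. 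The first factor is exactly $\Psi(\bar k, \bar \rho)$, so the whole identity reduces to showing that $\E\!\left[\prod_{e} J_e^{k_e}\right]$ equals $1$ when $\bar k \in \currents$ and $0$ otherwise.

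The key step is therefore the evaluation of $\E\!\left[\prod_{e \in \edges} J_e^{k_e}\right]$. Here I would use $J_e = J_x J_y$ for an edge $e$ with endpoints $x,y$ (with $J_e = J_x^2 = 1$ for a self-edge), so that
\[
  \prod_{e \in \edges} J_e^{k_e} = \prod_{x \in A} J_x^{\,\sum_{e} k_e \, n_e(x)} = \prod_{x \in A} J_x^{\,2 n_x},
\]
recalling the definition $n_x = \frac{1}{2}\sum_{e} k_e \, n_e(x)$ from \eqref{nx}. Now if $\bar k \in \currents$, each exponent $2 n_x$ is an even integer, so $J_x^{2 n_x} = 1$ for every $x$ and the product is identically $1$, giving expectation $1$. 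If $\bar k \notin \currents$, then there is at least one vertex $x_0$ for which $2 n_{x_0}$ is odd; since the $\{J_x\}$ are independent and $J_{x_0}$ has mean zero, conditioning on all $J_x$ with $x \neq x_0$ and taking the expectation over $J_{x_0}$ (which appears to an odd power) gives $0$. Summing only over $\bar k \in \currents$ then yields $\sum_{\bar k \in \currents} \Psi(\bar k, \bar \rho)$, which is the right-hand side.

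The only mild subtlety — and the place I would be careful — is the interchange of the product over edges with the infinite sums over each $k_e$, and the convergence of the resulting double sum: this needs absolute convergence, which holds because the product over $e \in \edges$ is finite and $\sum_{k_e} |\rho_e|^{k_e}/k_e! = e^{|\rho_e|} < \infty$ for each edge, so Fubini/Tonelli applies termwise inside the (finite) expectation. No integrability hypothesis on $\theta$ is actually used here; the identity is purely algebraic once this rearrangement is justified. I would also note that parity is the whole content: a current is precisely a $\bar k$ for which every $2n_x$ is even, equivalently (for the off-diagonal part) the number of non-self-edges at each vertex is even, so the characterization of $\currents$ used in the excerpt is exactly what makes $\E[\prod_e J_e^{k_e}]$ the indicator of $\currents$.
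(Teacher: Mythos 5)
Your proof is correct and is essentially the paper's own argument: expand $e^{J_e\rho_e}$ edge by edge, interchange the (finite) product with the sums, and use independence of the $\{J_x\}$ together with the parity of the exponent $2n_x(\bar k)=\sum_e k_e n_e(x)$ to conclude that $\E\bigl[\prod_e J_e^{k_e}\bigr]$ is the indicator that $\bar k \in \currents$. Your explicit justification of the rearrangement (absolute convergence, finitely many edges) and the observation that no integrability of $\theta$ is needed are welcome refinements of the same approach, not a different one.
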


\begin{proof}     If $\bar k \in 
 {\N}^\edges $,   
 we let $n_x = n_x(\bar k)$ be as before.  Then
 we can 
expand
\[ \exp\left\{  J_x \, J_y \,\rho_ e \right\}
  = \sum_{k_ e=0}^\infty \frac{J_x^{k_ e} \, J_y^{k_ e} \rho_ e^{k_ e }}{k_ e!}.\]
 \[
\exp\left\{ \sum_{ e \in \edges }  J_e\,\rho_ e \right\} = \sum_{\bar k
\in \edges ^{\N}}
 \left[\prod_{x \in A} J_x^{n_x(\bar k)}\right] \, \Psi(\bar k, \bar \rho),\]
\[    \E\left[
\exp\left\{ \sum_{ e \in \edges }  J_e\, \rho_ e \right\} \right]
     = \sum_{\bar k\in \edges ^{\N}}
 \left[\prod_{x \in A} \E (J_x^{n_x (\bar k)})\right] \, \Psi(\bar k, \bar \rho).\]
If
  $n_x(\bar k)$ is  odd  for some $x \in A$,
  we get $\E (J_x^{n_x(\bar k)})=0$.  Otherwise,
 $\E (J_x^{n_x(\bar k)})= 1$ for all $x$.  This gives the lemma.

 \end{proof} 
 
 \begad

The formula \eqref{may16.1} is valid in the non-integrable case although
it is not the density for the measure.  However, the calculations of this
section are valid.  In particular,  for fixed $\bar t$, the conditional
measure on $\bar k$ is a measure 
and \eqref{may16.3} shows that the   total mass is positive.

\endad
 
 \begin{theorem}  Under the assumptions above, the  marginal density
 for  $\bar t= \{t_x\}$
 is the same as that of $\{Z_x^2/2 \}$ where $\bar Z$ is the centered
 multivariate normal
 distribution indexed by $A$ with covariance matrix $G$.
 \end{theorem}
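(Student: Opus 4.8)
The plan is to observe that the substantive work has already been carried out and that only a short assembly remains. Indeed, the joint law of the current $\bar k$ and the local-time vector $\bar t = \bar R + \bar Y$ is the density \eqref{may16.1}; summing it over $\bar k \in \currents$ gives the marginal density $\hat g(\bar t)$ displayed in \eqref{may16.2}; and the lemma \eqref{may16.3} identifies the sum over currents appearing there with an expectation over $\pm 1$ coin flips. Matching the result against the earlier computation of the density of $\{Z_x^2/2\}$ will finish the proof.

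First I would take \eqref{may16.2} with $\rho_e = \theta_e\sqrt{t_e}$ and apply \eqref{may16.3} to rewrite $\sum_{\bar k \in \currents}\Psi(\bar k,\bar\rho) = \E[\exp\{\sum_{e\in\edges}\theta_e J_e\sqrt{t_e}\}]$, where $\{J_x : x\in A\}$ are independent fair $\pm1$ variables. Using $\prod_{x\in A}\sqrt{\pi t_x} = \pi^{n/2}\prod_{x\in A}\sqrt{t_x}$, this turns \eqref{may16.2} into
\[
  \hat g(\bar t) = \left[\prod_{x\in A}\sqrt{\pi t_x}\right]^{-1}
    \exp\!\left\{-\sum_{x\in A} t_x\right\}
    \sqrt{D}\;\E\!\left[\exp\!\left\{\sum_{e\in\edges}\theta_e J_e\sqrt{t_e}\right\}\right].
\]
Next I would recognize the last two factors as exactly the quantity $\Phi = \Phi_q$ from the proposition computing the density of $T_x = Z_x^2/2$ for the Gaussian field weighted by $\theta$; that proposition asserts precisely that $\bar T$ has density $\big[\prod_{x}\sqrt{\pi t_x}\big]^{-1}\exp\{-\sum_x t_x\}\,\Phi$, which is the right-hand side above. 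Hence $\hat g$ is the density of $\{Z_x^2/2\}$, proving the theorem.

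The point to stress is that there is no real obstacle left here: the content lies in Theorem \ref{oct10.theorem} (and behind it the combinatorial identity Theorem \ref{grapheqtheorem}) for the law of the current of the intensity-$1/2$ loop soup, together with the elementary lemma \eqref{may16.3}. What remains is bookkeeping --- tracking the normalizations $\sqrt{D}$, the powers of $\pi$, and the factors $\prod_x t_x^{-1/2}$ --- plus noting that the coin-flip expectation produced by \eqref{may16.3} is literally the sign-weight $\Phi_q$ appearing in the square-of-the-field decomposition. The one hypothesis actually used is integrability of $\theta$: it guarantees that \eqref{may16.1} is a bona fide probability density, so that $\hat g$ is the honest marginal of $\bar t$, consistent with the setting of the earlier proposition for $\{T_x\}$.
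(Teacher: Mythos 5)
Your proposal is correct and follows essentially the same route as the paper: the paper's own proof also just matches the marginal \eqref{may16.2} with the density of $\{Z_x^2/2\}$ from \eqref{may16.5} via the coin-flip identity \eqref{may16.3}, with all the substantive work residing in Theorem \ref{oct10.theorem}. Your bookkeeping of the normalizations and the identification of the coin-flip expectation with $\Phi_q$ is exactly what the paper does, just traversed in the opposite direction.
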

 
 \begin{proof}   
 In \eqref{may16.5} we showed that the density of $(Z_x^2/2)$ is 
  \[  g(\bar t) = \frac{ \sqrt {D} }{ \pi ^{n/2} \,
    \ [\prod_{x \in A} \sqrt{ t_x}\,]} \, \exp\left\{-  \sum_{x \in A}
    \, t_x\right\}
    \, \E\left[
\exp\left\{ \sum_{ e \in \edges } J_{ e}  \, \rho_ e \right\} \right].\, \]
 where  $\{J_x, x \in A\}$ are independent $\pm 1$ coin flips;
  $J_{ e} = J_xJ_y$ if $ e$ connects $x$ and $y$; and 
 $\rho_ e =\theta_ e\,\sqrt{ t_ e} $.
By using \eqref{may16.3}, we get \eqref{may16.2}.
 \end{proof}
 
The form of the joint density also gives us the
conditional density for the currents $\bar k$ given
$\bar t$.

\begin{proposition}  Given $\bar t$, the conditional
distribution for the currents $\bar k$ is proportional
to 
\begin{equation}  \label{oct17.1}
   \Psi(\bar k, \bar \rho) = 
  \prod_{ e \in \edges }
     \frac{\rho_ e^{k_ e}}{k_ e!}, \;\;\;\; \rho_e = \sqrt{t_e} \, \theta_e.
     \end{equation}
\end{proposition}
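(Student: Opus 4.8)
The plan is to read the conditional law of $\bar k$ directly off the joint density \eqref{may16.1} of $(\bar k,\bar t)$, which was established in the proposition preceding the lemma. First I would fix $\bar t$ and set $\rho_e = \sqrt{t_e}\,\theta_e$, so that \eqref{may16.1} factors as
\[
\frac{\sqrt{D}}{\pi^{n/2}}\,\left[\prod_{x\in A} t_x^{-1/2}\right]\,\exp\Big\{-\sum_{x\in A} t_x\Big\}\;\cdot\;\prod_{e\in\edges}\frac{\rho_e^{k_e}}{k_e!}
\;=\;C(\bar t)\,\Psi(\bar k,\bar\rho),
\]
where $C(\bar t)$ collects exactly those factors that do not involve $\bar k$ (recall that the loop soup measure is supported on $\currents$, so $\bar k$ ranges over $\currents$ and $k_e!$, $\rho_e^{k_e}$ are the only $\bar k$-dependent quantities). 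Since $C(\bar t)$ is a function of $\bar t$ alone, the conditional distribution of $\bar k$ given $\bar t$ is proportional to $\Psi(\bar k,\bar\rho)$, which is precisely the assertion \eqref{oct17.1}.

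The second step is to pin down the normalization so that this really is a probability measure on $\currents$. The constant of proportionality is $\big(\sum_{\bar k\in\currents}\Psi(\bar k,\bar\rho)\big)^{-1}$, and by the lemma \eqref{may16.3} this sum equals $\E\big[\exp\{\sum_{e\in\edges}J_e\rho_e\}\big]$, a finite and strictly positive number (an average of positive quantities over the $2^{\#(A)}$ sign configurations). Equivalently, one may simply divide the joint density \eqref{may16.1} by the marginal density $\hat g(\bar t)$ of \eqref{may16.2}: the $\bar t$-only prefactors cancel and the ratio is exactly $\Psi(\bar k,\bar\rho)\big/\sum_{\tilde k\in\currents}\Psi(\tilde k,\bar\rho)$, reproving the proposition and giving the explicit normalization at the same time.

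There is no real obstacle here; the statement is an immediate corollary of the joint density computed earlier, and the only point deserving a remark is the non-integrable case: there \eqref{may16.1} is not literally a probability density, but the algebraic factorization above is still valid as an identity of measures, and the remark following the lemma already guarantees that for each fixed $\bar t$ the conditional mass $\sum_{\bar k\in\currents}\Psi(\bar k,\bar\rho)$ is finite and positive, so the conditional measure on $\currents$ is well defined and still proportional to $\Psi(\bar k,\bar\rho)$.
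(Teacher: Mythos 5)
Your argument is correct and is exactly the paper's (implicit) reasoning: the proposition is stated as an immediate consequence of the joint density \eqref{may16.1}, whose only $\bar k$-dependent factor is $\Psi(\bar k,\bar\rho)$ with $\rho_e=\theta_e\sqrt{t_e}$, and the positivity of the normalizing sum over $\currents$ is precisely the point made via \eqref{may16.3} in the remark following the lemma. Nothing further is needed.
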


\begad

The form \eqref{oct17.1} may look like the distribution of
independent Poisson random variables $\{k_e: e \in\edges \}$
where $k_e$ has intensity $\rho_e$.  However, this distribution
is restricted to $\bar k \in \currents$.  
The distribution is that of
independent Poisson distribution {\em conditioned that  the
  $\bar k$ is a current}. 

\endad 
 
 \subsection{Finding the signs}
 
 Given $\bar t = \{t_x: x \in A\}$, the values of the field are
 given by
 \[         Z_x = J_x \, \sqrt {2t_x} ,\]
 where $J_x = \pm 1$ is the sign of $Z_x$.
  A way to specify the signs $\bar J$ is to give the 
  ``positive set''  
 $V = \{x \in A : J_x = 1\}$. 
 
For a positive weight, we can give an algorithm due to Lupu \cite{Lupu}
 to get the Gaussian
field with signs from a realization of the loop soup combined with some
extra randomness.

\begin{itemize}
 \item  Obtain a sample $(\bar k, \bar t)$ of currents and  
 times as above.  This gives the edge weights $\rho_e$.
 \item  Open any edge $e$ with $k_e \geq 1$.
 \item  For each edge $e$, make $e$ open with probability
$    1 - \exp\{- \rho_e\}$.
   An  edge $e$
 is open if it has been opened for either of the two reasons.
We say two vertices are connected if there is a path between
them using  open edges.
 \item  For
 each connected cluster $U$, take an independent random variable
 $J_U  = \pm 1$  with equal probability.
 \item  For $x \in U$, set $Z_x = J_U \, \sqrt{2t_x} . $
 \end{itemize}
 
 \begin{theorem}  The distribution of $\bar Z =(Z_x)$ is that of
 a centered multivariate random variable  indexed by
 $A$ with covariance matrix $G.$
 \end{theorem}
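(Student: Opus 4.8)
The plan is to split the verification into the magnitudes and the signs, using that the magnitudes are already under control. The algorithm outputs a pair $(\bar k,\bar t)$ with $t_x=Z_x^2/2$ and $Z_x=J_x\sqrt{2t_x}$, and by the preceding theorem on the square of the Gaussian free field the marginal law of $\bar t$ is the correct one; so it suffices to show that, conditionally on $\bar t$, the sign vector $\bar J=(J_x)_{x\in A}$ has the law prescribed by Corollary~\ref{signcor}, namely proportional to $\exp\{\sum_{e\in\edges}\theta_e J_e\sqrt{t_e}\}$ on $\{\pm1\}^A$. Since the weight is positive and the $t_x$ are nonnegative, set $\rho_e=\theta_e\sqrt{t_e}\ge0$, so that $1-e^{-\rho_e}\in[0,1)$ is a bona fide probability and the target conditional law is $\propto\prod_{e\in\edges}e^{\rho_e J_e}$.

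First I would condition on $\bar t$. By the proposition giving the conditional law of the currents (see~\eqref{oct17.1}), given $\bar t$ the current $\bar k$ has law $\propto\Psi(\bar k,\bar\rho)=\prod_e\rho_e^{k_e}/k_e!$ on $\currents$; write $Z=\sum_{\bar k\in\currents}\Psi(\bar k,\bar\rho)$, which by \eqref{may16.3} equals $\E_{\bar\sigma}[\exp\{\sum_e\sigma_e\rho_e\}]$ for i.i.d.\ $\pm1$ signs. Introduce independent coins $C_e\sim\mathrm{Bernoulli}(1-e^{-\rho_e})$; an edge is open iff $k_e\ge1$ or $C_e=1$, and the cluster signs are i.i.d.\ uniform. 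The output equals a fixed $\bar j$ exactly when the open configuration $\bar o$ is monochromatic for $\bar j$ (no open edge joins a $(+)$ site to a $(-)$ site of $\bar j$) and the cluster signs happen to agree with $\bar j$, an event of conditional probability $2^{-c(\bar o)}$, where $c(\bar o)$ is the number of open clusters. Hence
\[
\Prob(\bar J=\bar j\mid\bar t)=\frac1Z\sum_{\bar k\in\currents}\sum_{\bar C}\Psi(\bar k,\bar\rho)\Bigl(\prod_{C_e=1}(1-e^{-\rho_e})\Bigr)\Bigl(\prod_{C_e=0}e^{-\rho_e}\Bigr)\mathbf{1}\{\bar o\ \text{respects}\ \bar j\}\,2^{-c(\bar o)}.
\]

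The next step is to observe that once we restrict to $\bar o$ respecting $\bar j$ everything factors over the partition $A=V_+\sqcup V_-$ determined by $\bar j$: the edges crossing the partition are forced closed, contributing $\prod_{j_e=-1}e^{-\rho_e}$ (here $j_e=j_xj_y$), the surviving current splits as a current of the induced subgraph on $V_+$ times one on $V_-$, the coins split, and $c(\bar o)=c(\bar o|_{V_+})+c(\bar o|_{V_-})$. Writing, for an induced subgraph $(V,E_V)$ with weights $\rho_e$ and current set $\currents(V)$,
\[
W(V):=\sum_{\bar k\in\currents(V)}\sum_{\bar C}\Psi(\bar k,\bar\rho)\Bigl(\prod_{C_e=1}(1-e^{-\rho_e})\Bigr)\Bigl(\prod_{C_e=0}e^{-\rho_e}\Bigr)2^{-c(\bar o)},
\]
this gives $\Prob(\bar J=\bar j\mid\bar t)=Z^{-1}\bigl(\prod_{j_e=-1}e^{-\rho_e}\bigr)W(V_+)W(V_-)$. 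The crux is the identity $W(V)=2^{-|V|}\prod_{e\in E_V}e^{\rho_e}$. Granting it, and using $\prod_e e^{\rho_e j_e}=\prod_{e\in E_{V_+}}e^{\rho_e}\prod_{e\in E_{V_-}}e^{\rho_e}\prod_{j_e=-1}e^{-\rho_e}$, we obtain $\Prob(\bar J=\bar j\mid\bar t)=(2^{-|A|}/Z)\prod_e e^{\rho_e j_e}$, exactly proportional to the target (summing over $\bar j$ and using \eqref{may16.3} confirms the normalization). Combined with the already-established marginal for $\bar t$ and $Z_x=J_x\sqrt{2t_x}$, this identifies the law of $\bar Z$ as the centered Gaussian with covariance $G$.

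I expect the identity for $W(V)$ to be the main obstacle. I would prove it by induction on the edge set of $(V,E_V)$ — base case $E_V=\emptyset$, where $c(\bar o)\equiv|V|$ and $W(V)=2^{-|V|}$ — analysing the effect of adjoining a single edge by summing over its coin and over its current multiplicity (with the parity it imposes on $\currents$) against the factor $2^{-c(\bar o)}$, separating the cases where the new edge is open or closed and where it does or does not merge two clusters; alternatively one may build the graph through the operations of Section~\ref{combinsec}. The cancellation is the one already visible on two vertices joined by one edge, $\tfrac14 e^{-\rho}+\tfrac12(1-e^{-\rho})+\tfrac12(\cosh\rho-1)=\tfrac14 e^{\rho}$. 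Equivalently, $W(V)$ is recognizable as (a multiple of) the partition function of the Edwards--Sokal coupling of the FK representation of an Ising model with couplings $\rho_e$, the ``random current plus independent bridges'' construction being the standard way the loop soup at intensity $1/2$ reproduces the FK measure, so that the identity is the classical FK$\leftrightarrow$Ising correspondence. The remaining points — the $V_+\sqcup V_-$ split, self-edges (which always lie entirely within one part and never affect $c(\bar o)$), and isolated vertices in the cluster count — are routine bookkeeping.
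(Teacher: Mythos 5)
Your overall route is sound and genuinely different from the paper's. You reduce the theorem to identifying the conditional law of the signs given $\bar t$: conditioning on $\bar t$, using \eqref{oct17.1} for the current, noting that $\{\bar J=\bar j\}$ forces every edge joining $V_+$ to $V_-$ to be closed (so $k_e=0$ and the extra coin closed, contributing $e^{-\rho_e}$), and factorizing the remaining sum over the two induced subgraphs is all correct, as is the final matching with Corollary \ref{signcor} and the already-established marginal law of $\bar t$. The paper argues instead by induction on $\#(A)$: it fixes the sign pattern $V$, computes the probability that all crossing edges are closed as $e^{-m(L)/2}\exp\{-\sum\rho_e\}$ with $e^{-m(L)/2}=\sqrt{D/(D_V D_{A\setminus V})}$ coming from the multiplicativity of the loop measure, and then applies the inductive hypothesis to the two halves; that argument needs nothing beyond Proposition \ref{funrooted} and the density formula \eqref{fielddensity}, and in particular never has to identify the law of the open clusters or of the conditional sign field.

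The gap in your proposal is exactly where you predicted it: the identity $W(V)=2^{-|V|}\prod_{e}e^{\rho_e}$ carries all of the content, and neither of your two suggested proofs is adequate as stated. The induction on edges does not close on itself: when you adjoin an edge $f=\{x,y\}$ and sum over its multiplicity, the odd values of $k_f$ force the current on the old graph to have sources at $x$ and $y$, so you need control of sums of $\Psi(\bar k,\bar\rho)\,\E\bigl[2^{-c(\bar o)}\bigr]$ over currents with two sources --- objects outside your inductive hypothesis --- and moreover the effect of opening $f$ on $c(\bar o)$ depends nonlocally on whether $x$ and $y$ are already connected; this is precisely why arguments of this type invoke the switching lemma. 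Likewise, the classical Edwards--Sokal/FK correspondence only says that coloring FK$(1-e^{-2\rho_e})$ clusters uniformly produces the Ising law with couplings $\rho_e$; the input you actually need is that the trace of the sourceless current produced by the soup at intensity $1/2$, sprinkled with independent Bernoulli$(1-e^{-\rho_e})$ edges, is that FK measure (at least at the level of the partition-function identity for every induced subgraph). That is the theorem of Lupu and Werner \cite{LupuW}, a result of the same order of difficulty as the statement being proved and the one this section is partly meant to rederive. If you are willing to quote \cite{LupuW}, your argument is complete; otherwise the identity for $W(V)$ still requires a genuine proof, e.g.\ via a switching-lemma argument or graph surgeries in the spirit of Section \ref{combinsec}, and that is the missing piece.
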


 \begin{proof}  We do it on induction on the cardinality of $A$.
 It is clearly true for a one-point set.   Assume that it
 is true for all sets of cardinality at most $n-1$ and suppose
 that $\#(A) =  n $.  Let $V \subset A$  with
 $1 \leq \#(V) \leq n-1$.
   Let $R_V = \{\bar z \in \R^A: z_x > 0, x \in V
 \mbox{ and } z_x < 0, x \in A \setminus V \}.$  We will write
 elements of $R_V$ as $\bar z = (\bar z_+ , -\bar z_-)$ where $\bar z_+
  \in (0,\infty)^V, \bar z_- \in  (0,\infty)^{A \setminus V}$.
   We will consider
 the density of  $\bar Z$  restricted to $R_V$.  Let
 $G_V, G_{A\setminus V}$ denote the Green's function restricted
 to the edges in $V, A \setminus V$, respectively, and let
\begin{equation}  \label{may16.4}
   D_V = \frac{1}{\det G_V} , \;\;\;\; D_{A \setminus V}
    = \frac 1 {\det G_{A \setminus V}}. 
    \end{equation}
    We
 write $\edges^* = \edges^*(V,A)$ for the set of edges in $\edges_A$
 that have one vertex in $V$ and one vertex in $A \setminus V$.
 \begin{itemize}
 
 \item  In order for the algorithm to output $\bar z \in R_V$
 it is necessary that every edge in $\edges^*$ is closed.  This
 requires it to be closed using both criteria.
 \begin{itemize}
 \item 
  To be closed using the first criterion, the loop soup  must
 not contain any loop in $L$, the set of
 unrooted loops that intersect  both $V$ and $A \setminus V$.
We know that 
 \[   D  = \exp\left\{\sum_{\ell \in \loops(A)}
   m(\ell) \right\} = D_V \, D_{A \setminus V} \, e^{-m(L)}\]
 where $m$ denotes the unrooted loop measure.  Therefore,
 the probability
 that no such loop is chosen in the soup of intensity $1/2$
 is
 \[   e^{-m(L)/2} =   \sqrt{\frac{D}{D_V \, D_{A \setminus V}}}.\]
 \item  Given the realization of $\bar t$, the probability that no
 loop is open using the second criterion is
 \[      \exp\left\{- \sum_{e \in \edges^*} \rho_e \right\}. \]
 
 \end{itemize}
 
 \item Given that the loop soup contains
 no loop in $L$, the algorithm
 acts independently on $V$ and $A \setminus V$.  By the induction
 hypothesis,  
the density of the output of the algorithm, restricted to $R_V$,
is given by 
  \[    f_{V}(\bar z_+) \, f_{A \setminus V}(-\bar z_-) 
       \,  e^{-m(L)/2} \,  \exp \left\{-  \sum_{e \in \edges^* } \rho_e \right\}. \]
Using \eqref{fielddensity} and
  \eqref{may16.4}, we see that this is 
 the same as $f_A((\bar z_+, -\bar z_-))$.
  
  \item This argument computes the density
  $f_A(\bar z)$ for any $\bar z \in R_V$
  such that   $V$ is  not $\eset$ or $A$.
  However, symmetry shows that $f_A(\bar z) = f_A(- \bar z)$
  for $\bar z \in  R_A  $, and since the total integral of the density
  must equal one, we get the result for $V = \eset$
  and $V = A$ as well.

 \end{itemize}

 \end{proof}

 \begin{example}
 Suppose $A = \{x,y\}$, assume that $\theta_{xx} =
 \theta_{yy} = 0$, and let $ a = q^2 = [\theta_{xy}/2]^2$.
 There is only one elementary loop at $x$, $l=[x,y,x]$
 with $q(l) = q^2$.  In particular, the loop
 measure $m_{q} = m_{-q}$.  
 \[  G(x,x) = G(y,y) = \frac{1}{1-a}, \]
 \[  G(x,y) = q \, G(y,y) = \frac{q}{1-a} \]
 \[  \det G = \frac{1}{(1-a)^2} - \frac{a}
   {(1-a)^2} = \frac{1}{1-a}.\]
The distribution of $n_x$ at time $t=1/2$ for the
growing loop at $x$ is 
\[     \nu(k) =  \frac{\Gamma(k + \frac 12)}{k!\,
\Gamma(\frac 12)} a^{k} \, (1-a)^{1/2}.\]
The joint density of $(N_x,T_x)$ is
\[   
  \frac{\Gamma(k + \frac 12)}{k!\,
\Gamma(t)} a^{k} \, (1-a)^{1/2}
\,  \frac{1}{\Gamma(k + \frac 12)} \, t^{k - \frac 12}
 \, e^{-t}.\]
Summing over $k$ we get the density of $(T_x)$ is
\[  \frac{e^{-t}\, \sqrt{1-a}}
   {\sqrt{t \pi}} \,  \sum_{k=0}^\infty \frac{(at)^k}{k!}
      =  \frac{\sqrt{1-a}}
         {\sqrt{ t \pi}}  \, e^{-(1-a) t}, \]
  which is the density of $ Z^2/2 $  where
  $Z$ is a centered normal with variance $1/(1-a) $.

  To get normal random variables with covariance matrix
  $G$, we can let $U,V$ be independent $N(0,1)$ and let
  \[     Z_x = \frac{U}{\sqrt{1-a}}
     , \;\;\;\;  Z_y =  \frac{ q}{\sqrt{1-a}} \, Z_x
         + \frac{\sqrt{1-a}}{\sqrt{1-a}} \, V.\]
 Note that the joint distribution of $(Z_x^2,Z_y^2)$ in this
 case is independent of the sign of $q$.  However the distribution
 of $(Z_x,Z_y)$ does depend on the sign.
 \end{example}
 
 \section{Measures on multiple walks}

 \subsection{An example: nearest neighbor, symmetric walks on $\Z^2$}
\label{simpleex}
 
 Before doing the general theory, we will consider simple
 random walk in $\Z^2$.  We will make use of some planarity
 properties as well as conformal invariance in the scaling
 limit.  For now we set up some notation.
 We will use complex notation, $\Z^2 = \Z + i \Z$; in
 particular, we write just $k$ for the point $k + 0\cdot i$.
 We will write $p$ for the usual random walk weight,
 $p_\edge = 1/4$  for every nearest neighbor directed edge.
  Equivalently, in the notation
of Section \ref{fieldsec}, we can set $\theta_e=1/2$ for each
undirected, nearest neighbor edge.
 We will consider random walk restricted to finite, connected
 open sets $A$ often stopped at the boundary $\p A$.
 
 \begin{itemize}
 
 \item 
 Associated to every finite $A$ there is a domain
$D_A \subset \C$ obtained by replacing each lattice point
with a square of side length one centered at the point.  To
be more precise, we let
\[   \Square = \left\{r+is \in \C: |r|,|s| \leq \frac 12 \right\}, \]
and we define $D_A$ to be the interior of
\[         \bigcup_{z \in A} \left[z + \Square\right]. \]
The boundary
of $D_A$ is a union of unit segments what are edges in the
dual lattice $\Z^2 + (\frac 12 + \frac i2) .$
 
 \item 
 We set 
\[     w_0 = \frac 12 - \frac i 4  \in D_A . \]  
  The {\em (downward) zipper} at $w_0$ is the vertical line
in $\C$ starting
at $w_0$ and going downward until it first reaches
$\p D_A$.  If $p$ is a positive, symmetric, nearest
neighbor integrable weight
on $A$, we
define the corresponding zipper measure $q$ by
saying that $q_e = J_e \, p_e$ where 
$J_e = -1$ if $e$ crosses the zipper and $J_e = 1$
otherwise.   Equivalently, let $k$ be the smallest positive integer,
such that either $-ki$ or $1-ki$ is not in $A$.  Then,
  $J_{e} = -1$ if $e$ connects $-ji$ and $1-ji$ with
$0 < j < k$ and $J_\edge = 1$ otherwise.
\item We say that connected $A$ is {\em simply connected} if $\Z^2 \setminus A$
is also a connected subgraph of $\Z^2$; this is equivalent to saying
that $D_A$ is a simply connected domain. 
\item  We let $\aset$ be the set of finite subsets
of $\Z + i\Z$ containing $0$ and $1$, and we set $\simpleset$
be the collections of such sets that are simply connected.

\item  Here is a topological fact.  Suppose $A \in \simpleset$
and $\{a,b\}$ are distinct boundary edges.  Then we can order
$(a,b)$ such that the following is true.
\begin{itemize}
\item Any $\eta \in \saws_A(a,b)$ using the directed edge $\ora{01}$
crosses the zipper an even number of times.  In particular,
$q(\eta) = p(\eta)$.
\item  Any $\eta \in \saws_A(a,b)$ using the directed edge
$\ora{10}$ crosses the zipper an odd number of times. In particular,
$q(\eta) = - p(\eta)$.
\end{itemize}
We will call this the positive ordering of $\{a,b\}$ (with respect
to this zipper).
\item We let $\oddloops_A$ be the set of unrooted loops in $A$
that intersect the zipper an odd number of times.  Note that
if $A \in \simpleset$,
\[         m_q(\ell) = -m_p(\ell) , \;\;\; \ell \in \oddloops_A , \]
\[      m_q(\ell) = m_p(\ell), \;\;\;  \ell \in \loops_A
\setminus  \oddloops_A.\]
\item If $A \in \simpleset$, 
let $f:D_A \rightarrow \Disk:=\{z \in \C: |z| < 1\}$ be the unique conformal
transformation with $f(0) = 0 , f(a) =1 $ and define $\theta$
by $f(b) = e^{2i\theta}$.  The existence and uniqueness
of the map follows from the Riemann mapping theorem. We set
\[   r_A = r_A(0) = |f'(0)|^{-1} , \;\;\;\;
 S_{A,a,b} = S_A(0;a,b) = \sin \theta .\]
 Using the Koebe $1/4$-theorem from complex analysis we can see that
\begin{equation}  \label{koebe}
        \frac{r_A}{4} \leq \dist(0, \p D_A) \leq {r_A}. 
        \end{equation}
\end{itemize}

\begad

We have made an arbitrary choice of the zipper.  We could take
any curve such that in the scaling limit it gives a simple
curve from the origin to the boundary.  It makes the discussion
a little easier by making a specific choice.  It is
important that the zipper comes up all the way to $\ora{01}$.

We have defined a collection of sets that contain the
ordered edge $\ora{01}$.  This is an arbitrary choice.
If we are interested in sets containing the nearest
neighbor edge $\ora{zw}$
where $w = z + e^{2i\theta}$, we can take $A \in \aset$,
translate by $z$, and then rotate by $\theta$.

\endad

If $D \subset \C$ is a bounded domain containing
the origin, we can make a lattice approximation to $D$.
To be specific, for each positive integer $n$, we let
$  A_{D,n}$ be the connected component containing
the origin of all $z \in \Z + i \Z$ such that
$\Square + z \subset nD$.  We then set $D_n =
  n^{-1} \, D_{A_{D,n}}.$
We get the following properties.
\begin{itemize}
\item  $D_n \subset D$. 
\item  For every $z \in \p D_n$, $\dist(z,\p D)
  \leq \sqrt 2 /n$.
 \item  If $D$ is simply connected, then $D_n$ is
 simply connected.
 \item  If $\zeta \in D  $, 
 then for all $n$ sufficiently large, $\zeta \in
 D_n$.

  \end{itemize}
 
 \subsection{General theory}
  
 If $q$ is a weight on $\overline A$, and hence a measure
 on $\overline \paths_A$, we extend this to a measure on
 ordered pairs of paths using product measure.  The
 next definition makes this precise.
 
 \begin{definition} $\;$
 \begin{itemize}
 \item If  $\x = (x_1,\ldots,x_k), \y = (y_1,\ldots,y_k)$ are
$2k$   points in $\overline A$, we let $\paths_A(\x,\y)$
be the set of ordered $k$-tuples
\[      \bfomega = (\omega^1,\ldots,\omega^k) , \;\;\;\;
    \omega^j \in \paths_A(x_j,y_j).\]
   \item The measure $q$ on $\paths_A(\x,\y)$ is the
 product measure
 \[    q(\bfomega) = q(\omega^1) \cdots q(\omega^k). \]
 \item We write $H_A(\x,\y)$ for the total mass 
which is given by the  product
\[         H_A(\x,\y) = \prod_{j=1}^k H_A(x_j,y_j).\]
    
\end{itemize}
\end{definition}

 There are 
 several reasonable choices for
  extensions of the loop-erased measure $\hat H_A(x,y)$,
   but the following
 has proved to be the most useful.

 \begin{definition} $\;$ 
\begin{itemize}
\item If  $\x = (x_1,\ldots,x_k), \y = (y_1,\ldots,y_k)$ are
$2k$ distinct points in $\overline A$, let  $\saws_A(\x,\y)$
denote the set of ordered $k$-tuples of SAWs
\[    \bfeta = (\eta^1,\ldots,\eta^k) , \;\;\;
  \eta^j \in \saws_A(x_j,y_j) , \]
 that are mutually avoiding
 \[               \eta^i \cap \eta^j = \eset, \;\;\;\;
  1 \leq i < j \leq k.\]

    \item The loop-erased measure $\hat q$
  is defined on $\saws_A(\x,\y)$ by
  \[    \hat q(\bfeta) = q(\bfeta) \, F_\bfeta(A) , \]
  where $\log F_\bfeta(A)  = 
  \log F_{\eta^1 \cup \cdots \cup \eta^k}(A) $
 is the  loop measure of loops in $A$ that intersect
 at least one of the SAWs $\eta^1,\ldots,\eta^k$.
 
 \item  More generally, the measure is defined on
 $\saws_A(x_1,y_1) \times \cdots \times \saws_A(x_k,y_k)$ by
 \[    \hat q(\bfeta) = 1\{\bfeta \in\saws_A(\x,\y)\}
 \, q(\bfeta) \, F_\bfeta(A) . \]

 \item We write $\hat H_A(\x,\y)$ for the total mass
 of the measure
 \[    \hat H_A(\x,\y)=
  \sum_{\bfeta \in \saws_A(\x,\y)} \hat q(\bfeta).\]

\item If $\sigma$ is a  permutation of $\{1,\ldots,k\}$
and $\x = (x_1,\ldots,x_k)$ we write
$ \x^\sigma = (x_{\sigma(1)},\ldots,x_{\sigma(k)})$. 
Note that
\[  H_{A}(\x^\sigma ,\y^\sigma)  = H_A(\x,\y) ,\;\;\;\;\;
 \hat  H_{A}(\x^\sigma ,\y^\sigma)  = \hat  H_A(\x,\y).\]
(It is important that we use the same permutation $\sigma$
to permute the coordinates of $\x$ and $\y$.) 

 \end{itemize}
\end{definition}

In particular, $\hat q$ is   defined on $\saws_A(\x,\y)$ neither
as the
product measure nor as the product measure restricted to mutually
avoiding paths.  
 The definition allows the points to be interior
or boundary points.  It will be easier to consider only the
case of boundary points and the next proposition, which is really
no more than an immediate observation which we do not prove, shows that
one can change an interior point to a boundary point and we need
only multiply the entire measure by a loop term.

\begin{proposition} 
If  $\x = (x_1,\ldots,x_k), \y = (y_1,\ldots,y_k)$ are
$2k$ distinct points in $\overline A$,  then for all  
$\bfeta \in \saws_A(\x,\y)$
 \[         \hat q (\bfeta) = q(\bfeta)
  \, F_{B} (A) \, F_{\bfeta}(A \setminus B) , \]
where $B= \{x_1,\ldots,x_k,y_1,\ldots,y_k\}$.
In particular,
\[   \hat H_A(\x,\y) = F_{B}(A)
       \hat H_{A \setminus B}(\x,\y).\]
\end{proposition}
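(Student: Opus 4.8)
The plan is to reduce the pointwise identity to the multiplicativity rule \eqref{funion} and then integrate over the tuples. Write $V = \eta^1 \cup \cdots \cup \eta^k$ for the set of all vertices visited by $\bfeta$. By definition $\hat q(\bfeta) = q(\bfeta)\, F_\bfeta(A)$ and $F_\bfeta(A) = F_{V}(A)$; moreover, since each $\eta^j$ begins at $x_j$ and ends at $y_j$, every point of $B = \{x_1,\dots,x_k,y_1,\dots,y_k\}$ lies in $V$, so $B \cup V = V$. Thus it suffices to prove $F_V(A) = F_B(A)\, F_V(A\setminus B)$ and then to identify $F_V(A\setminus B)$ with $F_\bfeta(A\setminus B)$.

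The first identity is exactly \eqref{funion} applied with $B_1 = B$ and $B_2 = V$, using $B_1 \cup B_2 = V$. The second is a matter of unwinding notation: by the convention $F_C(A') = F_{C\cap A'}(A')$ and the definition of $F_\bfeta$ for a general region, $F_\bfeta(A\setminus B) = F_{V\cap(A\setminus B)}(A\setminus B) = F_{V\setminus B}(A\setminus B) = F_V(A\setminus B)$; one may also see this directly, since the unrooted loop measure of a loop depends only on the loop, and a loop contained in $A\setminus B$ meets $V$ precisely when it meets $V\setminus B$, so Proposition \ref{funrooted} gives the same exponential. Combining the two gives $\hat q(\bfeta) = q(\bfeta)\, F_B(A)\, F_\bfeta(A\setminus B)$, which is the first assertion.

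For the total mass I would sum this identity over $\bfeta \in \saws_A(\x,\y)$. The only point to check is that $\saws_A(\x,\y) = \saws_{A\setminus B}(\x,\y)$: if $\bfeta$ is a $k$-tuple of mutually avoiding SAWs, then no interior vertex of $\eta^j$ can be $x_j$ or $y_j$ (self-avoidance) nor $x_i$ or $y_i$ for $i \neq j$ (mutual avoidance), so all interior vertices of the $\eta^j$ already lie in $A\setminus B$, and the reverse inclusion is immediate. Since $q(\bfeta) = q(\eta^1)\cdots q(\eta^k)$ does not depend on the ambient set, summing yields $\hat H_A(\x,\y) = F_B(A)\, \hat H_{A\setminus B}(\x,\y)$. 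I expect no genuine obstacle here: the only care needed is the bookkeeping with boundary points (some $x_j, y_j$ may lie in $\p A$, and loops never visit $\p A$), which is absorbed by the convention $F_C(A') = F_{C\cap A'}(A')$, and the substantive input \eqref{funion} is already available from Proposition \ref{prop.sep27}.
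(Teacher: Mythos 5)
Your proof is correct, and it is precisely the argument the paper has in mind when it calls the proposition ``an immediate observation which we do not prove'': the pointwise identity is \eqref{funion} with $B_1=B$, $B_2 = \eta^1\cup\cdots\cup\eta^k$ (equivalently, splitting the loops in $A$ hitting $\bfeta$ into those hitting $B$ and those lying in $A\setminus B$), with boundary points absorbed by the convention $F_C(A')=F_{C\cap A'}(A')$. Your additional observation that $\saws_A(\x,\y)=\saws_{A\setminus B}(\x,\y)$, so that summing the pointwise identity gives the statement about total masses, is exactly the bookkeeping needed and is carried out correctly.
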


\begin{example}  If $A \in \simpleset$
 and $x= (x_1,\ldots,x_k), \y =
(y_1,\ldots,y_k)$ are $2k$ distinct boundary points that
appear in order on $\p A$, 
 there is  only  one permutation
$\sigma$ of $\{1,\ldots,k\}$ such that
\[    \hat H_A(\x,\y^\sigma) \neq 0.\]
\end{example}

 From the definition, we immediately get the
Radon-Nikodym derivative with respect to product measure.

 \begin{proposition}$\;$
 \begin{itemize}
 \item  If $\bfeta
  \in \saws_A(\x,\y)$,  then
  \[    \frac{\hat q(\bfeta)}{\hat q(\eta^1) \, \hat q(\eta^2)
    \, \cdots \, \hat q(\eta^k)}  \hspace{2.5in}\]\[ =  
               1\{\bfeta \in \saws_A(\x,\y)\}
                \, \exp\left\{  m[\loops (A;\bfeta)] -
                  \sum_{j=1}^k m[\loops (A;\eta^j)]\right\}. \]
 In particular, if $k=2$, then
\[   {\hat q(\bfeta)} =   
  \hat q(\eta^1) \, \hat q(\eta^2)\, \exp\left\{-m[\loops(A;\eta^1)
   \cap \loops(A;\eta^2)]\right\}. \]
   \item The marginal density on $\eta^1$ is absolutely continuous
 with respect to $\hat q$ on $\saws_A(x_1,y_1)$ with
 Radon-Nikodym derivative
 \[         \hat H_{A\setminus \eta^1}(\x',\y') \]
 where $\x'= (x_2,\ldots,x_k), \y'=(y_2,\ldots,y_k)$. 
 \item \begin{equation}  \label{oct25.1}
  \hat H_A(\x,\y) = \sum_{\bfomega \in 
      V} q(\bfomega), 
      \end{equation}
 where $V$ denotes the set of $\bfomega = (\omega^1,\ldots,
 \omega^k) \in \paths_A(\x,\y)$ such that for $j=2,\ldots,k$,
 \[    \omega^j \cap \left[LE(\omega^1) \cup \cdots \cup LE(
\omega^{j-1}) \right] = \eset . \]
 
 \end{itemize}
 \end{proposition}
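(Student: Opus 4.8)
All three assertions will be deduced from the single--path identity $\hat q(\eta) = q(\eta)\,F_\eta(A)$ (Proposition \ref{lerwprob}), the product structure $q(\bfomega) = q(\omega^1)\cdots q(\omega^k)$, and the two ways of handling the loop term $F$: the exponential formula $F_B(A) = \exp\{m[\loops(A;B)]\}$ of Proposition \ref{funrooted} and the peeling rule \eqref{funion}. For the first identity I would simply divide: since $q$ is multiplicative over the $k$-tuple, $\hat q(\bfeta) = 1\{\bfeta\in\saws_A(\x,\y)\}\,q(\bfeta)\,F_{\eta^1\cup\cdots\cup\eta^k}(A)$ while $\prod_j \hat q(\eta^j) = \prod_j q(\eta^j)\,F_{\eta^j}(A)$, so after cancelling the $q$-factors and applying Proposition \ref{funrooted} the ratio becomes $1\{\bfeta\in\saws_A(\x,\y)\}\,\exp\{m[\loops(A;\eta^1\cup\cdots\cup\eta^k)] - \sum_j m[\loops(A;\eta^j)]\}$, which is the claimed formula once one writes $\loops(A;\eta^1\cup\cdots\cup\eta^k) = \bigcup_j \loops(A;\eta^j)$. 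For $k=2$, inclusion--exclusion for the measure $m$ collapses the exponent to $-m[\loops(A;\eta^1)\cap\loops(A;\eta^2)]$, and the indicator equals $1$ because $\bfeta\in\saws_A(\x,\y)$ already forces $\eta^1\cap\eta^2 = \eset$.

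For the second assertion the idea is to peel the first coordinate off the loop term. Writing $B' = \eta^2\cup\cdots\cup\eta^k$, the rule \eqref{funion} gives $F_{\eta^1\cup B'}(A) = F_{\eta^1}(A)\,F_{B'}(A\setminus\eta^1)$, hence $\hat q(\bfeta) = \hat q(\eta^1)\cdot\hat q^{A\setminus\eta^1}(\eta^2,\ldots,\eta^k)$, where $\hat q^{A\setminus\eta^1}$ is the loop-erased measure for the state space $A\setminus\eta^1$ and marked points $\x',\y'$. Because the $2k$ marked points are distinct, with $\eta^1$ fixed the condition ``$\bfeta\in\saws_A(\x,\y)$'' is exactly the condition ``$(\eta^2,\ldots,\eta^k)\in\saws_{A\setminus\eta^1}(\x',\y')$'' (and if $\eta^1$ happens to pass through one of $x_2,\ldots,y_k$, both sides vanish). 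Summing over $(\eta^2,\ldots,\eta^k)$ then gives the marginal $\hat q(\eta^1)\,\hat H_{A\setminus\eta^1}(\x',\y')$, which is the asserted Radon--Nikodym derivative.

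For the third assertion I would argue by induction on $k$. The base case $k=1$ is the already-recorded identity $\hat H_A(x_1,y_1) = \sum_\eta \hat q(\eta) = q[\paths_A(x_1,y_1)]$, and $V$ carries no constraint. For the inductive step, by the second assertion
\[
\hat H_A(\x,\y) = \sum_{\eta^1\in\saws_A(x_1,y_1)} \hat q(\eta^1)\,\hat H_{A\setminus\eta^1}(\x',\y');
\]
I would then expand $\hat q(\eta^1) = \sum_{\omega^1:\,LE(\omega^1)=\eta^1} q(\omega^1)$ by the definition of $\hat q$ and $\hat H_{A\setminus\eta^1}(\x',\y')$ by the inductive hypothesis as a sum of $q(\omega^2)\cdots q(\omega^k)$ over $(k-1)$-tuples of walks in $A\setminus\eta^1$ avoiding the previous loop-erasures. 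Since loop erasure of a walk depends only on its vertex sequence and since ``$\omega^j$ is a walk in $A\setminus\eta^1$'' is the same as ``$\omega^j\cap LE(\omega^1) = \eset$'', the resulting triple sum enumerates $V$ exactly once, giving $\sum_{\bfomega\in V} q(\bfomega)$.

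These steps are mostly bookkeeping; the place I would flag as the main pitfall is the matching of constraints in the inductive step of the third part (and the analogous point in the second part): one must verify carefully that, after fixing $\eta^1$ and passing to $A\setminus\eta^1$, the self-avoidance and mutual-avoidance conditions on the remaining walks coincide with the defining conditions of $V$ for $(A\setminus\eta^1,\x',\y')$, including the degenerate situations where $\eta^1$ meets one of the other marked points, in which case both sides are zero. Once that matching is checked, the three identities follow at once.
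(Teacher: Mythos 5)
Your proof is correct and follows exactly the route the paper intends: the paper states this proposition as an immediate consequence of the definition of $\hat q(\bfeta)$ together with Proposition \ref{funrooted} and the peeling rule \eqref{funion}, and your write-up simply spells out those steps, including the induction on $k$ via the marginal identity to obtain \eqref{oct25.1}. The degenerate cases you flag (where $\eta^1$ passes through one of the other marked points) are handled correctly, since both sides vanish there.
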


 \begad
 We could have used \eqref{oct25.1} as a definition of 
 $ \hat H_A(\x,\y)$ but it is not obvious from this definiton
 that  $\hat H_A(\x^\sigma,\y^\sigma) =  \hat H_A(\x,\y)$
 for permutations $\sigma$.
 \endad
 
 The next proposition shows that we can give the probability
 that a loop-erased walk uses a particular edge in terms of
the total mass of pairs of walks (the past and the future
as viewed by the edge).

% 
% \begin{proposition}  
% \[ \hat H_A(\x,\y) = \sum_{\bfomega \in 
%      V} q(\bfomega), \]
% where $V$ denotes the set of $\bfomega = (\omega^1,\ldots,
% \omega^k) \in \paths_A(\x,\y)$ such that for $j=2,\ldots,k$,
% \[    \omega^j \cap \left[LE(\omega^1) \cup \cdots LE(
%\omega^{j-1}) \right] = \eset . \]
%\end{proposition}

 \begin{proposition}  Suppose $x_1,y_1,x_2,y_2$ are distinct
 points in $\overline A$ with $x_2,y_1 \in A$.  Let $V$ denote
 the set of SAWs in $\saws_A(x_1,y_2)$ that include the directed
 edge $\edge = \ora{x_2y_1}$.  Then
 \[    \hat q(V) = q_\edge \, F_{\edge}(A) \, 
     \hat H_{A'}(\x,\y), \]
 where $A' = A \setminus \{x_2,y_1\}$.
 \end{proposition}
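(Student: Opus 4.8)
The plan is to cut each self-avoiding walk in $V$ at the marked edge $\edge = \ora{x_2y_1}$ and to recognize the two resulting pieces as a mutually avoiding pair of SAWs; this identifies $\hat q(V)$ with a loop factor times the multi-walk total mass $\hat H_{A'}(\x,\y)$, where $\x = (x_1,y_1)$ and $\y = (x_2,y_2)$.

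First I would set up the decomposition. If $\eta \in V$, then $\eta$ is self-avoiding and uses $\edge$, so it visits $x_2$ and $y_1$ exactly once and passes directly from $x_2$ to $y_1$; writing $\eta^1$ for the part of $\eta$ up to $x_2$ and $\eta^2$ for the part from $y_1$ onward, one gets a unique representation $\eta = \eta^1 \oplus \edge \oplus \eta^2$ with $\eta^1 \in \saws_A(x_1,x_2)$, $\eta^2 \in \saws_A(y_1,y_2)$, and $\eta^1 \cap \eta^2 = \eset$, i.e. $(\eta^1,\eta^2) \in \saws_A(\x,\y)$. Conversely, given any $(\eta^1,\eta^2) \in \saws_A(\x,\y)$, mutual avoidance forces $y_1 \notin \eta^1$ and $x_2 \notin \eta^2$ (since $y_1 \in \eta^2$ and $x_2 \in \eta^1$), so $\eta^1 \oplus \edge \oplus \eta^2$ is a self-avoiding walk from $x_1$ to $y_2$ lying in $A$ (all of its interior vertices, $x_2$ and $y_1$ included, are in $A$) that uses $\edge$, hence lies in $V$. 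This gives a bijection $V \leftrightarrow \saws_A(\x,\y)$. Moreover, since the interior vertices of $\eta^1$ avoid both $x_2$ and $y_1$, and likewise for $\eta^2$, one has $\saws_A(\x,\y) = \saws_{A'}(\x,\y)$ with $A' = A \setminus \{x_2,y_1\}$.

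Next I would track the weights. By Proposition \ref{lerwprob}, $\hat q(\eta) = q(\eta)\, F_\eta(A)$, and multiplicativity of $q$ along concatenations gives $q(\eta) = q_\edge\, q(\eta^1)\, q(\eta^2) = q_\edge\, q(\bfeta)$ with $\bfeta = (\eta^1,\eta^2)$. The vertex set of $\eta$ is exactly $\eta^1 \cup \eta^2$ (the edge $\edge$ contributes only $x_2 \in \eta^1$ and $y_1 \in \eta^2$), so $F_\eta(A) = F_{\eta^1 \cup \eta^2}(A) = F_\bfeta(A)$. Applying the factorization rule \eqref{funion} with $B_1 = \edge = \{x_2,y_1\}$ and $B_2 = (\eta^1\cup\eta^2)\setminus\edge$, together with the convention $F_B(A') = F_{B\cap A'}(A')$, yields $F_\bfeta(A) = F_\edge(A)\, F_\bfeta(A')$. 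Hence
\[ \hat q(\eta) = q_\edge\, F_\edge(A)\, q(\bfeta)\, F_\bfeta(A') = q_\edge\, F_\edge(A)\, \hat q_{A'}(\bfeta). \]
Summing over $\eta \in V$ and using the bijection to rewrite the sum as one over $\bfeta \in \saws_{A'}(\x,\y)$ gives $\hat q(V) = q_\edge\, F_\edge(A)\sum_{\bfeta \in \saws_{A'}(\x,\y)} \hat q_{A'}(\bfeta) = q_\edge\, F_\edge(A)\, \hat H_{A'}(\x,\y)$, which is the claim.

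The only genuinely delicate point is the bookkeeping in the first step: checking that cutting at $\edge$ really is a bijection and that the self-avoidance of $\eta$ is equivalent to the mutual avoidance of $(\eta^1,\eta^2)$ together with the (automatic) conditions placing the interior vertices in $A'$. Once that is pinned down, the weight identities follow mechanically from Proposition \ref{lerwprob} and the factorization \eqref{funion}, so no new estimates are needed.
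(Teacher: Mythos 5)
Your proof is correct and follows essentially the same route as the paper: cut each $\eta \in V$ at the edge $\edge$ into the pair $\bfeta=(\eta^1,\eta^2)$, factor $q(\eta)=q_\edge\, q(\bfeta)$ and $F_\eta(A)=F_\edge(A)\,F_{\bfeta}(A')$, and sum over $\bfeta$. The only cosmetic difference is that you get the $F$-factorization from \eqref{funion}, while the paper phrases it as partitioning the loops meeting $\eta$ according to whether they visit $\{x_2,y_1\}$; these are the same identity.
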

 
 \begin{proof}  If $\eta \in V$ we write $\eta = \eta^1 \oplus
 \edge \oplus \eta^2$; let $\bfeta = (\eta^1,\eta^2)$;
  and note that $q(\eta) = q(\bfeta ) \, q(\edge)
 $.  The loops that intersect $\eta$ can be partitioned
 into those that visit $\{x_1,y_2\}$ and those that do not giving
 \[      F_\eta(A) = F_\edge(A) \, F_{\bfeta}
    (A'), \]
    and hence 
 \[   \hat q(\eta) =   q(\edge) \, F_{\edge}(A) \, q(\bfeta)
  \, F_{\bfeta}
    (A ').\]
 We now sum over all possible $\bfeta$.
 
 \end{proof}

 \begin{proposition}[Fomin's identity, two paths]  
Suppose $x_1,x_2,y_1,y_2$ are distinct points in $\p A$,
 and let $\sigma$ denote the nontrivial permutation
 of $\{1,2\}$.   Then 
\begin{equation}  \label{oct25.2}
    \hat H_{A}(\x,\y) - \hat H_A(\x,\y^\sigma) =
     H_A(\x,\y) - H_A(\x,\y^\sigma) . 
    \end{equation}
  \end{proposition}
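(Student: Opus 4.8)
The plan is to deduce \eqref{oct25.2} from the walk-sum representation \eqref{oct25.1} of $\hat H_A$ together with a weight-preserving surgery on pairs of walks. First I would invoke the $k=2$ case of \eqref{oct25.1} to write
\[
 \hat H_A(\x,\y)=\sum_{\substack{\omega^1\in\paths_A(x_1,y_1),\ \omega^2\in\paths_A(x_2,y_2)\\ \omega^2\cap LE(\omega^1)=\eset}} q(\omega^1)\,q(\omega^2);
\]
since $q$ is integrable, this and all the sums below converge absolutely. Subtracting this from $H_A(\x,\y)=H_A(x_1,y_1)\,H_A(x_2,y_2)=\sum_{\omega^1,\omega^2}q(\omega^1)q(\omega^2)$ (the same double sum, now over all pairs) and repeating with $\y$ replaced by $\y^\sigma=(y_2,y_1)$, one sees that \eqref{oct25.2} is equivalent to
\[
 \sum_{\mathcal A} q(\omega^1)\,q(\omega^2)\;=\;\sum_{\mathcal A'} q(\omega^1)\,q(\omega^2),
\]
where $\mathcal A$ is the set of pairs $(\omega^1,\omega^2)\in\paths_A(x_1,y_1)\times\paths_A(x_2,y_2)$ with $\omega^2\cap LE(\omega^1)\neq\eset$, and $\mathcal A'$ is the analogous set with $\omega^1\in\paths_A(x_1,y_2)$ and $\omega^2\in\paths_A(x_2,y_1)$.

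For the second step I would construct an explicit weight-preserving bijection $\mathcal A\leftrightarrow\mathcal A'$ by switching the two walks' ``futures'' at a carefully chosen vertex. Given $(\omega^1,\omega^2)\in\mathcal A$, put $\eta=LE(\omega^1)=[\eta_0,\ldots,\eta_m]$ and let $w=\eta_j$, where $j=\min\{l:\eta_l\in\omega^2\}$ is the first vertex of $\eta$ (read from $x_1$) that $\omega^2$ visits. Since the four points are distinct and $x_1=\eta_0$, $y_1=\eta_m\in\p A$ are not visited by $\omega^2$, we have $0<j<m$, so $w\in A$. Split $\omega^1$ at its last visit to $w$, say $\omega^1=\alpha\oplus\delta$ with $\alpha\in\paths_A(x_1,w)$, $\delta\in\paths_A(w,y_1)$, and split $\omega^2$ at its last visit to $w$, say $\omega^2=\beta\oplus\gamma$ with $\beta\in\paths_A(x_2,w)$, $\gamma\in\paths_A(w,y_2)$; by construction neither $\delta$ nor $\gamma$ returns to $w$. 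Send $(\omega^1,\omega^2)$ to $(\alpha\oplus\gamma,\ \beta\oplus\delta)\in\paths_A(x_1,y_2)\times\paths_A(x_2,y_1)$. This manifestly preserves the weight: $q(\alpha\oplus\gamma)\,q(\beta\oplus\delta)=q(\alpha)q(\delta)q(\beta)q(\gamma)=q(\omega^1)q(\omega^2)$.

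The verification that this map lands in $\mathcal A'$ and is an involution is where the real work lies, and it rests on two standard facts about loop erasure: $LE(\mu_1\oplus\mu_2)=LE(LE(\mu_1)\oplus\mu_2)$, and if $z\in LE(\mu)$ has its last visit at time $s$ then $LE(\mu)$ is the concatenation of $LE(\mu[0,s])$ and $LE(\mu[s,|\mu|])$. Applying the second fact to $\omega^1$ at $w$ gives $LE(\alpha)=[\eta_0,\ldots,\eta_j]$ and $LE(\delta)=[\eta_j,\ldots,\eta_m]$. To see $w\in LE(\alpha\oplus\gamma)$ I would use the first fact, $LE(\alpha\oplus\gamma)=LE([\eta_0,\ldots,\eta_j]\oplus\gamma)$, and observe that minimality of $j$ forces $\omega^2$, hence $\gamma$, to avoid $\eta_0,\ldots,\eta_{j-1}$ entirely, while $\gamma$ meets $w=\eta_j$ only at its starting point; thus each of $\eta_0,\ldots,\eta_j$ is visited exactly once in $[\eta_0,\ldots,\eta_j]\oplus\gamma$ and survives erasure, so $w\in LE(\alpha\oplus\gamma)$ (and trivially $w\in\beta\oplus\delta$), and in fact $LE(\alpha\oplus\gamma)=[\eta_0,\ldots,\eta_j]\oplus LE(\gamma)$. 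For the involution I would rerun the construction on the image: since in $LE(\omega^1)$ the time of the last visit to $\eta_l$ increases with $l$, the tail $\delta$ (hence $\beta\oplus\delta$) also avoids $\eta_0,\ldots,\eta_{j-1}$, so the construction again marks the vertex $w$ at index $j$ of $[\eta_0,\ldots,\eta_j]\oplus LE(\gamma)$; and because all splits were taken at \emph{last} visits to $w$, the four pieces $\alpha,\beta,\gamma,\delta$ are returned intact, recovering $(\omega^1,\omega^2)$. Summing the weight over this bijection gives the displayed equality of sums, and unwinding the first step yields \eqref{oct25.2}. The main obstacle is exactly this bookkeeping: the two design choices — marking the $x_1$-side \emph{first} intersection $w$, and cutting \emph{both} walks at their \emph{last} visits to $w$ — are precisely what make loop erasure commute with the regluing and the inverse map well defined; most other natural choices fail because the grafted tail could re-enter the portion of $\eta$ that has already been traversed.
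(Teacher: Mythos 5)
Your proof is correct and follows essentially the same route as the paper: reduce via the representation \eqref{oct25.1} and then cancel the intersecting pairs by the tail-swapping bijection cut at the first vertex of $LE(\omega^1)$ met by $\omega^2$, with both walks split at their last visits to that vertex. The only difference is that the paper states the bijection and leaves its verification as ``readily checked,'' whereas you carry out that bookkeeping (image lands in the intersecting set, map is an involution) explicitly and correctly.
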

  
  \begin{proof}
 Let $V$ denote the set of $\bfomega=(\omega^1,\omega^2)
 \in \paths_A(\x,\y)$ such that $LE(\omega^1) \cap \omega^2
 \neq \eset$ and let $V_\sigma$ be the corresponding
 set in $\paths_A(\x,\y^\sigma)$.  We will give a bijection
 $\pi: V \rightarrow V_\sigma$ such that for each $\bfomega$,
 $q(\bfomega) = q(\pi(\bfomega)).$  Let $\eta = LE(\omega^1)$.
If $ \bfomega  \in V  $, let  $r$ be the smallest index such that $\eta_r \in \omega^2$;
 let $k$ be the largest
 index such that $\omega^2_k  = \eta_r$; and let $j$
 be the largest index such that $\omega^1_j = \eta_r$.
We write $\omega^i = \omega^{i,-} \oplus \omega^{i,+},$ where
 \[    \omega^{1,-} = [\omega_0^1,\ldots,\omega_j^1], \;\;\;\;
    \omega^{2,-} =  [\omega_0^1,\ldots,\omega_k^2].\]
 We then define $\pi({\bfomega})$ by
 \[     ( \omega^{1,-}  \oplus  \omega^{2,+},
    \omega^{2,-} \oplus \omega^{1,+}).\]
    It can  readily be checked that this gives  the
    necessary bijection.
    
 The bijection shows that the terms on the right-hand side
 of \eqref{oct25.2} corresponding to $V$ and $V_\sigma$ cancel.  The equality of the
 remaining terms is seen by \eqref{oct25.1}.
 \end{proof}

 The two-path result  is a special case of a more general theorem. 
 For a proof of the following see \cite[Proposition 9.6.2]{LLimic}.
 
 \begin{proposition}
 [Fomin's identity]  
Suppose $x_1,x_2,\ldots, x_k, y_1,y_2, \ldots, y_k$ are distinct points in $\p A$.
Then
\[ \sum (-1)^{\sgn(\sigma)} \, \hat H_A(\x,\y^\sigma)
   =  \det \left[H_{A}(x_i,y_j)\right]_{1 \leq i,j \leq k} \]
where the sum is over  all permutations of $\{1,\ldots,k\}$. 
\end{proposition}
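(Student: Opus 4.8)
The plan is to expand both sides of the identity as signed sums indexed by pairs $(\bfomega,\sigma)$, where $\bfomega$ is a $k$-tuple of walks and $\sigma$ a permutation of $\{1,\dots,k\}$, and then to construct a sign-reversing, weight-preserving involution on the pairs that do not already cancel. First I would rewrite the right-hand side: since $H_A(x,y)=q[\paths_A(x,y)]$ and $H_A$ on tuples is the product measure,
\[
\det\left[H_A(x_i,y_j)\right]_{1\leq i,j\leq k}
 = \sum_\sigma (-1)^{\sgn(\sigma)}\prod_{j=1}^k H_A(x_j,y_{\sigma(j)})
 = \sum_\sigma (-1)^{\sgn(\sigma)} \sum_{\bfomega\in\paths_A(\x,\y^\sigma)} q(\bfomega).
\]
By \eqref{oct25.1} the left-hand side equals $\sum_\sigma(-1)^{\sgn(\sigma)}\sum_{\bfomega\in V_\sigma}q(\bfomega)$, where $V_\sigma\subset\paths_A(\x,\y^\sigma)$ is the set of $\bfomega$ with $\omega^j\cap[LE(\omega^1)\cup\cdots\cup LE(\omega^{j-1})]=\eset$ for $j=2,\dots,k$. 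Subtracting, the claim is equivalent to $\sum_\sigma(-1)^{\sgn(\sigma)}\sum_{\bfomega\in\paths_A(\x,\y^\sigma)\setminus V_\sigma}q(\bfomega)=0$, so it suffices to exhibit a fixed-point-free involution $\Phi$ on the set of ``bad'' pairs $\mathcal B=\{(\bfomega,\sigma):\bfomega\notin V_\sigma\}$ which flips $(-1)^{\sgn(\sigma)}$ and preserves $q(\bfomega)$.

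To define $\Phi$ I would generalize the tail-swapping surgery already used for $k=2$. Write $L_r:=LE(\omega^r)$. Given a bad pair, let $i\geq 2$ be minimal with $\omega^i\cap(L_1\cup\cdots\cup L_{i-1})\neq\eset$; minimality forces $L_1,\dots,L_{i-1}$ to be pairwise disjoint. One then picks, by a canonical rule extending the two-path construction, an index $s<i$ and a vertex $p\in L_s$ where the surgery takes place --- the key being, just as in the $k=2$ proof, that $p$ has the smallest possible index along $L_s$ among the vertices of $L_s$ that lie on $\omega^i$. Splitting $\omega^s$ and $\omega^i$ each at its last visit to $p$ and interchanging the two tails produces $\tilde\omega^s,\tilde\omega^i$ with endpoints $x_s\to y_{\sigma(i)}$ and $x_i\to y_{\sigma(s)}$; setting $\tilde\sigma=\sigma\circ(s\,i)$ gives $\Phi(\bfomega,\sigma)$. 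This flips the sign of $\sigma$, and it preserves $q(\bfomega)$ because $q$ is multiplicative over concatenation, so both tuples are built from the same directed edges.

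The step I expect to be the main obstacle is verifying that $\Phi$ is genuinely an involution. Exactly as in the two-path case, one must show that after the surgery the minimal bad index is still $i$, the distinguished index is still $s$, and the surgery vertex and splitting points are unchanged, so that $\Phi\circ\Phi$ is the identity. The choice of $p$ as the smallest-index vertex of $L_s$ on $\omega^i$ is what guarantees that the loop-erasure of the modified $s$-th walk still agrees with $L_s$ up through $p$, since the grafted tail of $\omega^i$ cannot touch the vertices of $L_s$ lying strictly before $p$; this is the heart of the two-path argument. What is genuinely new for $k\geq 3$ is controlling the interaction with the remaining early walks $\omega^1,\dots,\omega^{i-1}$: the canonical rule for selecting $s$ must be arranged so that grafting a piece of $\omega^i$ into the $s$-th walk cannot create an earlier intersection (which would lower the minimal bad index) and so that the data $(i,s,p)$ are recoverable from $\Phi(\bfomega,\sigma)$. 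Getting this combinatorial bookkeeping right is the technical core; once it is done, the pairing of $\mathcal B$ yields the cancellation and hence the identity. (This is the argument of \cite[Proposition 9.6.2]{LLimic}; an alternative would be an induction on $k$ using the two-path identity together with the absolute continuity of the $\omega^1$-marginal with Radon--Nikodym derivative $\hat H_{A\setminus\eta^1}$, but the sign-reversing involution is the standard route.)
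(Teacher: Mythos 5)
Your reduction of the identity to the cancellation of ``bad'' pairs $(\bfomega,\sigma)$ via \eqref{oct25.1}, and the observation that a tail swap preserves $q(\bfomega)$ and flips $(-1)^{\sgn \sigma}$, are both correct; but the proof is not complete, because the one step you defer --- specifying the canonical choice of $(s,p)$ and verifying that the surgery is an involution --- is exactly where the $k\geq 3$ case differs from $k=2$, and the selection order you propose does not work. If you first fix $i$ as the minimal index with $\omega^i\cap(L_1\cup\cdots\cup L_{i-1})\neq\eset$ and then choose $s<i$ and $p\in L_s$, the grafted walk at slot $s$ is $\tilde\omega^s=\omega^{s,-}\oplus\omega^{i,+}$, and $LE(\tilde\omega^s)$ beyond $p$ consists of points of the old $\omega^{i,+}$. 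Nothing in the original constraints prevents an intermediate walk $\omega^j$ with $s<j<i$ from meeting $\omega^{i,+}$ (the hypotheses only restrict $\omega^j$ against the \emph{loop erasures} of earlier walks), so after the surgery the minimal bad index can drop strictly below $i$: e.g.\ with $k=3$, $i=3$, $s=1$, take $\omega^2$ disjoint from $L_1$ but meeting a point of $\omega^{3,+}$ that survives in $LE(\tilde\omega^1)$. Then $\Phi$ applied to the image performs surgery on a different pair of slots and does not undo the first swap, so $\Phi\circ\Phi\neq\mathrm{id}$ and the cancellation argument collapses. No choice of $s$ among the indices with $\omega^i\cap L_s\neq\eset$ repairs this, because the obstruction comes from walks strictly between $s$ and $i$, which your rule never controls; your own caveat (``cannot create an earlier intersection'') is precisely the condition that fails.

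The fix is to index the fault by the earliest \emph{loop erasure}, not the earliest offending walk: let $s$ be minimal such that $L_s\cap\bigl(\omega^{s+1}\cup\cdots\cup\omega^k\bigr)\neq\eset$, let $p$ be the first vertex along $L_s$ lying in that union, and let $i>s$ be minimal with $p\in\omega^i$; then swap the tails of $\omega^s$ and $\omega^i$ at their last visits to $p$ and compose $\sigma$ with the transposition $(s\,i)$. With this rule the data $(s,p,i)$ is invariant under the surgery: for $m<s$ the point set $\omega^s\cup\omega^i$ is unchanged, so no $\tilde L_m=L_m$ becomes hit; the vertices of $L_s$ before $p$ are still an initial segment of $LE(\tilde\omega^s)$ (last-visit times along a loop erasure increase, so neither $\omega^{i,+}$ nor $\omega^{s,+}$ revisits them) and are still unhit, while $p$ lies on $\tilde\omega^i$; and the last visits of $\tilde\omega^s,\tilde\omega^i$ to $p$ occur at the junctions, so the second swap restores the original pair. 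Extra intersections of intermediate walks with the grafted part of $LE(\tilde\omega^s)$ beyond $p$ are then harmless, since they do not enter the definition of $(s,p,i)$. Note also that the paper itself does not prove the $k$-path identity --- it proves only $k=2$ (where your rule coincides with the correct one, there being no intermediate walks) and cites \cite[Proposition 9.6.2]{LLimic} for the general case --- so the burden of the combinatorial bookkeeping you postpone is the entire content of the proposition, and as set up it would have to be reorganized as above before it goes through.
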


We could alternatively write the right-hand side of the above
equation as
\[  \sum (-1)^{\sgn(\sigma)} \,   H_A(\x,\y^\sigma).\]
In the case of simple random walk in a simply connected domain,
topological constraints imply that $\hat H_A(x,y^\sigma)$
is non-zero for at most one permutation $\sigma$.

\begin{corollary} \label{fominsimple}
 Suppose  $A \in \simpleset$ and 
 $x_1,x_2,\ldots, x_k, y_k,y_{k-1},
 \ldots, y_1 $  are distinct points in order on  $\p A$.
  Then,
 \[   \hat H_A(\x,\y) =  \det \left[H_{A}(x_i,y_j)\right]_{1 \leq i,j \leq k} \]
where the sum is over  all permutations of $\{1,\ldots,k\}$. 
\end{corollary}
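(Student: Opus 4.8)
The plan is to derive the corollary directly from the general Fomin identity stated just above it, using the planarity of $A\in\simpleset$ to kill all but one term in the alternating sum. Recall that Fomin's identity reads
\[
\sum_{\sigma}(-1)^{\sgn(\sigma)}\,\hat H_A(\x,\y^\sigma)=\det\left[H_A(x_i,y_j)\right]_{1\le i,j\le k},
\]
the sum running over all permutations $\sigma$ of $\{1,\ldots,k\}$. Since $\sgn(\mathrm{id})=1$, it suffices to prove that $\hat H_A(\x,\y^\sigma)=0$ for every $\sigma\neq\mathrm{id}$; the left-hand side then collapses to $\hat H_A(\x,\y)$ and the corollary follows.

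To establish the vanishing, I would fix a permutation $\sigma$ with $\hat H_A(\x,\y^\sigma)\neq 0$ and produce a contradiction unless $\sigma=\mathrm{id}$. Nonvanishing means there is a mutually avoiding family $(\eta^1,\ldots,\eta^k)$ with $\eta^j\in\saws_A(x_j,y_{\sigma(j)})$. Regarding each $\eta^j$ as a disjoint simple arc inside the simply connected region $D_A$ joining two boundary points, a Jordan-curve argument --- this is exactly the topological constraint already invoked in the excerpt right after the general Fomin identity --- forces the induced matching $\{(x_j,y_{\sigma(j)})\}$ to be non-crossing with respect to the cyclic order $x_1,\ldots,x_k,y_k,\ldots,y_1$ on $\partial A$. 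I would then observe that this cyclic order admits a unique non-crossing matching of the $x$-vertices to the $y$-vertices, namely $x_j\leftrightarrow y_j$: the points $x_k$ and $y_k$ are cyclically adjacent, so if the chord out of $x_k$ went to $y_m$ with $m<k$ it would strand the nonempty set $\{y_{m+1},\ldots,y_k\}$ of $y$-vertices on one side of the chord with no $x$-vertex available to pair them, contradicting non-crossing; hence $\sigma(k)=k$, and deleting $x_k,y_k$ (which keeps the cyclic order of the remaining $2(k-1)$ points of the required type) and inducting on $k$ yields $\sigma=\mathrm{id}$.

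The only slightly delicate point is the topological step itself --- making precise, for lattice self-avoiding walks running between boundary squares of the polyomino domain $D_A$, that disjoint realizations force a planar (non-crossing) matching of the endpoints. But this is precisely the topological input already used and acknowledged in the excerpt (cf. the remark following the general Fomin identity and the preceding ``topological fact''), so I would cite it rather than reprove it. With that in hand, the remainder is bookkeeping: $\hat H_A(\x,\y^\sigma)=0$ for $\sigma\neq\mathrm{id}$, Fomin's identity collapses, and $\hat H_A(\x,\y)=\det[H_A(x_i,y_j)]_{1\le i,j\le k}$.
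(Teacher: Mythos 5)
Your proposal is correct and follows essentially the same route as the paper: invoke the general Fomin identity and use the planarity of $A\in\simpleset$ to conclude that $\hat H_A(\x,\y^\sigma)=0$ for every nontrivial $\sigma$, so the alternating sum collapses to the identity term. You merely spell out in more detail (via the non-crossing matching induction) the topological fact that the paper simply asserts.
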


\begin{proof}  We know that $\hat H_A(\x,\y^\sigma) = 0$ for nontrivial
permutations $\sigma$.  \end{proof}

\begad
If $x_1,x_2,\ldots,x_n, y_1,y_2,\ldots,y_n$ are distinct points  
  on $\p A$, then there may be several permutations $\sigma$
  for which $\hat H_A(\x,\y^\sigma)  \neq  0$.  However,
  one can determine the value of the loop-erased quantities in
  terms of random walk determinants.  We give the idea here; see
  \cite{KW11} and $\cite{KPe}$ for more details.
  
  We will consider pairings of $[2n] = \{1,2,\ldots,2n\}$.
A planar
pairing $\pair$ is a partition into $n$ sets of cardinality $2$
such that nonintersecting
curves can be drawn in the upper half plane
$\Half$ connecting the points.
We write $x \lra y$ if $x$ and $y$ are paired.
There is a one-to-one correspondence between planar pairings
and  ``Dyck paths''; this is combinatorial terminology for 
one-dimensional random walk bridges that stay nonnegative, that is,
  functions $f:\{0,1,\ldots,2n\}
\rightarrow \{0,1,2,\ldots\}$ with $f(0) = f(2n) = 0$ and $f(j+1) -
f(j) = \pm 1$ for each $j$.
The correspondence is given by
\[   f_\pair(k) = \#\{j \leq k: j \mbox{ is paired with
a point} > k \}. \]
This defines a partial order on planar partitions: $\pair \preceq
\pair'$ if $f_\pair \leq f_{\pair'}$.  Given a planar pairing,
let $\x=\x_\pair$ denote the vector of left endpoints in
increasing order and $\y  = \y_\pair= (y_1,\ldots,y_n)$ where
$x_j \lra y_j$.  If $\sigma$ is a permutation of $[n]$, we write
$\pair_\sigma$ for the (not necessarily planar) 
pairing given by $x_j \lra y_{\sigma(j)}$.
   Fomin's identity implies that
\begin{equation}  \label{mar22.1}
    \det\left[H(\x,\y)\right]
  = \sum_{\sigma} (-1)^{\sigma} \hat H(\x,\y^\sigma).
  \end{equation}
Note that we can restrict the sum on the right-hand side
to permutations $\sigma$ such that $\pair_\sigma$ is
a planar pairing since $\hat H(\x,\y^\sigma)=0$ for the others.

The key observation, that we leave as an exercise, is
that if 
  $\pair_\sigma$ is a planar pairing, then
$\pair \preceq \pair_\sigma$.  
We can then write \eqref{mar22.1} as
\[       \det\left[H(\pair)\right] =
    \sum_{\pair'}  M_{\pair, \pair'} \, \hat H(\pair'), \]
where $M_{\pair,\pair'} \in \{0,\pm 1\}$, $M_{\pair,\pair} = 1$
 and $M_{\pair,\pair'} = 0$
unless $\pair \preceq \pair$.  Therefore, if we order
the pairings consistently with $\preceq$, 
 $M$ is an upper triangular
matrix with nonzero diagonal terms and we can invert giving
\[        \hat H  = M^{-1} [\det H].\]
  
 \endad

\begin{itemize}
\item If $X$ is any function on
$\paths_A(x,y)$ we write $\langle X \rangle_q$ for the integral
or ``expectation value''
\[   \langle X \rangle_q= \langle X;A,x,y\rangle_q
 = \sum_{\omega \in \paths_A(x,y)} X(\omega)\, q(\omega). \]  
\item 
If $\omega$ is a path and $\edge$ is a
directed edge, we let $Y_\edge(\omega)$
be the number of times that $\omega$ traverses the directed edge
$\edge$.  We also set $Y_\edge^-(\omega) = Y_{\edge^R}(\omega)$
be  the number of  
  traverses of the reversed edge, and note that
$Y_\edge - Y_\edge^-$ represents the number of ``signed''
traverses of $\edge$.
\item Note that if $q$ is symmetric and $z \in A$, then
\[   \langle Y_\edge - Y_\edge^- ;A,z,z\rangle_q =0\]
since the terms with $l$ and $l^R$ cancel.  
\item We   write
$I_z(\omega),I_e(\omega),I_\edge(\omega)$ for the indicator function that the loop
erasure $LE(\omega)$ contains the vertex $z$, the undirected
edge $e$, and the directed edge $\edge$, respectively.
\end{itemize}

\begin{proposition}  \label{prophallow2} Suppose $q$ is a symmetric integrable
weight
on  $A,$ $x,y \in \partial A$, and
$\edge = \ora{zw}  \in \dedges_A$.   Then
%\[  \langle Y_\edge  - Y_\edge^-  ; A, x, y\rangle_q
%    =  \langle I_\edge  - I_{\edge^R}; A, x, y \rangle_q +
%       \sum_{x \in A} \langle I_x ; A, x, y \rangle_q\; 
%         \langle Y_\edge- Y_{\edge}^-; A,x,x\rangle_q .\]
%     In particular, if $q$ is symmetric
     \[  \langle Y_\edge - Y_\edge^-   ; A, x, y\rangle_q=    
     q_\edge \, F_\edge(A) \, \left[
     H_{A'}(x,z) \, H_{A'}(y,w) - H_{A'}(x,w)
      \, H_{A'}(y,z)\right],\]
   where $A' = A \setminus \{z,w\}$.
\end{proposition}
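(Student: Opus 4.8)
The plan is to collapse the left-hand side to a statement purely about the Green's function and Poisson kernel of the walk, and then prove that statement by a $2\times 2$ determinant factorization. Integrability of $q$ makes every sum over paths below absolutely convergent, so the rearrangements are legitimate, and the degenerate case $z=w$ is immediate (both sides vanish: $Y_\edge=Y_\edge^-$ for a self-edge), so I take $z\ne w$. First I would expand $\langle Y_\edge;A,x,y\rangle_q=\sum_{\omega\in\paths_A(x,y)}Y_\edge(\omega)\,q(\omega)$ by marking one traversal of $\edge=\ora{zw}$: splitting $\omega$ at a marked traversal gives a unique decomposition $\omega=\omega^-\oplus\edge\oplus\omega^+$ with $\omega^-\in\paths_A(x,z)$, $\omega^+\in\paths_A(w,y)$, and conversely each such pair yields a marked path, so $\langle Y_\edge;A,x,y\rangle_q=q_\edge\,H_A(x,z)\,H_A(w,y)$. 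Applying this to $\edge^R=\ora{wz}$ and using symmetry of $q$ (so $q_{\edge^R}=q_\edge$, and path reversal gives $H_A(a,b)=H_A(b,a)$) yields
\[ \langle Y_\edge - Y_\edge^- ;A,x,y\rangle_q = q_\edge\bigl[\,H_A(x,z)H_A(y,w)-H_A(x,w)H_A(y,z)\,\bigr], \]
so it remains to prove
\[ H_A(x,z)H_A(y,w)-H_A(x,w)H_A(y,z) = F_\edge(A)\bigl[\,H_{A'}(x,z)H_{A'}(y,w)-H_{A'}(x,w)H_{A'}(y,z)\,\bigr] \]
with $A'=A\setminus\{z,w\}$.

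For the algebraic identity I would decompose any path from $a\in\{x,y\}$ to $c\in\{z,w\}$ inside $A$ at its first visit to $\{z,w\}$: the initial segment goes from $a$ to some $c'\in\{z,w\}$ with intermediate vertices in $A'$ (contributing $H_{A'}(a,c')$, where $H_{A'}(a,c')$ is the $q$-mass of paths from $a$ to $c'$ whose interior vertices all lie in $A'$), and the rest is an arbitrary path from $c'$ to $c$ in $A$ (contributing $G_A(c',c)$). Hence $H_A(a,c)=\sum_{c'\in\{z,w\}}H_{A'}(a,c')\,G_A(c',c)$, i.e. the $2\times 2$ matrix $\bigl[H_A(a,c)\bigr]_{a\in\{x,y\},\,c\in\{z,w\}}$ equals $\bigl[H_{A'}(a,c)\bigr]\cdot\bigl[G_A(c,c')\bigr]_{c,c'\in\{z,w\}}$. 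Taking determinants, the identity reduces to $\det\bigl[G_A(c,c')\bigr]_{c,c'\in\{z,w\}}=F_\edge(A)$.

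Finally I would identify this minor of $G_A=(I-\qprob)^{-1}$. By Jacobi's identity for a principal minor of an inverse, with $B=\{z,w\}$,
\[ \det\bigl[G_A(c,c')\bigr]_{c,c'\in B}=\frac{\det(I-\qprob_{A'})}{\det(I-\qprob_A)}, \]
where $\qprob_{A'}$ is $\qprob$ restricted to $A'$; by \eqref{fdet} this equals $F(A)/F(A')$, and by \eqref{funion} (with $B_1=B$, $B_2=A'$) that is $F_B(A)=F_\edge(A)$. Chaining the displays proves the proposition. The only points requiring care rather than computation are checking that the two splittings (marking a traversal of $\edge$; recording the first hit of $\{z,w\}$) are genuine bijections onto the stated products of path sets, and invoking Jacobi's identity in its sign-free principal-minor form; I foresee no real obstacle.
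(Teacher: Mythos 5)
Your proof is correct, but it takes a genuinely different route from the paper's. The paper never evaluates $\langle Y_\edge;A,x,y\rangle_q$ on its own: it works with the difference $Y_\edge-Y_\edge^-$ throughout, classifying paths by the relative order of the first and last visits to $z$ and $w$, cancelling two of the four cases by reversing the middle segment (this is where symmetry of $q$ does its work), and then reading off the surviving cases via a path decomposition that directly produces $q_\edge\, H_{A'}(x,z)H_{A'}(y,w)\,G_A(z,z)\,G_{A\setminus\{z\}}(w,w)$ and its antisymmetric partner. You instead compute each of $\langle Y_\edge\rangle_q$ and $\langle Y_{\edge^R}\rangle_q$ exactly by marking a traversal (legitimate under integrability), which reduces the proposition to the determinant identity $\det\bigl[q[\paths_A(a,c)]\bigr]_{a\in\{x,y\},\,c\in\{z,w\}}=F_\edge(A)\,\det\bigl[H_{A'}(a,c)\bigr]$, proved by the first-hit matrix factorization together with $\det[G_A(c,c')]_{c,c'\in\{z,w\}}=F_{\{z,w\}}(A)$ via Jacobi's identity, \eqref{fdet} and \eqref{funion}. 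The paper's argument buys self-containment (pure path counting, the determinant structure emerging from a combinatorial cancellation); yours buys a clean separation of the two ingredients (an elementary occupation-time identity and a Green's-minor identity) and uses symmetry only through $q_{\edge^R}=q_\edge$ and $q[\paths_A(a,b)]=q[\paths_A(b,a)]$. Two small remarks: the minor identity can also be done in one line without Jacobi, since $G_A(w,z)=h\,G_A(z,z)$ and $G_A(w,w)=G_{A\setminus\{z\}}(w,w)+h\,G_A(z,w)$, where $h$ is the mass of paths from $w$ to $z$ in $A$ hitting $z$ only at the terminal point; and note that your $H_{A'}(a,c)$ (like the one intended in the statement and in the paper's own proof) must include the one-step path when $q(a,c)\neq 0$, despite the paper's formal convention for boundary-to-boundary paths elsewhere, while \eqref{fdet} applied to $A'$ is justified because $|\qprob_{A'}|$, a principal submatrix of $|\qprob|$, is again integrable.
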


\begin{proof} Let $Y = Y_\edge, Y^-= Y^-_\edge, X = Y-Y^-$.
If $\omega$ does not visit both $z$ and $w$, then
$Y(\omega) = Y^-(\omega) = 0$; hence we will only
consider $\omega$ that visit both $z$ and $w$.
 Let $\xi,\tau$ be the first and last
indices $j$ with $\omega_j =z$ and $\xi',\tau'$
the corresponding quantities for $w$.
 
 Suppose $\xi < \xi', \tau' < \tau$.
In this case we can write $\omega = [\omega_0= x,
\ldots, \omega_n = y]$ uniquely as
$   \omega = \omega^- \oplus l \oplus \omega^+$
where 
\begin{equation}  \label{oct30.3}
\omega^-=[\omega_0,\ldots,\omega_\xi],\;\;\;
l = [\omega_\xi, \ldots,\omega_\tau],\;\;\;
 \omega^+ = [\omega_\tau, \ldots, \omega_n=y].
 \end{equation}
We can also consider  $\tilde \omega = \omega^-
\oplus l^R \oplus \omega^+$.  Since $\omega_{\tau + 1}
 \neq w$,  $X(\omega) = X(l) = - X(l^R) 
 =
 -X(\tilde \omega)$.
 Since $q$ is symmetric, $q(\omega) = q(\tilde \omega)$.
 Hence
 \[  \sum_{\xi < \xi', \tau' < \tau}
    q(\omega) \, X(\omega) = 0, \]
A similar argument shows that 
\[  \sum_{\xi' < \xi, \tau < \tau'}
    q(\omega) \, X(\omega) = 0.\]
  
   Suppose $\xi < \xi', \tau < \tau'$, and 
   decompose
 $\omega$ as in \eqref{oct30.3}.  Note that
 $X(\omega) = X(l) + 1\{\omega_{\tau+1} = w\}$.
 By comparing $l$ and $l^R$ as in the previous paragraph,
 we see that
 \[  \sum_{\xi<\xi', \tau < \tau'}
    q(\omega) \, X(\omega) = 
    \sum_{\xi < \xi', \tau < \tau'}
    q(\omega) \, 1\{\omega_{\tau+1} = w\}   .\]
 If $\omega_{\tau + 1} = w$, we can write
 \[   \omega = \omega^- \oplus l \oplus \edge
 \oplus l' \oplus \tilde \omega^+, \;\;\;\;l' = [\omega_{\tau+1},
 \ldots, \omega_{\tau'}], \;\;
   \tilde \omega^+ = [\omega_{\tau'},\ldots,\omega_n].\]
 By construction we see that
  $\omega_- \in \paths_{A'}(z,z),
  l \in \paths_A(z,z) , l' \in \paths_{A\setminus \{z\}}(w,w),$
  and $
  \tilde \omega^+ \in \paths_{A'}(w,y).$ 
 Therefore,
 \begin{eqnarray*}
  \sum_{\xi < \xi', \tau < \tau'}
    q(\omega) \, 1\{\omega_{\tau+1} = w\} &  = &
   H_{ A'}(x,z) \, H_{ A'}(y,w) \, q_\edge 
   \, G_A(z,z) \, G_{A\setminus \{z\}}(w,w)\\
   &  = &  H_{ A'}(x,z) \, H_{ A'}(y,w) \, q_\edge \,
      F_\edge(A) .
  \end{eqnarray*}
 A similar argument shows that
  \[ \sum_{\xi' < \xi, \tau' < \tau}
    q(\omega) \, X(\omega) = - H_{ A'}(x,w) \, H_{ A'}(y,z) \, q_\edge \,
      F_\edge(A) .\]

\end{proof}

\begin{proposition}  \label{prophallow} Suppose $q$ is a symmetric integrable
weight
on  $A,$ $x,y \in \partial A$, and
$\edge = \ora{zw}  \in \dedges_A$.   Then
\[  \langle Y_\edge  - Y_\edge^-  ; A, x, y\rangle_q
    =  \langle I_\edge  - I_{\edge^R}; A, x, y \rangle_q.\]
\end{proposition}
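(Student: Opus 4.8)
The plan is to use that $\langle Y_\edge - Y_\edge^- ; A, x, y\rangle_q$ has already been evaluated in closed form in Proposition \ref{prophallow2}, so it suffices to show that the right-hand side $\langle I_\edge - I_{\edge^R} ; A, x, y\rangle_q$ equals the same expression. First I would dispose of degenerate cases: if $z=w$ then $\edge$ is a self-edge, $Y_\edge\equiv Y_\edge^-$, and $I_\edge=I_{\edge^R}=0$ since every loop-erasure is self-avoiding; if $x=y$ the left-hand side vanishes by the $\omega\leftrightarrow\omega^R$ cancellation noted just before Proposition \ref{prophallow2}, and the right-hand side vanishes because $LE(\omega)$ is trivial for every $\omega\in\paths_A(x,x)$. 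So we may assume $x,y,z,w$ are distinct.

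Next I would rewrite the indicator terms as masses of self-avoiding walks. Since $LE(\omega)$ is self-avoiding and $\hat q(\eta)=\sum_{LE(\omega)=\eta}q(\omega)$, we have $\langle I_\edge ; A, x, y\rangle_q=\hat q(V_\edge)$, where $V_\edge$ is the set of $\eta\in\saws_A(x,y)$ traversing the directed edge $\ora{zw}$, and similarly with $\edge^R$. Applying the earlier proposition that computes the $\hat q$-mass of self-avoiding walks through a fixed directed edge (once with $\ora{zw}$ and once with $\ora{wz}$, in both cases with $A'=A\setminus\{z,w\}$) gives
\[ \langle I_\edge ; A, x, y\rangle_q = q_\edge\,F_\edge(A)\,\hat H_{A'}((x,w),(z,y)), \qquad \langle I_{\edge^R} ; A, x, y\rangle_q = q_{\edge^R}\,F_{\edge^R}(A)\,\hat H_{A'}((x,z),(w,y)). \]
Symmetry of $q$ gives $q_{\edge^R}=q_\edge$, and $F_B(A)$ depends only on the vertex set of $B$, so $F_{\edge^R}(A)=F_\edge(A)$. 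Reversing the second path in each pair is a weight-preserving bijection that preserves mutual avoidance and every vertex set (hence the factor $F$), whence $\hat H_{A'}((x,w),(z,y))=\hat H_{A'}((x,y),(z,w))$ and $\hat H_{A'}((x,z),(w,y))=\hat H_{A'}((x,y),(w,z))$. Writing $\x=(x,y)$, $\y=(z,w)$ and letting $\sigma$ be the transposition of $\{1,2\}$, we get
\[ \langle I_\edge - I_{\edge^R} ; A, x, y\rangle_q = q_\edge\,F_\edge(A)\,\left[\hat H_{A'}(\x,\y)-\hat H_{A'}(\x,\y^\sigma)\right]. \]
Since $x,y,z,w$ are four distinct points of $\p A'$, the two-path Fomin identity \eqref{oct25.2} applies in $A'$ and turns the bracket into $H_{A'}(\x,\y)-H_{A'}(\x,\y^\sigma)=H_{A'}(x,z)\,H_{A'}(y,w)-H_{A'}(x,w)\,H_{A'}(y,z)$. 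This is exactly the closed form for $\langle Y_\edge - Y_\edge^- ; A, x, y\rangle_q$ given by Proposition \ref{prophallow2}, which completes the proof.

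The step I expect to require care is the identification of the pairings: $\{x,y,z,w\}$ has three perfect matchings, $I_\edge$ and $I_{\edge^R}$ single out two of them, and those two are not related by permuting only the $\y$-coordinates of a fixed $\hat H_{A'}(\x,\cdot)$, so \eqref{oct25.2} cannot be invoked directly. The symmetry of $q$ is precisely what lets one rewrite both masses in the form $\hat H_{A'}((x,y),\cdot)$, with $z$ and $w$ both in the ``sink'' slot, which is the shape to which the two-path identity applies. Apart from this, the argument is a bookkeeping assembly of Proposition \ref{prophallow2}, the edge-mass proposition, and \eqref{oct25.2}, with no genuine computation.
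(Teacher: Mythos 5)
Your proof is correct, but it takes a genuinely different route from the paper. The paper argues directly: writing $\omega$ as the edges of $\eta=LE(\omega)$ interleaved with the erased loops $l^j\in\paths_{A\setminus\{x_0,\ldots,x_{j-1}\}}(x_j,x_j)$, one has $Y_\edge(\omega)-Y_\edge^-(\omega)=I_\edge(\omega)-I_{\edge^R}(\omega)+\sum_j\bigl[Y_\edge(l^j)-Y_\edge^-(l^j)\bigr]$, and the loop contributions vanish after summing against $q$ because $l^j\leftrightarrow (l^j)^R$ is a weight-preserving, sign-reversing involution (symmetry of $q$); this is short, self-contained, needs no case analysis, and does not use Proposition \ref{prophallow2} at all. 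You instead evaluate both sides in closed form: the left side via Proposition \ref{prophallow2}, and the right side by converting $\langle I_\edge\rangle_q$ and $\langle I_{\edge^R}\rangle_q$ into $\hat q$-masses of SAWs through a directed edge, applying the edge-hitting formula $\hat q(V)=q_\edge F_\edge(A)\hat H_{A'}(\cdot,\cdot)$, reversing the second path (using symmetry of $q$ and the fact that $F$ depends only on vertex sets) to put both pairings in the form $\hat H_{A'}((x,y),\cdot)$, and then invoking the two-path Fomin identity \eqref{oct25.2} in $A'$ with $z,w$ regarded as boundary points. This is logically sound and non-circular (none of the quoted results rely on Proposition \ref{prophallow}), your reduction of the three pairings issue is exactly the right care point, and your handling of the degenerate cases $x=y$ and $z=w$ is correct; the price is heavier machinery and the case split, and the argument somewhat obscures that the identity is really just the statement that the erased loops' signed traversals cancel. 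What your route buys is an explicit derivation of the closed form for $\langle I_\edge-I_{\edge^R};A,x,y\rangle_q$ (the combination the paper later uses in Proposition \ref{saints.prop1}) and an illustration of how Fomin's identity and Proposition \ref{prophallow2} fit together consistently.
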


\begin{proof}  Let $X = Y_\edge - Y_\edge^-$.
Let $\omega \in \paths_A(x,y)$ and let $\eta =
LE(\omega) = [x_0 = x, x_1,\ldots,x_n = y]$.  Then we can write
\[ \omega = [x,x_1] \oplus l^1 \oplus [x_1,x_2]
 \oplus l^2 \oplus \cdots \oplus
 [x_{n-2},x_{n-1}] \oplus l^{n-1}
   \oplus [x_{n-1},y] , \]
 where $l^j \in \paths_{A \setminus \{x_0,\ldots,x_{j-1}\}}(x_j,x_j)$.
 By construction, we can see that
 \[     X(\omega)
    = I_\edge(\omega) 
   - I_{\edge^R}(\omega)  
    + \sum_{j=1}^{n-1} X(l^j).\]
 Using the fact that the terms with $l^j$ and $(l^j)^R$ cancel
 as in the previous proof, we see that
 \[   \sum_{\omega \in \paths_A(x,y) } q(\omega)
  \sum_{j=1}^{n-1} X(l^j)=0.\]
 Therefore,
 \[ \sum_{\omega \in \paths_A(x,y)}
  X(\omega) \, q(\omega) =
    \sum_{\omega \in \paths_A(x,y)}
  [I_\edge(\omega) -I_{\edge^R}(\omega)]\, q(\omega)  .\]
\end{proof}

The next proposition gives the probability that 
a two-dimensional 
loop-erased random walk uses an undirected edge
in terms of two quantities: the measure of the
set of loops with odd winding number
and boundary Poisson kernels   with respect to the signed
zipper measure.  

\begin{proposition} \label{saints.prop1}
Suppose $A \in \simpleset$; $x,y \in 
\p A$ are   positively ordered with respect to the zipper;
$p$ is simple random walk  with corresponding
zipper weight $q$; 
and $\eta \in \saws_A(x,y)$ is a nearest neighbor
SAW that contains the directed edge $\edge = \ora{01}$. 
Then
\[     \hat q(\eta) =  \hat p(\eta) \, \exp \left\{-2 m_p(\oddloops_A)
 \right\}, \]
where $\oddloops_A$ is the set of unordered loops
$\ell \in \loops_A$ that cross the zipper an odd
number of times.

In particular, if $e$ denotes the undirected edge
associated to $\edge$, 
\begin{eqnarray*}
      \langle I_e; A,x,y \rangle_p  & =  &
   \exp \left\{2 m_p(\oddloops_A)
 \right\}  \,  \langle I_{\edge}- I_{\edge^R} ; A,x,y\rangle_q\\
& = &  \exp \left\{2 m_p(\oddloops_A)
 \right\}  \, q_\edge \, F_\edge^q(A) \, \Delta^q_{A'}(x,y;0,1),
      \end{eqnarray*}
   where $A' = A \setminus \{0,1\}$
   and
 \[  \Delta^q_{A'}(x,y;0,1) =
   H_{A'}^q(x,0) \, H_{A'}^q(y,1) - H_{A'}^q(x,1)
      \, H_{A'}^q(y,0).\]
%  = 
% \exp \left\{-2 m_p(\oddloops_A)
% \right\}  \,  \langle I_e; A,x,y \rangle_p. \]
%\[   \langle I_{\ora{10}} \rangle_q =  -
% \exp \left\{-2 m_p(\oddloops_A)
% \right\}  \,  \langle I_{\ora{01}} \rangle_p.\]
\end{proposition}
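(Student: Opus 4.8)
The plan is to split the statement into its two parts and reduce both to a single topological fact about loops and the zipper. For the identity $\hat q(\eta)=\hat p(\eta)\,\exp\{-2m_p(\oddloops_A)\}$, apply Proposition~\ref{lerwprob} to each weight to get $\hat q(\eta)=q(\eta)\,F_\eta^q(A)$ and $\hat p(\eta)=p(\eta)\,F_\eta^p(A)$. Since $\eta$ uses $\ora{01}$ and $x,y$ are positively ordered with respect to the zipper, $\eta$ crosses the zipper an even number of times, so $q(\eta)=p(\eta)$. By Proposition~\ref{funrooted}(2), $F_\eta^q(A)=\exp\{m_q[\loops(A;\eta)]\}$ and $F_\eta^p(A)=\exp\{m_p[\loops(A;\eta)]\}$, and since $m_q(\ell)=-m_p(\ell)$ for $\ell\in\oddloops_A$ and $m_q(\ell)=m_p(\ell)$ otherwise,
\[ \frac{F_\eta^q(A)}{F_\eta^p(A)}=\exp\bigl\{-2\,m_p[\loops(A;\eta)\cap\oddloops_A]\bigr\}. \]
As $m_p$ gives positive mass to every loop, the desired identity is thus equivalent to the claim that $\oddloops_A\subseteq\loops(A;\eta)$, i.e.\ every loop that crosses the zipper an odd number of times shares a vertex with $\eta$.

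To prove this claim I would use simple connectivity of $D_A$ (which holds since $A\in\simpleset$): every closed lattice curve in $D_A$ crosses any crosscut of $D_A$ an even number of times. Consider the vertical downward ray $R$ from $w_0=\tfrac12-\tfrac i4$ (note $w_0$ lies on no lattice edge, so the winding number of any loop about $w_0$ is defined). Its intersection with $D_A$ consists of the zipper, running from $w_0$ to its first boundary hit, together with possibly some bounded segments lying below that hit; each of the latter is a crosscut of $D_A$, hence is crossed by $\ell$ an even number of times, so the winding number of $\ell$ about $w_0$ (the net number of crossings of $\ell$ with $R$) has the same parity as the number of times $\ell$ crosses the zipper. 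Hence $\ell\in\oddloops_A$ exactly when $\ell$ has odd winding number about $w_0$. For such $\ell$ this winding number is nonzero, so $w_0$ lies in a bounded component of the complement of $\ell$ and $\p D_A$ in the unbounded one; thus any path in $\overline{D_A}$ from $w_0$ to $\p D_A$ must meet $\ell$. Since $w_0$ sits just below the midpoint of the edge $\{0,1\}$ that $\eta$ traverses, the concatenation of the short vertical segment from $w_0$ to that midpoint with the part of $\eta$ from there to $x$ is such a path; the short segment meets $\ell$ only if $\ell$ uses the edge $\{0,1\}$, in which case $\ell$ already contains the vertices $0,1$ of $\eta$, and otherwise $\ell$ meets $\eta$ — and a common point of two unions of lattice edges is a common vertex, necessarily in $A$. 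This proves the claim, hence the first identity.

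For the displayed equalities, note the topological claim used only that $\eta$ traverses $e$, so it applies to every $\eta'\in\saws_A(x,y)$ containing $e$: those using $\ora{01}$ satisfy $\hat q(\eta')=\hat p(\eta')\exp\{-2m_p(\oddloops_A)\}$ as above, while those using $\ora{10}$ satisfy $q(\eta')=-p(\eta')$ by positive ordering, hence $\hat q(\eta')=-\hat p(\eta')\exp\{-2m_p(\oddloops_A)\}$. Summing over all such $\eta'$,
\[ \langle I_\edge-I_{\edge^R};A,x,y\rangle_q=\sum_{\eta'\ni\ora{01}}\hat q(\eta')-\sum_{\eta'\ni\ora{10}}\hat q(\eta')=\exp\{-2m_p(\oddloops_A)\}\,\langle I_e;A,x,y\rangle_p, \]
which rearranges to the first displayed equality. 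The second then follows from Proposition~\ref{prophallow} ($\langle I_\edge-I_{\edge^R}\rangle_q=\langle Y_\edge-Y_\edge^-\rangle_q$, valid since $q$ is symmetric and integrable because $|q|=p$) together with Proposition~\ref{prophallow2} applied with $z=0$, $w=1$, which evaluates $\langle Y_\edge-Y_\edge^-;A,x,y\rangle_q$ as $q_\edge\,F_\edge^q(A)\,\Delta^q_{A'}(x,y;0,1)$. The main obstacle is the topological claim, and in particular the equivalence between crossing the zipper oddly and having odd winding number about $w_0$, which is precisely where simple connectivity enters; one should also check the degenerate configurations (e.g.\ the zipper's boundary endpoint coinciding with $x$ or $y$) separately, though these do not affect the argument.
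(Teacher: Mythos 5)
Your proof is correct and takes essentially the same route as the paper: $q(\eta)=p(\eta)$ for SAWs through $\ora{01}$ (with a sign flip for $\ora{10}$), the sign reversal of the loop measure exactly on $\oddloops_A$, the fact that every loop crossing the zipper an odd number of times must meet any SAW containing the edge $\{0,1\}$, and then summing and invoking Propositions \ref{prophallow} and \ref{prophallow2}. The only difference is that you supply the winding-number/crosscut justification for the topological fact that the paper asserts with "for topological reasons," which is a welcome addition rather than a departure.
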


\begin{proof}
We know that 
\[     \hat p(\eta) = p(\eta) \, \exp \left\{\sum_{\ell 
\in \loops_A; \ell \cap \eta \neq \eset} m_p(\ell) \right\}, \]
\[     \hat q(\eta) = q(\eta) \, \exp \left\{\sum_{\ell 
\in \loops_A; \ell \cap \eta \neq \eset} m_q(\ell) \right\}. \]
Since $x,y$ are positively ordered, we have $p(\eta) = q(\eta)$.
Also $m_q(\ell) = - m_p(\ell)$ if $\ell \in \oddloops_A$ and
otherwise $m_q(\ell) = m_p(\ell)$.  For 
topological reasons, we see that if $\ell \in \oddloops_A$ 
and if  $\eta$ contains
$\{0,1\}$, then $\ell \cap \eta \neq \eset. $  This gives
the first equality, and by summing over $\eta$ we see that
\begin{equation}  \label{oct30.1}
     \langle I_\edge; A,x,y \rangle_p=
   \exp \left\{2 m_p(\oddloops_A)
 \right\}  \,  \langle I_{\edge}  ; A,x,y\rangle_q.
 \end{equation}

For the second, we give a similar argument for SAWs $\eta$
that 
contain $\ora{10}$.  The argument is the same except that 
$q(\eta) = - p(\eta)$ and hence
  \[     \hat q(\eta) =  -\hat p(\eta) \, \exp \left\{-2 m_p(\oddloops_A)
 \right\},\]
\begin{equation}  \label{oct30.2}
    \langle I_{\edge^R}; A,x,y \rangle_p=-
   \exp \left\{2 m_p(\oddloops_A)
 \right\}  \,  \langle I_{\edge^R}  ; A,x,y\rangle_q.
 \end{equation}
Adding \eqref{oct30.1} and \eqref{oct30.2} and using
$I_e = I_\edge + I_{\edge^R}$ gives the penultimate equality,
and the last follows from Propositions
\ref{prophallow2} and \ref{prophallow}.
\end{proof}

 \subsection{A crossing exponent in $\Z^2$}
 
 We will calculate a boundary exponent for simple random walk.
 We will first consider the $k=2$ case.  if $N,r$ are nonnegative
 integers, we set 
 \[   A_{N,r} = \left\{x+iy \in \Z + i \Z: 0 < x < rN, 0 < y < \pi   N \right\}. \]
 We will be considering the case with $r$ fixed and $N \rightarrow \infty$,
 in which case $N^{-1} A_{N,r}$ is an approximation of the
rectangle
 \[        D_r = \{x+iy \in \C: 0 < x < r, 0 < y < \pi\}. \]
 Let $0 < y_1 < y_2  < \pi $ and let
 \[  z_j = i y_j, \;\;\;\; w_j = i  y_j + r ,\;\;\;
     z_{j,N} = i   \lfloor y_jN \rfloor , \;\;\;\;  w_{j,N}
    =  \lceil rN \rceil  +  i  \lfloor y_jN \rfloor . \]
     \[   \z_N = (z_{1,N},z_{2,N}), \;\;\;\w_N = (w_{1,N},w_{2,N}),  \]
Let us first fix $r$ and let $N \rightarrow \infty$.     Fomin's identity
(see Corollary \ref{fominsimple}) implies that
\[     \hat H_{A_{N,r}}(\z_N,\w_N)
  =    H_{A_{N,r}}(\z_N,\w_N) -  H_{A_{N,r}}(\z_N,\w_N^\sigma), \]
  where $\sigma$ denotes the transposition on $\{1,2\}$.
  
  In the limit, random walk approaches Brownian motion.  For simple
  random walk and domains that are parallel to the coordinate
  axes, the convergence is very sharp.  Indeed, it can be shown that
  \[ \lim_{N \rightarrow \infty}
             N^{2} \, H_{A_{N,r}}(z_{j,N},w_{k,N}) 
               = h_{\p D_r}(iy_j  , r +iy_k  ) .\]
Here we use $h$ to denote the boundary Poisson kernel for Brownian motion.
More precisely, the Poisson kernel $h(\zeta) := h_{ D_r}(\zeta, r +iy_k )$
is the harmonic function on $D_r$ with boundary value the delta function
at $r+ iy_k$,and $h_{\p D_r}(iy_j  , r +iy_k  )= \p_x h(iy_j)$. 
Therefore,
\[  \lim_{N \rightarrow \infty}
   \frac{\hat H_{A_{N,r}} (\z_N,\w_N)}
   { H_{A_{N,r}} (\z_N,\w_N) } \hspace{2.5in}\]
\begin{equation}  \label{halloween.1}
\hspace{.3in}= \frac{ h_{\p D_r}(z_1,w_1)
       \,  h_{\p D_r}(z_2,w_2) - 
        h_{\p D_r}(z_1,w_2)\,  h_{\p D_r}(z_2,w_1) }
      { h_{\p D_r}(z_1,w_1)  \,  h_{\p D_r}(z_2,w_2) }.
      \end{equation}
We will now take the asymptotics of the right-hand side
 as $r \rightarrow \infty$. 
 
 The boundary Poisson kernel for Brownian motion can be
computed exactly using separation of variables
(see, e.g., \cite[Section 11.3]{BergM}):
\[  h_{  D_r}(x+iy, r + i \tilde y) =
   \frac 2 \pi \sum_{j=1}^\infty \sin(jy) \, \sin(j \tilde y)
     \, \frac{\sinh(jx)}{\sinh(jr)},\]
\[      h_{\p D_r}(iy, r + i \tilde y) =
   \frac 2 \pi \sum_{j=1}^\infty \sin(jy) \, \sin(j \tilde y)
     \, \frac{j}{\sinh(jr)}.\]  
 In particular, as $r \rightarrow \infty$, 
\[h_{\p D_r}(iy, r + i \tilde y)
\sim  \sum_{j=1}^\infty \sin(jy) \, \sin(j \tilde y)
     \,j \, e^{-jr}
  \sim \frac 2 \pi
      \, e^{-r} \, \sin y \, \sin \tilde y, , \]
and hence the denominator of the right-hand side of
\eqref{halloween.1} is asymptotic to
\[     \left(\frac{2}{\pi}\right)^2 \, e^{-2r}\, \sin^2 y_1  \, \sin^2  y_2 .\]
If we plug in the asymptotics for $h_{\p D_r}$
into the numerator we see that the
  $e^{-2r}$ term cancels, and  the numerator
is asymptotic to $  (2/\pi)^2 \,c\, e^{-3r}$ where
\[     c= c(y_1,y_2)
 =      \,2\,\sin^2 y_1 \, \sin^2(2y_2)
  + 2\, \sin^2 y_2 \, \sin^2(2y_1) \hspace{.3in}\]
  \[ \hspace{1.4in}  - 4 \, \sin y_1 \, \sin y_2
   \, \sin(2y_1) \, \sin (2y_2)  . \] 
   In particular, the ratio is asymptotic to $c(y_1,y_2) \, e^{-r}$.
   A similar argument works for $k$ paths, and we leave the calculation
as an exercise.  
   
\begin{exercise} Suppose that $0 < y_1 < y_2 < \ldots < y_n < \pi$.  
Show that there exists $c = c(y_1,\ldots,y_n) > 0$ such
that  as $r \rightarrow \infty$,
 \[   \det \left[  h_{\p D_r}(iy_j, r + i   y_k) \right] 
  \sim c\,(2/\pi)^n \,  e^{-\frac{n(n+1)r}{2}} 
 , \]
and hence
\begin{eqnarray*}
 \det \left[  h_{\p D_r}(iy_j, r + i  y_k) \right]  
  & \sim & c\,(2/\pi)^n \,  e^{-\frac{n(n+1)r}{2}} \,  \prod_{j=1}^n   h_{\p D_r}(iy_j, r + i   y_j) \\
  & \sim & c \, [\sin^2 y_1\, \cdots \, \sin^2 y_n]  \,  e^{-\frac{n(n-1)r}{2}}
 . 
 \end{eqnarray*}
  \end{exercise}

  \begad
  
  The exponent $\frac{n(n+1)}{2}$ is a {\em (chordal)
  crossing exponent} for
  loop-erased random walk.  It can also be computed directly 
  as a crossing exponent for 
  its continuous counterpart the chordal Schramm-Loewner evolution with 
  parameter $\kappa = 2$.  There are corresponding crossing
  exponents for all $\kappa$.
  
  \endad

  \begad
  
  We let $N \rightarrow \infty$ and then $r \rightarrow \infty$ to make
  the calculation easier.  In fact, one can use a finite Fourier series, which is
  really just a diagonalization of a matrix, to find the discrete Poisson kernel
  exactly in terms of a finite sum that is dominated by the initial terms. 
  See, e.g., \cite[Chapter 8]{LLimic}. This allows
  us to take $N,r$ to infinity at the same time as long as $r$ does not go too much
  faster than $N$.
  
  \endad

  \subsection{Green's function for loop-erased random walk in $\Z^2$}
  
  We will now give a very sharp estimate for the probability that
  loop-erased random walk goes through a particular edge.  Recall the
  definitions of $\simpleset, r_A, S_{A,a,b}$ from Section \ref{simpleex}.
  
  \begin{theorem}
  There exist  $c',u > 0$
  such that if $A \in \simpleset$ and $a,b \in 
  \partial_e A$,  then the probability that a loop-erased random walk
  from $a$ to $b$ uses the directed edge $\ora{01}$ equals
\begin{equation}  \label{saints.1}
         c' \, r_A^{-3/4} \, \left[S_{A,a,b}^{3} + O(r_A^{-u}) \right].
         \end{equation}
  \end{theorem}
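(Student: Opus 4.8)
The plan is to combine the loop-measure identity of Proposition~\ref{saints.prop1} with a short list of sharp asymptotic estimates for two-dimensional random walk --- for the ordinary weight $p$ and for the sign-changed ``zipper'' weight $q$ --- each of which converges, at a power-law rate in $r_A$, to a conformally covariant object that can be evaluated explicitly on the disk $\Disk$.  First I would record the reduction.  Let $\edge=\ora{01}$ and let $e$ be the corresponding undirected edge.  The probability that the loop-erased walk from $a$ to $b$ uses $e$ --- which, by the positive ordering of $a,b$ and the topology of a simply connected $A$, it can traverse only as $\edge$ --- is $\langle I_e;A,a,b\rangle_p$ divided by the total mass $H^p_{\p A}(a,b)=\langle 1;A,a,b\rangle_p$.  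By Proposition~\ref{saints.prop1}, and since the zipper does not cross the edge $\{0,1\}$ (so $q_\edge=1/4$), this equals
\[
   \frac{\exp\{2m_p(\oddloops_A)\}\,F^q_\edge(A)\,\Delta^q_{A'}(a,b;0,1)}{4\,H^p_{\p A}(a,b)},
   \qquad A'=A\setminus\{0,1\}.
\]
So it suffices to find the asymptotics of these four factors as $r_A\to\infty$, uniformly over $A\in\simpleset$ and over $a,b$.

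Next I would prove the four ingredients.  \emph{(i)}~The local factor $F^q_\edge(A)=G^q_A(0,0)\,G^q_{A\setminus\{0\}}(1,1)$: in contrast to the untwisted case, where $G^p_A(0,0)$ diverges logarithmically, the sign cancellation among loops of odd winding about the zipper tip $w_0$ keeps the twisted return generating functions bounded away from $1$, so $F^q_\edge(A)$ converges to a universal constant $c_1\in(0,\infty)$ with rate $F^q_\edge(A)=c_1+O(r_A^{-u})$.  \emph{(ii)}~The twist factor: by Proposition~\ref{funrooted}, $\exp\{2m_p(\oddloops_A)\}=e^{\,m_p(\loops_A)-m_q(\loops_A)}=\det(I-\qprob_A)/\det(I-\Prob_A)$, and comparing the random-walk loop measure of loops of odd winding about $w_0$ with the corresponding Brownian loop measure (or analysing this ratio of discrete Laplacian determinants directly) gives $\exp\{2m_p(\oddloops_A)\}=c_2\,r_A^{1/4}\,(1+O(r_A^{-u}))$, the exponent $\tfrac14$ being the one dictated by the scaling limit ($SLE$ with $\kappa=2$).  \emph{(iii)--(iv)}~The boundary Poisson kernels: compare $\Delta^q_{A'}(a,b;0,1)=H^q_{A'}(a,0)H^q_{A'}(b,1)-H^q_{A'}(a,1)H^q_{A'}(b,0)$ and $H^p_{\p A}(a,b)$ with their continuum limits in $D_A$ --- the ordinary Brownian boundary Poisson kernel for $H^p$, and, for the $q$-kernels from the boundary to the points $0,1$, the multivalued (branch-cut) harmonic functions attached to the zipper --- each with an explicit power rate $O(r_A^{-u})$.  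These discrete-to-continuum estimates, which must hold for all simply connected $A$, are the substantive random-walk input, carried out in \cite{BLV,LGreen}.

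Now the assembly.  Inserting the four limits into the displayed ratio turns it into a conformally covariant functional of $(D_A,0,a,b)$.  Applying the conformal map $f\colon D_A\to\Disk$ with $f(0)=0$, $f(a)=1$, $f(b)=e^{2i\theta}$ --- so that $|f'(0)|^{-1}=r_A$ and $S_{A,a,b}=\sin\theta$, and, by the Koebe estimate \eqref{koebe}, $r_A\to\infty$ is the same as the domain growing --- the boundary scale factors $|f'(a)|,|f'(b)|$ produced by $\Delta^q_{A'}$ cancel against the same factors in $H^p_{\p A}(a,b)$.  What survives of the conformal radius is the factor $r_A^{1/4}$ from \emph{(ii)} times a factor $r_A^{-1}$ coming from $\Delta^q_{A'}/H^p_{\p A}$ (after the cancellation forced by $0$ and $1$ being neighbors, which makes $\Delta^q_{A'}$ a Wronskian-type quantity one order smaller than its naive size), and together these give $r_A^{-3/4}$; the remaining disk computation of that Wronskian against $1/\sin^2\theta$ produces the angular factor $\sin^3\theta=S_{A,a,b}^3$.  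Collecting the error terms and taking $u$ to be the smallest of the individual rates yields \eqref{saints.1} with $c'$ equal to $c_1c_2$ times an explicit disk constant divided by $4$.

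The main obstacle is \emph{(iii)--(iv)} together with the power rates in \emph{(i)--(ii)}: proving sharp, \emph{uniform} comparison of the twisted discrete objects --- the Green's function and the boundary Poisson kernels for the sign-changed weight $q$ --- to their continuum conformal counterparts, \emph{with} a genuine power-law error $O(r_A^{-u})$.  This is delicate because the relevant continuum objects are multivalued around the zipper tip $w_0$, so one is really proving a discrete analogue of a spinor/monodromy estimate, and because the bounds must hold for every simply connected $A$, i.e.\ for arbitrarily rough boundaries rather than smooth fixed domains.  One must also check that the answer is independent of the (arbitrary) choice of zipper: a different admissible zipper alters $\oddloops_A$ and the weight $q$, but by Proposition~\ref{saints.prop1} the product $\exp\{2m_p(\oddloops_A)\}\,q_\edge\,F^q_\edge(A)\,\Delta^q_{A'}(a,b;0,1)$ is unchanged, so only the constant $c'$ --- not the exponent $-3/4$ nor the factor $S_{A,a,b}^3$ --- could be affected, and in fact it is not.
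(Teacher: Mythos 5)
Your proposal is essentially the paper's own argument: the same reduction through Proposition \ref{saints.prop1} (with $q_\edge=1/4$ since the zipper does not touch the edge $\{0,1\}$), the same three inputs --- your (i), (ii), (iii)--(iv) are \eqref{estimate1}, \eqref{estimate2}, and the numerator/denominator halves of \eqref{estimate3} --- and, like the text, you defer the uniform power-rate discrete-to-continuum estimates to \cite{LKozdron,BLV,LGreen}, so at the sketch level at which these notes treat the theorem your outline is correct (and your bookkeeping $r_A^{1/4}\cdot r_A^{-1}=r_A^{-3/4}$ is the consistent way to combine \eqref{estimate2} with the third estimate). The one point to repair in your narrative is the origin of the factor $r_A^{-1}$: it is not an extra cancellation in the Wronskian forced by $0$ and $1$ being neighbors, but rather the fact that each twisted kernel ratio $H^q_{A'}(\cdot,0)/H^p$ and $H^q_{A'}(\cdot,1)/H^p$ is individually of order $r_A^{-1/2}$ --- the discrete counterpart of the behavior $g(re^{2i\psi})\approx 2\sqrt{r}\,\sin\psi$ near the zipper tip computed at the end of the section --- while the determinant structure supplies only the additional angular factor (the third power of $S_{A,a,b}$), not an additional power of $r_A$.
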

  
  The error term $O(\cdot)$ is bounded uniformly over all $A,a,b$.
  Let us be  more precise.  The probability that a loop-erased
  random walk uses edge $\edge = \ora{01}$ is
  \[     P(\edge,A,a,b) =
      \frac{\langle I_\edge;A,a,b \rangle }{
            H_{A}(a,b)}.\]
where we have left implicit the simple random walk weight $p$.   Then we
can restate the theorem as saying there exists $C < \infty$ such that
\[       \left|\log P(\edge,A,a,b) - \log (c'\, r_A^{3/4} \, S^{ 3}_{A,a,b}  )\right|
   \leq  \frac{C}{r_A^u  \, S_{A,a,b}^3} . \]
   In particular, if $S_{A,a,b} \geq r_A^{-u/6}$,
   \[  \left|\log P - \log (c'\, r_A^{3/4} \, S^{ 3}_{A,a,b}  )\right|
   \leq  \frac{C}{r_A^{u/2}  } . \]
   We prefer to use the
simpler
form \eqref{saints.1}.
   
 Let us write $A' = A \setminus \{0,1\}$, $q$ for the zipper measure, and 
 \[    \Delta H_A^q(a,b) =   {\left|H_{A'}^q(a,0) \, H_{A'}^q(b,1)
     - H_{A'}^q(a,1) \, H_{A'}^q(b,0)\right|} . \]
 Using Proposition \ref{saints.prop1}, we see that
 \[   P(\edge,A,a,b) = \frac{F_\edge^q(A)}{4} \, \exp\left\{2 m_p(\oddloops_A)
 \right\}
  \, \frac{ \Delta H_A^q(a,b)}{H_A(a,b)} . \]
 The result will follow from three estimates: there exist  $c_1, c_3,u > 0, c_2
  \in \R$
such that
\begin{equation}  \label{estimate1}
   F_\edge^q(A) = c_1 + O(r_A^{-1/2}), 
   \end{equation}
\begin{equation}  \label{estimate2}          m_p(\oddloops_A) = \frac 18\,\log r_A + c_2 + O(r_A^{-u}),  \end{equation}
\begin{equation}  \label{estimate3}    \frac{ \Delta H_A^q(a,b)}{H_A^p(a,b)}  =
      c_3\, S_{A,a,b}^3 + O(r_A^{-u}).  \end{equation}
   
 The relation \eqref{estimate1} follows from
 $F_\edge^q(A) = G_A(0,0;q) \, G_{A\setminus \{0\}}(1,1;q)$
 and the following proposition.    
      
      \begin{proposition}  There exists $c_0, c_0' > 0$, $u > 0$ such that 
   \[  G_A(0,0;q) = c_0 + O(r_A^{-1/2}), \;\;\;\;
      G_{A\setminus \{0\}}(1,1;q)=  c_0' + O(r_A^{-1/2}). \]
 \end{proposition}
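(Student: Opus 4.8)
The plan is to realise $G_A(0,0;q)$ as an ordinary random walk Green's function on a two sheeted branched cover of $A$, and to recognise its limit as a potential kernel on the corresponding cover of $\Z^2$.

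The first ingredient is the topological meaning of the zipper sign: for a loop $l$ rooted at a vertex of $A$, $q(l)=(-1)^{W(l)}\,p(l)$, where $W(l)$ is the winding number of $l$ about $w_0=\frac12-\frac i4$. Indeed, by the description in Section \ref{simpleex} the zipper is a simple arc from $w_0$ to $\p D_A$ lying in the downward ray $\{\frac12\}\times(-\infty,-\frac14]$; a lattice loop contained in $A$ misses the part of that ray outside $D_A$, so the signed number of edges of $l$ crossing the zipper has the same parity as the crossing number of $l$ with a full ray from $w_0$ to infinity, which is $W(l)\bmod 2$. Now form the connected double cover $\widetilde A$ of $\overline A$ branched at $w_0$: two copies $(v,0),(v,1)$ of each vertex, with $(u,i)\sim(v,i)$ across edges not crossing the zipper and $(u,i)\sim(v,1-i)$ across edges that do. Write $\tilde 0,\tilde 0'$ and $\tilde 1,\tilde 1'$ for the pairs of lifts of $0$ and of $1$, and $\p\widetilde A$ for the lift of $\p A$. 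Simple random walk on $\overline A$ lifts step by step to simple random walk on $\widetilde A$, and a loop in $A$ at $0$ lifts from $\tilde 0$ to a path ending at $\tilde 0$ or $\tilde 0'$ according to the parity of its winding. With $G_{\widetilde A}$ the ordinary Green's function killed on $\p\widetilde A$ this gives
\[
G_A(0,0;q)=G_{\widetilde A}(\tilde 0,\tilde 0)-G_{\widetilde A}(\tilde 0,\tilde 0'),
\]
and likewise $G_{A\setminus\{0\}}(1,1;q)=G_{\widehat A}(\tilde 1,\tilde 1)-G_{\widehat A}(\tilde 1,\tilde 1')$, where $\widehat A$ is the branched double cover of $\overline{A\setminus\{0\}}$, i.e.\ the graph $\widetilde A$ with the two lifts of $0$ turned into killing points. (Equivalently one may stay inside the loop measure language: $\log G_A(0,0;q)=m_q[\loop(A;0)]=m_p[\loop(A;0)]-2\,m_p[\oddloops_A\cap\loop(A;0)]$, the two sheets recording the even and odd winding parts of the loops through $0$.)

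Now let $r_A\to\infty$. By the Koebe bound \eqref{koebe}, $A\supset C_{r_A/4}$, so $\widetilde A$ and $\widehat A$ agree, out to scale $\asymp r_A$, with the connected branched double cover $\widehat{\Z^2}$ of $\Z^2$ and with $\widehat{\Z^2}$ with $\tilde 0,\tilde 0'$ absorbing. The graph $\widehat{\Z^2}$ has bounded degree and its simple random walk projects, with two element fibres, to the recurrent walk on $\Z^2$; since that walk returns to any point infinitely often and $\widehat{\Z^2}$ is irreducible, the walk on $\widehat{\Z^2}$ is recurrent and carries a potential kernel $a^\#$ with $a^\#=\frac2\pi\log(\mathrm{dist})+O(1)$. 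For any exhaustion $B_n$ of a recurrent chain one has $G_{B_n}(x,x)-G_{B_n}(x,y)\to a^\#(x,y)$, so the first display converges to $c_0:=a^\#(\tilde 0,\tilde 0')>0$ (a potential kernel between two distinct states, hence positive and finite). On the absorbing cover the walk from $\tilde 1$ is absorbed at $\{\tilde 0,\tilde 0'\}$ almost surely, so its Green's function is finite outright and the second display converges to a finite $c_0'>0$. For the error term I would argue as in the treatment of Property A for $\Z^2$ in Section \ref{infinitelerwsec}: optional stopping for $a^\#(\cdot,\tilde 0)$ (which vanishes at $\tilde 0$) at the exit time of $\widetilde A$ together with the first return time to $\tilde 0$ expresses both $G_{\widetilde A}(\tilde 0,\tilde 0)$ and $c_0$ as conditional expectations of $a^\#$ at the exit point of $\widetilde A$, and the discrepancy is then controlled by the difference between the exit laws started from $\tilde 0$ and from $\tilde 0'$ --- vertices at bounded distance --- together with last exit decompositions and Beurling type bounds to handle the possibly irregular $\p D_A$; dividing out the $\asymp\log r_A$ normalisation leaves the stated $O(r_A^{-1/2})$. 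The case of $G_{A\setminus\{0\}}(1,1;q)$ is the same, and if anything easier since the limiting Green's function is already finite.

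The conceptual content --- the winding identity, the lift, and the identification of $c_0$ and $c_0'$ --- is short and robust. The real work is the error estimate: getting $O(r_A^{-1/2})$ uniformly over all simply connected $A$ of conformal radius $r_A$, whose boundaries may be very irregular, requires the sharp simple random walk asymptotics for $\Z^2$ (cf.\ \cite[Chapters 6 and 8]{LLimic}) transported to the branched cover, and this I expect to be the main obstacle.
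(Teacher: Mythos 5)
Your structural reductions are fine: the unfolding identity $G_A(0,0;q)=G_{\widetilde A}(\tilde 0,\tilde 0)-G_{\widetilde A}(\tilde 0,\tilde 0')$ for the double cover defined by the zipper-crossing cocycle is correct, and positivity of the limiting constants is not an issue (it already follows from \eqref{oct9.1}). But the entire content of the proposition is the \emph{uniform} error $O(r_A^{-1/2})$ over all $A\in\simpleset$, and that is exactly what your sketch does not deliver. Concretely: (i) $\widetilde A$ is \emph{not} the restriction of the branched double cover of $\Z^2$ — the zipper in $A$ is truncated at the first exit of $D_A$, while the cover of $\Z^2$ uses the full ray, and the two structures agree only inside $C_{cr_A}$ (by \eqref{koebe}); so the optional-stopping identity $G_{\widetilde A}(\cdot,\tilde 0)=\E[a^\#(X_T,\tilde 0)]-a^\#(\cdot,\tilde 0)$ cannot be applied to $\widetilde A$ as you propose. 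One must cut at the exit of $C_{cr_A}$ and then bound the \emph{signed} contribution of the loops at $0$ that leave that ball; their unsigned mass is only $\asymp 1/\log r_A$, so "dividing out the $\asymp\log r_A$ normalisation" produces a logarithmic, not a power, error — the exponent $1/2$ has to come from genuine cancellation, which your outline never establishes. (ii) The potential theory you invoke on the branched cover (existence of $a^\#$, its asymptotics — incidentally $\tfrac1\pi\log$, not $\tfrac2\pi\log$, since the visits to a fiber split between the sheets — and above all the quantitative antisymmetry $a^\#(z,\tilde 0)-a^\#(z,\tilde 0')=O(|z|^{-1/2})$, equivalently that the hitting distribution of the $0$-fiber from distance $R$ favors neither lift beyond $O(R^{-1/2})$) is not off-the-shelf; proving it is essentially the same discrete cancellation estimate again, so the cover repackages the difficulty rather than removing it. (iii) A minor point: "zipper-crossing parity $=$ winding parity about $w_0$" can fail for $A\in\simpleset$, since the downward ray may re-enter $D_A$ below the truncation point; the cover should be (and in effect is) defined by the crossing cocycle itself.

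For contrast, the paper's proof never needs the limit to be identified as a potential kernel. It writes $G_A(0,0;q)=(1-f_A)^{-1}$ via \eqref{firstreturn}, with $f_A$ the signed measure of elementary loops at $0$ in $A$, lets $f$ be the analogous quantity in $\Z^2$ with the full zipper (so $|f|<1$ because both signs occur), and proves $|f_A-f|\le c\,r_A^{-1/2}$ by a sign-reversing involution: for a loop that exits the box $K$ of scale $r_A/10$, reflect across the real axis the portion between the last visit to $\{0,1\}$ before exiting $K$ and the first hit of the segment $\{2,\ldots,k\}$ (or $\{-k,\ldots,-2\}$); paired loops cancel, and the unpaired ones must avoid a segment of length $\asymp r_A$, which costs $O(r_A^{-1/2})$ by the Beurling-type estimate of \cite[Section 5.3]{LLimic}. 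If you want to salvage the cover route, the step you must actually prove is precisely this $O(R^{-1/2})$ antisymmetry of the fiber-hitting law, and a reflection argument of the paper's type is the natural (and perhaps unavoidable) tool.
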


 \begin{proof}  We will do the first estimate; the second can be
 done similarly.  Let $L_A = \tilde \loops^1_0(A)$ denote
 the set of elementary loops in $A$ rooted at $0$
 and let $L = L_{\Z^2}$ be the set of elementary loops in $\Z^2$
 rooted at $0$.  Recall that 
 \[            f_A = \sum_{l \in L_A} q(l), \]
  and let
 \[        f = \sum_{l \in L} q(l). \]
 Since
 \[     \sum_{l \in L} |q(l)| = 1, \]
and there are both positive and negative terms, we can see that
$-1 < f < 1$.  It suffices to show that $f_A = f +O(r_A^{-1/2})$,
that is, there exists $c$ such that  
\[   
  \left|\sum_{l \in L \setminus L_A
 }  q(l)\right| 
    \leq c \, r_A^{-1/2}.\]Although we
   can estimate
  the absolute value of the sum by the sum of the
absolute values,  the latter sum does not decay fast enough for us.
We will have to take advantage of some cancellations in the sum.

 Let $K = \{x+iy \in \Z + i \Z: |x|, |y| <r_A/10\}$.
 Using 
 \eqref{koebe} we see that $\overline K \subset A$.  In particular any
 loop in $L \setminus L_A$ 
  can be decomposed
 as
 \[          l = \omega^- \oplus \omega^+ , \]
 where $\omega^-$ is $l$ stopped at the first visit to $\p K$.
 We can further decompose the walk as
 \[     l = \omega^1 \oplus \omega^2 \oplus \omega^+ , \]
 where $\omega^1$ is $\omega^-$ stopped at the last visit to
 $\{0,1\}$ before reaching $\p K$. 
 We now do a third decomposition.  
 
 Let $L_0$ denote the set of loops in $L \setminus L_A$
  as in the previous paragraph
 such that the last vertex of $\omega^1 $ is zero.   Let $L_0'$
 be the set of loops in $L_0$  such 
 that $\omega^2 \cap \{2,\ldots,k\} \neq \eset$,
 where $k=\lfloor r_A/10\rfloor$.   If $l \in L_0'$,
  we write
 \[     \omega^2 = \omega^3 \oplus \omega^4 , \]
 where $\omega^3$ is $\omega^2$ stopped at the first visit
 to $\{2,\ldots,k\}$.  Let $
 \tilde l = \omega^1 \oplus \tilde \omega^3 \oplus \omega^4
 \oplus \omega^+$
 where $\tilde \omega^3$  is  the reflection of
 $\omega^3$ about the real axis --- that is, the real jumps of $\tilde{\omega}^3$
 are the real jumps of $\omega^3$ but the imaginary jumps of 
  $\tilde{\omega}^3$ are the negative of the imaginary jumps of
  $\omega^3$.  Since $\omega^3$ does not use the edge $\{0,1\}$, we can
  see that $q(\tilde \omega^3) = - q(\omega^3)$ and hence
  \[     q(\tilde l )
   = -  q(l ). \]
   This gives
   \[   \sum_{l \in L_0'} q(l) = 0 . \]
 The measure of $\omega^2$ such that  $\omega^2 \cap \{2,\ldots,k\} = \eset$
 is $O(k^{-1/2})$ (see \cite[Section 5.3]{LLimic}).
 We therefore get
 \[    \left| \sum_{l \in L_0  } q(l)\right| \leq
    \sum_{l \in L_0 \setminus L_0'} |q(l)| \leq  \frac c {k^{1/2}}. \]
  
 If $L^1$ is the set of loops as in the previous paragraph
 such that the last vertex of $\omega^1 $ is $1$, we do a similar
 argument using $\{-k,-k+1,\ldots,-2\}$ to show that
  \[    \left| \sum_{l \in L_1  } q(l)\right| \leq
    \sum_{l \in L_0 \setminus L_1'} |q(l)| \leq  \frac c {k^{1/2}}. \]

\end{proof}

   \subsection{The estimate \eqref{estimate2}}
   
 We will study the loop measure of $\oddloops_A$, the set loops
 that cross the zipper an odd number of times.  We will first consider
 the case
 \[   A = C^n := C_{e^n} = \{z \in \Z^2: |z| < e^n\}, \]
and let $\oddloops^n = \oddloops_{C^n}$.  If we restrict to
the sets $C^n$, the estimate \eqref{estimate2} can be written
as\[    m(\oddloops^n) = \frac{n}{8} + c_0 + O(e^{-un}), \]
which follows immediately, if we can establish
\[     m(\oddloops^n \setminus \oddloops^{n-1})
    = \frac 18 + O(e^{-un}).\]
    
To establish this we consider the scaling loop of the random
walk loop measure, the {\em Brownian loop measure}.  The definition
is similar to that for random walk.  We will start with a measure
on rooted loops by giving the analog of $\tilde m$ from
Section \ref{loopmeasuresec}.  A (rooted) loop $\gamma: [0,t_\gamma]
\rightarrow \C$ is a continuous function with $\gamma(0) = \gamma(t
_\gamma)$.   One important probability measure on loops
is the {\em Brownian bridge measure}  $\mbridge$ defined as
the measure on Brownian paths $B_t, 0 \leq t \leq 1$ conditioned
so that $B_0 = B_1 = 0$.  (This is conditioning on an event
of probability zero so some care needs to be taken, but it
is well known how to make sense of this; indeed, there are numerous
equivalent constructions.)   

If we want to specify a loop $\gamma$, we can write a
triple  $(z,t_\gamma,\tilde \gamma)$ where $z$ is the root,
$t_\gamma$ is the time duration, and $\tilde \gamma$
is a loop rooted at $0$ of time duration one (obtained from
$\gamma$ by translation and Brownian scaling).  The rooted 
Brownian loop measure can be defined as the measure
on triples given by
\[      \left({\rm Area}\right) \times \left(
  \frac{1}{2\pi t^2} \, dt \right) \times \mbridge . \]
 The factor $1/2\pi t^2 $ should be read as $(1/2\pi t ) \cdot
 (1/t)$.  The factor $1/2\pi t $ is the ``probability that Brownian
 motion is at the origin at time $0$''; more precisely, it is
 the density at time $t$ evaluated at $z=0$.  The factor $1/t$
 is the analog of the $1/|l|$ factor in the definition of $\tilde m$.
 
 This gives the Brownian loop measure in all of $\C$.  For loops
 in a domain $D$ we restrict the measure to such loops.  This is
 an infinite,  $\sigma$-finite measure, because the measure of
 small loops blows up.  However, if $D$ is a bounded domain, and
 $\epsilon > 0$, the measure of loops in $D$ of diameter at
 least $\epsilon$ is finite (it goes to infinity as $\epsilon
 \downarrow 0$). 
 
 The amazing and useful fact about the Brownian loop measure
 is that it is conformally invariant, at least if viewed as a
 measure on unrooted loops (these are defined in the obvious way).
 In other words, if $D$ is a domain and $f: D \rightarrow f(D)$ is
 a conformal transfrormation then the image of the loop measure in $D$
 under $f$ is the same as the loop measure on $f(D)$. (One does need
 to worry about the parametrization --- there is a time change 
 which is the same as 
  that for the usual conformal invariance of Brownian motion.)
 In particular, if we consider the measure of loops in the disk of
 radius $e^{n}$ that are not contained in the disk of radius
 $e^{n-1}$ but that have winding number about $0$, then this value is
 independent of $n$.  Indeed, it can be computed (we will not do
 it here) and the answer is $1/8$. 
   
Although we will not give the details, we will
   describe how to show that the random walk loop measure
   converges to the Brownian measure
 We will couple the rooted random walk loop measure, giving measure
 $(2n)^{-1} \, 4^{-2n}$ to each loop of length $2n$ with the corresponding
 Brownian loop measure.  We will use the fact
 that for one dimension walks, the probability of being at
 the origin at time $n$ is
 \[p_n:=  \frac{1}{ 
 \sqrt{\pi n}} \, \left[1 - \frac 1{8n} + O(n^{-2})\right],\]
 and by a well known trick gives for two dimensional random walk
 \[  \Prob\{S_{2n} = 0\}=   p_n^2
  = \frac{1}{ \pi n} \, \left[1 - \frac 1{4n} + O(n^{-2})\right].\]
  Note that if
  \[   q_n =\int_{n-\frac 38}^{n+ \frac 58} \, \frac{dt}{2\pi t^2}  , \;\;\;\;\;
    q_n' := \frac{1}{2n} \, \Prob\{S_{2n} = 0\}
    , \]
    then
    \[  |q_n - q_n'| \leq c_1 \, n^{-4} . \]
   For each $(z,n)$ we let $(K_{z,n}, K'_{z,n})$ be coupled Poisson random
   variables with parameters $q_n,q_n'$, respectively, coupled so
   that $\Prob\{K_{z,n}  \neq K'_{z,n}\}  \leq |q_n - q_n'|$.
   We let $(K_{z,n},K'_{z,n}), z \in \Z^2, n \geq 1$ be independent.
We also have a coupling of Brownian bridge $B_t, 0 \leq t \leq n$
and random walk bridge $S_t, 0\leq t \leq 2n$ such that
\[    \Prob\{\max_{0 \leq t \leq n}|B_t - S_{2t}| \geq c_2 \, \log n\}
    \leq c_2 \, n^{-4}. \]
We then use this to construct the coupling.  Whenever a random walk and
Brownian pair $(z,n)$ occur we do the following:
\begin{itemize}
\item  Choose from the $(B,S)$ distribution for $n$,
\item  Let the random walk loop be $S + z$.
\item   Choose $t \in [n-\frac 38, n+ \frac 58]$ from density
$c \, nt^{-2}$.
\item Scale $B$ from time $n$ to time $t$ (this is not much of a change).  
\[      W_s =  (t/n)^{1/2} \, B_{sn/t}, 0 \leq s \leq t  . \] 
\item  Let the Brownian loop be $W + z + Y$ where $Y$ is a uniform
random variable on the square of side length one about the origin.
\end{itemize}

After the estimate is proved for $\oddloops^n$, it is done for more
general domains again using conformal invariance of the Brownian loop
measure.   
    
\subsection{The estimate \eqref{estimate3}}

We first consider the denominator $H_A(a,b) = H^p_A(a,b)$.  
In this setup, this was considered in \cite{LKozdron} where it was
shown that there is an absolute constant $c$ such that
\[     H_A(a,b) = c \, [\sin^{-2} \theta] \, H_A(0,a) \, H_A(0,b) 
  \, [1 + O(r_A^{-u})], \]
at least if $\sin \theta \geq r_A^{-u}$ (explicit  values
of $c$ and $u$ were given but we will not use them).  The way
to think of this result is that the term $H_A(a,b)$ has three
factors: one local factor at $a$ measuring the probability
of escaping the boundary there; a similar local factor at $b$;
and one global factor that is a conformal invariant.  Similarly,
$H(0,a)$ has the local factor at $a$ and a conformal invariant.
Given this, one needs to estimate the terms like
\[          \frac{ H_A^q(0,a)}{H_A^p(0,a)}. \]
For these terms the ``local factor'' at $a$ cancels and the
limit should be a conformally invariant
quantity about Brownian motion.

To see what the limit should be in our case, let us consider the
continuous case.  Let $D$ be a bounded domain containing the origin and
let $\lambda(t): 0 \leq t \leq 1$ be a ``zipper'', that is, a simple
curve with $\lambda(0) = 0, b:= \lambda(1)  \in \partial D, \lambda[0,
1) \subset D$. Let $a \in \partial D \setminus \{b\}$, and let
$\tilde D = D \setminus \lambda[0,1]$.  An example would be $\lambda$
the vertical line segment starting at $0$, going downward, and stopping
at the first visit to $\partial D$.  Let $f:D \rightarrow \Disk$
be the  conformal transformation with $f(0) = 0, f(a) = - 1$.
 Let $\tilde \lambda = f \circ \lambda$
which is a zipper in $\Disk$ from $0$ to $\partial \Disk$. 

If $z \in \tilde D \setminus \{0\}$, consider a curve $B_t, 0 \leq t \leq \tau$
where $B_0 = z, B_\tau = a$ and assume that $0 \not\in 
 B[0,\tau]$.  The example we have in mind is $B_t$ is an $h$-process
  that is, Brownian motion ``conditioned''  to leave $D$
  at $a$.   We want to consider $(-1)^{J}$ where $J$ is the number
  of times that $B$ crosses the zipper $\lambda$.  This does not quite
  make sense for curves such as Brownian motion since there are infinitely
  many crossings.  But we will make sense of this in terms of arguments.
 Define $\theta_t$ by $f(B_t) = e^{2i\theta_t}$.  Note that $\theta_0$ is
 only defined up to an additive multiple of $\pi$; we
  we make an arbitrary
 choice of $\theta_0$ but then  require $\theta_t$ to be continuous in $t$.
 This is well defined assuming the curve does not go through the origin.
In this case $\theta_\tau$ is well define and $\theta_\tau - \theta_0$
is independent of the arbitrary choice for $\theta_0$.      
We then define $J$ to be $+1$ if $\theta_\tau =   2k \pi$ for some
integer $k$ and define $J$ to be $-1$ if $\theta_\tau =
  (2k+1) \pi$ for some $k$.  We then set
   \[    g(z) = \E^z_a\left[(-1)^{J} \right] \]
  where we write $\E_a$ to denote expectations with respect to 
the
  $h$-process corresponding to  Brownian motion conditioned to leave
  $D$ at $a$.

 To compute $g$, we first note that $g$ is conformally invariant
 and so it suffices to compute it when $D = \Disk, a = -1$.  
 Let $l = [0,1)$ denote the radial line 
 is antipodal to $-1  $.  By symmetry we can see that
 $  g(z) = 0$ for $z \in l$, and hence by   the strong Markov
 property,   that $  g(z)$
 is the probability that an $h$-process in $\Disk$ toward $-1$
  starting at $z$
 reaches $-1$ without hitting the antipodal line $l$.
 This can be written as
 \[     g (z) =  \frac{h_{D}(z,-1)}{h_\Disk(z,-1)},
 \;\;\;\;  D = \Disk \setminus l.\] 
 Here $h$ denotes the Poisson kernel which we will normalize so that
 $h_\Disk(0,-1) = 1$.  As $z \rightarrow 0$,   $h_\Disk(z,-1) = 1
 +O(z)$.
To compute  $ h_{D}(z,-1)$ it is somewhat easier
to consider the upper half disk $D_+ = \Disk \cap \Half$.
    Note that $F(z) = z^2$
takes $D_+$ conformally onto $D$. A computation
shows that 
\[      h_{D_+}(z,i) = 4 \, \Im(z)  
 \, [1 + O(|z|)].\]
(One can see that it must be asymptotic to $c \, \Im(z)$ as
$z \rightarrow 0$ by considering the ``gambler's ruin'' estimate
for the imaginary part of the Brownian motion.)
The scaling rule for the Poisson kernel gives
\[       h_{D_+}(z,-1) = |F'(z)|^2 \, 
 h_{D}(z^2 , e^{2i\theta})
  = 2 \,  h_{D}(z^2 , e^{2i\theta}),\]
and hence
\[   g(re^{2i\psi}) =  h_D(re^{2i\psi},-1)\, [1 + O(r)]
 = 2 \,\sqrt r
 \, [\sin \psi] \,    \,
      [1 + O(\sqrt r)].\]
  
 The reader may note that the $\sin^3$ term in our results consists
 of two factors: we have a $\sin^2$ coming from $H^p_A(a,b)$ and then
 one extra $\sin$ coming from the ratio   $H^q_A/H^ p_A$.

\end{document}